\newtheoremstyle{named}{}{}{\itshape}{}{\bfseries}{.}{.5em}{\thmnote{#3}}
\theoremstyle{named}
\newtheorem*{namedtheorem}{Theorem}
\title[Uniqueness of AC $G_2$-shrinkers]{Uniqueness of Asymptotically Conical Gradient Shrinking Solitons in $G_2$-Laplacian Flow}
\author[Haskins, Khan, \& Payne]{Mark Haskins, Ilyas Khan, and Alec Payne}
\date{}
\begin{document}

\maketitle
\vspace{-.3in}

\begin{abstract}
    We prove a uniqueness result for asymptotically conical (AC) gradient shrinking solitons for the Laplacian flow of closed $G_2$-structures: If two gradient shrinking solitons to Laplacian flow are asymptotic to the same closed $G_2$-cone, then their $G_2$-structures are equivalent, and in particular, the two solitons are isometric. The proof extends Kotschwar--Wang's argument for uniqueness of AC gradient shrinking Ricci solitons. We additionally show that the symmetries of the $G_2$-structure of an AC shrinker end are inherited from its asymptotic cone; under a mild assumption on the fundamental group, the symmetries of the asymptotic cone extend to global symmetries. 
\end{abstract}

\section{Introduction}

This paper is aimed at improving the current understanding of a natural class of noncompact singularity models to Laplacian flow, a geometric flow of closed and nondegenerate 3-forms on a $7$-dimensional spin manifold. A major hope for Laplacian flow is that it will provide a parabolic approach to the construction of $G_2$-holonomy metrics on spin $7$-manifolds. The latter are interesting to Riemannian geometers because their  Ricci tensor vanishes. Currently, such $G_2$-holonomy metrics are the only known source of (nonflat) Ricci-flat metrics on compact, simply-connected manifolds of odd dimension. Existing constructions of $G_2$-holonomy metrics on compact manifolds are all based on the singular elliptic perturbation method pioneered by Joyce~\cite{Joyce96} and since developed by various authors~\cite{JoyceBook00,Kovalev03,JoyceKarigiannis,CortiHaskinsNordstromPacini,FoscoloHaskinsNordstrom}. 

In this parabolic approach, one might initially hope that the Laplacian flow exists for all time and converges in infinite time to a torsion-free $G_2$-structure, i.e.\ a nondegenerate 3-form which induces a metric with holonomy contained in $G_2$. In practice, one expects a Laplacian flow with generic initial data to encounter both finite-time and infinite-time singularities~\cite[\S 6.2]{Bryant03Remarks}. The simplest (and often most important) type of finite-time singularity model for a geometric flow is a shrinking soliton, a natural class of self-similar solution. Shrinking solitons for both Ricci flow and mean curvature flow have been extensively studied, and a variety of examples and classification results are now known: we refer the reader to~\cite{CaoSurvey09, AndrewsChowGuentherLangford20} for overviews of this extensive body of work. By contrast, the study of shrinkers in Laplacian flow (and of Laplacian flow solitons more generally) is still in its infancy; a number of examples have been constructed recently~\cite{Lauret17, Nicolini22, Fowdar:S1:inv:Laplacian:flow, HaskinsNordstrom21, KovalevLee11}, but almost no classification results are available.

A key property that distinguishes Laplacian flow from Ricci flow or mean curvature flow is the absence of compact shrinkers. This is a consequence of the fact that the volume element increases monotonically under Laplacian flow.  Therefore, for Laplacian flow an important task is to understand what asymptotic geometries can occur for complete noncompact Laplacian shrinkers and which of these geometries model finite-time singularities of the flow on a compact manifold. Once a certain type of asymptotic geometry is known to occur for shrinkers, it then becomes natural to ask what asymptotic data is needed to determine the shrinker completely. As the asymptotic geometry of a shrinker models the macroscopic structure of a singularity of the flow, unique determination of a shrinker from its asymptotics gives strong information about how the singularity forms and potentially how to continue the flow past this singularity.

 Very recently, Haskins--Nordstr\"om showed that shrinkers that are asymptotic to smooth cones arise in Laplacian flow~\cite{HaskinsNordstrom21}---as
 is well-known in the context of Ricci flow and mean curvature flow shrinkers~\cite{FeldmanIlmananKnopf03, KleeneMoller14, Leewang09, AngenentKnopf22, WangAsymptotic16}.
 Work by Lu Wang and later Bernstein on shrinking hypersurfaces in mean curvature flow~\cite{BernsteinUniquenessExpanders, Wang11}, by Kotschwar--Wang on gradient Ricci shrinkers~\cite{kotschwarwang2015}, and Khan for higher codimension mean curvature flow shrinkers~\cite{Khan20} has shown that such asymptotically conical (AC) shrinkers exhibit a perhaps surprising degree of rigidity: they are uniquely determined by their asymptotic cone. Related elliptic problems do not share such strong rigidity properties. For instance, it is well-known that AC K\"{a}hler Ricci-flat metrics with a given asymptotic cone often occur in non-trivial finite-dimensional moduli spaces. It turns out, however, that the strong rigidity properties of AC shrinkers have a parabolic origin: in strong backward uniqueness results for parabolic subsolutions. 

In upcoming work, Haskins--Juneman--Nordstr\"{o}m prove that the highly-symmetric AC Laplacian shrinkers constructed in~\cite{HaskinsNordstrom21} are determined by their asymptotic cones, within the class of shrinkers sharing the same symmetries~\cite{HaskinsJunemanNordstrom22}. The high degree of symmetry allowed these authors to establish rigidity with ODE methods. The rigidity results obtained in this special case suggested the further possibility that any AC Laplacian shrinker would be uniquely determined by its asymptotic cone, just as in Ricci flow and in mean curvature flow. Removing the symmetry assumptions puts the question beyond the scope of ODE methods. 
In this paper, we use PDE methods to prove that gradient AC Laplacian shrinkers are indeed determined by their asymptotic cones. 

We now proceed with some more precise definitions and the statements of our main results. 
Laplacian flow (introduced by Bryant in ~\cite{Bryant03Remarks}) is a one-parameter family of closed $3$-forms $t \mapsto \vp(t)$ which represent $G_2$-structures and solve
\begin{equation*}
     \frac{\partial \vp}{\partial t} = \Delta_{\vp} \vp,
\end{equation*}
for $t\in [0, T)$, where $\Delta_{\vp} \vp$ is the Hodge Laplacian of $\vp$. See~\cite{BryantXu11, LotayWeiLaplacianFlow, FineYaoReport19} for additional background on this flow. 

The gradient shrinking Laplacian soliton equation is
$$\De_\vp \vp = -\tfrac{3}{2}\vp + \cL_{\na f} \vp,$$
where $\vp$ is a closed $G_2$-structure on $M$ and $f$ is a smooth function on $M$. Any solution to the gradient shrinking Laplacian soliton equation determines a self-similarly shrinking solution to Laplacian flow. See~\cite[\S 2]{HaskinsNordstrom21} for a survey of what is known about Laplacian solitons.

We can now state the main theorem (see Definition \ref{definition asymptotically conical} for what it means for a shrinker to be asymptotic to a cone along an end):

\begin{thm}\label{theorem main}
    Let $(M_1, \vp_1, f_1)$ and $(M_2, \vp_2, f_2)$ be two gradient Laplacian shrinkers asymptotic to the closed $G_2$-cone $(\cC^\Sg, \vp_C)$ along the ends $V_1 \subset M_1$ and $V_2 \subset M_2$, respectively. Then there exist ends $W_1 \subset V_1$ and $W_2 \subset V_2$ and a diffeomorphism $\Psi: W_1 \rightarrow W_2$ such that $\Psi^*\vp_2 = \vp_1$.
\end{thm}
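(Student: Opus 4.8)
The plan is to follow Kotschwar--Wang's strategy: reinterpret the statement as a backward-uniqueness statement for Laplacian flow run from the asymptotic cone at $t=-\infty$, and establish that by Carleman estimates. The first step is to pass from the soliton equation to a self-similar flow. Each gradient shrinker $(M_i,\vp_i,f_i)$ generates an ancient self-similar solution $\vp_i(\cdot)$ of Laplacian flow on $M_i\times(-\infty,-1]$, obtained by pulling $\vp_i$ back along the (time-rescaled) flow of $\na f_i$ and rescaling by a power of $|t|$, with $\vp_i(-1)=\vp_i$. Since $\na f_i$ grows at the appropriate linear rate along the AC end (a consequence of the soliton identities), this self-similar solution "opens up" the end as $t\to-\infty$: identifying the ends $V_i$ with a fixed truncated cone $\{r>R\}\subset\cC^\Sg$ via the diffeomorphisms of Definition~\ref{definition asymptotically conical}, one gets $\vp_i(t)\to\vp_C$ in $C^\infty_{\mathrm{loc}}(\cC^\Sg\setminus\{0\})$ as $t\to-\infty$, with uniform estimates near the end inherited from the AC decay rate. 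Thus $\vp_1(t)$ and $\vp_2(t)$ are two Laplacian flows on the common domain $\{r>R\}\times(-\infty,-1]$ sharing the \emph{same} limit $\vp_C$ at $t=-\infty$.

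The second step is to produce a closed differential system for the difference. Because the Hodge Laplacian $\De_\vp$ is only weakly parabolic, I would either couple both flows to a harmonic-map heat flow into the fixed cone background (a Laplacian--DeTurck gauge) or, more in the spirit of Kotschwar's treatment of Ricci flow, work directly with the \emph{prolonged} system whose unknown $X$ bundles the differences $\vp_1-\vp_2$, $g(\vp_1)-g(\vp_2)$, the full torsion tensors $T_1-T_2$, the Levi-Civita connections, and the Riemann curvatures of the two geometries. Using the constraints $d\vp_i=0$ and the Bryant/Lotay--Wei expression for $\De_\vp\vp$ in terms of torsion, and a fixed background identification of the two ends, one checks that this tuple satisfies a closed differential inequality of the form $|(\partial_t-\Delta)X|\le C(|X|+|\na X|)$ on $\{r>R\}\times(-\infty,-1]$, with $X\to 0$ as $t\to-\infty$.

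The third step is the decay analysis and the backward-uniqueness argument. The AC hypothesis gives only polynomial spatial decay $|X|=O(r^{-\mu})$ at the cone end, which is not enough to make the Carleman weights integrable; I would first upgrade this to decay faster than every power of $r$, by a unique-continuation argument at spatial infinity — a spatial Carleman estimate, or an indicial-root analysis of the linearized Laplacian-flow operator on the cone exploiting the soliton structure. With $X$ then vanishing at $t=-\infty$ and decaying super-polynomially in $r$, I would establish two Carleman-type inequalities for $X$ on $\{r>R\}\times(-\infty,-1]$ with weights $e^{\sigma\theta}$ for a space-time function $\theta$ blowing up as $t\to-\infty$; combining them forces $X\equiv 0$ on a smaller end $\{r>R'\}\times(-\infty,-1]$. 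A cutoff near the inner boundary $\{r=R\}$ is required to localize, its error terms being absorbed using the structure of the weight (or by iterating a local propagation-of-vanishing statement inward from a region where vanishing is known). Undoing the self-similar transformation then yields $\vp_1=\Psi^*\vp_2$ on sub-ends $W_1\subset V_1$, $W_2\subset V_2$, with $\Psi$ assembled from the diffeomorphisms generated by $\na f_i$ together with the cone identifications.

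I expect the third step to be the main obstacle: one must design Carleman weights adapted simultaneously to the conical end and to the $t\to-\infty$ limit, verify the super-polynomial decay needed for finiteness of the weighted integrals, and control the commutator and cutoff error terms for a \emph{coupled} system whose coefficients degenerate toward the cone vertex. All of this is more delicate than in the Ricci flow setting because both the metric and the principal part of the flow are nonlinear algebraic and differential functions of $\vp$, so assembling a system that genuinely closes (in Step 2) and estimating it (in Step 3) requires carrying along enough torsion and curvature quantities and carefully using the closedness constraint.
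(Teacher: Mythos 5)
Your overall strategy (self-similar flows plus Carleman-based backward uniqueness, following Kotschwar--Wang) is the right one, but two of your steps contain genuine errors. First, the time orientation is reversed: for a shrinker normalized by $\Delta_\varphi\varphi=-\tfrac32\varphi+\mathcal{L}_{\nabla f}\varphi$, the associated self-similar solution $\varphi(t)=(-t)^{3/2}\Psi_{s(t)}^*\varphi$, $s(t)=-\log(-t)$, lives on $[-1,0)$ and converges to the asymptotic cone at the \emph{finite singular time} $t\to 0^-$ (Proposition \ref{proposition backward flow reparametrization}); as $t\to-\infty$ the diffeomorphisms flow inward along $-\nabla f$ and the rescaling factor blows up, so there is no convergence to $\varphi_C$ there. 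The problem must therefore be posed as backward uniqueness on a finite interval from the terminal slice $\tau=0$ where the two flows coincide (Theorem \ref{theorem parabolic version main theorem}); ``uniqueness from a limit at $t=-\infty$'' is a different and substantially harder question, and the ESS-type machinery with weights blowing up as $t\to-\infty$, as you propose, is not the argument that works. Second, even with the correct orientation, the constructed flow converges a priori only to \emph{some} closed $G_2$-cone $\varphi_D$: the reparametrizations generated by $\nabla f$ are not the dilations appearing in Definition \ref{definition asymptotically conical}, so identifying the dynamical limit with the prescribed rescaling limit $\varphi_C$ is a real step, not a bookkeeping convention. Kotschwar--Wang did this with Cheeger--Colding theory, which is unavailable at the level of $3$-forms; the paper instead proves decay of $\varphi-\varphi_D$ in the $C^k_{loc}(g_D)$-topology and compares the $g_C$- and $g_D$-topologies to conclude $\varphi_C\equiv\varphi_D$ (Proposition \ref{prop dynamical limit equals rescaling limit}). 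Your Step 1 simply asserts this identification.

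In Step 2 the claimed closed inequality $|(\partial_t-\Delta)X|\le C(|X|+|\nabla X|)$ for the full prolonged tuple is not attainable: the differences of the $3$-forms, metrics and connections carry no useful Laplacian structure and satisfy only ODE-type inequalities, which is precisely why the argument (already in Kotschwar--Wang) uses a coupled ODE--PDE system, with one Carleman estimate for the parabolic block $(V,S,Q,W)$ and a separate one for the ODE block $(\gamma,\mu,A,B,F)$. In Step 3, no preliminary super-polynomial spatial decay obtained by spatial unique continuation is needed or used; the $O(r^{-2})$ decay coming from the construction suffices as input, and the quadratic-exponential spatial decay required to run the $\exp(\alpha(\tau_0-\tau)h^{2-\delta}+h^2)$-weighted estimate is itself produced by a second, Gaussian-weighted Carleman inequality (Proposition \ref{proposition exponential decay}). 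Finally, there is a Laplacian-flow-specific obstruction you do not anticipate: the shrinker identity degrades to $R+|\nabla f|^2-f=O(\log r)$, which injects a logarithmically growing term into the second Carleman estimate (Proposition \ref{proposition second Carleman estimate PDE}) and forces a new absorption scheme (taking the Carleman exponent of size $\log\rho$ on annuli and controlling the resulting $\exp(N(\log\rho)^2)$ growth); without addressing this, the decay step of your plan does not close.
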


We briefly sketch the proof of this theorem. Broadly speaking, our proof extends the methodology employed by Kotschwar--Wang \cite{kotschwarwang2015} for Ricci shrinkers to the setting of Laplacian flow. Kotschwar--Wang reduce the question of uniqueness of AC shrinkers in Ricci flow to a parabolic backward uniqueness problem. Starting at any AC shrinking end, there is an associated Ricci flow which converges to the asymptotic cone in finite time. The goal then is to prove backward uniqueness for Ricci flows which converge to the same cone in finite time. However, the difference of two AC shrinking ends with the same asymptotic cone does not satisfy a strictly parabolic equation, and due to the inapplicability of DeTurck's trick (see~\cite[\S 1]{KotschwarBackwardUniqueness10}), there is no obvious way to break the gauge invariance of the problem. To 
overcome this problem Kotschwar--Wang construct an ODE-PDE system for the evolution of the differences of appropriate geometric quantities and prove that backward uniqueness holds for this ODE-PDE system~\cite[\S 3]{kotschwarwang2015}. They prove the parabolic backward uniqueness property for this system using Carleman estimates, as pioneered in the work of Escauriaza--Seregin--{\v{S}}ver{\'a}k on backward uniqueness results for the heat equation~\cite{EscauriazaSereginSverak}. 

However, the application of these methods in our setting is far from straightforward due to several significant differences between Laplacian flow and Ricci flow. The starting point for the proof is the fact that each $G_2$-structure $\vp$ induces a Riemannian metric $g_\vp$ on $M$, which then evolves along with $\vp$ under Laplacian flow. Indeed, we might first hope that the techniques of \cite{kotschwarwang2015} would apply to these underlying metrics after comparing the Ricci flow equation to the evolution of the metric under Laplacian flow:
\begin{equation}\label{equation evolution metrics}
         \frac{\dd g_{ij}}{\dd t} = -2 \Ric_{ij} - \frac{2}{3}|T|^2 g_{ij}- 4 T_i^{\;  l}T_{lj},
\end{equation}
where $T = T_{ij}$ is the torsion $2$-form of the closed $G_2$-structure (see \eqref{equation nabla phi and torsion} for its definition). The torsion terms in \eqref{equation evolution metrics} are second order in the metric, scale like curvature, and are not ``small" compared to the Ricci curvature. 

The additional torsion terms can have a dramatic influence on the geometry of Laplacian shrinkers. For instance, there exist examples of complete gradient shrinking Laplacian solitons due to Fowdar which have exponential volume growth and constant negative scalar curvature~\cite{Fowdar:S1:inv:Laplacian:flow}, starkly contrasting with Ricci shrinkers which have nonnegative scalar curvature and at most Euclidean volume growth~\cite{CaoZhu:JDG2010}. In fact, for a closed $G_2$-structure, the scalar curvature $R$ satisfies $R = -|T|^2$, meaning that we are always working in the setting of nonpositive scalar curvature. In the AC setting, the torsion and hence the scalar curvature are both asymptotically small. Intuitively, therefore, the AC condition ameliorates the negative scalar curvature of closed $G_2$-structures, thus somewhat constraining the geometry of AC 
Laplacian shrinkers. 

The first major departure from the approach of Kotschwar--Wang, then, is that even in the AC setting, the shrinker potential $f$ satisfies substantially weaker properties than in Ricci flow. More specifically, in the analysis of gradient Ricci shrinkers a key role is played by the following identity for the shrinker potential $f$
\[
R + \abs{\nabla f}^2 - f=0.
\]
In the AC setting, the Laplacian flow analogue  of this identity (see Lemma \ref{lemma fundamental shrinker identity}) contains an 
additional term with logarithmic growth; this turns out to change significantly the nature of the backward uniqueness problem for the ODE-PDE system that we must solve. The large error terms resulting from this phenomenon are ultimately consolidated in an error term growing logarithmically in the radial variable and lead to a necessarily much weaker Carleman estimate than found in Kotschwar--Wang (see Proposition~\ref{proposition second Carleman estimate PDE}). This requires a new approach on the PDE level to the proof of backward uniqueness, involving a careful subdivision of the domain and judicious absorption of ``bad terms" (see Proposition \ref{proposition exponential decay}). In Appendix \ref{section Appendix}, we showcase how these techniques can be used to prove the following extension of the main theorem of \cite{EscauriazaSereginSverak}:
\begin{namedtheorem}[Theorem A.1]
Let $Q_{R,T}$ be the parabolic domain $(\bR^n \setminus B_R) \times [0, T]$, and let $u: Q_{R,T} \rightarrow \bR$ satisfy
\begin{equation}
    |\De u + \dd_t u| \le M|\na u| + (N|x|^{\de} + M)|u|\;\; \text{ and }\;\; |u(x,t)| \le Me^{M|x|^2},
\end{equation}
for constants $M, T, R, C, D >0$ and $0< \de < 1/2$. Then, if $u(x,0) \equiv 0$ in $\bR^n \setminus B_R$, $u$ vanishes identically in $Q_{R,T}$. 
\end{namedtheorem}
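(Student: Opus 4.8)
The plan is to adapt the two-step Carleman-estimate strategy of Escauriaza--Seregin--{\v{S}}ver{\'a}k, the modifications being forced by the exterior domain and, above all, by the unbounded zeroth-order coefficient $N|x|^\delta$. First, several reductions. Replacing $t$ by $T - t$ turns $u$ into an approximate solution of the forward heat equation vanishing at the final time, so the assertion is genuine backward uniqueness; splitting $[0,T]$ into short subintervals, we may take $T$ as small as we wish; and interior parabolic estimates let us assume $|\nabla u|$ obeys a bound of the same type as $|u|$. Finally, multiplying $u$ by a fixed spatial cutoff $\eta$ that vanishes on $B_{2R}$ and equals $1$ outside $B_{3R}$ produces a function $\eta u$ defined on all of $\mathbb{R}^n \times [0,T]$ to which the differential inequality still applies, up to extra terms supported in the fixed annulus $\{2R \le |x| \le 3R\}$. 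It therefore suffices to prove $u \equiv 0$ on $(\mathbb{R}^n \setminus B_{3R}) \times [0, T']$ for some $T' \le T$ and then to re-run the argument --- now with a bounded coefficient --- on the bounded region $\{R < |x| < 3R\}$, where $u$ and its derivatives vanish on the outer boundary, to fill in all of $Q_{R,T}$.

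\emph{Step 1: super-Gaussian decay in space (cf.\ Proposition~\ref{proposition exponential decay}).} Using $u(\cdot,0) \equiv 0$ together with $|u(x,t)| \le M e^{M|x|^2}$, I would apply a Carleman inequality to $\eta u$ with a weight of the shape $e^{a|x|^2/(t+\sigma)}$: at $t = 0$ this weight equals $e^{a|x|^2/\sigma}$, which for $\sigma$ small dominates $e^{M|x|^2}$, so the right-hand side of the inequality is finite and the vanishing at $t = 0$ annihilates the initial boundary term. Letting $\sigma \downarrow 0$ and iterating in $a$ should give $\sup_{t \in [0,T']}\int_{|x| \ge 3R} e^{a|x|^2}|u(x,t)|^2\,dx < \infty$ for every $a > 0$, i.e.\ $u$ decays faster than any Gaussian. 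This is the step where the domain has to be subdivided: after inserting the weight and integrating, the perturbation $(N|x|^\delta + M)|u|$ is not absorbed globally by the gain of the estimate, so one decomposes $\{|x| \ge 3R\}$ into a large finite ball, on which $N|x|^\delta$ is simply bounded and harmless, and its exterior, on which the term $(N|x|^\delta + M)|u|$ can be absorbed into the Carleman gain precisely because $\delta < 1/2$, at the cost of taking $a$ and the Carleman parameter large. The estimate obtained this way is weaker than in the bounded-coefficient setting, losing at worst a logarithmic factor in $|x|$.

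\emph{Step 2: backward uniqueness from the decay (cf.\ Proposition~\ref{proposition second Carleman estimate PDE}).} With $u$ and $\nabla u$ now decaying faster than any Gaussian, all boundary terms at spatial infinity in the ensuing integrations by parts vanish, and I would apply a second Carleman inequality on the full exterior region with a weight $e^{\phi}$ that is (log-)convex in $t$ and Gaussian in $x$ --- concretely $\phi = \theta(t)|x|^2$ with $\theta$ convex and decreasing --- arranged so that $\phi$ is small near $|x| = 3R$. Then the annular error terms from the cutoff are dominated by the bulk, the $t = 0$ boundary term vanishes since $u(\cdot,0) \equiv 0$, and $(N|x|^\delta + M)|u|$ is again absorbed using $\delta < 1/2$ exactly as in Step 1; sending the Carleman parameter to infinity forces $u \equiv 0$ on $(\mathbb{R}^n \setminus B_{3R}) \times [0,T']$, which, with the bounded-region step above, completes the proof.

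The principal obstacle is Step 1. In Escauriaza--Seregin--{\v{S}}ver{\'a}k the zeroth-order coefficient is bounded and is swallowed by the Carleman gain in one stroke; here $N|x|^\delta$ genuinely competes with the Gaussian weight, and making the absorption work requires balancing, simultaneously, the radius at which the exterior region is cut, the Carleman parameters, and the target decay exponent $a$, while also checking that the cross terms generated by the cutoffs are of strictly lower order than what the weakened estimate controls. This is precisely the ``careful subdivision of the domain and judicious absorption of bad terms'' needed in the body of the paper to handle the logarithmically growing error in the shrinker identity of Lemma~\ref{lemma fundamental shrinker identity}; Theorem~A.1 is the cleanest setting in which to exhibit it, and $\delta < 1/2$ is exactly the threshold below which $N|x|^\delta$ stays subcritical for the estimates available.
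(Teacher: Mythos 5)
Your high-level plan (time-reversal reductions, a first Carleman step yielding super-Gaussian spatial decay from $u(\cdot,0)\equiv 0$, a second Carleman step with quadratic-exponential weights to force vanishing outside a compact set, then a final pass with effectively bounded coefficients) is the same skeleton as the paper's appendix. However, the mechanism you propose for the one genuinely new difficulty --- the unbounded coefficient $N|x|^{\delta}$ --- does not work as described, and that mechanism is the content of the theorem. You claim that on the exterior of a large ball the term $(N|x|^{\delta}+M)|u|$ ``can be absorbed into the Carleman gain precisely because $\delta<1/2$, at the cost of taking $a$ and the Carleman parameter large.'' But the gain in these Carleman inequalities is a constant (at best $\sqrt{\alpha}$, uniform in $x$), while $N|x|^{\delta}$ is unbounded on any exterior region; no choice of fixed parameters absorbs it there, for any $\delta>0$. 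The device that actually works, and that the paper uses, is the opposite decomposition: one works on annuli (or cutoff regions) whose \emph{outer radius is coupled to the Carleman parameter} --- $k_1\sim\rho^{2\delta}$ in the decay step (Proposition \ref{proposition exponential decay scalar linear}, where $\xi$ is set equal to $8\rho$ rather than sent to infinity) and outer cutoff radius $\xi\sim\alpha^{1/(2\delta)}$ in the vanishing step --- so that on the working region $N|x|^{\delta}\lesssim\sqrt{k}$ resp.\ $\sqrt{\alpha}$ and can be absorbed, and one then must verify that the resulting losses (factors like $(k_1!)^2\sim e^{C\rho^{2\delta}\log\rho}$, or boundary terms on the moving outer annulus) are still dominated by the Gaussian decay already in hand. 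This requires a Carleman inequality that retains the $\sqrt{\alpha}$ coefficient on the zeroth-order term (which is why the paper uses the Kotschwar--Wang form rather than the Escauriaza--Seregin--\v{S}ver\'ak inequality (1.4), where that gain is discarded); your proposal never identifies this requirement.

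A second, related gap is that you misplace where $\delta<1/2$ enters. The exponential-decay step goes through for all $\delta\in(0,1)$ (as the paper remarks after Proposition \ref{proposition exponential decay scalar linear}); the threshold $\delta<1/2$ is needed only in the vanishing step, and for a specific reason: with the weight $\exp\big(\alpha(T_0-t)|x|^{2-\eta}+|x|^2\big)$ of Proposition \ref{proposition first carleman estimate scalar} and outer cutoff at $|x|\sim\alpha^{1/(2\delta)}$, one must choose $\eta$ with $2\delta<\eta<1$ so that on that outer annulus the weight is bounded by $e^{4|x|^2}$ and hence controlled by the fixed Gaussian-weighted $L^2$ bound produced in the first step; the interval $(2\delta,1)$ is nonempty exactly when $\delta<1/2$. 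Your purely quadratic weight $\theta(t)|x|^2$ does not admit this balance, and your assertion that $\delta<1/2$ is ``exactly the threshold'' is stated without the computation that produces it. Finally, a minor point: once $u$ is shown to vanish outside a large ball, the cleanest conclusion is to note that $N|x|^{\delta}$ is bounded on the support of $u$ and apply the original Escauriaza--Seregin--\v{S}ver\'ak theorem on the exterior domain $\bR^n\setminus B_R$ directly (no lateral boundary conditions are needed), rather than invoking a separate unique-continuation argument on the bounded annulus from vanishing Cauchy data on its outer boundary, which is a different and unproven step as you phrase it.
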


The second major complication, beyond the differences between Ricci and Laplacian flow on the metric level, is the fact that the objects of Laplacian flow are closed nondegenerate $3$-forms and that the induced \textit{metric} structure contains strictly less information than the corresponding $3$-form. The point at which this becomes relevant is in the setup of the parabolic backward uniqueness problem in Section \ref{section setting up the parabolic problem}. In this section, we construct Laplacian flows that interpolate between an AC shrinker $\vp$ and its conical limit $\vp_C$. 

However, the flows as constructed converge \textit{a priori} only to \textit{some} closed $G_2$-cone $\vp_D$ and not necessarily to the common limit $\vp_C$ given in Theorem \ref{theorem main}. Thus, it is necessary to show that $\vp_C = \vp_D$, in order that the limit $\vp_1(t) - \vp_2(t)$ vanishes as $t \rightarrow 0$ and the backward uniqueness problem be well-defined. Kotschwar--Wang used Cheeger--Colding theory for the limits of metric spaces in order to identify the two metric cones associated to $g_D$ and $g_C$. As we are working at the level of the $3$-form, we cannot rely on this theory to make this identification. Ultimately, we overcome this obstacle by comparing the $C^k$-topologies of convergence generated by $g_D$ and $g_C$ for sections of $\La^3(TM)$ and showing that the rescaling limit of $\vp$ is the same in both of these topologies. Furthermore, we should point out that our construction is not limited to the case of 3-forms: it generalizes the reparametrization procedure in \cite{kotschwarwang2015} to flows of an $(r,s)$-tensor with the induced bundle metric.

After proving Theorem \ref{theorem main}, we also obtain a number of corollaries. Corollary \ref{corollary main} is a global statement which follows from Theorem \ref{theorem main} after using an argument for global extension of isometries~\cite[Theorem 5.2]{KotschwarWangIsometries}.

\begin{cor}\label{corollary main}
Let $(M_1, \vp_1, f_1)$ and $(M_2, \vp_2, f_2)$ be two complete gradient Laplacian shrinkers asymptotic to the closed $G_2$-cone $(\cC^\Sg, \vp_C)$ along the ends $V_1 \subset M_1$ and $V_2 \subset M_2$, respectively. Suppose that for some $x_0 \in V_1$ and $y_0  \in V_2$, the homomorphisms $\pi_1(V_1, x_0)\rightarrow \pi_1(M_1, x_0)$ and $\pi_1(V_2, y_0)\rightarrow \pi_1(M_2, y_0)$ induced by inclusion are surjective.

Then, there exists a diffeomorphism $\Psi: M_1 \to M_2$ such that $\Psi^* \vp_2 = \vp_1$.
\end{cor}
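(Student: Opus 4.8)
The plan is to bootstrap from the local statement of Theorem~\ref{theorem main} on ends to a global diffeomorphism of the manifolds, following the strategy of \cite[Theorem 5.2]{KotschwarWangIsometries}. First, I would apply Theorem~\ref{theorem main} to obtain ends $W_1 \subset V_1$ and $W_2 \subset V_2$ together with a diffeomorphism $\Psi_0: W_1 \to W_2$ satisfying $\Psi_0^* \vp_2 = \vp_1$; since $\vp_i$ determines $g_{\vp_i}$, this $\Psi_0$ is in particular a Riemannian isometry $(W_1, g_{\vp_1}) \to (W_2, g_{\vp_2})$. Both $(M_i, g_{\vp_i})$ are complete, real-analytic Riemannian manifolds: real-analyticity holds because gradient Laplacian shrinkers satisfy an elliptic system (the soliton equation, in an appropriate gauge) with analytic coefficients, analogously to the Ricci shrinker case, so $g_{\vp_i}$ is real-analytic in harmonic or shrinker-adapted coordinates, and one checks $\vp_i$ is real-analytic as well. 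The torsion-free three-form is not in play here, but the key point is that $\vp_i$ is a parallel-like analytic tensor field for the analytic connection structure.

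The heart of the argument is analytic continuation of the local isometry/three-form equivalence. By the classical result on extending local isometries of complete real-analytic Riemannian manifolds (as used in \cite{KotschwarWangIsometries}), the germ of $\Psi_0$ extends to an isometric immersion along any path; the hypothesis that $\pi_1(V_i) \to \pi_1(M_i)$ is surjective (for $i=1,2$) is exactly what is needed to guarantee that the multivalued continuation is in fact well-defined as a single-valued map on all of $M_1$, because any loop in $M_i$ is homotopic to one lying in the end $V_i$, where $\Psi_0$ (or its inverse) is already defined and hence the holonomy of the continuation around such a loop is trivial. This yields a global isometry $\Psi: (M_1, g_{\vp_1}) \to (M_2, g_{\vp_2})$ extending $\Psi_0$ near infinity. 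It remains to upgrade the Riemannian isometry to an equivalence of $G_2$-structures, i.e.\ $\Psi^* \vp_2 = \vp_1$ globally. The two three-forms $\Psi^* \vp_2$ and $\vp_1$ are both real-analytic, both induce the metric $g_{\vp_1}$, and they agree on the open set $W_1$; a real-analytic tensor identity that holds on a nonempty open subset of a connected analytic manifold holds everywhere, so $\Psi^* \vp_2 = \vp_1$ on all of $M_1$.

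The main obstacle I anticipate is making the real-analyticity rigorous and intrinsic: one must produce a gauge (a choice of local coordinates or a modified soliton equation, e.g.\ via a DeTurck-type vector field adapted to $\nabla f$) in which the shrinker PDE becomes elliptic with analytic coefficients, and then verify that not only the metric but the full three-form $\vp$ inherits real-analyticity in that gauge. In the Ricci case this is standard (Ricci solitons are analytic in harmonic coordinates); here the torsion terms in \eqref{equation evolution metrics} and the coupling between $\vp$ and $g_\vp$ require care, though the ellipticity of the closed $G_2$ soliton system in a suitable gauge should make this routine. A secondary point is the careful topological argument that surjectivity of $\pi_1(V_i) \to \pi_1(M_i)$ suffices to kill the monodromy of analytic continuation on both sides simultaneously — this is precisely the content imported from \cite[Theorem 5.2]{KotschwarWangIsometries}, and I would invoke it essentially verbatim, checking only that its hypotheses (completeness, real-analyticity, and the germ of an isometry near infinity) are met in our setting.
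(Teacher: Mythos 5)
Your proposal is correct and follows essentially the same route as the paper: apply Theorem~\ref{theorem main} to get $\Psi_0$ on ends, observe it is a Riemannian isometry, extend it globally using \cite[Theorem 5.2]{KotschwarWangIsometries} via the $\pi_1$-surjectivity hypothesis, and then propagate $\Psi^*\vp_2=\vp_1$ from the end to all of $M_1$ by real-analytic continuation. Two small remarks: the real analyticity you flag as the main obstacle is already available off the shelf --- \cite[Cor.\ 1.2]{LotayWeiRealAnalyticity} gives real analyticity of Laplacian solitons (and the isometry $\Psi$ is then automatically real analytic), so no gauge-fixing argument is needed --- and since $\Psi_0$ is defined only on the smaller ends $W_i\subset V_i$, you must transfer the surjectivity of $\pi_1(V_i)\to\pi_1(M_i)$ to $\pi_1(W_i)\to\pi_1(M_i)$, which the paper does by noting that $W_i\hookrightarrow V_i$ is a $\pi_1$-isomorphism because both are cone-type ends $\Sg\times(\text{interval})$.
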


Theorem \ref{theorem main} is then used to prove Theorem \ref{theorem isometries}, which says that ends of AC shrinkers have the same symmetries as their asymptotic cones. For a closed $G_2$-structure $(M, \vp)$, we define $\Aut(M, \vp)$ to be all the diffeomorphisms $F: M \to M$ such that $F^* \vp = \vp$. The group $\Aut(M, \vp)$ describes the symmetries of the $G_2$-structure. 

\begin{thm}\label{theorem isometries}
Let $(M, \vp, f)$ be a gradient shrinking Laplacian soliton asymptotic to the closed $G_2$-cone $(\cC^\Sg, \vp_C)$ along the end $V \subset (M, \vp)$. Then, there is $r > 0$, an end $W$ of $(M, \vp)$, and a diffeomorphism $F: \cC_r^{\Sg} = \Sg \times (r,\infty) \to W$ such that $\ga \mapsto F \circ \ga \circ F^{-1}$ is an isomorphism from $\Aut(\cC^\Sg, \vp_C)$ to $\Aut(W, \vp)$.
\end{thm}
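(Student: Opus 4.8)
The plan is to use Theorem \ref{theorem main} as the engine: it provides uniqueness of AC shrinker ends asymptotic to a given cone, and every symmetry $\gamma \in \Aut(\cC^\Sg, \vp_C)$ produces, via the asymptotic identification, a ``twisted'' copy of the shrinker end $V$ that is again asymptotic to $(\cC^\Sg, \vp_C)$; rigidity then forces this twisted copy to agree with the original up to a diffeomorphism of ends, and that diffeomorphism is the sought-after automorphism of $W$. More precisely, fix the diffeomorphism $\Phi \colon \cC_r^\Sg \to V$ realizing the asymptotic identification of $\vp$ with $\vp_C$ (so that $\Phi^*\vp - \vp_C$ decays at the appropriate rate). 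Given $\gamma \in \Aut(\cC^\Sg,\vp_C)$, we may (after restricting $r$) view $\gamma$ as a cone automorphism, and $(M, \vp, f)$ together with the identification $\Phi \circ \gamma$ is again a gradient Laplacian shrinker asymptotic to $(\cC^\Sg, \vp_C)$. Applying Theorem \ref{theorem main} to the pair consisting of $(M,\vp,f)$ with identification $\Phi$ and $(M,\vp,f)$ with identification $\Phi\circ\gamma$ yields ends and a diffeomorphism $\Psi_\gamma$ between them with $\Psi_\gamma^*\vp = \vp$, i.e.\ $\Psi_\gamma \in \Aut(W,\vp)$ for a suitable common end $W$.

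Next I would check that $\gamma \mapsto \Psi_\gamma$ is (up to conjugation by $F$) a well-defined homomorphism and a bijection. Here $F$ should be taken to be $\Phi$ restricted to a sufficiently small subcone $\cC_r^\Sg$ (shrinking $r$ as needed so that everything is defined on a genuine end $W$). The key technical point is an \emph{asymptotic uniqueness of the identifying diffeomorphism}: if two diffeomorphisms of cone ends both intertwine $\vp_C$ with $\vp$ up to decaying error, they must agree (on a smaller end) up to an element of $\Aut(\cC^\Sg,\vp_C)$ — this is essentially the same rigidity Theorem \ref{theorem main} already encodes, applied with $\gamma = \mathrm{id}$, and it tells us that $\Psi_\gamma$ is the \emph{unique} element of $\Aut(W,\vp)$ whose conjugate $F^{-1}\circ \Psi_\gamma \circ F$ is asymptotic to $\gamma$ near infinity. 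Uniqueness of $\Psi_\gamma$ with this asymptotic makes the assignment well-defined; functoriality ($\Psi_{\gamma_1\gamma_2}$ is asymptotic to $\gamma_1\gamma_2$, hence equals $\Psi_{\gamma_1}\circ\Psi_{\gamma_2}$ by uniqueness) then gives the homomorphism property, and injectivity follows because $\Psi_\gamma = \mathrm{id}$ forces $\gamma$ to be asymptotic to $\mathrm{id}$ on the cone, hence equal to $\mathrm{id}$ (a cone automorphism fixing the conical structure and asymptotic to the identity is the identity, being determined by its restriction to any link-times-ray).

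For surjectivity, given any $\Psi \in \Aut(W,\vp)$, one transports it to the cone via $F^{-1}\circ\Psi\circ F$, a diffeomorphism of cone ends. Using the decay of $\Phi^*\vp - \vp_C$ together with $\Psi^*\vp = \vp$, one shows $F^{-1}\circ\Psi\circ F$ intertwines $\vp_C$ with a form decaying to $\vp_C$; one then argues that such a diffeomorphism is asymptotic to an honest cone automorphism $\gamma$ — this step requires controlling how $\Psi$ behaves under the radial rescalings $\rho_\lambda$ of the cone and extracting a limit $\gamma = \lim_{\lambda\to\infty}\rho_\lambda^{-1}\circ(F^{-1}\circ\Psi\circ F)\circ\rho_\lambda$ in $\Aut(\cC^\Sg,\vp_C)$, using that $\Aut(\cC^\Sg,\vp_C)$ is a (finite-dimensional) Lie group acting properly so that the limit exists. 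Then $\Psi$ and $\Psi_\gamma$ are both automorphisms of $W$ asymptotic to $\gamma$, so by the uniqueness established above $\Psi = \Psi_\gamma$.

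The main obstacle I expect is precisely this last extraction: showing that an arbitrary $\vp$-automorphism of the shrinker end is genuinely asymptotic, in a rescaling-limit sense, to a \emph{cone} automorphism, rather than merely being a diffeomorphism that happens to nearly preserve $\vp_C$. This is where the decay rate in Definition \ref{definition asymptotically conical} must be used quantitatively — one needs that the rescaled diffeomorphisms $\rho_\lambda^{-1}\circ(F^{-1}\circ\Psi\circ F)\circ\rho_\lambda$ converge in $C^\infty_{\mathrm{loc}}$ on $\Sg\times(0,\infty)$, with the limit commuting with dilations and preserving $\vp_C$, hence lying in $\Aut(\cC^\Sg,\vp_C)$; the uniform estimates needed for this convergence come from combining the soliton/elliptic regularity for $\vp$ with the asymptotic decay. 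Once that compactness-and-limit step is in hand, the homomorphism, injectivity, and the matching $\Psi=\Psi_\gamma$ are all formal consequences of the uniqueness statement of Theorem \ref{theorem main}, and the rescaled reparametrization machinery already developed in Section \ref{section setting up the parabolic problem} should supply exactly the estimates required.
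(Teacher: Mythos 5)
Your transport of cone automorphisms to shrinker automorphisms via Theorem \ref{theorem main} matches the easy half of the paper's argument, but the load-bearing step of your proposal --- the ``asymptotic uniqueness of the identifying diffeomorphism,'' i.e.\ that an element of $\Aut(W,\vp)$ asymptotic to the identity must \emph{be} the identity --- does not follow from Theorem \ref{theorem main} ``applied with $\ga=\mathrm{id}$.'' That theorem is purely an existence statement (some diffeomorphism intertwining the two structures exists; with $\ga=\mathrm{id}$ the identity already witnesses it), and it says nothing about which diffeomorphisms can intertwine, nor anything about the asymptotic behaviour of the one it produces. Since your well-definedness, homomorphism property, injectivity, and the final identification $\Psi=\Psi_\ga$ in the surjectivity step all rest on this uniqueness, the proposal as written has a genuine gap. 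The paper closes exactly this gap with soliton-specific structure rather than rigidity of Theorem \ref{theorem main}: it first shows (Lemma \ref{lemma flat or preserved potential function}) that any $\vp$-automorphism of the end either forces flatness or satisfies $F^*(\na f)=\na f$, hence commutes with the self-similar reparametrizations $\Psi_\tau$ of Proposition \ref{proposition backward flow reparametrization}; then $F^*\vp_\tau=\vp_\tau$ for all $\tau$, and letting $\tau\to 0$ gives $F^*\vp_C=\vp_C$ \emph{exactly}. This yields the stronger statement (Theorem \ref{theorem isometries dynamically asymptotic}) that $\Aut(\cC_r^\Sg,\vp)=\Aut(\cC_r^\Sg,\vp_C)$ as subsets of $\Diff(\cC_r^\Sg)$ in the dynamical parametrization, so injectivity and surjectivity are automatic and no rescaling-limit extraction $\rho_\la^{-1}\circ(F^{-1}\circ\Psi\circ F)\circ\rho_\la$ is needed --- the step you yourself flag as the main obstacle and do not carry out.

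Two further ingredients you omit are needed even for the direction you do sketch. First, backward uniqueness only gives $F^*\vp=\vp$ on a \emph{smaller} end $\cC^\Sg_a$ with $a$ possibly much larger than $r$ (and $a$ depending on the automorphism), so to land all automorphisms on one fixed end you need unique continuation; the paper supplies this through real analyticity --- Lotay--Wei's analyticity of Laplacian flow, the common analytic structure of $\vp$ and $\vp_C$ (Corollary \ref{corollary common analytic structure shrinker}), and Lemma \ref{lemma extending local isometry using real analyticity} --- none of which appears in your outline. Second, the relation between the diffeomorphism produced by Theorem \ref{theorem main} and the prescribed $\ga$ at infinity is not part of that theorem's statement; establishing it requires opening the dynamical-reparametrization proof (as you partly acknowledge), and also handling the degenerate torsion-free/flat cases that the paper treats separately. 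Without the potential-preservation lemma, the analyticity argument, and a genuine proof of the asymptotic-uniqueness claim, the proposed construction of $\ga\mapsto\Psi_\ga$ does not yet yield the claimed isomorphism.
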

Here, $\Aut(\cC^\Sg, \vp_C)$ refers to the $G_2$-automorphisms fixing the vertex of $\cC^\Sg$. The Ricci flow analogue of Theorem \ref{theorem isometries}, which establishes an isomorphism between isometry groups, was proven by Kotschwar--Wang in~\cite{KotschwarWangIsometries} and relies on their uniqueness result for AC Ricci shrinkers. Given Theorem \ref{theorem main}, we may adapt the argument of~\cite{KotschwarWangIsometries} (see Section \ref{section automorphisms of shrinkers}). The argument involves extending known results on local real analyticity of Laplacian flow~\cite{LotayWeiRealAnalyticity} to show that the $\vp_C$ and $\vp_D$ are analytic with respect to a common real analytic structure. As a consequence, despite the fact that the asymptotic cone of a shrinker does not itself satisfy the shrinker equation, we deduce the following (see Corollary \ref{corollary common analytic structure shrinker},  \emph{cf.}~\cite[Cor.\ 3.2]{KotschwarWangIsometries}):
\textit{The asymptotic cone of an AC Laplacian shrinker must have a real analytic cross-section.} 

As a corollary of Theorem \ref{theorem isometries}, we find that under a mild topological assumption, the symmetries can be extended to the entire shrinker, not just an end.

\begin{cor}\label{corollary complete isometries}
Let $(M, \vp, f)$ be a connected, complete gradient shrinking Laplacian soliton asymptotic to the closed $G_2$-cone $(\cC^\Sg, \vp_C)$ along the end $V \subset (M, \vp)$. If for some $x_0 \in V$, the homomorphism $\pi_1(V, x_0) \rightarrow \pi_1(M, x_0)$ induced by inclusion is surjective, then $\Aut(\cC^\Sg, \vp_C)$ embeds into $\Aut(M, \vp)$.
\end{cor}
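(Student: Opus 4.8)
The plan is to combine Theorem~\ref{theorem isometries} with an extension‑of‑local‑isometries argument, exactly as Corollary~\ref{corollary main} was deduced from Theorem~\ref{theorem main} via \cite[Theorem 5.2]{KotschwarWangIsometries}, now additionally using the real analyticity of the shrinker $G_2$‑structure to pass from metric isometries to $G_2$‑automorphisms. By Theorem~\ref{theorem isometries} there are $r>0$, an end $W$ of $(M,\vp)$, and a diffeomorphism $F\colon\cC_r^{\Sg}\to W$ such that $\gamma\mapsto\gamma_W:=F\circ\gamma\circ F^{-1}$ is an isomorphism $\Aut(\cC^\Sg,\vp_C)\xrightarrow{\ \sim\ }\Aut(W,\vp)$. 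Since any vertex‑fixing automorphism of the cone is an isometry of $g_C$ and hence preserves each dilation‑invariant subset $\Sg\times(r',\infty)$, the map $\gamma_W$ preserves $F(\cC_{r'}^{\Sg})$ for every $r'>r$; enlarging $r$, I may therefore assume $W\subset V$, and then the inclusion of $W$ as a neighborhood of infinity in $V$ induces an isomorphism on $\pi_1$, so the composite $\pi_1(W,x_0)\to\pi_1(V,x_0)\to\pi_1(M,x_0)$ is still surjective by hypothesis. Each $\gamma_W$ satisfies $\gamma_W^*\vp=\vp$ on $W$, hence also $\gamma_W^*g_\vp=g_\vp$ on $W$, so $\gamma_W$ is an isometry of $(W,g_\vp)$.

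Next I would invoke real analyticity. As in the proof of Theorem~\ref{theorem isometries} and the discussion preceding Corollary~\ref{corollary common analytic structure shrinker} (cf.\ \cite{LotayWeiRealAnalyticity}), the $G_2$‑structure of a gradient shrinking Laplacian soliton is real analytic: there is a real analytic structure on $M$, compatible with the given smooth one, in which $\vp$---and therefore $g_\vp$---is real analytic. Since $(M,\vp,f)$ is complete, $(M,g_\vp)$ is a complete, connected, real analytic Riemannian manifold and $\gamma_W$ is a real analytic local isometry onto its image. By the extension theorem \cite[Theorem 5.2]{KotschwarWangIsometries}---the same tool already used for Corollary~\ref{corollary main}---the surjectivity of $\pi_1(W,x_0)\to\pi_1(M,x_0)$ implies that $\gamma_W$ extends to a unique global isometry $\tilde\gamma\colon M\to M$: one analytically continues $\gamma_W$ along paths based at $x_0$, geodesic completeness guaranteeing the continuation exists along every path and the $\pi_1$‑hypothesis (together with the fact that $\gamma_W$ is honestly defined on all of $W$) killing the monodromy.

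It then remains to upgrade $\tilde\gamma$ from a $g_\vp$‑isometry to a genuine $G_2$‑automorphism; this is the step at which the $3$‑form, not merely its induced metric, enters, and it is precisely here that real analyticity is indispensable, since a $g_\vp$‑isometry need not preserve $\vp$. Because $\tilde\gamma$ is an isometry between real analytic Riemannian manifolds with real analytic metrics, $\tilde\gamma$ is itself real analytic, so $\tilde\gamma^*\vp$ is a real analytic section of $\La^3(T^*M)$. On the nonempty open set $W$ one has $\tilde\gamma|_W=\gamma_W\in\Aut(W,\vp)$, so $\tilde\gamma^*\vp=\vp$ there; by the identity principle for real analytic tensor fields on the connected manifold $M$, $\tilde\gamma^*\vp=\vp$ everywhere, i.e.\ $\tilde\gamma\in\Aut(M,\vp)$.

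Finally, I would check that $\gamma\mapsto\tilde\gamma$ is a group homomorphism $\Aut(\cC^\Sg,\vp_C)\to\Aut(M,\vp)$: conjugation by $F$ gives $(\gamma_1\gamma_2)_W=\gamma_{1,W}\circ\gamma_{2,W}$, and by uniqueness of analytic continuation its global extension is $\tilde\gamma_1\circ\tilde\gamma_2$, while the extension of $\mathrm{id}_W$ is $\mathrm{id}_M$. It is injective, since $\tilde\gamma=\mathrm{id}_M$ forces $\gamma_W=\mathrm{id}_W$ and hence $\gamma=\mathrm{id}$ through the isomorphism of Theorem~\ref{theorem isometries}. This yields the desired embedding $\Aut(\cC^\Sg,\vp_C)\hookrightarrow\Aut(M,\vp)$. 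I expect the two points needing the most care to be the topological bookkeeping transferring the $\pi_1$‑hypothesis from $V$ to the sub‑end $W$, and---more essentially---the verification that the extended isometry really preserves $\vp$, which is the place where the repeatedly emphasized gap between a $G_2$‑structure and its underlying metric must be bridged by analyticity rather than by metric‑space techniques.
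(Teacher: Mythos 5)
Your proposal is correct and follows essentially the same route as the paper: invoke Theorem \ref{theorem isometries} to get the end $W$ and the isomorphism with $\Aut(W,\vp)$, transfer the $\pi_1$-surjectivity from $V$ to $W$, extend each automorphism as a global isometry via \cite[Theorem 5.2]{KotschwarWangIsometries}, and then use real analyticity of $\vp$ (Lotay--Wei) together with the identity principle on the connected, complete $M$ to conclude the extension preserves $\vp$. Your explicit verification that $\gamma\mapsto\tilde\gamma$ is an injective group homomorphism is a detail the paper leaves implicit, but it is the same argument.
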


Finally, we note that the techniques developed to prove the results of this paper are sufficiently flexible that we expect our approach to be relevant beyond Laplacian flow. Further applications of these techniques may include proving similar results for other geometric flows or finding general criteria under which a flow's shrinkers satisfy an analogous uniqueness property.

\subsection*{Acknowledgments}
The authors would like to thank the Simons Foundation for its support of their research under the Simons Collaboration on Special Holonomy in Geometry, Analysis and Physics (grant \#488620, MH and AP; grant \#724071, IK). 

The authors would also like to thank Robert Bryant, Jason Lotay, and Johannes Nordstr\"{o}m for helpful comments and discussions.

\section{Preliminaries}

\subsection{Preliminaries on \texorpdfstring{$G_2$}{G2}-structures}

Throughout this paper, $M$ will denote a connected, smooth $7$-manifold. A $G_2$-structure on $M$ is a reduction of the structure group of the frame bundle to the group $G_2\subset GL(7)$. This is a purely topological condition, equivalent to $M$ being orientable and spinnable~\cite{Gray69}. It is well-known that any $G_2$-structure can be uniquely associated to a particular $3$-form $\vp \in \Omega^3(M)$ defining the $G_2$-structure, called a $G_2$-form. Such a $3$-form must satisfy the property that for each $x \in M$, there is an isomorphism $\io: T_x M \to \bR^7$ with $\io^* \vp_0 = \vp_x,$. Here, $\vp_0$ is the $3$-form on $\bR^7 = \Im(\bO)$ given by
$$\vp_0(u,v,w) := \langle u \times v, w\rangle = \langle uv, w\rangle,$$
where $\langle \cdot, \cdot \rangle$ is the Euclidean inner product and $uv$ is multiplication of $u$ and $v$ in the octonions.

Every $G_2$-structure we consider in this paper will be closed. This means that the associated $3$-form $\vp$ is a closed form, i.e.\ $d\vp=0$. If $\vp$ is also co-closed, i.e.\ $d^* \vp = 0$, then the $G_2$-structure is said to be torsion-free. This is equivalent to the $G_2$-structure having zero torsion, in the sense of the torsion of the $G$-structure~\cite{FernandezGray82}.    

For any $G_2$-structure $\vp$ on $M$, $g = g_{\vp}$ is a metric and an orientation canonically associated to $\vp$ by the relation
$$g_{\vp}(u,v) \Vol_{g_{\vp}} = \io_u \vp \wedge \io_v \vp \wedge \vp,$$
which holds for any vector fields $u,v$ on $M$. We will often suppress the subscript notation in $g_{\vp}$ unless we need to specify the dependence of the metric on the $G_2$-structure. An important fact is that if a $G_2$-structure is torsion-free, then this is equivalent to $\Hol(M, g_{\vp})\subseteq G_2$~\cite{FernandezGray82}.

There is a Hodge star $\star_{\vp}$, using the metric and orientation associated to the $G_2$-form $\vp$. The torsion of a $G_2$-structure is given by a two-tensor $T_{ij}$ satisfying
\begin{equation}\label{equation nabla phi and torsion}\na_i \vp_{jkl} = T_{i}^{\;m}(\star_{\vp} \vp)_{mjkl}.\end{equation}
If the $G_2$-structure is closed, then $T_{ij}$ is a two-form and hence is anti-symmetric. Also, in the closed case, $T_{ij}$ is divergence-free, $g^{ij} \na_i T_{jk} = 0$, and $T_{ij}$ satisfies the crucial relation
\begin{equation}\label{equation R = -|T|^2}
    R = -|T|^2,
\end{equation}
where $R$ is the scalar curvature of the metric $g_{\vp}$. See~\cite{Bryant03Remarks, KarigiannisFlows09, KarigiannisIntro20, LotayWeiLaplacianFlow} for additional details on these identities.

\subsection{Scaling of geometric quantities}

The scaling behavior of $G_2$-structures plays an important role throughout this paper, so for concreteness, we state the following basic result describing this behavior.
\begin{lem}\label{lemma scaling of metric}
If $\vp$ is a $G_2$-structure on $M$, $\Psi:M \rightarrow M$ is a diffeomorphism, and $\lambda > 0$ is a constant, consider the pulled-back and rescaled 3-form $\lambda \Psi^* \vp$, which is also a $G_2$-structure on $M$. Then, the corresponding metric $g_{\lambda \Psi^* \vp}$ is given by $\lambda^\frac{2}{3}\Psi^* g_{\vp}$. Moreover, 
the following identities are satisfied:
\begin{enumerate}
\item \textit{(Hodge Laplacian)}
\begin{equation}\label{equation scaling hodge laplacian}
    \Delta_{\la \Psi^* \vp}\la \Psi^* \vp = \la^{\frac{1}{3}}  \Psi^* (\Delta_{\vp} \vp)
\end{equation}
\item \textit{(Hodge Dual)}
\begin{equation}
    \star_{\la \Psi^* \vp} (\la \Psi^* \vp) = \la^{\frac{4}{3}} \Psi^* (\star_\vp \vp)
\end{equation}
\item \textit{(Torsion)}
\begin{equation}
    T_{\la \Psi^* \vp} = \la^{\frac{1}{3}} \Psi^* T_{\vp}
\end{equation}
where $T$ denotes the torsion tensor regarded as a $2$-tensor. 
\end{enumerate}
\end{lem}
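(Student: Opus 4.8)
The plan is to reduce the entire statement to a single scaling computation. Every operation appearing in the lemma --- the assignment $\vp\mapsto g_\vp$, the associated orientation, the Levi--Civita connection, the Hodge star, the Hodge Laplacian, and the torsion $2$-tensor determined by \eqref{equation nabla phi and torsion} --- is natural with respect to diffeomorphisms, and since $\lambda>0$ the canonical orientation of $\lambda\Psi^*\vp$ equals $\Psi^*$ of that of $\vp$, so all Hodge stars below are compared with matching orientations. It therefore suffices to prove each of (1)--(3) and the metric identity in the case $\Psi=\mathrm{id}$ and then apply $\Psi^*$; so I would analyze the effect of $\vp\mapsto\lambda\vp$ pointwise on $M$.

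For the metric identity I would work at a point $x\in M$: choose a linear isomorphism $\io\colon T_xM\to\bR^7$ with $\io^*\vp_0=\vp_x$, and observe that $(\lambda^{1/3}\io)^*\vp_0=\lambda\,\io^*\vp_0=(\lambda\vp)_x$, so $\lambda^{1/3}\io$ is an adapted frame for $\lambda\vp$ at $x$. Since the metric associated to a $G_2$-form is precisely the one for which adapted frames are orthonormal --- equivalently, since $\vp\mapsto g_\vp$ is equivariant under linear isomorphisms and sends $\vp_0$ to the standard inner product $\langle\cdot,\cdot\rangle$ on $\bR^7$ --- this yields $g_{\lambda\vp}\big|_x=(\lambda^{1/3}\io)^*\langle\cdot,\cdot\rangle=\lambda^{2/3}\,\io^*\langle\cdot,\cdot\rangle=\lambda^{2/3}\,g_\vp\big|_x$, i.e.\ $g_{\lambda\Psi^*\vp}=\lambda^{2/3}\Psi^*g_\vp$ once $\Psi$ is reinstated. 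I would use this characterization of $g_\vp$ rather than the $\Vol_{g_\vp}$-normalized defining relation, to avoid a circular appeal to the volume form.

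The remaining three identities then follow from the standard behaviour of the Hodge operators under a \emph{constant} conformal change $g\mapsto cg$ on an oriented Riemannian $n$-manifold: on $k$-forms one has $\star_{cg}=c^{\,n/2-k}\star_g$; the Levi--Civita connection of $cg$ coincides with that of $g$; and consequently $d^*_{cg}=c^{-1}d^*_g$ and $\Delta_{cg}=c^{-1}\Delta_g$ on forms of every degree. Taking $n=7$, $c=\lambda^{2/3}$, and using $g_{\lambda\vp}=\lambda^{2/3}g_\vp$, I get for the $3$-form $\lambda\vp$ that $\star_{\lambda\vp}(\lambda\vp)=(\lambda^{2/3})^{1/2}\star_{g_\vp}(\lambda\vp)=\lambda^{4/3}\star_\vp\vp$, which is (2); and, since $d(\lambda\vp)=0$ forces $\Delta_{\lambda\vp}(\lambda\vp)=d\,d^*_{\lambda\vp}(\lambda\vp)$, also $\Delta_{\lambda\vp}(\lambda\vp)=\lambda^{-2/3}\Delta_{g_\vp}(\lambda\vp)=\lambda^{1/3}\Delta_\vp\vp$, which is (1). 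Finally, for (3) I would substitute $\lambda\vp$ into \eqref{equation nabla phi and torsion}: the left-hand side scales by $\lambda$ because $\na$ is unchanged; on the right-hand side $\star_{\lambda\vp}(\lambda\vp)$ contributes $\lambda^{4/3}$, and rewriting the mixed tensor $T_i^{\;m}$ in terms of the $2$-tensor $T_{ij}$ introduces an inverse metric $g_{\lambda\vp}^{-1}=\lambda^{-2/3}g_\vp^{-1}$, contributing $\lambda^{-2/3}$; matching coefficients and invoking the uniqueness of the $2$-tensor satisfying \eqref{equation nabla phi and torsion} gives $T_{\lambda\vp}=\lambda^{1/3}T_\vp$ as $2$-tensors, hence $T_{\lambda\Psi^*\vp}=\lambda^{1/3}\Psi^*T_\vp$.

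I do not expect a genuine obstacle here: once organized around the constant conformal rescaling $g\mapsto\lambda^{2/3}g$ together with diffeomorphism naturality, everything is elementary bookkeeping. The two places to be careful are the pointwise metric computation --- where one should argue via equivariance of $\vp\mapsto g_\vp$ rather than the volume-normalized defining relation --- and the index placement in \eqref{equation nabla phi and torsion}, where the inverse-metric factor $\lambda^{-2/3}$ coming from passing between $T_i^{\;m}$ and $T_{ij}$ is precisely what converts the exponent $-\tfrac{1}{3}$ of the mixed tensor into the exponent $\tfrac{1}{3}$ of the $2$-tensor.
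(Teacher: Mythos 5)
Your proposal is correct and follows essentially the same route as the paper: treat $\lambda\Psi^*\vp$ via diffeomorphism naturality plus the constant conformal rescaling $g_{\lambda\vp}=\lambda^{2/3}g_\vp$ (which the paper invokes by citing the conformal-rescaling behavior of $G_2$-structures), with the standard formulas $\star_{cg}=c^{n/2-k}\star_g$ and $\Delta_{cg}=c^{-1}\Delta_g$ giving (1) and (2), and the torsion scaling deduced from \eqref{equation nabla phi and torsion} exactly as in the paper (mixed tensor scales by $\lambda^{-1/3}$, lowering an index with $\lambda^{2/3}g_\vp$ yields $\lambda^{1/3}$). Your adapted-frame equivariance argument for $g_{\lambda\vp}=\lambda^{2/3}g_\vp$ is a clean substitute for the paper's unspecified ``straightforward calculation in local coordinates,'' and the appeal to closedness in (1) is unnecessary but harmless.
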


\begin{proof}

The proofs follows from straightforward calculation in local coordinates using 
the relation between the associated metric $g_\vp$ and $\vp$. 
The result can also be seen as a special case of the behavior of a $G_2$-structure 
under a conformal rescaling, e.g as described in \cite[\S 3.1]{Karigiannis:deformations}.
We explain how the scaling of the torsion tensor $T$ (regarded as a 2-tensor) follows from the scaling behavior of the other geometric quantities. 
We can ignore the pullback $\Psi^*$ (since $\Psi$ is an isometry) and consider the torsion tensor $\tilde{T}$ corresponding to $\tilde \vp = \lambda \vp$.
Recall that, regarded as a $(1,1)$-tensor, $T$ satisfies
\[
\nabla_X \vp = T(X) \lrcorner \star_{\vp} \vp,
\]
for all smooth vector fields $X$.

Then using the fact that covariant differentiation is invariant under rescaling and the rescaling behavior for $\vp$ and $\star_{\vp} \vp$ we see that as a $(1,1)$-tensor 
the torsion is rescaled by $\lambda^{-1/3}$. So when we use the metric (which rescales by $\lambda^{2/3}$)
to regard $T$ as a 2-tensor instead it will rescale by a factor of $\lambda^{1/3}$
as claimed.

\end{proof}

\subsection{Estimates on the metric from estimates on the 3-form}

In this subsection, we get explicit estimates on the metric associated to a perturbed $G_2$-form.
\begin{prop}\label{proposition estimates on difference of metrics}
Let $\vp, \tvp$ be two $G_2$-forms on a smooth $7$-manifold $M$, and let $g, \tg$ be their corresponding induced metrics. There is an $\epsilon > 0$ such that if $|\tvp - \vp|_\vp \le \eps$, we have 
\begin{equation}\label{equation estimate on metrics}
  |\tg - g|_\vp \le  C |\tvp - \vp|_\vp,
\end{equation}
where the norm $|\cdot|_\vp$ is taken with respect to $g = g_\vp$ and $C > 0$ is an absolute constant. Alternatively, if the bound $|\tvp - \vp|_\vp \le \eps$ is not known and $|\tg - g|_{\vp} \le \ka$, then 
\begin{equation}\label{equation estimate on metrics dependent}
  |\tg - g|_\vp \le  C(1+\ka) |\tvp - \vp|_\vp,
\end{equation}
where $C>0$ is an absolute constant. 
\end{prop}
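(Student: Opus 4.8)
The plan is to treat the induced metric as a smooth function of the $G_2$-form and control it pointwise. Fix a point $x \in M$. Choosing a $g_\vp$-orthonormal frame at $x$, we may identify $T_xM$ with $\mathbb{R}^7$ so that $\vp_x = \vp_0$ is the standard $G_2$-form and $g_\vp = \langle\,\cdot\,,\cdot\,\rangle$ is the Euclidean inner product; then $|\tvp - \vp|_\vp$ at $x$ is just the Euclidean norm of the tensor $\psi := \tvp_x - \vp_0$, and $|\tg - g|_\vp$ at $x$ is the Euclidean operator norm of the symmetric matrix $\tg_x - I$. The key structural input is the defining relation $g_\eta(u,v)\,\Vol_{g_\eta} = \iota_u\eta \wedge \iota_v\eta \wedge \eta$: for a 3-form $\eta$ close to $\vp_0$, the right-hand side defines a symmetric bilinear form $B_\eta(u,v)$ (valued in $\Lambda^7 \cong \mathbb{R}$ after fixing the volume normalization coming from $\vp_0$), and one has $g_\eta = (\det B_\eta)^{-1/9} B_\eta$ in the appropriate normalization (this is exactly the conformal-rescaling formula referenced via \cite{Karigiannis:deformations}). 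The map $\eta \mapsto B_\eta$ is a fixed quadratic polynomial in the entries of $\eta$, and $\eta \mapsto g_\eta$ is therefore real-analytic on the open set of nondegenerate 3-forms near $\vp_0$, with $B_{\vp_0}$ proportional to $I$ and $g_{\vp_0} = I$.

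The first step is to record the first-order expansion: $B_\eta = B_{\vp_0} + DB_{\vp_0}(\psi) + Q(\psi)$ where $DB_{\vp_0}$ is linear and $Q$ is the (fixed) quadratic part, so $|DB_{\vp_0}(\psi)| \le C_0|\psi|$ and $|Q(\psi)| \le C_0|\psi|^2$ for an absolute constant $C_0$ depending only on $\vp_0$. Composing with the smooth normalization $B \mapsto (\det B)^{-1/9}B$, which is $C^1$ near $B_{\vp_0}$ with a bound on its derivative that is again absolute (it only depends on $\vp_0$, via $B_{\vp_0}$), we get: there is $\eps > 0$ such that if $|\psi| \le \eps$ then $B_\eta$ stays in the region where the normalization is controlled, and $|g_\eta - I| \le C|\psi|$. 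Since $x$ was arbitrary and all constants depend only on the model $\vp_0$ (hence are absolute), this is precisely \eqref{equation estimate on metrics}. The second step, for \eqref{equation estimate on metrics dependent}, is to drop the smallness hypothesis on $\psi$ but instead assume $|\tg - g|_\vp \le \ka$, i.e.\ $\tg_x - I$ has operator norm $\le \ka$. Here one cannot run the Taylor expansion blindly, but one can still exploit the explicit formula: from $\tg_x$ one recovers $\tvp_x$ only up to the $G_2$-stabilizer, yet $|\tvp_x - \vp_0|$ is comparable to the distance from $\tg_x$ to $I$ \emph{weighted by how stretched $\tg_x$ is}. Concretely, I would argue by a compactness/continuity argument: on the compact set $\{A \text{ symmetric positive definite}, \ |A - I| \le \ka\}$, for any 3-form $\eta$ with $g_\eta = A$ we have $|\eta - \vp_0| \le h(\ka)$ for a continuous function $h$ with $h(\ka) \le C(1+\ka)$ — the linear growth coming from the fact that rescaling the metric by $A$ rescales $\eta$ by factors controlled by the eigenvalues of $A$, which lie in $[1-\ka, 1+\ka]$ when $\ka < 1$ and more crudely by $\|A\| \le 1+\ka$ in general. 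Inverting this (i.e.\ bounding $|\eta - \vp_0|$ below by a multiple of $|g_\eta - I|$ is the wrong direction; what we need is the stated upper bound on $|g_\eta - I|$ in terms of $|\eta - \vp_0|$ with constant $C(1+\ka)$), the cleanest route is: use \eqref{equation estimate on metrics} on the subset where $|\psi| \le \eps$, and on the complementary subset $|\psi| > \eps$ use the trivial bound $|\tg - g|_\vp \le \ka = \ka \eps^{-1}\cdot\eps < \ka\eps^{-1}|\psi| \le C(1+\ka)|\psi|$, absorbing $\eps^{-1}$ into the absolute constant.

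The step I expect to be the main obstacle is making the constant in \eqref{equation estimate on metrics dependent} genuinely \emph{absolute} (independent of $\ka$ except through the explicit factor $1+\ka$), rather than merely "depending on $\ka$." The naive case split above already gives the result, so the real content is just organizing it so no hidden dependence on $\ka$ sneaks into the $|\psi|\le\eps$ branch — but there $\eqref{equation estimate on metrics}$ gives an absolute constant, so we are fine; the only care needed is checking that $\eps$ from the first part can be chosen once and for all (it can, since it only depends on $\vp_0$). A secondary technical point is verifying the $C^1$-bound on $B \mapsto (\det B)^{-1/9}B$ near $B_{\vp_0}$ uniformly — this is elementary since $\det B_{\vp_0} \neq 0$ and the map is smooth, but it should be stated explicitly so the "absolute constant" claim is justified. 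Everything else is bookkeeping with the explicit polynomial $\eta \mapsto B_\eta$.
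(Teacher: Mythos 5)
Your proposal is correct, but it takes a genuinely different route from the paper. You reduce everything to a pointwise linear-algebra statement: identify $(T_xM,\vp_x,g_\vp)$ isometrically with the model $(\bR^7,\vp_0,\langle\cdot,\cdot\rangle)$, observe that the induced metric is a smooth (indeed real-analytic) function of the definite $3$-form via $B_\eta(u,v)=\iota_u\eta\wedge\iota_v\eta\wedge\eta$ and $g_\eta=(\det B_\eta)^{-1/9}B_\eta$, and then quote a Taylor/first-derivative bound near $\vp_0$, with the $\ka$-version obtained by the case split $|\tvp-\vp|_\vp\le\eps$ versus $|\tvp-\vp|_\vp>\eps$ (where the trivial bound $|\tg-g|_\vp\le\ka\le\eps^{-1}\ka\,|\tvp-\vp|_\vp$ suffices). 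The paper instead argues intrinsically: it decomposes $\ga=\tvp-\vp$ into its $\Om^3_1\oplus\Om^3_7\oplus\Om^3_{27}$ components, bounds $b_0,b_1,b_3$ by $|\ga|_\vp$, inserts Bryant's second-order expansion of $\Vol_{\tvp}$, and absorbs the term $C|\tg-g|_\vp|\ga|_\vp$ (first case) or bounds it by $C\ka|\ga|_\vp$ (second case). Your soft approach is shorter and avoids the explicit expansion, and your case split for the $\ka$-estimate is arguably cleaner than the paper's absorption; what the paper's explicit route buys is the machinery (the components $b_0,b_1,b_3$, the identity $b_0=\langle\ga,\vp\rangle$, and the contraction formula defining $b_1$) that is differentiated verbatim in the subsequent Proposition \ref{proposition estimates on derivatives of difference of metrics} to control $\na^k(\tg-g)$, which a purely pointwise smoothness argument does not directly set up. Two minor slips worth fixing: $B_\eta$ is cubic, not quadratic, in $\eta$ (harmless, since the Taylor remainder is still $O(|\psi|^2)$ on the ball $|\psi|\le\eps$), and you should say explicitly that $\eps$ is chosen so that this ball stays inside the open cone of definite $3$-forms, which is what makes all your constants depend only on $\vp_0$ and hence be absolute.
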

\begin{proof}
Define $\ga:= \tvp - \vp$. For any vector fields $u,v$ on $M$, 
\begin{align*}\tg(u,v) \Vol_{\tvp} &= \frac{1}{6}(\io_u \tvp)\we (\io_v \tvp) \we \tvp\\
&= \frac{1}{6}(\io_u (\vp + \ga))\we (\io_v (\vp + \ga)) \we (\vp + \ga)
\end{align*}
Then, there exists a $7$-form $\cG(u,v)$ on $M$ depending on the vector fields $u,v$ such that 
\begin{equation}\label{equation gVol difference}
\tg(u,v) \Vol_{\tvp} = g(u,v) \Vol_{\vp} + \cG(u,v)
\end{equation}
and
\begin{equation}\label{equation estimate on difference}|\cG(u,v)|_{\vp} \leq \frac{1}{6}  |\ga|_{\vp} \big(|\ga|_{\vp}^2 + 3|\ga|_{\vp} |\vp|_{\vp} + 3|\vp|_{\vp}^2\big)|u|_{\vp}|v|_{\vp},
\end{equation}
where the bars $|\cdot|_\vp$ denote the norm taken with respect to the metric $g$ induced by $\vp$.

The most direct way to estimate the difference $\tg - g$ is to expand $\Vol_{\tvp}$ around $\Vol_{\vp}$ using the expansion \cite[(6.7)]{Bryant03Remarks}. By the decomposition of $3$-forms on $M$ into irreducible representations of $G_2$, the form $\gamma$ can be expressed as
\begin{equation}\label{equation gamma decomposition} \gamma = 3b_0\vp + \star_\vp(b_1 \we \vp) + b_3, \;\;\; b_0 \in \Om^0(M), b_1 \in \Om^1(M), \text{ and  } b_3 \in \Om^3_{27}(M, \vp),
\end{equation}
where $3b_0\vp$, $\star_\vp(b_1 \we \vp)$, and $b_3$ lie in the mutually $g$-orthogonal spaces $\Om^3_{1}(M, \vp)$, $\Om^3_{7}(M, \vp)$, and $\Om^3_{27}(M, \vp)$, respectively. We now obtain bounds on the forms $b_0, b_1, b_3$. The following estimate on $b_3$ is immediate.
\begin{equation}\label{equation bound on 27 part of gamma}
    |b_3|_\vp = |\pi^3_{27}(\gamma)|_\vp \le |\gamma|_\vp,
\end{equation}
since $\pi^3_{27}:\Omega^3(M) \rightarrow \Om^3_{27}(M, \vp)$ is an orthogonal projection operator with respect to $g$. 

Now, notice that as in \cite[\S 2]{LotayWeiLaplacianFlow}, we have that $\vp_{ijk} \vp_{abl}g^{ia}g^{jb} = 6 g_{kl}$. Contracting once more, we find that $|\vp|_{\vp} = 7$, where we are taking the norm in $\Omega^3(M)$. Thus,
\begin{align}\label{equation bound on 1 part of gamma}
    |3b_0\vp|_\vp &= |\pi^3_{1}(\gamma)|_\vp \le |\gamma|_\vp \nonumber \\
    |b_0| & \le |\gamma|_\vp.
\end{align}
Finally, we estimate $b_1$. First, we note that $\star_\vp(b_1 \we \vp) = -b_1^{\#} \lrcorner \star_\vp \vp$. Now, calculating as on \cite[pg.\,226]{LotayWeiLaplacianFlow}, 
\begin{equation}\label{equation bound on 7 part of gamma}\ga^{ijk}(\star_{\vp} \vp)_{ijkl} = (\ga \lrcorner \star_{\vp} \vp)_l = 24(b_1)_l.
\end{equation}
Fix $l$. By Cauchy-Schwartz,
\begin{align*}
  |(b_1)_l| = \frac{1}{24} |\ga^{ijk} (\star_{\vp} \vp)_{ijkl}| \le 7|\ga|_\vp,
\end{align*}
and therefore $|b_1|_\vp = O(|\ga|_\vp)$. 

Finally, we appeal to the expansion \cite[(6.7)]{Bryant03Remarks}, which gives us a formula for the difference of the volume forms, in terms of $b_0, b_1,$ and $b_3$.

\begin{equation}\label{equation second order expansion of volume form}
    \star_{\tvp} 1 = \bigg(1 + 7b_0 + \bigg(14(b_0)^2 + \frac{2}{3}|b_1|^2_\vp - \frac{1}{6} |b_3|^2_\vp\bigg) + r(b_0, b_1, b_3) \bigg)\star_{\vp} 1,
\end{equation}
where, as we have shown, $b_0, b_1, b_3$ are all $O(|\ga|_{\vp})$ and $r$ has cubic decay in its arguments. Let $u,v$ be unit vectors with respect to $g$. Plugging \eqref{equation second order expansion of volume form} into \eqref{equation gVol difference}, we obtain 
\[
\tg(u,v) (1 + 7b_0 + Q(b_0, b_1, b_3))\Vol_\vp = g(u,v)\Vol_\vp  + \cG(u,v)
\]
\begin{multline*}
(\tg(u,v) - g(u,v))\star_\vp \Vol_\vp  =  (\tg(u,v)- g(u,v)) (-7b_0 - Q(b_0, b_1, b_3))\star_\vp \Vol_\vp \\ + g(u,v) (-7b_0 - Q(b_0, b_1, b_3))\star_\vp \Vol_\vp + \star_\vp \cG(u,v)
\end{multline*}
\[
\implies |\tg - g|_\vp \le  C|\tg- g|_\vp |\ga|_\vp + C|\ga|_\vp + \star_\vp \cG(u,v),
\]
where $C$ depends on the coefficients of the expansion $7b_0 + Q(b_0, b_1,b_3)$. If we set $\epsilon = 1/2C$ and assume that $|\ga|_\vp \le \eps$, then we can absorb the first term on the left-hand side into the right-hand side and obtain
\[|\tg - g|_\vp \le  C|\ga|_{\vp} + 2 \star_\vp \cG(u,v).
\]
Since $|\cG(u,v)|_\vp$ is $O(|\ga|_{\vp})$ by \eqref{equation estimate on difference} and the Hodge star is an isometry, we have 
\begin{equation*}
  |\tg - g|_\vp \le C|\tvp - \vp|_\vp.
\end{equation*}
This proves the first statement of the proposition. In case that the bound $|\ga|_\vp \le \eps$ is unavailable but the bound $|\tg- g|_\vp \le \ka$ holds, we obtain
\begin{align*}
|\tg - g|_\vp &\le  C\ka |\ga|_\vp + C|\ga|_\vp + \star_\vp \cG(u,v)\\
&\le C(1 + \ka) |\ga|_\vp.
\end{align*}
This proves the second statement in the proposition.
\end{proof}

\begin{lem}\label{lemma difference of hodge star estimate}
We can obtain the following bounds on the difference of the inverse metrics and the Hodge stars associated to $\vp$ and $\tvp$.
\[|\tg^{-1} - g^{-1}|_{\vp} \le C |\tg^{-1}|_{\vp}|\tvp - \vp|_{\vp}.
\]
and for an arbitrary $k$-form $\beta$
\begin{equation}\label{equation difference of hodge stars}
    |\star_\vp \beta - \star_{\tvp} \beta|_\vp \le C|\tvp - \vp|_\vp |\beta|_\vp.
\end{equation}
Here, $C$ depends on $\kappa$ if the second set of bounds from Proposition \ref{proposition estimates on difference of metrics} is used. 
\end{lem}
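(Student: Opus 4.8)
The plan is to reduce both estimates to Proposition~\ref{proposition estimates on difference of metrics} together with the volume-form expansion~\eqref{equation second order expansion of volume form}, using only elementary algebraic identities for matrix inverses and the pointwise multilinear-algebra description of the Hodge star. All norms throughout are taken with respect to the fixed background metric $g = g_\vp$.

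For the inverse-metric bound I would start from the exact pointwise identity $\tg^{-1} - g^{-1} = \tg^{-1}(g - \tg)g^{-1}$, in which $g - \tg$ is viewed as a $(0,2)$-tensor and the two inverses as $(2,0)$-tensors, so that the right-hand side is a genuine contraction. Taking the $g$-norm and using that $|g^{-1}|_\vp$ is a universal constant (since the norm is computed with respect to $g$ itself), this gives $|\tg^{-1} - g^{-1}|_\vp \le C\,|\tg^{-1}|_\vp\,|g - \tg|_\vp$. Finally, Proposition~\ref{proposition estimates on difference of metrics} bounds $|g - \tg|_\vp$ by $C|\tvp - \vp|_\vp$ — with $C$ absolute if $|\tvp-\vp|_\vp\le\eps$, and $C$ depending on $\ka$ if only $|\tg - g|_\vp \le \ka$ is known — which yields the first claim.

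For the Hodge-star bound I would write $\star_\vp\beta$ pointwise: for a $k$-form $\beta$ it is obtained by raising the $k$ indices of $\beta$ with $g^{-1}$ and contracting the result against the volume form $\Vol_\vp = \star_\vp 1$, and similarly $\star_{\tvp}\beta$ is formed from $\tg^{-1}$ (appearing $k$ times) and $\Vol_{\tvp}$. Subtracting and telescoping — replacing one factor of $g^{-1}$ by $\tg^{-1}$ at a time, and then $\Vol_\vp$ by $\Vol_{\tvp}$ — expresses $\star_{\tvp}\beta - \star_\vp\beta$ as a sum of terms each containing exactly one factor of $\tg^{-1} - g^{-1}$ or of $\Vol_{\tvp} - \Vol_\vp$, with all remaining factors being $g^{-1}$ (constant), $\tg^{-1}$ (bounded once $|\tvp - \vp|_\vp$ is small, by the first part), $\Vol_\vp$ (constant), or $\beta$ itself. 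The difference of volume forms satisfies $|\Vol_{\tvp} - \Vol_\vp|_\vp \le C|\tvp - \vp|_\vp$, which reads off the expansion~\eqref{equation second order expansion of volume form} since $b_0, b_1, b_3 = O(|\tvp - \vp|_\vp)$ and the remainder $r$ is cubic; the factor $|\tg^{-1} - g^{-1}|_\vp$ is controlled by the first part of the lemma. Collecting all the terms gives $|\star_\vp\beta - \star_{\tvp}\beta|_\vp \le C|\tvp - \vp|_\vp\,|\beta|_\vp$.

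I expect the only real subtlety to be the bookkeeping of background norms: before telescoping one must first invoke a smallness hypothesis — either $|\tvp - \vp|_\vp \le \eps$, or $|\tg - g|_\vp \le \ka$ together with the second bound of Proposition~\ref{proposition estimates on difference of metrics} — to guarantee that $\tg^{-1}$ and $\Vol_{\tvp}$ are uniformly comparable to their $\vp$-counterparts in the $g$-norm; this is precisely where the stated $\ka$-dependence of $C$ enters. Beyond that there is no genuine analytic difficulty: both estimates are purely algebraic consequences of the identities already recorded in this section.
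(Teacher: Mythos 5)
Your proposal is correct, and both of its key ingredients (the metric-difference estimate of Proposition \ref{proposition estimates on difference of metrics} and the volume-form expansion \eqref{equation second order expansion of volume form}) are exactly the ones the paper relies on. For the inverse-metric bound your argument is essentially identical to the paper's: the paper simply quotes the ``basic inequality'' $|\tg^{-1}-g^{-1}|_\vp \le C|\tg^{-1}||\tg-g|_\vp$, which is your identity $\tg^{-1}-g^{-1}=\tg^{-1}(g-\tg)g^{-1}$ after noting $|g^{-1}|_\vp$ is a fixed constant. For the Hodge-star bound the mechanics differ: instead of writing $\star_\vp\beta$ in components and telescoping the $k$ factors of $g^{-1}$ and the volume form one at a time, the paper works dually, pairing against an arbitrary $k$-form $\alpha$ via the defining relation $(\star_\vp\beta-\star_{\tvp}\beta)\wedge\alpha = g_\vp(\beta,\alpha)\Vol_\vp - g_{\tvp}(\beta,\alpha)\Vol_{\tvp}$, applying $\star_\vp$ to this $7$-form, and estimating the two scalar discrepancies ($g_\vp(\beta,\alpha)-g_{\tvp}(\beta,\alpha)$ and the volume-form ratio) by $O(|\ga|_\vp)$. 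The paper's route avoids the combinatorial bookkeeping of index-raising and delegates everything to the inner-product and volume-form comparisons already established; your telescoping is more explicit about where each difference factor enters, at the cost of needing the coordinate formula for $\star$ and the uniform bound on $|\tg^{-1}|_\vp$, which you correctly flag as the source of the $\ka$-dependence. Both arguments are sound and of comparable length, so this is a difference of presentation rather than substance.
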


\begin{proof}
The first estimate follows immediately from the basic inequality
\[|\tg^{-1} - g^{-1}|_\vp \le C|\tg^{-1}||\tg - g|_\vp.
\]
We now prove the second statement. For $\alpha$ and $\beta$ arbitrary $k$-forms,
\begin{equation}
  (\star_\vp \beta - \star_{\tvp} \beta) \wedge \alpha  = g_\vp (\beta, \alpha)\Vol_{g_\vp} - g_{\tvp} (\beta, \alpha)\Vol_{g_{\tvp}},
\end{equation}
by the definition of the Hodge star. Taking the Hodge star again of this expression, by the same reasoning as in the proof of Proposition \ref{proposition estimates on difference of metrics}, we get
\begin{align*}g_\vp (\beta, \alpha)\star_\vp \Vol_{g_\vp} - g_{\tvp} (\beta, \alpha)\star_\vp \Vol_{g_{\tvp}} &= g_\vp (\beta, \alpha) - g_{\tvp} (\beta, \alpha) + g_{\tvp} (\beta, \alpha)O(|\ga|)\\
&= O(|\ga|_\vp |\beta|_\vp |\alpha|_\vp).
\end{align*}
Thus, 
\[
    |\star_\vp \beta - \star_{\tvp} \beta|_\vp \le C|\ga|_\vp |\beta|_\vp.
\]
\end{proof}

In the next proposition, we obtain similar estimates to those in Proposition \ref{proposition estimates on difference of metrics} for covariant derivatives of $g - \tg$ with respect to the Levi-Civita connection on $(M,g)$.

\begin{prop}\label{proposition estimates on derivatives of difference of metrics}
Under the same assumptions as Proposition \ref{proposition estimates on difference of metrics}, for $k \in \mathbb{N}$
\begin{equation}\label{equation bound on covariant derivative of tg}
|\na^k (\tg - g)|_{\vp} \le C_k  \bigg( 1 + \sum_{j=0}^k |\na^j(\tg - g)|_\vp + \sum_{j=0}^k |\na^j \vp|_\vp \bigg)\bigg( \sum_{j=0}^k |\na^j (\vp - \tvp)|_{\vp} \bigg).
\end{equation}
where $C_k > 0$ depends on the order of differentiation $k$.
\end{prop}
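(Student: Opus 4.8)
The plan is to exploit the fact that the induced metric is a \emph{pointwise}, algebraic function of the $3$-form: inverting the defining relation $g_\vp(u,v)\,\Vol_{g_\vp}=\io_u\vp\we\io_v\vp\we\vp$ fibre by fibre expresses $g_\vp=F(\vp)$ for a universal map $F$ which is smooth (indeed real-analytic) on the open set of non-degenerate $3$-forms. Consequently $\tg-g=F(\tvp)-F(\vp)=\int_0^1 (DF)(\vp_s)[\tvp-\vp]\,ds$ along the segment $\vp_s:=\vp+s(\tvp-\vp)$. Under the hypotheses of Proposition~\ref{proposition estimates on difference of metrics} (after shrinking $\eps$, or using the $\ka$-bound together with Lemma~\ref{lemma difference of hodge star estimate}), the $\vp_s$ stay in a fixed compact family of non-degenerate $3$-forms relative to $g_\vp$, so $F$ and all of its derivatives $D^rF$, evaluated along the segment and measured in $g_\vp$, are bounded by constants depending only on $r$ (respectively on $r$ and $\ka$).

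The main step is then to differentiate $\tg-g=\int_0^1(DF)(\vp_s)[\psi]\,ds$, $\psi:=\tvp-\vp$, covariantly $k$ times with $\na=\na^{g}$, using the Leibniz rule for the bilinear pairing and the Fa\`a di Bruno formula for $\na^j\big((DF)(\vp_s)\big)$. Each term of $\na^k(\tg-g)$ is then a universal contraction of one bounded coefficient $(D^rF)(\vp_s)$ against a product of factors $\na^{a_1}\vp_s,\dots,\na^{a_m}\vp_s$ together with exactly one factor $\na^{a_0}\psi$ coming from the explicit $\psi$, where $a_0+\cdots+a_m\le k$. Substituting $\na^a\vp_s=\na^a\vp+s\,\na^a\psi$ and expanding, every resulting term retains at least one covariant derivative of $\vp-\tvp$ of order $\le k$, while the remaining factors are bounded coefficients and covariant derivatives of $\vp$ of order $\le k$; the latter genuinely occur, since $\na\vp$ is the (generically nonzero) torsion. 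Taking $g_\vp$-norms and collecting terms gives an estimate of the asserted shape; the $|\na^j(\tg-g)|_\vp$ terms on the right arise naturally if one argues by induction on $k$, differentiating the defining relations for $\tg$ and $g$ directly, subtracting, solving for the top-order term $\na^k(\tg-g)$, and expressing the remaining quantities (e.g.\ $\Vol_{\tvp}$ versus $\Vol_\vp$) through $h:=\tg-g$, so that lower-order covariant derivatives of $h$ enter as coefficients.

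I expect the only real difficulty to be organisational. First, one must verify that along the interpolation the tensors $D^rF(\vp_s)$ remain uniformly bounded, which reduces to keeping the $\vp_s$ non-degenerate and in a compact family — a mild strengthening of, or consequence of, the setup of Proposition~\ref{proposition estimates on difference of metrics}. Second, one must marshal the combinatorially many terms of the Fa\`a di Bruno/Leibniz expansion so that $\vp-\tvp$ is exhibited to first order and all other factors, in particular products of several covariant derivatives of $\vp$ or of $\tg-g$, are absorbed into the three groups appearing on the right-hand side, with constants depending only on $k$. A clean induction on $k$, invoking Proposition~\ref{proposition estimates on difference of metrics}, Lemma~\ref{lemma difference of hodge star estimate}, and the cases below $k$ of the present estimate, is the most economical way to carry out this bookkeeping.
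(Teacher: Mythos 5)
Your proposal is correct in substance but follows a genuinely different route from the paper. The paper never packages the dependence $g_\vp=F(\vp)$ abstractly: it works from the explicit identity $\tg(u,v)-g(u,v)=\tg(u,v)\big(-7b_0-Q(b_0,b_1,b_3)\big)+\star_\vp\cG(u,v)$, obtained from Bryant's expansion of the volume form together with the decomposition of $\ga=\tvp-\vp$ into its $\Om^3_1\oplus\Om^3_7\oplus\Om^3_{27}$ components, and then differentiates this identity $k$ times, bounding $|\na^j b_0|,|\na^j b_1|,|\na^j b_3|$ by $\sum_{i\le j}|\na^i\ga|$. This is why the factors $|\na^j(\tg-g)|$ appear on the right-hand side: $\tg$ itself sits in the identity and gets differentiated. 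Your route --- smoothness of the pointwise map $\vp\mapsto g_\vp$ on the open set of nondegenerate $3$-forms, the segment representation $\tg-g=\int_0^1(DF)(\vp_s)[\tvp-\vp]\,ds$, and Leibniz/Fa\`a di Bruno --- trades the explicit representation-theoretic bookkeeping for an abstract compactness argument for $D^rF$, and, if pushed through, would in fact deliver a cleaner bound with no $|\na^j(\tg-g)|$ terms at all (which trivially implies the stated inequality). What the paper's version buys is explicitness and the fact that it never interpolates between the two forms, only between the two metrics implicitly through the identity.

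Two caveats you should make explicit. First, the space of positive $3$-forms is open but \emph{not} convex, so the segment $\vp_s=\vp+s(\tvp-\vp)$ is only guaranteed to consist of nondegenerate forms in the regime $|\tvp-\vp|_\vp\le\eps$; in the alternative hypothesis of Proposition \ref{proposition estimates on difference of metrics}, where only $|\tg-g|_\vp\le\ka$ is assumed, your integral representation is not automatically available, whereas the paper's identity-based argument does not interpolate and so does not meet this obstruction. Second, after expanding $\na^a\vp_s=\na^a\vp+s\,\na^a(\tvp-\vp)$, terms containing \emph{several} factors of derivatives of $\tvp-\vp$ occur (e.g.\ $(D^2F)[\na\psi,\psi]$ for $k=1$), so your claim that all remaining factors are bounded or are derivatives of $\vp$ is not literally true; you need lower-order derivatives of $\ga$ to be bounded to fold the extra factors into the constants. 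The paper has the same issue and handles it by explicitly assuming $|\na^i\ga|_\vp<1$ in its proof, so this is a presentational gap rather than a fatal one, but it should be stated rather than left implicit.
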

\begin{proof}
We return to the relation
\begin{equation}\label{equation explicit tg minus g}
\tg(u,v) - g(u,v) = \tg(u,v)\Big(-7b_0 -Q(b_0, b_1, b_3)\Big) + \star_{\vp}\cG(u,v),
\end{equation}
where
\begin{equation}
    Q(b_0, b_1, b_3) = \bigg(14(b_0)^2 + \frac{2}{3}|b_1|^2_\vp - \frac{1}{6} |b_3|^2_\vp\bigg) + r(b_0, b_1, b_3)
\end{equation}
and $\lim_{|\ga| \to 0} \frac{|r(b_0, b_1, b_3)|_{\vp}}{|\ga|_{\vp}^2} =0$. 
The proof of the proposition involves taking the $k$-times derivative of~\eqref{equation explicit tg minus g}. We illustrate the case $k = 1$: the proof in the cases $k > 1$ follows almost exactly the same steps.

Taking the covariant derivative of \eqref{equation explicit tg minus g} with respect to the Levi-Civita connection $\na = \na^g$,
\begin{multline}\label{equation first estimate on nabla tg}
    |\nabla (\tg -g)|_{\vp} \leq C|\nabla \tg|_{\vp}\Big(|b_0|_{\vp}+|Q(b_0, b_1, b_3)|_{\vp}\Big) + |\na (\star_{\vp}\cG_{ij})|_{\vp}\\ + C|\tg|_{\vp}\Big(|\na b_0|_{\vp} + \big(|b_0|_{\vp}+|b_1|_{\vp}+|b_3|_{\vp}\big)\big(|\na b_0|_{\vp}+|\na b_1|_{\vp}+|\na b_3|_{\vp}\big)\Big),
\end{multline}
where we write $\star_{\vp} \cG_{ij}$ to denote the two-tensor $(u,v)\mapsto \star_{\vp}\cG(u,v)$. Computing the covariant derivative with respect to $g$ and using a normal coordinate system centered at a point $p \in M$,
\begin{equation}\label{equation covariant derivative of star cG}
    |\na(\star_{\vp}\cG_{ij})|_{\vp} \leq C \Big((1+ |\ga|_{\vp}+ |\ga|^2_{\vp})|\na \ga|_{\vp}+(|\ga|_{\vp}+|\ga|^2_{\vp})|\na \vp|_{\vp}\Big).
\end{equation}
Assuming that $|\nabla^i \ga|_\vp < 1$ for $i = 0,1$ and using the bounds on $b_0$, $b_1$, and $b_3$ found above, we may combine \eqref{equation covariant derivative of star cG} with \eqref{equation first estimate on nabla tg} to find 
\begin{multline}\label{equation penultimate step before derivative tg bound}
    |\na (\tg - g)|_{\vp} \leq C\big((|\na \vp|_\vp |\ga|_{\vp}+|\na \ga|_{\vp})+ |\na \tg|_\vp |\ga|_\vp \\ +  |\tg|_\vp (|\na b_0|_{\vp} + |\ga|_{\vp}\big(|\na b_0|_{\vp} + |\na b_1|_{\vp} + |\na b_3|_{\vp}\big))\big).
\end{multline}

By definition, $b_0 = \langle \ga, \vp\rangle$, so 
$|\na b_0|_{\vp} \leq C(|\ga|_{\vp}+ |\na \ga|_{\vp})$.
By \cite[(2.11)-(2.12)]{LotayWeiLaplacianFlow} and the assumption that $\na \vp$ is $O(1)$ with respect to the $g$-norm, we have that $\star_{\vp}\vp$ and $\na \star_{\vp}\vp$ are also $O(1)$ with respect to the $g$-norm. Note that $\vp$ is automatically $O(1)$ with respect to the $g$-norm by \cite[(2.3)]{LotayWeiLaplacianFlow}. So, by differentiating \eqref{equation bound on 7 part of gamma}, we find that 
$|\na b_1|_{\vp} \leq C(|\ga|_{\vp}+ |\na \ga|_{\vp})$. Finally, by differentiating \eqref{equation gamma decomposition} and applying the bounds on $|\na b_0|$ and $|\na b_1|$, we find that 
$|\na b_3| \leq C(|\ga|_{\vp}+|\na \ga|_{\vp}).$

Applying these bounds on $|\na b_0|_{\vp}$, $|\na b_1|_{\vp}$, and $|\na b_3|_{\vp}$ to \eqref{equation penultimate step before derivative tg bound}, we conclude that
\begin{equation*}
|\na \tg|_{\vp} \leq C (1 + |\tg|_\vp + |\na \tg|_\vp + |\na \vp|_\vp)(|\vp - \tvp|_{\vp} + |\na (\vp - \tvp)|_{\vp}).
\end{equation*}
The proof in the case of larger values of $k$ is almost identical, with the coefficients $C_k$ depending on the number of terms that arise from the product rule under iterated differentiation.
\end{proof}

\begin{rmk}\label{remark bound on derivatives of tg-g}
From \eqref{equation bound on covariant derivative of tg}, we can obtain estimates of the form 
\[|\na^k (\tg - g)|_{\vp} \le C_k \bigg( \sum_{j=0}^k |\na^j (\vp - \tvp)|_{\vp} \bigg),
\]
where $C_k$ will depend on a priori estimates on $|\nabla^j (\tg -g)|_\vp$, $|\nabla^j (\tvp - \vp)|_\vp$, and $|\nabla^j \vp|_\vp$. These will vary depending on context, so we leave our expression in the more general form \eqref{equation bound on covariant derivative of tg}.
\end{rmk}

\section{Properties of AC \texorpdfstring{$G_2$}{G2}-shrinkers}\label{section properties of shrinkers}
\subsection{Basic features of closed AC \texorpdfstring{$G_2$}{G2}-structures}

We define the asymptotic conicality of a shrinking, closed $G_2$-end along the lines of \cite[Definition 1.1]{kotschwarwang2015} and show that this definition implies the expected asymptotics of all relevant quantities. 

\medskip

Let $(M, \vp)$ be a 7-dimensional manifold with a closed $G_2$-structure. For the moment, we consider only its Riemannian structure, i.e. the data $(M, g_\vp)$. An \textit{end} of $M$ is an unbounded connected component $V$ of $M \setminus K$ for some compact subset $K$ in $M$. We will consider ends $V$ of $M$ that are asymptotic to closed $G_2$-cones. A \textit{closed $G_2$-cone} over a closed Riemannian 6-manifold $(\Sigma^6, g_\Sg)$ is a manifold $(\cC^\Sg, \vp_C)$ such that $\cC^\Sg= (0, \infty) \times \Sigma^6$, and the 3-form $\vp_C$ is a closed $G_2$-structure whose associated metric is $g_{C} = dr^2 + r^2g_\Sg$. For $a \ge 0$, let $\cC^\Sg_a := (a, \infty) \times \Sg$ and define the \textit{dilation map} $\rho_{\lambda}: \cC^\Sg_0 \rightarrow \cC^\Sg_0$ for $\lambda > 0$, given by $\rho_\lambda(r, \sigma) := (\lambda r, \sg)$.

\begin{defn}\label{definition asymptotically conical}
Let $V$ be an end of $M$. We say that $(M, \vp)$ is \emph{asymptotic to the $G_2$-cone $(\cC^\Sg_0, \vp_C)$ along $V$} if, for some $r_0 >0$, there is a diffeomorphism $\Phi : \cC^\Sg_{r_0} \rightarrow V$ such that $\la^{-3} \rho_\la^* \Phi^* \vp \rightarrow \vp_C$ as $\la \rightarrow \infty$ in $C^3_{\text{loc}}(\cC^\Sg_{r_0}, g_C)$.
\end{defn}

\begin{lem}[\mbox{\cite[Lemma A.1]{kotschwarwang2015}}]\label{lemma decay estimates on G2 forms}
If $V$ is an end of $M$, $\Phi : \cC^\Sg_a \rightarrow V$ is a diffeomorphism for some $a > 0$, and $k \in \mathbb{N}_0$, then
\begin{equation}\label{equation ac condition varphi order k}
    \lim_{\la \rightarrow \infty} \la^{-3} \rho_\la^* \Phi^* \vp = \vp_C \;\; \text{ in } \;\;C^k_{loc}(\cC^\Sg_0, g_C),
\end{equation}
holds if and only if 
\[\lim_{b \rightarrow \infty} b^l || \nabla^{l}_{g_C} (\Phi^*\vp - \vp_C)||_{C^0(\cC^\Sg_b, g_C)} = 0
\]
for each $l =0,1,2,\ldots, k$.
\end{lem}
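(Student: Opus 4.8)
The statement is an equivalence between two notions of conical decay: convergence of the rescaled pullbacks $\la^{-3}\rho_\la^*\Phi^*\vp$ to $\vp_C$ in $C^k_{\mathrm{loc}}$, and weighted $C^0$-decay $b^l\|\nabla^l_{g_C}(\Phi^*\vp-\vp_C)\|_{C^0(\cC^\Sg_b,g_C)}\to 0$ for $l=0,\dots,k$. The plan is to exploit the exact homogeneity of the cone and the scaling behavior recorded in Lemma~\ref{lemma scaling of metric}. The key observation is that $\vp_C$ is homogeneous of degree $3$ under $\rho_\la$, i.e. $\rho_\la^*\vp_C=\la^3\vp_C$, so $\la^{-3}\rho_\la^*(\Phi^*\vp)-\vp_C=\la^{-3}\rho_\la^*(\Phi^*\vp-\vp_C)$; the problem is thus entirely about the decay of the single tensor $\eta:=\Phi^*\vp-\vp_C$ on $\cC^\Sg_a$.

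The main computational input is the behavior of the pointwise $g_C$-norm of $\rho_\la^*$ applied to a $3$-form and its covariant derivatives. Since $\rho_\la^*g_C=\la^2 g_C$ (this is immediate from $g_C=dr^2+r^2g_\Sg$), for any covariant $p$-tensor $S$ one has $|\rho_\la^* S|_{g_C}(r,\sigma)=\la^{-p}|S|_{g_C}(\la r,\sigma)$, and because the Levi-Civita connection of $\la^2 g_C$ agrees with that of $g_C$, $\nabla^l_{g_C}(\rho_\la^* S)=\rho_\la^*(\nabla^l_{g_C} S)$. Applying this with $S=\eta$ (a $3$-form, so $p=3$) and $S=\nabla^l_{g_C}\eta$ (so $p=3+l$) gives, at a point $(r,\sigma)$,
\begin{equation*}
  |\nabla^l_{g_C}(\la^{-3}\rho_\la^*\eta)|_{g_C}(r,\sigma)=\la^{-3}\la^{-(3+l)}|\nabla^l_{g_C}\eta|_{g_C}(\la r,\sigma)\cdot\la^{? }.
\end{equation*}
I would track the exponents carefully: the factor $\la^{-3}$ from the overall normalization combines with the $\la^{-l}$ from differentiating a tensor that has itself been scaled, and I expect after also using that $\vp_C$ itself contributes a compensating $\la^3$ (i.e. working with $\la^{-3}\rho_\la^*\Phi^*\vp-\vp_C$ directly) the net relation becomes
\begin{equation*}
  \big|\nabla^l_{g_C}\big(\la^{-3}\rho_\la^*\Phi^*\vp-\vp_C\big)\big|_{g_C}(r,\sigma)=\la^{-l}\,\big|\nabla^l_{g_C}\eta\big|_{g_C}(\la r,\sigma),
\end{equation*}
after normalizing so the degree-$3$ homogeneity is absorbed. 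Fixing a compact set $K\subset\cC^\Sg_0$, say contained in the annulus $\{r_1\le r\le r_2\}$, the $C^0(K)$-norm of the left side is controlled above and below by $\sup_{r_1\la\le s\le r_2\la}s^l|\nabla^l_{g_C}\eta|_{g_C}(s,\cdot)$ up to fixed constants depending only on $r_1,r_2$. Hence $C^l_{\mathrm{loc}}$-convergence to zero on $\cC^\Sg_0$ as $\la\to\infty$ is equivalent to $s^l\|\nabla^l_{g_C}\eta\|_{C^0(\cC^\Sg_s)}\to 0$ as $s\to\infty$; summing over $l=0,\dots,k$ gives the $C^k_{\mathrm{loc}}$ statement and the claimed weighted decay. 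This is essentially a change of variables $s=\la r$ together with bookkeeping of the scaling weights, so I would present the forward and backward implications together via this two-sided estimate rather than proving each direction separately.

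The step I expect to require the most care is not conceptual but notational: correctly accounting for the interaction of three separate scalings — the overall $\la^{-3}$ normalization, the $\la^{-1}$ per covariant derivative coming from $\rho_\la^*g_C=\la^2 g_C$, and the degree-$3$ homogeneity of $\vp_C$ — so that the exponents collapse to the clean weight $\la^{-l}$, exactly matching the weight $b^l$ in the $C^0$ formulation. A secondary point worth a sentence is that one must know $\nabla^l_{g_C}\vp_C$ is itself homogeneous of the appropriate degree (which follows from $\vp_C$ being $\rho_\la$-homogeneous and $g_C$ being a cone metric) so that subtracting $\vp_C$ commutes cleanly with the rescaling; this is what lets us replace $\la^{-3}\rho_\la^*\Phi^*\vp-\vp_C$ by $\la^{-3}\rho_\la^*(\Phi^*\vp-\vp_C)$ and reduce everything to the tensor $\eta$. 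Since this lemma is cited from \cite[Lemma A.1]{kotschwarwang2015}, where it is proved for Riemannian metrics rather than $3$-forms, the only genuinely new content is checking that the same scaling argument goes through verbatim for sections of $\Lambda^3(T^*M)$ with the induced bundle metric, which it does because the argument only ever uses the homogeneity of the cone and the tensorial transformation law under $\rho_\la$.
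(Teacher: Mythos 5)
Your strategy is essentially the paper's: use the dilation invariance $\la^{-3}\rho_\la^*\vp_C=\vp_C$ to reduce everything to $\eta:=\Phi^*\vp-\vp_C$, commute $\rho_\la^*$ with $\na_{g_C}$ (valid because $\rho_\la^*g_C=\la^2 g_C$ has the same Levi-Civita connection as $g_C$), and compare norms on annuli via the substitution $s=\la r$; the paper's whole proof is exactly the resulting sup identity over $\cC^\Sg_b\setminus\cC^\Sg_{2b}$.

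The one concrete error is the sign of the exponent in your key pointwise relation. Since $\rho_\la^*g_C=\la^2 g_C$, pulling back a covariant $p$-tensor $S$ by $\rho_\la$ \emph{multiplies} the $g_C$-norm by $\la^{p}$, i.e.\ $|\rho_\la^*S|_{g_C}(r,\sg)=\la^{p}|S|_{g_C}(\la r,\sg)$, not $\la^{-p}$. Applying this to $S=\na^l_{g_C}\eta$ (a covariant $(3+l)$-tensor) and cancelling the form degree against the $\la^{-3}$ normalization gives
\[
\big|\na^l_{g_C}\big(\la^{-3}\rho_\la^*\Phi^*\vp-\vp_C\big)\big|_{g_C}(r,\sg)=\la^{\,l}\,\big|\na^l_{g_C}\eta\big|_{g_C}(\la r,\sg),
\]
with $\la^{+l}$, which is precisely the identity the paper records and precisely what produces the weight $b^l$ after setting $s=\la r$ (on $\{r_1\le r\le r_2\}$ one has $\la^l\simeq s^l$ up to constants depending only on $r_1,r_2$). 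Your displayed $\la^{-l}$, and the sentence claiming the exponents ``collapse to the clean weight $\la^{-l}$, exactly matching the weight $b^l$,'' are inconsistent with your own subsequent annulus comparison, which is only valid for $\la^{+l}$; with the sign corrected, your argument goes through and coincides with the paper's. (A minor further remark: to pass from $\la^{-3}\rho_\la^*\Phi^*\vp-\vp_C$ to $\la^{-3}\rho_\la^*\eta$ you only need the homogeneity of $\vp_C$ itself; the behavior of $\na^l_{g_C}\vp_C$ is then automatic from commuting pullback with the connection, and indeed the paper notes $\vp_C$ need not be $\na_{g_C}$-parallel.)
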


\begin{proof}
Let $k, \la \ge 1$ and $b > a$. The statement follows from the identity
\[\sup_{\cC^\Sg_b \setminus \cC^\Sg_{2b}}\Big|\nabla_{g_C}^k (\lambda^{-3}\rho_\la^*(\Phi^*\vp - \vp_C))\Big|_{g_C} = \sup_{\cC^\Sg_{\la b} \setminus \cC^\Sg_{\la 2b}} \la^k \Big|\nabla_{g_C}^k (\Phi^*\vp - \vp_C)\Big|_{g_C}.
\]
Note that $\vp_C$ is not necessarily parallel with respect to $\na_{g_C}$ and that $\vp_C$ is invariant under dilation and rescaling, i.e. $\lambda^{-3}\rho_\la^* \vp_C = \vp_C.$
\end{proof}

We now show that asymptotic conicality of $G_2$-structures implies asymptotic conicality on the level of metrics.

\begin{cor}\label{corollary phi conical implies g conical}
If $(M,\vp)$ is asymptotic to the $G_2$-cone $(\cC^\Sg_0, \vp_C)$ in $C^k_{\text{loc}}(\cC^\Sg_0, g_C)$, then $\la^{-2} \rho_{\la}^{*} \Phi^* g \to g_{C}$ as $\la \to \infty$ in $C^k_{\text{loc}}(\cC^\Sg_0, g_C)$.
\end{cor}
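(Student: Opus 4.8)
The plan is to deduce the metric convergence directly from Proposition~\ref{proposition estimates on difference of metrics} together with the rescaling behavior of the metric recorded in Lemma~\ref{lemma scaling of metric}. First I would set $\vp_\la := \la^{-3}\rho_\la^*\Phi^*\vp$ and $g_\la := g_{\vp_\la}$; by Lemma~\ref{lemma scaling of metric} (with $\Psi = \rho_\la$ and $\lambda = \la^{-3}$, noting that the prefactor power is the cube of the form-scaling), we have $g_\la = (\la^{-3})^{2/3}\rho_\la^* g = \la^{-2}\rho_\la^*\Phi^*g$, so that the assertion is precisely $g_\la \to g_C$ in $C^k_{\mathrm{loc}}(\cC^\Sg_0, g_C)$. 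The hypothesis gives $\vp_\la \to \vp_C$ in $C^k_{\mathrm{loc}}(\cC^\Sg_0, g_C)$, and since $g_C = g_{\vp_C}$, the map $\vp \mapsto g_\vp$ sends the limiting form to the limiting metric; the content is that this map is continuous in the relevant topology, uniformly on compact sets.

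The key step is then a local argument on a fixed compact set $K \subset \cC^\Sg_0$. On $K$, the $C^0$-convergence $\vp_\la \to \vp_C$ implies $|\vp_\la - \vp_C|_{\vp_C} \le \eps$ for $\la$ large (where $\eps$ is the threshold in Proposition~\ref{proposition estimates on difference of metrics}, applied with the fixed reference form $\vp_C$ and reference metric $g_C$). Applying \eqref{equation estimate on metrics} with $\vp = \vp_C$, $\tvp = \vp_\la$ gives $|g_\la - g_C|_{g_C} \le C|\vp_\la - \vp_C|_{g_C} \to 0$ uniformly on $K$, which is the $C^0$ statement. For the higher derivatives I would invoke Proposition~\ref{proposition estimates on derivatives of difference of metrics} (again with reference form $\vp_C$, whose covariant derivatives $\na^j_{g_C}\vp_C$ are bounded on the fixed compact set $K$): the bound \eqref{equation bound on covariant derivative of tg} controls $\sum_{j\le k}|\na^j_{g_C}(g_\la - g_C)|_{g_C}$ in terms of $\sum_{j\le k}|\na^j_{g_C}(\vp_\la - \vp_C)|_{g_C}$ together with the already-controlled quantities $\sum_{j\le k}|\na^j_{g_C}(g_\la - g_C)|_{g_C}$ and $\sum_{j\le k}|\na^j_{g_C}\vp_C|_{g_C}$; since the $\vp$-side terms tend to $0$ on $K$ and the $\vp_C$ terms are bounded there, one gets $\sum_{j\le k}|\na^j_{g_C}(g_\la - g_C)|_{g_C} \to 0$ on $K$ after absorbing the self-referential term (or, more cleanly, by an induction on $k$ using Remark~\ref{remark bound on derivatives of tg-g}, whose $C_k$ then depend only on the a priori bounds available on $K$). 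Covering $\cC^\Sg_0$ by such compact sets yields $C^k_{\mathrm{loc}}$ convergence.

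The main subtlety — the only place requiring care — is bookkeeping the reference metric: Propositions~\ref{proposition estimates on difference of metrics} and~\ref{proposition estimates on derivatives of difference of metrics} are stated with norms and covariant derivatives taken with respect to the \emph{first} $G_2$-form, so I must take that form to be $\vp_C$ (not $\vp_\la$), so that the reference geometry is fixed as $\la$ varies and the constants $C, C_k$ and the a priori bounds on $\na^j\vp_C$ do not degenerate along the sequence. Since all the target topologies in the statement are already measured with respect to $g_C$, this is the natural choice and no conversion between $g_\la$- and $g_C$-norms is needed. With that choice fixed, the estimate is a direct application and there is no genuine obstacle; the proof is essentially a citation of the two propositions above combined with the scaling identity for the induced metric.
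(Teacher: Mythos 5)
Your argument is correct, but it runs along a different track from the paper's. The paper's proof first invokes Lemma~\ref{lemma decay estimates on G2 forms} (the Kotschwar--Wang characterization) to convert the hypothesis into the weighted decay statement $\lim_{b\to\infty} b^l\|\na^l_{g_C}(\Phi^*\vp-\vp_C)\|_{C^0(\cC^\Sg_b,g_C)}=0$, applies Propositions~\ref{proposition estimates on difference of metrics} and~\ref{proposition estimates on derivatives of difference of metrics} with reference $\vp_C$ on far-out annuli (where, in addition, $|\na^j_{g_C}\vp_C|\to 0$ for $j>0$ by scale invariance), deduces the analogous weighted decay for $\Phi^*g-g_C$, and then translates back to $C^k_{\text{loc}}$-convergence of the rescalings via the same equivalence. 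You instead stay entirely in the rescaled picture: using Lemma~\ref{lemma scaling of metric} to identify $g_{\la^{-3}\rho_\la^*\Phi^*\vp}=\la^{-2}\rho_\la^*\Phi^*g$, you apply the same two propositions (with $\vp_C$ as the fixed reference, so the constants do not degenerate in $\la$) directly on a compact set $K\subset\cC^\Sg_0$, needing only boundedness of $\na^j_{g_C}\vp_C$ on $K$ rather than its decay at infinity, and you handle the self-referential metric-difference terms in \eqref{equation bound on covariant derivative of tg} by absorption/induction in $k$ — a point you actually treat more explicitly than the paper does. What each approach buys: yours is more direct and bypasses the weighted-decay detour entirely; the paper's route produces the weighted decay of $\Phi^*g-g_C$ as an intermediate byproduct, which is exactly the form of the statement recorded and reused in Corollary~\ref{lemma decay estimates}(a) (though you could recover it afterwards by one more application of \cite[Lemma A.1]{kotschwarwang2015} to the metric). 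Both proofs rest on the same core inputs, so the difference is one of bookkeeping rather than substance, and your version is complete.
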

\begin{proof}
Lemma \ref{lemma decay estimates on G2 forms} implies that 
\[ \sup_{x \in \cC^\Sg_r} |\nabla^k_{g_C} (\Phi^* \vp - \vp_C)|(x) \rightarrow 0,
\]
as $r \rightarrow \infty$. Thus, for each $k$, we can find an $r_k$ such that $ ||\Phi^*\vp - \vp_C||_{C^k(\cC^\Sg_{r_k}, g_C)} < \epsilon_k$, as defined in Propositions \ref{proposition estimates on difference of metrics} and \ref{proposition estimates on derivatives of difference of metrics}. Furthermore, the conicality of $\vp_C$ yields the decay $|\nabla^j \vp_C| \rightarrow 0$ for all derivatives of order $j > 0$. Then Proposition \ref{proposition estimates on derivatives of difference of metrics} gives bounds of the form
\[|\nabla^k_{g_C}(\la^{-2} \rho_{\la}^{*} \Phi^* g - g_{C})|_{\vp_C} \le C_k|\nabla^k_{g_C}(\la^{-3} \rho_\la^* \Phi^* \vp - \vp_C)|_{\vp_C}.
\]
This bound, combined with the fact that $\lim_{b \rightarrow \infty} b^k || \nabla^{l}_{g_C} (\Phi^*\vp - \vp_C)||_{C^0(\cC^\Sg_b, g_C)} = 0$, implies that $\lim_{b \rightarrow \infty} b^l || \nabla^{l}_{g_C} (\Phi^*g - g_C)||_{C^0(\cC^\Sg_b, g_C)} = 0$. According \cite[Lemma A.1]{kotschwarwang2015}, this is equivalent to $C^k_{\text{loc}}$-convergence, which concludes the proof of the corollary.
\end{proof}

An immediate consequence of Corollary \ref{corollary phi conical implies g conical} is the following version of \cite[Lemma A.1]{kotschwarwang2015}.

\begin{cor}[\mbox{\cite[Lemma A.1]{kotschwarwang2015}}]\label{lemma decay estimates}
Let $(M, g)$ be a Riemannian manifold, $V$ an end of $M$, and $\Phi : \cC^\Sg_a \rightarrow V$ a diffeomorphism for some $a > 0$. For any nonnegative integer $k$ we say that the property $(AC_k)$ holds if
\begin{equation}\label{equation ac condition order k}
    \lim_{\la \rightarrow \infty} \la^{-3} \rho_\la^* \Phi^* \vp = \vp_C \;\; \text{ in } \;\;C^k_{loc}(\cC^\Sg_0, g_C). \tag{$AC_k$} 
\end{equation}
Then
\begin{itemize}
    \item[(a)] \eqref{equation ac condition order k} implies that 
    \[\la^{-2} \rho_{\la}^{*} \Phi^* g \to g_{C} \text{ as } \la \to \infty,\]
    which holds if and only if
    \[\lim_{b \rightarrow \infty} b^l || \nabla^{l}_{g_C} (\Phi^*g - g_C)||_{C^0(\cC^\Sg_b, g_C)} = 0
    \]
    for each $l =0,1,2,\ldots, k$.
    \item[(b)] If ($AC_0$) holds, then the metrics $\Phi^*g$ and $g_C$ are uniformly equivalent on $\overline{\cC^\Sg_b}$ for any $b>a$, and, for all $\eps >0$, there exists $b > a$ such that, for $(r, \sg) \in \cC^\Sg_b$, 
    \begin{equation}
        (1- \eps)|r-b| \le \bar{r}_b(r, \sg) \le (1+ \eps) |r-b|,
    \end{equation}
    where $\bar{r}_b(x) := d_{\Phi^*g}(x, \partial \cC^\Sg_b)$.
    \item[(c)] If ($AC_2$) holds, then for any $b >a$, there exists a constant $K = K(b, g_\sg) > 0$ such that
    \begin{equation}
        \sup_{x \in \cC^\Sg_b} (\bar{r}_b^2(x) + 1) |\Rm(\Phi^*g)|_{\Phi^*g}(x) \le K.
    \end{equation}
\end{itemize}
\end{cor}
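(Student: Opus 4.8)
The plan is to reduce the whole statement to facts about the metrics $\Phi^*g$ and $g_C$ by invoking Corollary~\ref{corollary phi conical implies g conical}, after which the argument is essentially that of Kotschwar--Wang. Part (a) is then immediate: Corollary~\ref{corollary phi conical implies g conical} supplies the implication $(AC_k)\Rightarrow\la^{-2}\rho_\la^*\Phi^*g\to g_C$ in $C^k_{\mathrm{loc}}(\cC^\Sg_0,g_C)$, and, repeating the proof of Lemma~\ref{lemma decay estimates on G2 forms} with the $2$-tensor $\Phi^*g-g_C$ in place of $\Phi^*\vp-\vp_C$ and using that $g_C$ is dilation-homogeneous of degree $2$ (so $\la^{-2}\rho_\la^*g_C=g_C$), one sees that this $C^k_{\mathrm{loc}}$-convergence is equivalent to $\lim_{b\to\infty}b^l\|\na^l_{g_C}(\Phi^*g-g_C)\|_{C^0(\cC^\Sg_b,g_C)}=0$ for $l=0,\dots,k$; in fact this equivalence is already recorded inside the proof of Corollary~\ref{corollary phi conical implies g conical}.

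For part (b) I would first deduce from $(AC_0)$ and Lemma~\ref{lemma decay estimates on G2 forms} (with $k=0$) that $|\Phi^*\vp-\vp_C|_{\vp_C}(x)\to 0$ as $r(x)\to\infty$, and then apply Proposition~\ref{proposition estimates on difference of metrics} on the region where this quantity lies below the threshold $\eps$ to conclude $|\Phi^*g-g_C|_{g_C}(x)\to 0$ as $r(x)\to\infty$. Hence for each $\de>0$ there is $b_\de>a$ with $(1-\de)g_C\le\Phi^*g\le(1+\de)g_C$ on $\overline{\cC^\Sg_{b_\de}}$; together with the elementary uniform equivalence of $\Phi^*g$ and $g_C$ on the compact annuli $\{b\le r\le b_\de\}$, this gives uniform equivalence on $\overline{\cC^\Sg_b}$ for every $b>a$. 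For the distance estimate, fix $\eps>0$, pick $\de$ with $\sqrt{1-\de}\ge 1-\eps$ and $\sqrt{1+\de}\le 1+\eps$, and set $b=b_\de$. For the cone metric one has the exact identity $d_{g_C}((r,\sg),\partial\cC^\Sg_b)=r-b$, realized by the radial segment, because every path has $g_C$-length at least $\int|\dot r|$; and any path from $x\in\cC^\Sg_b$ to $\partial\cC^\Sg_b$ may be truncated at its first hit of $\{r=b\}$, so $\bar r_b(x)$ is an infimum over paths contained in $\overline{\cC^\Sg_b}$, where the two metrics are $\de$-close. Comparing the lengths of such paths with their $g_C$-lengths from below gives $\bar r_b(x)\ge(1-\eps)(r(x)-b)$, and testing against the radial $g_C$-segment gives $\bar r_b(x)\le(1+\eps)(r(x)-b)$.

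For part (c) I would use Corollary~\ref{corollary phi conical implies g conical} with $k=2$ to get that the rescaled metrics $g_\la:=\la^{-2}\rho_\la^*\Phi^*g$ converge to $g_C$ in $C^2_{\mathrm{loc}}(\cC^\Sg_0,g_C)$; hence their curvature tensors converge uniformly on the compact slice $\{1\}\times\Sg$, so $\sup_{\{1\}\times\Sg}|\Rm(g_\la)|_{g_\la}\le K_0$ for all $\la\ge\la_0$. Under $g\mapsto\la^{-2}\rho_\la^*g$ the pointwise norm of the $(0,4)$-curvature transforms by $|\Rm(g_\la)|_{g_\la}(r,\sg)=\la^{2}\,|\Rm(\Phi^*g)|_{\Phi^*g}(\la r,\sg)$, so taking $r=1$ shows $|\Rm(\Phi^*g)|_{\Phi^*g}(R,\sg)\le K_0 R^{-2}$ for every $R\ge\la_0$; the curvature is bounded on the remaining compact region, and combining with the bound $\bar r_b(x)\le(1+\eps)(r(x)-b)$ from part (b) yields $\sup_{\cC^\Sg_b}(\bar r_b^2+1)|\Rm(\Phi^*g)|_{\Phi^*g}\le K$, with $K$ depending only on $b$ and the link metric.

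The one place that needs genuine care rather than bookkeeping is the distance comparison in (b): one must be sure that $b$ can be chosen simultaneously large enough that the $(AC_0)$ estimate pins $\Phi^*g$ close to $g_C$ on the \emph{entire} end $\overline{\cC^\Sg_b}$, and that minimization to $\partial\cC^\Sg_b$ may be reduced to paths staying in $\overline{\cC^\Sg_b}$ (the truncation argument), after which the comparison with the radial segments of the cone is forced. Everything else in the statement follows mechanically by feeding $(AC_k)$ into Proposition~\ref{proposition estimates on difference of metrics}, Lemma~\ref{lemma decay estimates on G2 forms}, and Corollary~\ref{corollary phi conical implies g conical}.
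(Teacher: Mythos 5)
Your proposal is correct and follows essentially the same route as the paper: part (a) is exactly the appeal to Corollary \ref{corollary phi conical implies g conical} (plus the scaling identity from Lemma \ref{lemma decay estimates on G2 forms}), and your arguments for (b) and (c) are precisely the Kotschwar--Wang arguments that the paper simply cites rather than reproduces. The only difference is that you write out the truncation/radial-comparison and curvature-rescaling details explicitly, which is a faithful expansion rather than a new approach.
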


\begin{proof}
The first part of statement (a) is a restatement of Corollary \ref{corollary phi conical implies g conical}. The rest of the proof proceeds as in \cite{kotschwarwang2015}.
\end{proof}

\subsection{Parametrizing closed AC \texorpdfstring{$G_2$}{G2}-shrinkers}

A Laplacian soliton for the Laplacian flow of closed $G_2$-structures on $M$ is a quadruple $(M,\vp, X, \lambda)$ satisfying
\begin{equation}\label{equation laplacian soliton equation}
\Delta_\vp \vp = \lambda \vp + \cL_X\vp,
\end{equation}
where $\vp$ is a closed $G_2$-structure, $\la \in \mathbb{R}$ and $X$ is a vector field on $M$. The soliton is \emph{shrinking} if $\la <0$, \emph{expanding} if $\la > 0$, and \emph{steady} if $\la = 0$. From this point onward, we consider only the case of shrinkers with dilation constant $\lambda = -3/2$, and we therefore suppress $\lambda$ in all further notation.
There is no loss of generality in this assumption since rescaling a shrinking soliton 
changes its dilation constant.

The shrinker equation then implies the following identity for the soliton metric.

\begin{prop}[\mbox{\cite[Proposition 9.4]{LotayWeiLaplacianFlow}}] 
The metric $g$ associated with a shrinking Laplacian soliton $(M, \vp, X) $ in local coordinates satisfies the equation
\begin{equation}\label{equation shrinker identity for metric}
    -\Ric_{ij} - \frac{1}{3}|T|^2 g_{ij} -2T_i^{\;k}T_{kj} = -\frac{1}{2} g_{ij} + \frac{1}{2}(\cL_{X} g)_{ij} 
\end{equation}
and the vector field $X$ satisfies $d^*(X \lrcorner \vp) = 0$.
\end{prop}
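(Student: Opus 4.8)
The plan is to decompose the shrinker equation \eqref{equation laplacian soliton equation} (with dilation constant $\lambda=-\tfrac32$ and associated vector field $X$) according to the $G_2$-irreducible splitting $\Om^3(M)=\Om^3_{1}(M,\vp)\oplus\Om^3_{7}(M,\vp)\oplus\Om^3_{27}(M,\vp)$. Let $\mathbf{i}_\vp\colon S^2(T^*M)\to\Om^3_{1}(M,\vp)\oplus\Om^3_{27}(M,\vp)$ denote the standard linear isomorphism $\mathbf{i}_\vp(h)_{ijk}=h_i^{\;l}\vp_{ljk}+h_j^{\;l}\vp_{ilk}+h_k^{\;l}\vp_{ijl}$; it satisfies $\mathbf{i}_\vp(g)=3\vp$, so in particular $-\tfrac32\vp=\mathbf{i}_\vp(-\tfrac12 g)$.

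I will use two structural facts about closed $G_2$-structures. First, Bryant's formula (see \cite[\S2]{LotayWeiLaplacianFlow}) gives $\Delta_\vp\vp=\mathbf{i}_\vp(h_\vp)$, where $h_\vp$ is exactly one-half of the right-hand side of the metric evolution equation \eqref{equation evolution metrics}, i.e.\ $h_{\vp,ij}=-\Ric_{ij}-\tfrac13|T|^2 g_{ij}-2T_i^{\;k}T_{kj}$; in particular $\Delta_\vp\vp$ has vanishing $\Om^3_{7}$-component. Second, since $d\vp=0$ we have $\cL_X\vp=d(X\lrcorner\vp)$, and expanding this using $\na_iX_j=\tfrac12(\cL_Xg)_{ij}+\tfrac12(dX^\flat)_{ij}$ together with the torsion identity \eqref{equation nabla phi and torsion} produces a decomposition
\[\cL_X\vp=\mathbf{i}_\vp\!\left(\tfrac12\cL_Xg\right)+\gamma_X,\qquad\gamma_X\in\Om^3_{7}(M,\vp),\]
in which $\gamma_X$, transported by the canonical isomorphism $\Om^3_{7}(M,\vp)\cong\Om^1(M)$, equals a fixed nonzero multiple of the one-form $d^*(X\lrcorner\vp)$. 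Granting these, the proposition follows by substitution: equation \eqref{equation laplacian soliton equation} becomes $\mathbf{i}_\vp(h_\vp)=\mathbf{i}_\vp\!\left(-\tfrac12 g+\tfrac12\cL_Xg\right)+\gamma_X$; projecting onto $\Om^3_{1}\oplus\Om^3_{27}$ and using that $\mathbf{i}_\vp$ is injective on symmetric $2$-tensors gives $h_\vp=-\tfrac12 g+\tfrac12\cL_Xg$, which is precisely \eqref{equation shrinker identity for metric}, while projecting onto $\Om^3_{7}$ gives $\gamma_X=0$, i.e.\ $d^*(X\lrcorner\vp)=0$.

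The first fact is classical; alternatively, the metric identity \eqref{equation shrinker identity for metric} by itself can be obtained by noting that the shrinker generates the self-similar Laplacian flow $\vp(t)=(-t)^{3/2}\psi_t^*\vp$, $t<0$, with $\psi_t$ the flow of $\tfrac{1}{-t}X$ normalized so that $\psi_{-1}=\mathrm{id}$; then $g_{\vp(t)}=(-t)\,\psi_t^*g$ by Lemma~\ref{lemma scaling of metric}, and differentiating at $t=-1$ and comparing with \eqref{equation evolution metrics} yields $-2\Ric_{ij}-\tfrac23|T|^2 g_{ij}-4T_i^{\;k}T_{kj}=-g_{ij}+(\cL_Xg)_{ij}$. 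I expect the genuinely delicate point to be the second fact, namely that the $\Om^3_{7}$-component of $\cL_X\vp$ is, up to a nonzero constant, exactly $d^*(X\lrcorner\vp)$. This is a finite $G_2$-representation-theoretic computation: $\gamma_X$ is a universal linear combination of a torsion term $X\lrcorner T$ and a ``curl'' term (the vector field dual to the $\Om^2_7$-part of $dX^\flat$), while $d^*(X\lrcorner\vp)=\pm\star_\vp\,d(X^\flat\wedge\star_\vp\vp)$ unwinds --- via the torsion equation $d\star_\vp\vp=\tau_2\wedge\vp$, with $\tau_2$ a fixed multiple of $T$, and the $\Om^2_7\oplus\Om^2_{14}$-splitting of $dX^\flat$ --- into the very same two terms; the crux is therefore just the bookkeeping of the universal constants, using only the contraction identities for $\vp$ and $\star_\vp\vp$ recorded in \cite[\S2]{LotayWeiLaplacianFlow}.
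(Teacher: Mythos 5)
The paper itself gives no proof of this proposition---it is imported directly from Lotay--Wei \cite[Proposition 9.4]{LotayWeiLaplacianFlow}---and your argument is essentially the standard proof of that result: decompose the soliton equation into $G_2$-irreducible pieces, use $\De_\vp\vp = i_\vp(h_\vp)$ with $h_\vp$ the left-hand side of \eqref{equation shrinker identity for metric} to handle the $\Om^3_1\oplus\Om^3_{27}$ part (injectivity of $i_\vp$ on symmetric $2$-tensors then gives \eqref{equation shrinker identity for metric}), and read off $d^*(X\lrcorner\vp)=0$ from the $\Om^3_7$-part. All the facts you invoke are true, including the one you flag as the delicate point; for that step you can in fact bypass the contraction-identity bookkeeping entirely: since wedging with $\vp$ annihilates $\Om^3_1\oplus\Om^3_{27}$ and is injective on $\Om^3_7$, it suffices to compute $(\cL_X\vp)\we\vp$, and writing $\sg:=X\lrcorner\vp\in\Om^2_7$, so that $\sg\we\vp=c\,\star_\vp\sg$ pointwise for a universal constant $c\neq 0$, the closedness $d\vp=0$ gives $(\cL_X\vp)\we\vp=d\sg\we\vp=d(\sg\we\vp)=c\,d\star_\vp\sg$, whose Hodge star is a fixed nonzero multiple of $d^*(X\lrcorner\vp)$; hence the $\Om^3_7$-component of $\cL_X\vp$ vanishes exactly when $d^*(X\lrcorner\vp)=0$, as you claimed. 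Your alternative derivation of \eqref{equation shrinker identity for metric} from the self-similar flow and \eqref{equation evolution metrics} is also sound, and is essentially how the paper later recovers the time-dependent identities in Claim~\ref{claim with identities over time}.
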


When $X$ is the gradient of some potential function $f$, we call the triple $(M,\vp, f)$ a \emph{gradient} shrinking  Laplacian soliton. For short, we may call it a $G_2$-shrinker. In this case, the identity \eqref{equation shrinker identity for metric} can be rewritten as
\begin{equation}\label{equation gradient shrinker identity for metric}
    -\Ric_{ij} - \frac{1}{3}|T|^2 g_{ij} -2T_i^{\;k} T_{kj} = -\frac{1}{2} g_{ij} + \nabla_i \nabla_j f. 
\end{equation}
We can regard the previous equation as a perturbation of the shrinking gradient Ricci soliton equations
$$-\Ric_{ij} = -\frac{1}{2} g_{ij} + \nabla_i \nabla_j f$$
by terms quadratic in the torsion tensor $T$. 
We use the previous equation to prove that any gradient shrinking Laplacian soliton with quadratic curvature decay admits a reparametrization that is compatible in a suitable sense with the level sets of the potential function $f$.

\begin{lem}[\mbox{\cite[Lemma A.2]{kotschwarwang2015}}]\label{lemma first parametrization}
Suppose that $(\cC^\Sg_{r_0}, \vp, f)$ is a gradient shrinking Laplacian soliton end with associated metric $g$, which satisfies
\begin{equation}
    (\bar{r}^2(x) + 1) |\Rm(g)| \le K
\end{equation}
for some constant $K>0$, where $\bar{r}(x) := d_g(x, \partial \cC^\Sg_{2r_0})$, and in addition $\lim_{r_i \rightarrow \infty} \bar{r}(r_i, \sg_i) \rightarrow \infty$ for all sequences $(r_i, \sg_i) \in \cC^\Sg_{r_0}$ with $r_i \rightarrow \infty$ as $i \rightarrow \infty$. Then there exists $s_0 > 0$, a closed $6$-dimensional manifold $\bar{\Sg}$, and a map $\bar{\Phi} : \cC^{\bar \Sg}_{s_0} \rightarrow \cC^\Sg_{r_0}$, where $\cC^{\bar \Sg}_{s_0} := (s_0, \infty) \times \bar{\Sg}$, 
with the following properties:
\begin{enumerate}
\item $\bar{\Phi}$ is a diffeomorphism onto its image, and $\bar{\Phi}(\cC^{\bar \Sg}_{s_0})$ is an end of the closure of $\cC^\Sg_{2r_0}$.
\item For all $(s, \bar{\sg}) \in \cC^{\bar \Sg}_{s_0}$,
\[\bar{f}(s, \bar{\sg}) = \frac{s^2}{4}, \;\;\; \text{ and }\;\;\; \frac{\partial \bar \Phi}{\partial s} = \bar f^{\frac{1}{2}} \frac{\bar \nabla \bar f}{|\bar \nabla \bar f|^2_{\bar g}}.
\]
\item There exists a constant $N > 0$ such that, for all $(s, \bar \sg) \in \cC^{\bar \Sg}_{2s_0}$,
\[N^{-1}(s-1) \le \bar{s}(s, \bar \sg) \le N(s+1) \;\;\; \text{ and }\;\;\; (s^2 + 1)|\Rm(\bar g) |_{\bar g}(s, \bar \sg) \le N.
\]
\end{enumerate}
\noindent
Here, $\bar f := f \circ \bar \Phi$, $\bar g := \bar \Phi^*g$, and $\bar s(x) := d_{\bar g}(x, \partial \cC^{\bar \Sg}_{2s_0})$.
\end{lem}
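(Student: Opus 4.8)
The plan, following the structure of Kotschwar--Wang's Lemma~A.2, is to first extract precise asymptotics for the shrinker potential $f$ and for $\na f$ from the identity \eqref{equation gradient shrinker identity for metric} together with the quadratic curvature decay and the properness hypothesis on $\bar r$, and then to build $\bar\Phi$ as a reparametrized gradient flow of $f$ issuing from a fixed level set. For the asymptotics: rewriting \eqref{equation gradient shrinker identity for metric} as $\na_i\na_j f = \tfrac12 g_{ij} - \Ric_{ij} - \tfrac13|T|^2 g_{ij} - 2T_i^{\ k}T_{kj}$ and using $R=-|T|^2$, the trace shows $\Delta f$ is controlled by the curvature, while contracting with $\na f$ and using $\na^iT_{ij}=0$ and the contracted Bianchi identity yields, after normalizing the free additive constant, the $G_2$-analogue of the shrinker identity $|\na f|^2 = f + O(|T|^2)$ together with the logarithmically growing correction flagged in the introduction (see Lemma~\ref{lemma fundamental shrinker identity}). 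Since $(\bar r^2+1)|\Rm(g)|\le K$, integrating $(f\circ\ga)'' = \tfrac12 + O(\bar r^{-2})$ twice along a minimizing geodesic $\ga$ from $\partial\cC^\Sg_{2r_0}$ and using $\lim_{r_i\to\infty}\bar r(r_i,\sg_i)=\infty$ gives $f(x) = \tfrac14\bar r(x)^2(1+o(1))$; in particular $f$ is proper on $\cC^\Sg_{r_0}$, and by the identity above $|\na f|(x) = \tfrac12\bar r(x)(1+o(1))$, so $\na f$ is nowhere zero and $|\na f|$ is comparable to $f^{1/2}$ outside a compact set.

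For the construction: fix $s_0>0$ large enough that $\na f\ne 0$ on $\{f\ge s_0^2/4\}$ and that this set meets the end in a subset of $\cC^\Sg_{2r_0}$ which is an end of $\overline{\cC^\Sg_{2r_0}}$; set $\bar\Sg := \{f = s_0^2/4\}$, a closed $6$-manifold by properness and $\na f\ne 0$, and let $\bar\Phi: \cC^{\bar\Sg}_{s_0}=(s_0,\infty)\times\bar\Sg \to \cC^\Sg_{r_0}$ be the flow of the vector field $V := f^{1/2}\na f/|\na f|^2$ with $\bar\Phi(s_0,\cdot)=\mathrm{id}_{\bar\Sg}$, i.e.\ $\partial_s\bar\Phi = (f\circ\bar\Phi)^{1/2}\,\na f/|\na f|^2$. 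Then $h(s):=f(\bar\Phi(s,\bar\sg))$ solves $h'=h^{1/2}$ with $h(s_0)=s_0^2/4$, so $h(s)=s^2/4$; this gives $\bar f(s,\bar\sg)=s^2/4$ and the derivative identity in (2) (rewritten via $\bar\na\bar f$, valid because $\bar\Phi$ is an isometry onto its image, so $\bar\na(f\circ\bar\Phi)$ push-forwards to $\na f$). Since $|\partial_s\bar\Phi|_g = f^{1/2}/|\na f| = 1+o(1) = O(1)$, each flow line has bounded speed, and as $f$ is proper with $h\to\infty$, the flow lines stay in $\cC^\Sg_{r_0}$ and exhaust the end, so $\bar\Phi$ is defined on all of $\cC^{\bar\Sg}_{s_0}$.

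For properties (1) and (3): $\bar\Phi$ is injective because distinct $\bar\sg$ lie on distinct integral curves of $V$, which is positively proportional to a gradient and so has non-crossing flow lines transverse to the level sets of $f$, while $s\mapsto f(\bar\Phi(s,\bar\sg))=s^2/4$ is strictly increasing; and $\bar\Phi$ is an immersion since $\partial_s\bar\Phi$ is transverse to those level sets and the flow restricts to diffeomorphisms between them. Hence $\bar\Phi$ is a diffeomorphism onto its image $\{f>s_0^2/4\}$ in the end, which by the choice of $s_0$ is an end of $\overline{\cC^\Sg_{2r_0}}$, giving (1). For (3), $\bar g=\bar\Phi^*g$ yields $|\Rm(\bar g)|_{\bar g}(s,\bar\sg)=|\Rm(g)|(\bar\Phi(s,\bar\sg))\le K(\bar r^2+1)^{-1}$, and $f\circ\bar\Phi=s^2/4$ together with $f=\tfrac14\bar r^2(1+o(1))$ gives $\bar r(\bar\Phi(s,\bar\sg))=s(1+o(1))$, whence $(s^2+1)|\Rm(\bar g)|_{\bar g}\le N$ for $s\ge 2s_0$; moreover $\bar g$ is block-diagonal (the $\{s=\mathrm{const}\}$ slices being level sets of $f$, hence $(\na f)^\perp$) with $\bar g(\partial_s,\partial_s)=|\partial_s\bar\Phi|_g^2\to 1$, so $\bar s(s,\bar\sg)$ is comparable to $s$ and the bounds $N^{-1}(s-1)\le\bar s\le N(s+1)$ follow, using also that $\partial\cC^{\bar\Sg}_{2s_0}=\{f=s_0^2\}$ lies a bounded distance from $\partial\cC^\Sg_{2r_0}$ since both are compact.

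The main obstacle is the first step. In the Ricci case the identity $R+|\na f|^2-f=\mathrm{const}$ is exact, so the quadratic growth of $f$ and the relation $|\na f|^2\sim f$ are immediate; here one must integrate the torsion-perturbed Hessian equation and control all error terms — in particular the logarithmically growing correction in the $G_2$-analogue of that identity — against the assumed quadratic curvature decay, which is exactly what makes those errors $o(f)$ and hence harmless for the leading-order behavior. Once the asymptotics of $f$ and $\na f$ are established, the remaining steps are the same bookkeeping as in \cite{kotschwarwang2015}.
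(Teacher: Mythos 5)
Your proposal is correct and follows essentially the route the paper takes: reduce everything to showing that $f$ is proper and $\nabla f$ is nonvanishing outside a compact set by integrating the torsion-perturbed Hessian identity $\nabla^2 f-\tfrac12 g=O(\bar r^{-2})$ twice along minimizing geodesics from $\partial\cC^\Sg_{2r_0}$, and then run the Kotschwar--Wang gradient-flow reparametrization $\partial_s\bar\Phi=\bar f^{1/2}\bar\nabla\bar f/|\bar\nabla\bar f|^2$ verbatim, with the curvature bound and distance comparison in (3) following from $f\circ\bar\Phi=s^2/4$ and $f\sim\tfrac14\bar r^2$.

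The one place where you diverge, and where you should be careful, is the nonvanishing of $\nabla f$ and the estimate $|\partial_s\bar\Phi|_g=1+o(1)$: you route these through the log-corrected conservation law $R+|\nabla f|^2-f=O(\log)$, pointing to Lemma \ref{lemma fundamental shrinker identity}. In the paper that lemma is stated on the reparametrized end $\cC^{\bar\Sg}_{r_0}$ and its proof uses part (3) of the very lemma you are proving, so citing it here would be circular; the paper's own proof deliberately avoids the identity (see the remark following it) and instead reads off $\frac{d}{dt}(f\circ\gamma)(l)=\langle\nabla f,\dot\gamma\rangle\ge\tfrac14\bar r(x)$ directly from the once-integrated Hessian inequality, which already gives both nonvanishing and $|\nabla f|\le C(\bar r+1)$. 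Your sketch does indicate rederiving the identity from scratch (contracted Bianchi, $\nabla^iT_{ij}=0$, $\nabla f\lrcorner T=0$), and that is legitimate provided you do it in the original coordinates with $\bar r$ in place of $r$ and note that the $O(\log\bar r)$ bound on the integrated error needs the a priori bound $|\nabla f|\le C(\bar r+1)$, which must come first from the Hessian integration (your stated order has the identity before that bound). For the conclusion itself the identity is not needed at all, only for the sharper statement $|\partial_s\bar\Phi|_g\to1$, and even there the crude two-sided comparability of $f^{1/2}$ and $|\nabla f|$ coming from the two integrations suffices, as in Kotschwar--Wang. The remaining bookkeeping (level set $\{f=s_0^2/4\}$ closed, flow lines filling out the unbounded component of the superlevel set, block-diagonal form of $\bar g$) matches what the paper delegates to the verbatim repetition of \cite[Lemma A.2]{kotschwarwang2015}.
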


\begin{proof} 
The proof is a minor adaptation of~\cite[Lemma A.2]{kotschwarwang2015}. More specifically,  it is only necessary to find some $r_0 >0$ such that $f$ is proper and has no critical points on $\cC^\Sg_{r_0}$: at this point the proof of Kotschwar--Wang can be repeated verbatim. 
Notice that due to the quadratic curvature decay assumed, the left-hand side of \eqref{equation gradient shrinker identity for metric} decays at $O(r^{-2})$ and hence 
$
\nabla^2 f - \tfrac{1}{2}g$
decays quadratically as in the case of gradient Ricci shrinkers. 
We can therefore (mostly) follow the argument of Kotschwar--Wang: Let $x \in \cC^\Sg_{2r_0}$ and $\ga: [0,l] \rightarrow \cC^\Sg_{2r_0}$ be a unit-speed, minimizing geodesic with $\ga(0) = x_0 \in \partial \cC^\Sg_{2r_0}$, $\ga(l) = x$, and $l = \bar{r}(x)$. 
Combining the quadratic decay of $\nabla^2f - \tfrac{1}{2}g$ with 
\[
\frac{d^2}{dt^2}(f\circ \ga)(t) = \nabla^2 f (\dot \ga(t), \dot \ga(t))
\]
yields that
\[\frac{1}{2} - \frac{K}{t^2 +1} \le \frac{d^2}{dt^2} (f \circ \ga)(t) \le \frac{1}{2} + \frac{K}{t^2 + 1}.
\]
Integrating once yields
\[\frac{\bar{r}(\ga(t))}{2} - N_1 \le \frac{d}{dt} (f \circ \ga)(t) \le \frac{\bar{r}(\ga(t))}{2} + N_1,
\]
where $N_1$ depends on $K$ and $\sup_{\dd \cC^\Sg_{2r_0}}|\nabla f|$.
In particular, the lower bound implies that
\[\frac{d}{dt} (f \circ \ga)(l) = \nabla f(\ga(l))\cdot \dot \ga(l) \ge \frac{\bar{r}(x)}{4}.
\]
Thus $\nabla f$ cannot be zero for sufficiently large $x$. Integrating the previous inequalities yields
\begin{equation}\label{equation distance potential comparison}
\frac{\bar{r}^2(\ga(t))}{4} - N_2\,(\bar{r}(x) +1) \le f(x) \le \frac{\bar{r}^2(\ga(t))}{4} + N_2\,(\bar{r}(x) +1),
\end{equation}
where $N_2$ depends on $K$ and $\sup_{\dd \cC^\Sg_{2r_0}}(|f| + |\nabla f|)$. This implies that $f$ is proper. The previous inequalities together with the properness of $f$ and the non-vanishing of its gradient are enough to carry out the remainder of the argument in \cite[Lemma A.2]{kotschwarwang2015}.
\end{proof}

\begin{rmk}
Kotschwar--Wang used the fact that gradient Ricci shrinkers satisfy the identity 
$R + \abs{\nabla f}^2=f$ to establish that $f$ has no critical values on $\cC^\Sg_{r_0}$ for $r_0$ sufficiently large. There is an analogous but more complicated identity for gradient Laplacian shrinkers that we discuss shortly, but we did not make use of this identity in the proof above.
\end{rmk}

Henceforth, we will assume we are working in the parametrization described in Lemma \ref{lemma first parametrization}, but for the sake of clarity we will relabel $\cC^{\bar\Sg}_{s_0}$ as $\cC^{\bar\Sg}_{r_0}$ and continue to refer to the radial parameter on $\cC^{\bar\Sg}_{r_0}$ as ``$r$", instead of ``$s$" as in the statement of the lemma. In the rest of the section, we will write the reparametrized shrinker as $(\cC^{\bar\Sg}_{r_0}, \vp, f)$, where $\vp = \bar \Phi^* \vp$, etc., to reduce notational complexity. However, this convention will not carry over to the next section.

We now prove an estimate for the growth of an important quantity associated to a shrinker. 

\begin{lem}\label{lemma fundamental shrinker identity}
On any AC gradient Laplacian shrinker $(\cC^{\bar\Sg}_{r_0}, \vp,  f)$, 
\begin{equation}\label{equation fundamental shrinker identity}
    R + |\nabla f|^2 - f = O(\log r).
\end{equation}
\end{lem}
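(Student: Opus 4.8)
The plan is to derive an evolution/constraint equation for the scalar quantity $P := R + |\nabla f|^2 - f$ along the radial direction on the reparametrized end, show that its derivative decays fast enough to be integrable up to a logarithmic error, and then integrate. First I would recall the standard Ricci-shrinker computation: taking the divergence of the trace of the soliton identity $-\mathrm{Ric}_{ij} - \tfrac13|T|^2 g_{ij} - 2T_i{}^kT_{kj} = -\tfrac12 g_{ij} + \nabla_i\nabla_j f$ yields a first-order relation for $\nabla_i(R + |\nabla f|^2 - f)$. In the pure Ricci case this gives exactly $\nabla(R + |\nabla f|^2 - f) = 0$, using the contracted second Bianchi identity $\nabla_i R = 2\nabla_j \mathrm{Ric}_{ij}$ and the commutation of derivatives on $\nabla f$. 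Here, the extra torsion terms contribute, so instead I would obtain
\[
\nabla_i\big(R + |\nabla f|^2 - f\big) = E_i,
\]
where $E_i$ is a contraction of $\nabla$ applied to the quadratic torsion terms $\tfrac13|T|^2 g_{ij} + 2T_i{}^kT_{kj}$, together with curvature–torsion cross terms (using $R = -|T|^2$, the divergence-free property $\nabla^i T_{ij} = 0$, and Bianchi). Schematically $E_i = O(|T||\nabla T| + |T|^2|\nabla f| + \ldots)$.

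Next I would invoke the AC decay estimates. By Corollary \ref{lemma decay estimates}(c) the curvature satisfies $|\mathrm{Rm}| = O(r^{-2})$, and since $R = -|T|^2$ this forces $|T| = O(r^{-1})$; one also needs $|\nabla T| = O(r^{-2})$, which should follow from the asymptotic conicality at one higher order (the definition gives $C^3_{\mathrm{loc}}$ convergence of $\vp$, hence control of $\nabla T$ through \eqref{equation nabla phi and torsion} and the Corollary \ref{corollary phi conical implies g conical}-type metric estimates). From Lemma \ref{lemma first parametrization} we have $f = r^2/4$ in the new coordinates, so $|\nabla f| = O(r)$ — this is the crucial point of departure from Kotschwar--Wang, since it means a term like $|T|^2|\nabla f|$ is only $O(r^{-1})$, not faster. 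Assembling these, the worst terms in $E_i$ are of size $O(r^{-1})$ (measured in the $g$-norm), so $|E| = O(r^{-1})$.

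Finally I would integrate along a radial curve. Writing $\partial_r P = \langle \nabla P, \partial_r\rangle$ and noting $|\partial_r|_g = O(1)$ on the AC end, we get $|\partial_r P| = O(r^{-1})$, and integrating from a fixed $r_0$ gives $|P(r) - P(r_0)| \le C\int_{r_0}^r s^{-1}\,ds = O(\log r)$, which is exactly \eqref{equation fundamental shrinker identity}. The main obstacle I anticipate is the bookkeeping in deriving $E_i$: one must carefully track which torsion contractions appear when differentiating the trace of \eqref{equation gradient shrinker identity for metric} and commuting derivatives past $\nabla^2 f$, using the closed-$G_2$ identities ($T$ a 2-form, $\nabla^i T_{ij}=0$, $R=-|T|^2$) and possibly a Bochner-type identity relating $\Delta f$, $|\nabla f|^2$ and $R$; and then confirming that every resulting term genuinely has the claimed $O(r^{-1})$ decay, i.e. that no term secretly grows like $|\nabla f|^2 \cdot (\text{something only } O(1))$. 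If some term turns out to be only $O(r^{-1})$ rather than integrable, the logarithm is unavoidable — which is precisely what the statement asserts and what later forces the weaker Carleman estimate.
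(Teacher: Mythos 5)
Your proposal is correct and follows essentially the same route as the paper: take the divergence and trace of the soliton identity, use the contracted Bianchi identity and $R=-|T|^2$ to express $\nabla_i(R+|\nabla f|^2-f)$ in terms of quadratic torsion quantities, bound these by $O(r^{-1})$ using the AC decay $|T|=O(r^{-1})$, $|\nabla T|=O(r^{-2})$, $|\nabla f|=O(r)$, and integrate along a (radial/minimizing geodesic) curve to get the $O(\log r)$ bound. The only cosmetic difference is that the paper also uses $\nabla f\lrcorner T=0$ (from $d^*(\nabla f\lrcorner\vp)=0$ and $\operatorname{curl}\nabla f=0$) to drop the cross term $T_i{}^lT_{lk}\nabla_k f$, whereas you simply estimate it; either way that term is $O(r^{-1})$ and the conclusion is the same.
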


\begin{proof}
Let $\{x_1, \ldots, x_7\}$ be a system of geodesic normal coordinates in the neighborhood of a point $p$. Taking the divergence of~\eqref{equation gradient shrinker identity for metric} yields
\begin{align*}
    0 &= \nabla_j \left(\Ric_{ij} + \tfrac{1}{3}|T|^2 g_{ij} + 2T_i^{\;k}T_{kj} + \nabla_i \nabla_j f- \frac{1}{2}g_{ij}\right) \\
    &= \frac{1}{2} \nabla_i R + \nabla_i \Delta f - R_{jijk} \nabla_k f + \nabla_j \left(\tfrac{1}{3}|T|^2 g_{ij} + 2T_i^{\;k}T_{kj}\right),
\end{align*}
where the second equality uses the contracted Bianchi identity. Taking the trace of \eqref{equation gradient shrinker identity for metric} yields
\[
\Delta f = -\frac{2}{3}R + \frac{7}{2},
\]
where we have used the identity $-R = |T|^2$, which holds for any closed $G_2$-structure. Substituting this expression for $\Delta f$ into the previous equation we obtain
\begin{align*}
    0 &= -\frac{1}{6} \nabla_i R + R_{ik} \nabla_k f + \nabla_j \left(\tfrac{1}{3}|T|^2 g_{ij} + 2T_i^{\; k} T_{kj}\right) \\
    &= -\frac{1}{6} \nabla_i R + \bigg(- \tfrac{1}{3}|T|^2 \de_{ik} -2T_i^{\; l}T_{lk} + \frac{1}{2}\de_{ik} - \nabla_i \nabla_k f \bigg) \nabla_k f\\
    & \qquad \qquad + \nabla_j \left(\tfrac{1}{3}|T|^2 g_{ij} + 2T_i^{\; k}T_{kj}\right) \\
    &=-\frac{1}{6} \nabla_i R + \frac{1}{2}\nabla_i f - \frac{1}{2} \nabla_i |\nabla f|^2 - \tfrac{1}{3}|T|^2 \nabla_i f -2T_i^{\; l}T_{lk} \nabla_k f \\
    & \qquad \qquad + \nabla_j \left(\tfrac{1}{3}|T|^2 g_{ij} + 2T_i^{\; k}T_{kj}\right)\\
    &= -\frac{1}{2} \nabla_i R + \frac{1}{2}\nabla_i f - \frac{1}{2} \nabla_i |\nabla f|^2 - \tfrac{1}{3}|T|^2 \nabla_i f -2T_i^{\; l}T_{lk} \nabla_k f + \nabla_j 2T_i^{\; k}T_{kj},
\end{align*}
where in the second equality we again used~\eqref{equation gradient shrinker identity for metric}, and in the fourth that $R = -|T|^2$.
We can rewrite the last equality as
\begin{align*}
    \nabla_i (R + |\nabla f|^2 - f) = -\frac{2}{3}|T|^2 \nabla_i f - 4T_i^{\; l}T_{lk} \nabla_k f + 4\nabla_j T_i^{\; k}T_{kj}.
\end{align*}
At this point, a further reduction can be made. By \cite[Proposition 9.4]{LotayWeiLaplacianFlow}, $d^*(\nabla f \lrcorner \vp) = 0$. However, by \cite[(9.10)]{LotayWeiLaplacianFlow} 
\[d^*(\nabla f \lrcorner \vp) = (\curl(\nabla f))^\flat + (\nabla f \lrcorner T)^\flat. 
\]
Since $\nabla f$ is a gradient, $\curl(\nabla f) = 0$ and thus $\nabla f \lrcorner T = 0$.  This allows us to remove the term involving $\nabla f \lrcorner T$ from the right hand side.
\begin{align}\label{equation derivative of fundamental identity}
    \nabla_i (R + |\nabla f|^2 - f) = -\frac{2}{3}|T|^2 \nabla_i f + 4\nabla_j T_i^{\; k}T_{kj}.
\end{align}
Note that the right-hand side of this equation is $O(r^{-1})$ by Lemma \ref{lemma decay estimates} and Lemma \ref{lemma first parametrization}, since
\[\bigg|-\frac{2}{3}|T|^2 \nabla_i f + 4\nabla_j T_i^{\; k}T_{kj}\bigg| \le C(|T|^2|\nabla f| + |T||\nabla T|).
\]
Now, select a point $x_0$ on $\dd \cC^{\bar\Sg}_{r_0}$. For any $x \in \cC^{\bar\Sg}_{2r_0}$, take a unit-speed, minimizing geodesic $\ga : [0, l] \rightarrow \cC^{\bar\Sg}_{r_0}$ with $\ga(0) = x_0$ and $\ga(l) = x$. Integrating \eqref{equation derivative of fundamental identity}, we see 
\begin{align*}
    \bigg|\int_0^l \frac{d}{dt}\big((R + |\nabla f|^2 - f)\circ \ga(t)\big) dt \bigg| &\le  \int_0^l  |\nabla_{\dot \ga} (R + |\nabla f|^2 - f)| dt \\
    &\le \int_0^l  C\big((|T|^2|\nabla f| + |T||\nabla T|)(\ga(t)\big) |\dot \ga (t)| dt\\
    &\le \int_0^{\bar r(x)} \frac{C}{(t+1)^{-1}} dt \\
    &= C \log(r),
\end{align*}
where $\bar r(x)$ is the distance of $x$ to the boundary, and $r$ is the radial parameter. The final equality comes from the comparison (3) in Lemma \ref{lemma first parametrization}.
\end{proof}

In the following remark, we recall some well-known properties of complete gradient Ricci shrinkers 
and point out that they all fail in general on complete gradient Laplacian shrinkers.
\begin{rmk}
On any complete gradient Ricci shrinker $(M,g,f)$ with dilation constant $\lambda = -\tfrac{1}{2}$
we may (by adding a constant to the shrinker potential function $f$) assume that
\begin{equation}
\label{equation gradient Ricci shrinker conservation law}
R + \abs{\nabla f}^2 - f=0.    
\end{equation}
Then $(M,g,f)$ has the following properties:
\begin{enumerate}[left=0.25em]
    \item $M$ has finite fundamental group~\cite[Theorem 1.1]{Wylie}.
    \item The scalar curvature $R$ is strictly positive (except on the Gaussian shrinker on $\bR^n$)~\cite[Theorem 3]{PRS:MZ2012}.
    \item
    $\sqrt{f}$ is a $\tfrac{1}{2}$-Lipschitz function, that is, it satisfies $\abs{\sqrt{f(x)}-\sqrt{f(x_0})} \le \frac{1}{2}r(x)$, where $r(x)$ denotes the distance between $x$ and the reference point $x_0$.
    \item 
    $f$ satisfies the estimate $f(x) \ge \frac{1}{4}(r(x)-c_1)^2$, where $c_1$ is a positive constant depending on $n$ and on the metric $g$ on the unit ball $B_{x_0(1)}$~\cite[Proposition 2.1]{CaoZhu:JDG2010}.
    \item $g$ has at most Euclidean volume growth~\cite[Theorem 1.2]{CaoZhu:JDG2010}.
    \item $g$ has at least linear volume growth~\cite[Theorem 6.1]{Munteanu:Wang:CAG2012}.
\end{enumerate}

Fowdar~\cite[\S 5.2]{Fowdar:S1:inv:Laplacian:flow} constructed explicit complete gradient Laplacian shrinkers on $7$-manifolds of the form 
$\mathbb{R} \times P^6$ where $P^6$ is the total space of a principal $T^2$-bundle over a hyperK\"ahler $4$-manifold $(M^4,\omega_1,\omega_2,\omega_3)$
endowed with a connection $\theta=(\theta_1,\theta_2)$ whose curvature is $(\omega_2,\omega_3)$.
One particular case that arises is when $M^4$ is a flat $4$-torus $\mathbb{T}^4$ and $P^6$ is the Iwasawa manifold $I^6$,
a compact complex nilmanifold with first Betti number $b^1(I^6)=4$. The closed $3$-form $\varphi$
\[
\varphi = dt \wedge \left( \sqrt{3}\, e^{\frac{t}{6}}\, \omega_1 + \tfrac{1}{3}e^{\frac{t}{3}} \,\theta_1 \wedge \theta_2 \right) - e^{\frac{t}{3}} (\omega_3 \wedge \theta_1 - \omega_2 \wedge \theta_2)
\]
is a gradient Laplacian shrinker on $\bR \times I^6$ with dilation constant $\lambda = -\tfrac{1}{2}$ and soliton vector field $X=\tfrac{5}{2} \partial_t$.
The metric induced by~$\varphi$ is
\[
g_\varphi = dt^2 + \sqrt{3} e^{\frac{t}{6}}\, g_{\mathbb{T}^4} + \tfrac{1}{3}e^{\frac{t}{3}} (\theta_1\otimes\theta_1 + \theta_2\otimes \theta_2).
\]
$g_\varphi$ defines a complete metric on $\mathbb{R} \times I^6$ with two ends,  one of finite volume and the other with exponential volume growth; it has constant negative scalar curvature $R \equiv -\tfrac{3}{4}$; 
the soliton potential is the linear function $f=\tfrac{5}{2}t$. 

This example shows that none of the properties of complete gradient Ricci shrinkers
recalled above holds for complete gradient Laplacian shrinkers.
\end{rmk}

\section{Setting up the Parabolic Problem}\label{section setting up the parabolic problem}

In order to apply the parabolic Carleman estimates, we must obtain Laplacian flows from our $G_2$-shrinkers whose limits at the singular time coincide. Thus, when we construct a flow from our shrinker, we obtain a \textit{dynamical limit} whose relationship with the \textit{rescaling limit} given in Definition \ref{definition asymptotically conical} is unknown. In this section, we construct a flow such that the dynamical limit coincides with the rescaling limit. In particular, this implies that given any two $G_2$-shrinkers asymptotic to the same cone in the sense of Definition \ref{definition asymptotically conical}, flows can be constructed on the ends of each shrinker that coincide at the terminal time-slice.

The general outline of the argument proceeds as follows. A version of \cite[Proposition 2.1]{kotschwarwang2015}, Proposition \ref{proposition backward flow reparametrization}, establishes a Laplacian flow on the end $V \subset M$ which interpolates between the original shrinker $\vp$ and a smooth limit $\vp_D$, which is a     $G_2$-cone. Then, adapting an argument of Conlon--Deruelle--Sun in \cite{ConlonDeruelleSun19}, we show that $\vp$ has asymptotic cone $\vp_D$ with respect to the $C^k$-topology associated to the metric $g_{\vp_D}$. Finally, we show that the topology of convergence in $C^k_{loc}(V,g_{\vp_D})$ is equivalent to that of convergence in $C^k_{loc}(V,g_{\vp_C})$---and hence that $\vp_C = \vp_D$.

The following proposition is an adaptation of \cite[Proposition 2.1]{kotschwarwang2015} which constructs a shrinking flow associated to $\vp$, establishes several of its important properties, and establishes the existence of the dynamic limit $\vp_D$. In the sequel, we often consider the backward Laplacian flow and correspondingly the ``time-reversed" spacetime $\cC^{\bar\Sg}_{r_0} \times [0, \tau)$, for a time parameter $t<0$, $\tau = -t$. 

\begin{prop}\label{proposition backward flow reparametrization}
Given a gradient Laplacian shrinker $(M, \vp, f)$ asymptotic to the regular cone $(\cC^{\Sg}_0, \vp_C)$ along the end $V \subset M$, there exist constants $K_0, N_0,$ and $r_0 >0$, and a smooth family of maps $\Psi_\tau : \cC^{\Sg}_{r_0} \rightarrow  \cC^{\Sg}_{r_0}$ defined for $\tau \in (0,1]$ satisfying:
\begin{enumerate}
    \item For each $\tau \in (0,1]$, $\Psi_\tau$ is a diffeomorphism onto its image and $ \Psi_\tau(\cC^{\Sg}_{r_0})$ is an end of $\overline{\cC^{\Sg}_{r_0}}$.
    \item By an abuse of notation, let $\vp$ also denote the pullback of $\vp$ to $\cC^{\Sg}_{r_0}$. The family of closed $G_2$-forms $\vp(x,\tau):= \tau^{3/2} \Psi_\tau^* \vp(x)$ is a self-similar solution to the backward Laplacian flow
    \begin{equation}\label{equation backwards Laplacian flow}
        \frac{\dd \vp}{\dd \tau} = -\Delta_{\vp}\vp,
    \end{equation}
    for $\tau \in (0, 1]$ that extends smoothly as $\tau \searrow 0$ to $\vp(x,0) \equiv \vp_D(x)$ on $\cC_{r_0}^{\Sigma}$. Furthermore, the family of metrics $g(x,\tau) = \tau \Psi_\tau^* g(x)$ evolves by the modified backward Ricci flow
    \begin{equation}\label{equation backwards modified Ricci flow}
         \frac{\dd g_{ij}}{\dd \tau} = 2 \Ric_{ij} + \frac{2}{3}|T|^2 g_{ij} + 4 T_i^{\;  l}T_{lj},
    \end{equation}
    and each $g(\cdot,\tau)$ is the metric induced by $\vp(\cdot,\tau)$. Consequently, we find that the family $\{g(x,\tau)\}_{\tau \in (0,1]}$ extends smoothly as $\tau \searrow 0$ to the conical metric $g(x,0) \equiv g_D(x)$, induced by $\vp_D$.
    \item For all nonnegative integers $m$ we have the curvature decay estimate
    \begin{equation}\label{equation time dependent curvature decay}
        \sup_{\cC^{\Sg}_{r_0} \times [0,1]} (r^{m+2} +1) |\nabla^m \Rm(g) |_{\vp(\tau)} \le K_0,
    \end{equation}
    where $r$ denotes the radial function on the cone $\cC^{\Sg}_{0}$ and the norm and covariant derivatives are taken with respect to the metric $g(\tau)$ induced by $\vp(\tau)$.
    \item Let $f$ be the function on $\cC^{\Sg}_{r_0} \times (0,1]$ defined by $f(x,\tau) = \Psi_\tau^* f(x)$. The product $\tau f$ converges smoothly as $\tau \rightarrow 0$ to $r^2/4$ on $\overline{\cC^{\Sg}_{a}}$ for all $a > r_0$, and satisfies
    \begin{equation}\label{equation C0 estimates on f}
       r^2 - N_0\log(r) \le 4\tau f \le r^2 + N_0 \log(r), \;\;\; \tau \nabla f = \frac{r}{2}\frac{\dd}{\dd r},
    \end{equation}
    and
    \begin{equation}\label{equation derivative identities on f 1}
        \frac{\dd}{\dd \tau}(\tau f) = -\tau O(\log r), \;\; \tau^2 |\nabla f|^2 - \tau f= -\tau^2 O(\log r),
    \end{equation}
    \begin{equation}\label{equation derivative identities on f 2}
        \tau \nabla_i \nabla_j f  = \frac{g_{ij}}{2} -\tau \Ric_{ij} - \frac{\tau}{3}|T|^2 g_{ij} -2\tau T_{i}^{\;k}T_{kj}.
    \end{equation}
     on $\cC^{\Sg}_{r_0} \times (0,1]$ for some constant $N_0 >0$.
\end{enumerate}
\end{prop}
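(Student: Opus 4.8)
The plan is to adapt the construction of~\cite[Proposition 2.1]{kotschwarwang2015} to the Laplacian flow, using the scaling identities of Lemma~\ref{lemma scaling of metric} and the soliton equation in place of their Ricci-flow counterparts, while tracking the logarithmic (rather than decaying) errors forced by Lemma~\ref{lemma fundamental shrinker identity}. First I would invoke Corollary~\ref{lemma decay estimates}(c) to get quadratic curvature decay on a sub-end and then apply Lemma~\ref{lemma first parametrization}; after enlarging $r_0$ we may thus assume, in the resulting chart, that $f$ is proper with no critical points, $f = r^2/4$, $\na f$ is a positive multiple of $\partial_r$ (with $g_{r\sigma}=0$), and $(r^{m+2}+1)|\na^m\Rm(g)|_g\le N_m$ for all $m$. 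The family $\Psi_\tau$ is built by composing this reparametrization with the flow, in the variable $\tau$ running from $\tau=1$ towards $\tau\searrow0$, of the normalized gradient $-\tfrac{1}{\tau}\na f$. Since $\na f$ is radial, each trajectory keeps its cross-section coordinate fixed and its radial coordinate $r(\tau;x)$ satisfies $\tfrac{d}{d\tau}\log r = -\tfrac{1}{2\tau g_{rr}}$, and since $|\na f|^2 = f - R + O(\log r) = r^2/4 + O(\log r)$ by Lemma~\ref{lemma fundamental shrinker identity}, one has $g_{rr} = f/|\na f|^2 = 1 + O(\log r/r^2)$. Integrating, and noting the resulting error integral converges, gives $r(\tau;x) = r(x)\tau^{-1/2}e^{E(\tau;x)}$ with $E$ bounded (and uniformly small once $r_0$ is large). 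Hence $\Psi_\tau$ exists for all $\tau\in(0,1]$ (escape to spatial infinity occurs only in the limit $\tau\searrow0$), is a diffeomorphism onto an end of $\overline{\cC^\Sg_{r_0}}$, and the radial coordinate grows comparably to $r(x)\tau^{-1/2}$ with multiplicative constants close to $1$. This is statement~(1), and, in the adapted gauge, it also delivers the identity for $\tau\na f$ in~\eqref{equation C0 estimates on f}.

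For statement~(2), put $\vp(\cdot,\tau) = \tau^{3/2}\Psi_\tau^*\vp$. Differentiating and using~\eqref{equation scaling hodge laplacian} together with the soliton equation $\Delta_\vp\vp = -\tfrac{3}{2}\vp + \cL_{\na f}\vp$, one obtains $\partial_\tau\vp(\cdot,\tau) = \tau^{1/2}\Psi_\tau^*\big(\tfrac{3}{2}\vp - \cL_{\na f}\vp\big) = -\tau^{1/2}\Psi_\tau^*(\Delta_\vp\vp) = -\Delta_{\vp(\tau)}\vp(\tau)$; the power $\tfrac{3}{2}$ in the rescaling and the factor $-\tfrac{1}{\tau}$ in the vector field are precisely what make $c(\tau)=\tau^{3/2}$ solve $c'=\tfrac{3}{2} c^{1/3}$ and cancel the Lie-derivative term, given the scaling $\Delta\mapsto\la^{1/3}\Delta$. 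By Lemma~\ref{lemma scaling of metric}, $g_{\vp(\tau)} = \tau\,\Psi_\tau^*g =: g(\tau)$; differentiating and substituting~\eqref{equation gradient shrinker identity for metric} (equivalently, reversing time in~\eqref{equation evolution metrics}, and using that $\Ric$, $|T|^2g$ and $T_i^{\;l}T_{lj}$ are invariant under $g\mapsto\tau g$ and pullback) produces the modified backward Ricci flow~\eqref{equation backwards modified Ricci flow}. Statement~(3) is then pure scaling bookkeeping: under $g\mapsto\tau g$ and pullback, $|\na^m\Rm|$ picks up a factor $\tau^{-1-m/2}$, so $|\na^m\Rm(g(\tau))|(x)\le N_m\tau^{-1-m/2}\big(r(\tau;x)^{m+2}+1\big)^{-1}$, and the lower bound $r(\tau;x)\ge c\,r(x)\tau^{-1/2}$ from the first paragraph makes the powers of $\tau$ cancel, leaving $(r^{m+2}+1)|\na^m\Rm(g(\tau))|\le K_0$ uniformly in $\tau\in(0,1]$, which is~\eqref{equation time dependent curvature decay}.

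The remaining part of~(2), smooth extension across $\tau=0$, is where I expect the main work. With the uniform-in-$\tau$ bounds of~(3) and the asymptotically conical bounds on $\vp$ in hand, a Hamilton-type compactness argument yields subsequential smooth convergence $(\vp(\tau),g(\tau))\to(\vp_D,g_D)$ on every $\overline{\cC^\Sg_a}$ as $\tau\searrow0$; the self-similarity of the family (it is literally $\tau^{3/2}\Psi_\tau^*\vp$) then upgrades this to full convergence and forces $\vp_D$ to be dilation-invariant, hence a closed $G_2$-cone with $g_D = g_{\vp_D}$ conical. I emphasize that at this stage $\vp_D$ is only known to be \emph{some} closed $G_2$-cone, not the given $\vp_C$; the identification $\vp_C = \vp_D$ --- needed to make the later backward-uniqueness problem well-posed --- is handled only in the subsequent subsections, by comparing the $C^k_{\mathrm{loc}}$-topologies generated by $g_C$ and $g_D$.

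For~(4), write $f(\cdot,\tau) = \Psi_\tau^*f$. Since the Hessian is unchanged under $g\mapsto\tau g$ and natural under pullback, $\tau\na^{g(\tau)}\na^{g(\tau)}f(\cdot,\tau) = \tau\,\Psi_\tau^*\big(\na^g\na^g f\big)$, and substituting~\eqref{equation gradient shrinker identity for metric} together with $g(\tau)=\tau\Psi_\tau^*g$ and the scale-invariance of $\Ric$, $|T|^2g$, $T_i^{\;l}T_{lj}$ yields~\eqref{equation derivative identities on f 2} directly. The identities in~\eqref{equation derivative identities on f 1} follow by differentiating $\tau f(\cdot,\tau) = \Psi_\tau^*(\tau f)$ --- giving $\tfrac{d}{d\tau}(\tau f(\cdot,\tau)) = (f - |\na f|^2)\circ\Psi_\tau$ --- and applying Lemma~\ref{lemma fundamental shrinker identity}; the logarithmic error there is exactly what distinguishes these from the exact conservation law available to Kotschwar--Wang, and it is also the source of the $\log$-term in the two-sided bound $r^2 - N_0\log r\le 4\tau f\le r^2 + N_0\log r$, which comes from $4\tau f = \tau\,r(\tau;x)^2 = r(x)^2 e^{2E(\tau;x)}$ with $|E| = O(\log r/r^2)$. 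Finally, letting $\tau\searrow0$ in~\eqref{equation derivative identities on f 2} --- the weighted $\Ric$, $|T|^2g$ and $T_i^{\;l}T_{lj}$ terms vanish by~(3) --- gives $\na^{g_D}\na^{g_D}(\lim_{\tau\to0}\tau f) = \tfrac{1}{2} g_D$; on a cone this pins $\lim_{\tau\to0}\tau f$ down to $r^2/4$ (up to the already-controlled lower-order terms), establishing the claimed smooth convergence. Besides the $\tau\searrow0$ extension, the other delicate point is precisely this logarithmic bookkeeping: one must check that the accumulated radial error stays $O(\log r)$ on all of $(0,1]$ and so interferes neither with the convergence nor with the curvature estimates.
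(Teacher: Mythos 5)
Your construction of $\Psi_\tau$ via the gradient flow, the verification that $\tau^{3/2}\Psi_\tau^*\vp$ solves the backward Laplacian flow, and the derivation of \eqref{equation derivative identities on f 2} all follow the paper's argument and are fine, but there is a genuine gap at statement (3). Right after invoking Corollary \ref{lemma decay estimates}(c) and Lemma \ref{lemma first parametrization} you allow yourself $(r^{m+2}+1)|\na^m\Rm(g)|_g\le N_m$ \emph{for all} $m$ on the static end. Neither result provides this: the AC hypothesis is only $C^3_{\mathrm{loc}}$ convergence of the $3$-form, and both cited results give only the $m=0$ bound $(\bar r^2+1)|\Rm|\le K$. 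Your "pure scaling bookkeeping" for \eqref{equation time dependent curvature decay} therefore has no input when $m\ge 1$, and everything downstream that uses (3) (the smooth extension of $\vp(\tau)$ and of $\tau f$ across $\tau=0$, and later the whole ODE--PDE system) is left unsupported. The paper's derivation is genuinely parabolic: the $m=0$ bound is first turned into a uniform spacetime bound by self-similar scaling, then Lotay--Wei's local Shi-type derivative estimates for Laplacian flow (using $|\na T|\le C|\Rm|$) give interior bounds on all $\na^m\Rm$ for times near the singular time, and self-similarity propagates these into the decay \eqref{equation time dependent curvature decay} on all of $\cC^{\Sg}_{r_0}\times[0,1]$, including the static slice. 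Some such smoothing step cannot be skipped.

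Secondly, your treatment of the limit is logically inverted. The self-similarity you actually have is $\vp_D=\la^{3/2}\Psi_\la^*\vp_D$, where $\Psi_\la$ are gradient-flow maps of $f$, not dilations of the cone coordinates, so it does not "force $\vp_D$ to be dilation-invariant, hence a cone"; likewise a Hamilton-type compactness argument only gives convergence modulo diffeomorphisms, whereas the statement requires convergence of the tensors in the fixed chart, which the paper obtains (following Lotay--Wei) from the uniform bounds of (3) and integration of $\dd_\tau\vp=-\De_\vp\vp$. Conicality of $g_D$ is then proved in the opposite order from your last paragraph: one first shows $\tau f\to q$ smoothly with $\na^2_{g_D}q=\tfrac12 g_D$ and $|\na q|^2_{g_D}=q$, and these identities force $g_D$ to be a metric cone with radial function $2\sqrt q$; your step "on a cone this pins $\lim_{\tau\to0}\tau f$ down to $r^2/4$" presupposes exactly that conclusion. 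Relatedly, the uniform-in-$\tau$ bounds in \eqref{equation derivative identities on f 1} are not a one-line application of Lemma \ref{lemma fundamental shrinker identity}: that static identity gets applied at points of static radius $\sim r\tau^{-1/2}$ and then rescaled by $\tau^{-1}$, and controlling the outcome uniformly is precisely the content of the paper's time-dependent estimates on $\sE=R+|\na f|^2-f/\tau$ and its derivatives, which your proposal flags as "bookkeeping" but does not carry out (smooth convergence of $\tau f$ also needs those derivative estimates, not just the $C^0$ bound $4\tau f=r^2e^{2E}$).
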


\begin{rmk}
In general, we will not distinguish between objects (e.g. $\vp, g, f$, etc.)  defined on the end $V \subset M$ and their pullbacks to $\cC^{\Sg}_{r_0}$.
\end{rmk}

\subsubsection{Proof of Proposition \ref{proposition backward flow reparametrization}}
We adopt the format of the proof in \cite{kotschwarwang2015}, separating the proof of the proposition into the proofs of several claims. For the proofs of the first two claims, we will parametrize the shrinker $(\cC^{\Sg}_{r_0},\vp, f)$ by the local diffeomorphism $\bar \Phi : \cC^{\bar \Sg}_{r_1} \rightarrow \cC^{\Sg}_{r_0}$ defined in Lemma \ref{lemma first parametrization}, and consider the pulled-back shrinker $(\cC^{\bar \Sg}_{r_1},\bar \vp, \bar f) := (\cC^{\bar \Sg}_{r_1},\bar \Phi^* \vp, \bar \Phi^* f)$.

We first prove \cite[Claim 2.3]{kotschwarwang2015} which relates the integral curves of $\bar \nabla \bar f$ to the radial trajectories. The proof is largely the same with one small modification to account for the fact that the right-hand side of \eqref{equation fundamental shrinker identity} does not vanish and in fact grows logarithmically in the radial variable.

\begin{clm}\cite[Claim 2.3]{kotschwarwang2015} 
There exists $r_2 > r_1$ depending only on $r_1$, $K$ (as in part (c) of Lemma \ref{lemma decay estimates}), and $N$ (as in part (3) of Lemma \ref{lemma first parametrization}), and a one-parameter family of local diffeomorphisms $\bar \Psi_s: \cC^{\bar \Sg}_{r_2} \rightarrow \bar \cC^{\bar \Sg}_{r_2}$ defined for $s \ge 0$, which satisfy
\begin{equation}\label{equation satisfied by reparametrizations}
    \frac{\partial \bar \Psi_s}{\partial s} = \bar \nabla \bar f \circ \bar \Psi_s \;\;\textrm{ and }\;\; \bar \Psi_0 = \mathrm{Id}_{\cC^{\bar \Sg}_{r_2}}.
\end{equation}
Moreover, for all $(r, \bar \sg) \in \cC^{\bar \Sg}_{r_2}$, $r_s := r \circ \bar \Psi_s$ satisfies
\begin{equation}\label{equation distance estimate}
    (r-1)e^{s/2} + 1 \le r_s ( r , \bar \sg) \le (r + 1)e^{s/2} - 1.
\end{equation}
\end{clm}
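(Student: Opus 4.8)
The plan is to adapt \cite[Claim 2.3]{kotschwarwang2015} by integrating the flow of $\bar\nabla\bar f$ and controlling the evolution of the radial function $r_s = r\circ\bar\Psi_s$ using the parametrization of Lemma \ref{lemma first parametrization}, where $\bar f = s^2/4$ along the radial direction and $(s^2+1)|\Rm(\bar g)| \le N$. First I would establish short-time existence of the flow $\partial_s\bar\Psi_s = \bar\nabla\bar f\circ\bar\Psi_s$, $\bar\Psi_0 = \mathrm{Id}$, on $\cC^{\bar\Sg}_{r_2}$ for $s \ge 0$: since $\bar\nabla\bar f$ points ``outward" in the radial direction (its radial component is positive for $r$ large by part (2) of Lemma \ref{lemma first parametrization}, which gives $\partial\bar\Phi/\partial s = \bar f^{1/2}\bar\nabla\bar f/|\bar\nabla\bar f|^2$), the flow exists for all positive $s$ once we restrict to a sufficiently large end $\cC^{\bar\Sg}_{r_2}$; the trajectories move away from the boundary and stay in the cone.

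Next I would derive the differential inequality governing $r_s$. Along a trajectory, $\frac{d}{ds}(\bar f\circ\bar\Psi_s) = |\bar\nabla\bar f|^2$. The key analytic input is the fundamental shrinker identity, Lemma \ref{lemma fundamental shrinker identity}, which in the reparametrized picture reads $R + |\nabla f|^2 - f = O(\log r)$, together with $R = -|T|^2 = O(r^{-2})$ from the curvature decay in Lemma \ref{lemma decay estimates} and Lemma \ref{lemma first parametrization}; hence $|\bar\nabla\bar f|^2 = \bar f + O(\log r)$. Since on the reparametrized cone $\bar f \asymp r^2/4$ (again by the distance-potential comparison, the analogue of \eqref{equation distance potential comparison}), this yields $\frac{d}{ds}(\bar f\circ\bar\Psi_s) = \bar f\circ\bar\Psi_s + O(\log r_s)$. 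Comparing with the ODE $\dot u = u$ whose solutions are $u(s) = u(0)e^s$, and absorbing the logarithmic error term by enlarging $r_2$ so that $O(\log r) \le \eta r^2$ for a small constant $\eta$, I get $\frac{d}{ds}(\bar f\circ\bar\Psi_s)$ trapped between $(1-\varepsilon)(\bar f\circ\bar\Psi_s)$ and $(1+\varepsilon)(\bar f\circ\bar\Psi_s)$ — but in fact one does better by tracking the identity $\bar f = s^2/4$ directly, so that $\frac{d}{ds}r_s^2$ is $4\bar f\circ\bar\Psi_s + O(\log r_s) = r_s^2 + O(\log r_s)$ up to the uniform equivalence constants. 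Grönwall then gives bounds of the form $(r^2 - c)e^s + c' \le r_s^2 \le (r^2+c)e^s - c'$, which after taking square roots and adjusting constants yields the claimed \eqref{equation distance estimate}, $(r-1)e^{s/2}+1 \le r_s \le (r+1)e^{s/2}-1$, upon a further enlargement of $r_2$ depending only on $r_1, K, N$.

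The main obstacle, and the point of genuine departure from Kotschwar--Wang, is the logarithmic error term in the fundamental shrinker identity: in the Ricci case $R + |\nabla f|^2 - f = 0$ exactly (after normalizing $f$), so the ODE for the radial function is clean and the exponential comparison is immediate. Here one must show the $O(\log r)$ term is genuinely lower order than $\bar f \asymp r^2$ uniformly along the whole trajectory — not just at the initial point — which requires knowing a priori that trajectories do not re-enter the region where $r$ is small; this is why the estimate must be bootstrapped on a sufficiently large end $\cC^{\bar\Sg}_{r_2}$ and why $r_2$ depends on $K$ and $N$. Once the lower bound $r_s \ge (r-1)e^{s/2}+1$ is in hand it confirms trajectories escape to infinity, closing the loop. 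I would also record that, by the same comparison, $r_s$ is monotone increasing in $s$ and that $\bar\Psi_s$ remains a local diffeomorphism (its differential is non-degenerate since $\bar f$ has no critical points on $\cC^{\bar\Sg}_{r_2}$), which are needed for the subsequent claims in the proof of Proposition \ref{proposition backward flow reparametrization}.
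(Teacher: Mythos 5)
Your proposal is correct and follows essentially the same route as the paper: existence of the flow is imported from Kotschwar--Wang, and the radial estimate comes from differentiating the radial function along the flow, substituting the shrinker identity $|\bar{\nabla}\bar f|^2 = \bar f - \bar R + O(\log r)$, and absorbing the logarithmic error by taking $r_2$ large. The only cosmetic difference is that the paper bounds $\partial_s r_s = (\bar f^{-1/2}|\bar{\nabla}\bar f|^2)\circ\bar\Psi_s = \tfrac{r_s}{2} + o(1)$ directly between $\tfrac12(r_s-1)$ and $\tfrac12(r_s+1)$, which integrates exactly to the stated bounds with the constants $\pm 1$, whereas your Gr\"onwall argument on $r_s^2$ followed by taking square roots only recovers the estimate after an adjustment of constants.
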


\begin{proof}
The proof of \cite[Claim 2.3]{kotschwarwang2015} gives the existence of $\bar \Psi_s$ and the rest of their argument applies directly with the following small change: when calculating the derivative of $r_s$ along the flow, we must use identity~\eqref{equation fundamental shrinker identity}
in place of the conservation law for gradient Ricci shrinkers
$\abs{\nabla f}^2 =f-R$. More specifically, we obtain
\begin{align*}\frac{\partial r_s}{\partial s} &= (\bar f^{-\frac{1}{2}}|\bar \nabla \bar f|_{\bar g}^2) \circ \bar \Psi_s\\
&= \left(\bar f^{-\frac{1}{2}}(\bar f - \tfrac{1}{3}\bar R + O(\log r) \,)\right)\circ \bar \Psi_s \\
&= \frac{r_s}{2} - \frac{2\bar \Psi_s^* \bar R}{3r_s} + O(r_s^{-1}\log r_s)\\
&= \frac{r_s}{2} + o(1).
\end{align*}
Thus, for $r_2 > r_1$ sufficiently large, the following inequality holds
\[\frac{1}{2}(r_s - 1) \le \frac{\partial r_s}{\partial s} \le \frac{1}{2} (r_s + 1)
\]
on $\cC^{\bar \Sg}_{r_2}$. The rest of the proof follows as in \cite{kotschwarwang2015}. 
\end{proof}

Set $s(t) := -\log(-t)$ for $t<0$ and define the family of closed $G_2$-structures
\begin{equation}\label{equation first shrinking flow}
    \bar \vp_t = (-t)^{3/2}\bar \Psi_{s(t)}^*\bar \vp
\end{equation}
on $\cC^{\bar \Sg}_{r_2} \times [-1,0)$. 
We claim that $\bar \vp_t$ is a solution of Laplacian flow with initial condition
$\bar \vp_{-1}=\bar \vp$. 

To this end we compute
\begin{align*}
    \frac{\dd \bar \vp_t}{\dd t} &= -\frac{3}{2}(-t)^{\frac{1}{2}} \bar \Psi_{s(t)}^*\bar \vp + (-t)^{\frac{3}{2}} \bar \Psi_{s(t)}^*(\cL_{\bar{\nabla} \bar f} \bar \vp) \frac{\dd s}{\dd t}\\
    &= (-t)^{\frac{1}{2}}\bar \Psi_{s(t)}^*\bigg(-\frac{3}{2} \bar \vp + \cL_{\bar{\nabla} \bar f} \bar \vp \bigg)\\
    &= (-t)^{\frac{1}{2}}\bar \Psi_{s(t)}^*(\Delta_{\bar \vp} \bar \vp) \\
    &= \big((-t)^{\frac{3}{2}}\big)^{\frac{1}{3}}(\Delta_{\bar \Psi_{s(t)}^* \bar \vp} \bar \Psi_{s(t)}^* \bar \vp) \\
    &= \Delta_{\vp_t}\vp_t,
\end{align*}
where the final equality follows from the scaling behavior of the Hodge Laplacian, \eqref{equation scaling hodge laplacian}.

The metric $\bar g_t$ associated to $\bar \vp_t$ is given by $\bar g_t = -t \bar \Psi_{s(t)}^* \bar g$. 
Since $\bar \vp_t$ evolves by Laplacian flow we know from~\cite[(3.6)]{LotayWeiLaplacianFlow} that 
$\bar g_t$ evolves by the equation
\[\frac{\partial \bar g_{t_{ij}}}{\partial t} = -2\bar \Ric_{ij} - \frac{2}{3}|\bar T|^2 \bar g_{t_{ij}} -4\tilde T_{i}^{\;k} \tilde T_{kj},
\]
where all metric quantities on the right-hand side are those associated to the time-dependent metric $\bar g_t$. This evolution equation can also be computed directly by appealing to \eqref{equation shrinker identity for metric} and the scaling identities for Ricci curvature and torsion.

We now show that $\bar \vp_t$ induces a Laplacian flow on the original parametrization. Define the following family of local diffeomorphisms:
\[ \Psi_s = \bar \Phi \circ \bar \Psi_s \circ \bar \Phi^{-1}. 
\]
By pulling back \eqref{equation first shrinking flow} by $(\bar \Phi^{-1})^*$, there exists $r_3>0$ such that we obtain the following solution to Laplacian flow:
\begin{equation}\label{equation second shrinking flow}
     \vp_t = (-t)^{3/2} \Psi_{s(t)}^* \vp,
\end{equation}
on $\cC^{\Sg}_{r_3} \times [-1,0)$ with $s(t) = - \log(-t)$ and $\vp_{-1} = \vp$, the closed shrinking $G_2$-structure under the original parametrization.

We prove a curvature estimate that is uniform in time, first for the special parametrization $(\cC^{\bar \Sg}_{r_2},\bar \vp_t)$ and then convert it into an estimate on $(\cC^{\Sg}_{r_3},\vp_t)$. 

\begin{clm}\cite[Claim 2.4]{kotschwarwang2015}\label{claim uniform curvature estimates}
For all $m \ge 0$, there exists a constant $K_m$ which depends only on $m$ and the constant $K$ in Lemma \ref{lemma decay estimates} such that the curvature tensor of $g_t = -t \Psi^*_{s(t)} g$ satisfies 
\begin{equation}\label{equation higher order curvature estimates}
    \sup_{\cC^{\Sg}_{r_3} \times [-1, 0)}(r^{m+2}(x)+1)\Big| \na^{m}\Rm(g_t)\Big|_{g_t} \leq K_m.
\end{equation}
\end{clm}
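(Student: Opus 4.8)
The plan is to derive the spacetime curvature estimate \eqref{equation higher order curvature estimates} from the fixed-time curvature decay of the shrinker metric $\bar g$ furnished by Lemma \ref{lemma first parametrization}(3), using the self-similar structure of the flow $\bar\vp_t = (-t)^{3/2}\bar\Psi_{s(t)}^*\bar\vp$ together with the comparison of radial functions \eqref{equation distance estimate}, and then transfer the estimate from the special parametrization to the original one via $\bar\Phi$. First I would work on $\cC^{\bar\Sg}_{r_2}$, where the metric associated to $\bar\vp_t$ is $\bar g_t = (-t)\,\bar\Psi_{s(t)}^*\bar g$. Since curvature scales inversely with the metric and covariant derivatives commute with pullbacks, one has the pointwise identity
\[
(r^{m+2}(x)+1)\,\bigl|\nabla^m\Rm(\bar g_t)\bigr|_{\bar g_t}(x)
= (-t)^{-(m+2)/2}(r^{m+2}(x)+1)\,\bigl|\nabla^m\Rm(\bar g)\bigr|_{\bar g}\bigl(\bar\Psi_{s(t)}(x)\bigr).
\]
Writing $y = \bar\Psi_{s(t)}(x)$ and $r_s(x) = r(\bar\Psi_{s(t)}(x))$, Lemma \ref{lemma first parametrization}(3) gives $\bigl|\nabla^m\Rm(\bar g)\bigr|_{\bar g}(y)\le N(r_s(x)^2+1)^{-1}\cdot C_m$ once we upgrade the $m=0$ decay to all $m$ (this higher-order decay is standard for shrinkers with quadratic curvature decay, obtained by differentiating \eqref{equation gradient shrinker identity for metric} and applying Shi-type interior estimates; it is consistent with what is recorded in Proposition \ref{proposition backward flow reparametrization}(3)). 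Combining with the distance estimate \eqref{equation distance estimate}, which with $s = s(t) = -\log(-t)$ reads $(r-1)(-t)^{-1/2}+1\le r_s\le (r+1)(-t)^{-1/2}-1$, one sees that $(-t)^{-(m+2)/2}(r^{m+2}+1)(r_s^2+1)^{-1}$ is bounded above by a constant depending only on $m$ and $r_2$, since the factor $(r_s^2+1)^{-1}$ contributes a gain of $(-t)^{m/2+1}$ after accounting for the shift by $\pm 1$. This yields \eqref{equation higher order curvature estimates} on $\cC^{\bar\Sg}_{r_2}$ with constants $K_m$ depending only on $m$ and $K$.

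Next I would transfer this to the original parametrization. Recall $\Psi_s = \bar\Phi\circ\bar\Psi_s\circ\bar\Phi^{-1}$ and $\vp_t = (-t)^{3/2}\Psi_{s(t)}^*\vp$, so $g_t = (-t)\Psi_{s(t)}^*g$ and, since $\bar g = \bar\Phi^*g$, we have $g_t = (\bar\Phi^{-1})^*\bar g_t$; thus $\bar\Phi:(\cC^{\bar\Sg}_{r_2},\bar g_t)\to(\cC^{\Sg}_{\bar\Phi(r_2)},g_t)$ is an isometry for each $t$, so $\bigl|\nabla^m\Rm(g_t)\bigr|_{g_t}\circ\bar\Phi = \bigl|\nabla^m\Rm(\bar g_t)\bigr|_{\bar g_t}$. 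The only point requiring care is that the radial function $r$ on $\cC^{\Sg}_0$ and the radial function used on $\cC^{\bar\Sg}$ differ, but by Lemma \ref{lemma first parametrization}(3) (the comparison $N^{-1}(s-1)\le\bar s\le N(s+1)$ together with part (b) of Lemma \ref{lemma decay estimates}) these two radial functions are comparable up to multiplicative and additive constants on the relevant ends, so $(r^{m+2}+1)$ and the pulled-back $(\bar r^{m+2}+1)$ differ by a bounded factor; absorbing this into $K_m$ and choosing $r_3$ large enough that $\cC^{\Sg}_{r_3}\subset\bar\Phi(\cC^{\bar\Sg}_{r_2})$ gives the claim.

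The main obstacle I anticipate is the higher-order ($m\ge 1$) curvature decay of the shrinker metric $\bar g$, which is only stated for $m=0$ in Lemma \ref{lemma first parametrization}. Establishing it requires commuting derivatives through the shrinker identity \eqref{equation gradient shrinker identity for metric} — which now carries the quadratic torsion terms $\tfrac13|T|^2 g + 2T\ast T$ absent in the Ricci case — and then invoking local derivative estimates for the associated Laplacian flow (or directly for the elliptic shrinker system) on balls of radius comparable to $\bar r(x)$, on which, by the $m=0$ decay and part (c) of Lemma \ref{lemma decay estimates}, the geometry is uniformly controlled. One must also check that the torsion and its derivatives decay at the expected rates $|T|\le C\bar r^{-1}$, $|\nabla^m T|\le C_m \bar r^{-m-1}$, which follows from $R = -|T|^2$ together with the curvature decay and bootstrapping through \eqref{equation nabla phi and torsion}. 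Once this higher-order decay is in hand, the rest of the argument is the bookkeeping with scaling factors and the radial comparison \eqref{equation distance estimate} sketched above.
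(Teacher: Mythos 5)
Your proposal follows the same skeleton as the paper's proof --- decay for the shrinker metric, self-similar scaling combined with the radial comparison \eqref{equation distance estimate}, then transfer through $\bar\Phi$ with the comparison of radial functions --- and your scaling identity for $|\nabla^m\Rm(\bar g_t)|_{\bar g_t}$ is correct. But there are two substantive issues. First, an internal inconsistency: the static decay you actually display, $|\nabla^m\Rm(\bar g)|_{\bar g}(y)\le C_m(r_s^2+1)^{-1}$, gains only a factor of order $(-t)$ after inserting \eqref{equation distance estimate}, not the $(-t)^{m/2+1}$ your bookkeeping requires; to close the argument you need the stronger decay $|\nabla^m\Rm(\bar g)|_{\bar g}\le C_m(\bar r^{\,m+2}+1)^{-1}$, with an extra power of $\bar r^{-1}$ per derivative. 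Presumably this is what you intend, but as written the displayed estimate does not suffice.

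Second, and more importantly, that higher-order static decay is not an off-the-shelf input: it is essentially the content of the claim itself (it is the $t=-1$ slice of \eqref{equation higher order curvature estimates}), and Lemma \ref{lemma first parametrization} supplies it only for $m=0$; citing consistency with Proposition \ref{proposition backward flow reparametrization}(3) is no help, since the claim under proof is a step toward that proposition. The paper obtains the derivative bounds parabolically: the $m=0$ decay and scaling give the spacetime bound \eqref{equation spacetime curvature bound}; then the local Shi-type estimates of \cite[Theorem 6.3]{LotayWeiRealAnalyticity}, together with $|\nabla T|\le C|\Rm|$, yield $|\nabla^m\Rm(\bar g_t)|_{\bar g_t}\le K_m$ on the fixed annulus $(\cC^{\bar\Sg}_{2r_2}\setminus\cC^{\bar\Sg}_{4r_2})\times[-1/2,0)$, i.e.\ at times bounded away from the initial slice, and self-similarity plus \eqref{equation distance estimate} then spreads this bound over all of $\cC^{\bar\Sg}_{r_2}\times[-1,0)$ with the weight $r^{m+2}$; finally one pulls back and replaces $2f^{1/2}$ by $r$ via \eqref{equation distance potential comparison} and Corollary \ref{lemma decay estimates}, as you also propose. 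Your alternative --- Shi-type estimates on balls of radius comparable to $\bar r(x)$ applied directly to the static slice --- runs into the fact that smoothing estimates require the flow to have existed for a definite prior time at that scale, while the flow you constructed starts exactly at $t=-1$; to repair this you must either extend the self-similar flow locally backward in time for points deep in the end (possible, since $\bar\Psi_s$ is defined for moderately negative $s$ there, but it needs to be said and quantified) or prove the bound at later times on a compact annulus and transport it by self-similarity, which is precisely the paper's route. Merely ``differentiating \eqref{equation gradient shrinker identity for metric}'' does not by itself produce the decay, since that identity couples $\nabla^m\Ric$ to $\nabla^{m+2}f$, which is not a priori controlled.
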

\begin{proof}
By Lemma \ref{lemma decay estimates}(c) and the scaling argument employed on \cite[pg.\,10]{kotschwarwang2015}, we have the following uniform curvature estimate
\begin{equation}\label{equation spacetime curvature bound}
    \sup_{\cC^{\bar \Sg}_{r_2} \times [-1,0)} (r^2(x) + 1)|\Rm(\bar g_t)|_{\bar g_t}(x,t) \le 8K. 
\end{equation}
By the local Shi-type derivative estimates in \cite[Theorem 6.3]{LotayWeiRealAnalyticity} and using the fact that $|\na T| \leq C|\Rm|$, 
\[|\nabla^{m}\Rm(\bar g_t)|_{\bar g_t}(x,t) \le K_m \;\;\text{ on } \;\; (\cC^{\bar \Sg}_{2r_2} \setminus \cC^{\bar \Sg}_{4r_2}) \times [-1/2, 0),
\]
for each $m \ge 0$, where $K_m$ depends on $m$ and $K$. As in \cite[\S 2.2.3]{kotschwarwang2015}, it follows from \eqref{equation distance estimate} and a scaling argument that
\begin{equation}
\sup_{\cC^{\bar \Sg}_{r_2} \times [-1, 0)}(r^{m+2}(x)+1)\Big| \na^{m}\Rm(\bar g_t)\Big|_{\bar g_t} \leq K_m.
\end{equation}
We can pull back this equation by $(\Phi^{-1})^*$ to obtain
\[\sup_{\cC^{\Sg}_{r_3} \times [-1, 0)}((2f^{1/2})^{m+2}(x)+1)\Big| \na^{m}\Rm(g_t)\Big|_{g_t} \leq K_m.
\]
The estimates \eqref{equation distance potential comparison} and Corollary \ref{lemma decay estimates} allow us to replace $2f^{1/2}$ by the radial parameter $r$ on $\cC^{\Sg}_{r_3}$ at the cost of possibly increasing $K_m$, which now depends on $N_2$. This yields the desired inequality, \eqref{equation higher order curvature estimates}. 
\end{proof}

We may now obtain a smooth limit $\vp_D = \vp_0$ on $\cC^{\Sg}_{r_3} \times \{0\}$ as $t \nearrow 0$.  This follows as in \cite[Theorem 5.1, Claim 5.3]{LotayWeiLaplacianFlow}, by using compactness of $\Sg$ and finding a finite atlas for $\cC^{\Sg}_{r_3}$.

\bigskip
We now prove the analogues of the identities obtained in Section 2.2.4 in \cite{kotschwarwang2015}. For notational clarity, we now write the original potential $f$ as $\mathring{f}$. We suppress the subscript, and simply refer to the time-dependent pair $f :=\Psi^*_{s(t)} \mathring{f}$ and $\vp := \vp_t$, which together form the structure of a shrinking gradient Laplacian soliton 
on $\cC^{\Sg}_{r_3}$ with constant $-3/(2(-t))$ in place of $-3/2$. All quantities associated to the metric $\mathring{g}$ will be denoted with a $\circ$ accent, e.g. $\mathring{R}$, $\mathring{T}$, etc.

\begin{clm}\label{claim with identities over time}
On $\cC^{\Sg}_{r_3} \times [-1,0)$, $f$ satisfies
\begin{equation}\label{equation time dependent shrinker identities}
\frac{\partial f}{\partial t} = |\nabla_{g} f|_{g}^2,  \;\;\textrm{ and }\;\; \nabla_i \nabla_j f  = - \frac{g_{ij}}{2t} -\Ric_{ij} - \frac{1}{3}|T|^2 g_{ij} -2T_{i}^{\;k}T_{kj}.
\end{equation}
\end{clm}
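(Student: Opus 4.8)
The plan is to track how the shrinker potential and the shrinker equation transform under the time-dependent reparametrization $\Psi_{s(t)}$ and the conformal rescaling by $(-t)^{3/2}$, which together produce $\vp_t$ from $\mathring{\vp}$. First I would recall that $\mathring f$ satisfies the static identities from Proposition~\ref{proposition backward flow reparametrization}'s source lemmas, in particular $\mathring\nabla_i \mathring\nabla_j \mathring f = \tfrac{1}{2}\mathring g_{ij} - \mathring\Ric_{ij} - \tfrac{1}{3}|\mathring T|^2 \mathring g_{ij} - 2\mathring T_i^{\;k}\mathring T_{kj}$, which is just~\eqref{equation gradient shrinker identity for metric}. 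Since $f = \Psi_{s(t)}^*\mathring f$ and $g_t = (-t)\Psi_{s(t)}^* \mathring g$, pulling back this identity by $\Psi_{s(t)}^*$ and using that $\Psi_{s(t)}$ is a diffeomorphism (so commutes with covariant differentiation in the sense that $\nabla^{\Psi^*\mathring g}(\Psi^*\mathring f) = \Psi^*(\nabla^{\mathring g}\mathring f)$) gives $\nabla^{\Psi^*\mathring g}_i \nabla^{\Psi^*\mathring g}_j f$ in terms of the pulled-back curvature and torsion. Then I would rescale: under $g_t = (-t)\Psi_{s(t)}^*\mathring g$, the Hessian of a function is unchanged (it is conformally invariant only up to the metric, but here the conformal factor is constant in space so $\nabla^{g_t}_i\nabla^{g_t}_j f = \nabla^{\Psi^*\mathring g}_i\nabla^{\Psi^*\mathring g}_j f$), the Ricci curvature is unchanged, the torsion scales by $(-t)^{\pm 1/3}$ as a $(1,1)$ versus $(0,2)$ tensor per Lemma~\ref{lemma scaling of metric}, and $|T|^2 g_{ij}$ picks up the appropriate factor so that $(-t)|\mathring T|^2_{\mathring g}\Psi^*\mathring g_{ij} = |T|^2_{g_t}\,g_{t,ij}$; assembling these and recalling $(-t)\Psi^*\mathring g_{ij} = g_{t,ij}$ converts the coefficient $\tfrac12\mathring g_{ij}$ into $-\tfrac{1}{2t}g_{t,ij}$, giving the second identity of~\eqref{equation time dependent shrinker identities}.

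Next, for the first identity $\partial_t f = |\nabla_g f|_g^2$, I would differentiate $f(x,t) = (\Psi_{s(t)}^*\mathring f)(x)$ directly in $t$. By the chain rule and~\eqref{equation satisfied by reparametrizations} (which says $\partial_s\bar\Psi_s = \bar\nabla\bar f\circ\bar\Psi_s$, transported to the original parametrization via $\Psi_s = \bar\Phi\circ\bar\Psi_s\circ\bar\Phi^{-1}$), we get $\partial_t f = (ds/dt)\,\Psi_{s(t)}^*\big(d\mathring f(\mathring\nabla \mathring f)\big) = (ds/dt)\,\Psi_{s(t)}^*|\mathring\nabla\mathring f|^2_{\mathring g}$. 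Since $s(t) = -\log(-t)$, we have $ds/dt = -1/t = 1/(-t)$. On the other hand $|\nabla_{g_t} f|^2_{g_t} = (g_t)^{ij}\partial_i f\,\partial_j f = (-t)^{-1}(\Psi^*\mathring g)^{ij}\,\Psi^*(\partial_i\mathring f\,\partial_j\mathring f) = (-t)^{-1}\Psi_{s(t)}^*|\mathring\nabla\mathring f|^2_{\mathring g}$, because the inverse metric $(g_t)^{-1} = (-t)^{-1}(\Psi^*\mathring g)^{-1}$. Comparing the two expressions shows they agree, giving the first identity.

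The main obstacle, and the only place requiring genuine care, is the bookkeeping of scaling factors when converting the static shrinker identity into the time-dependent one: one must correctly separate the roles of the $(-t)$ rescaling of the metric versus the diffeomorphism pullback, and keep straight which tensors are scale-invariant (Ricci, the Hessian of a scalar), which scale as $(0,2)$-tensors (the metric, $T_i^{\;l}T_{lj}$ as written with lowered indices), and which scale differently as $(1,1)$-tensors (per Lemma~\ref{lemma scaling of metric}, $T$ as a $(1,1)$-tensor rescales by $\lambda^{-1/3}$, so $T_i^{\;l}T_{lj}$ with one index raised by $g_t$ behaves consistently as a $(0,2)$-tensor). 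The term $-\tfrac{g_{ij}}{2t}$ arises precisely because the $(-t)$ that rescales $\mathring g$ to $g_t$ does not also appear on the $\tfrac12\mathring g_{ij}$ side of the pulled-back static identity, so one factor of $(-t)^{-1}$ survives. Everything else is a routine application of the chain rule and the scaling lemma, and no serious analytic input beyond Proposition~\ref{proposition backward flow reparametrization} itself is needed.
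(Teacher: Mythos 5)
Your approach is essentially the paper's: for the Hessian identity the paper likewise pulls back \eqref{equation gradient shrinker identity for metric} by $\Psi_{s(t)}$ and uses the scaling behavior from Lemma \ref{lemma scaling of metric} to see that the combination $\Ric_{ij} + \tfrac{1}{3}|T|^2 g_{ij} + 2T_i^{\;k}T_{kj}$ is invariant under the combined pullback and rescaling, so that only $\tfrac{1}{2}\mathring g_{ij}$ turns into $\tfrac{1}{2(-t)}g_{ij}$; and for $\partial_t f$ it differentiates $\Psi_{s(t)}^*\mathring f$ along the gradient flow with $s(t)=-\log(-t)$ and uses $g_t^{-1}=(-t)^{-1}(\Psi^*\mathring g)^{-1}$, exactly as you do. Two bookkeeping slips should be corrected, though neither changes the conclusion you state: with $\lambda=(-t)^{3/2}$ the $2$-tensor torsion rescales by $\lambda^{1/3}=(-t)^{1/2}$ (not by $(-t)^{1/3}$), hence $|T|^2_{g_t}=(-t)^{-1}\Psi^*|\mathring T|^2_{\mathring g}$ and the correct relation is $|T|^2_{g_t}\,g_{t,ij}=\Psi^*\big(|\mathring T|^2_{\mathring g}\,\mathring g_{ij}\big)$ with no extra factor of $(-t)$; your displayed equation $(-t)|\mathring T|^2_{\mathring g}\Psi^*\mathring g_{ij}=|T|^2_{g_t}\,g_{t,ij}$, taken literally, would put a spurious $(-t)^{-1}$ in front of the torsion terms of \eqref{equation time dependent shrinker identities}.
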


\begin{proof}
Differentiating $f = \Psi_{s(t)}^* \mathring f$ with respect to time yields
\begin{align*}
    \frac{\dd f}{\dd t} (x,t) =\frac{\dd}{\dd t} \big( \mathring f (\Psi_{s(t)}(x)) \big) &= (d\mathring{f})_{\Psi_{s(t)}(x)}\bigg(\frac{\dd \Psi_{s(t)}}{\dd t}(x)\bigg) \\
    &= \frac{1}{(-t)}\mathring{g} ( \mathring \nabla \mathring f, \mathring \nabla \mathring f)\\
    &= \frac{1}{(-t)^2} g((-t)\nabla f, (-t) \nabla f) \\
    &= |\nabla_g f|^2_g.
\end{align*}
In the third line, we used the rescaling properties stated in Lemma \ref{lemma scaling of metric}.

We now calculate the transformation of Ricci curvature and torsion under the time-dependent pair $(\vp(t), g(t))$.
\begin{align*}
    \Ric(g)_{ij} + \frac{1}{3}|T|_g^2g_{ij} + 2g^{kl}T_{ik}T_{lj} &= \Psi_{s(t)}^*\bigg((\Ric(\mathring g)_{ij} +  \frac{1}{3}(-t)^{-1}|\mathring T|_{\mathring g}^2(-t)\mathring g_{ij} \\
    &\qquad \qquad  \qquad + 2 (-t)^{-1}  \mathring g^{kl} (-t)^{\frac{1}{2}} \mathring T_{ik} (-t)^{\frac{1}{2}} \mathring T_{lj}\bigg)\\
    &= \Psi_{s(t)}^*\bigg( \Ric(\mathring g)_{ij} + \frac{1}{3}|\mathring T|_{\mathring g}^2\mathring g_{ij} + 2\mathring g^{kl}\mathring T_{ik}\mathring T_{lj} \bigg) \\
    &= \Psi_{s(t)}^*\bigg( \frac{1}{2}\mathring g_{ij} - \mathring \nabla_i \mathring \nabla_j \mathring f\bigg) \\
    &= \frac{1}{2(-t)} g_{ij} - \nabla_i \nabla_j f 
\end{align*}
This proves the third identity. 
\end{proof}

\begin{cor}\label{corollary dynamic potential versus distance}
If $(x,t) \in \cC^{\Sg}_{r_3} \times [-1,0)$ and $p \in \partial \cC^{\Sg}_{r_3}$, there exist time-independent constants $N_3, N_4>0$ such that $-tf$ satisfies
\begin{equation}\label{equation dynamic potential gradient versus distance}
\frac{\mathrm{dist}_{g_t}(p,x)}{2} - tN_3 \le |\nabla (-tf)| \le \frac{\mathrm{dist}_{g_t}(p,x)}{2} +tN_3,
\end{equation}
and
\begin{equation}\label{equation dynamic potential versus distance}
\frac{\mathrm{dist}_{g_t}(p,x)^2}{4} - tN_4(\mathrm{dist}_{g_t}(p,x) + 1) \le -tf \le \frac{\mathrm{dist}_{g_t}(p,x)^2}{4} +tN_4(\mathrm{dist}_{g_t}(p,x) + 1),
\end{equation}
where all quantities are measured with respect to $g_t$. 
\end{cor}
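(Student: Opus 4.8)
The plan is to derive these bounds by integrating the Hessian identity of Claim~\ref{claim with identities over time} along minimizing $g_t$-geodesics issuing from $\partial\cC^{\Sg}_{r_3}$, exactly as in the static estimate \eqref{equation distance potential comparison} of Lemma~\ref{lemma first parametrization}, with the torsion and curvature corrections absorbed into error terms using the uniform-in-time curvature decay of Proposition~\ref{proposition backward flow reparametrization}(3). Fix $t\in[-1,0)$. Since the spatial Hessian of $-tf$ at fixed $t$ equals $-t\,\na^2 f$, multiplying the second identity in \eqref{equation time dependent shrinker identities} by $-t$ gives
\begin{equation*}
\na_i\na_j(-tf) = \tfrac{1}{2}(g_t)_{ij} + t\Big(\Ric_{ij} + \tfrac{1}{3}|T|^2(g_t)_{ij} + 2T_i^{\;k}T_{kj}\Big),
\end{equation*}
with all quantities on the right computed with respect to $g_t$. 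Writing $E_t$ for the parenthesized tensor, the relation $R=-|T|^2$ together with the pointwise bounds $|\Ric|_{g_t},\ |T|^2,\ |T_i^{\;k}T_{kj}|_{g_t}\le C|\Rm(g_t)|_{g_t}$ and the estimate \eqref{equation time dependent curvature decay} give $|t\,E_t|_{g_t}(x)\le C(-t)K_0/(r^2(x)+1)$ on $\cC^{\Sg}_{r_3}\times[-1,0)$.

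Second, I would record a comparison, uniform in $t$, between the radial coordinate $r$ and $\mathrm{dist}_{g_t}(p,\cdot)$ for $p\in\partial\cC^{\Sg}_{r_3}$: combining Corollary~\ref{lemma decay estimates}(b) (applied to each $g_t$, whose geometry is uniformly controlled by Proposition~\ref{proposition backward flow reparametrization}) with the distance estimate \eqref{equation distance estimate} and part~(3) of Lemma~\ref{lemma first parametrization}, one obtains constants $N,c$ independent of $t$ with $N^{-1}(r(x)-c)\le\mathrm{dist}_{g_t}(p,x)\le N(r(x)+c)$. In particular, along any unit-speed minimizing $g_t$-geodesic $\ga:[0,l]\to\overline{\cC^{\Sg}_{r_3}}$ from $p$ to a point $x$ with $\mathrm{dist}_{g_t}(p,x)=l$, one has $|t\,E_t|_{g_t}(\ga(s))\le C(-t)/(s^2+1)$. (As in Lemma~\ref{lemma first parametrization} and Lemma~\ref{lemma fundamental shrinker identity}, some care is needed to guarantee that such a geodesic stays in the end, which one arranges by enlarging $r_0$ and working on $\cC^{\Sg}_{2r_3}$.) Then $\tfrac{d^2}{ds^2}\big((-tf)\circ\ga\big)(s)=\tfrac12 + (t\,E_t)(\dot\ga,\dot\ga)(s)$; integrating twice from $0$ to $l$, the boundary contributions $(-tf)(p)$ and $\langle\na(-tf)(p),\dot\ga(0)\rangle_{g_t}$ are bounded uniformly in $t$ because they are controlled on the fixed compact set $\partial\cC^{\Sg}_{r_3}$, and the iterated error integral is $O(l)$ since $\int_0^\infty(s^2+1)^{-1}\,ds<\infty$; combined with the radial comparison this yields \eqref{equation dynamic potential versus distance}.

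For the gradient estimate \eqref{equation dynamic potential gradient versus distance}, integrating the Hessian identity only once gives $\langle\na(-tf)(x),\dot\ga(l)\rangle_{g_t}=\tfrac{l}{2}+O(1)$, and since $\dot\ga(l)$ is a $g_t$-unit vector this immediately produces the lower bound for $|\na(-tf)|_{g_t}(x)$. For the matching upper bound I would invoke the identity $\tau^2|\na f|^2-\tau f = -\tau^2 O(\log r)$ from \eqref{equation derivative identities on f 1} (with $\tau=-t$), which reads $|\na(-tf)|_{g_t}^2\le -tf + C(-t)^2\log r$; feeding in \eqref{equation dynamic potential versus distance} and the radial comparison then gives the upper bound. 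The main obstacle I anticipate is not any single estimate but the bookkeeping of uniformity in $t$ as $t\nearrow 0$: one must verify that the curvature decay, the uniform equivalence of the metrics $g_t$, the radius/distance comparison, and the confinement of minimizing geodesics to the end all hold with constants independent of $t$. This is precisely where Proposition~\ref{proposition backward flow reparametrization} does the heavy lifting (via the uniform bound \eqref{equation time dependent curvature decay} and the smooth extension to $\tau=0$), and it is why the weaker, logarithmically corrected shrinker identity \eqref{equation fundamental shrinker identity} still suffices: the $O(\log r)$ terms enter only through the already-bounded error integrals.
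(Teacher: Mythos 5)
Your treatment of \eqref{equation dynamic potential versus distance} and of the lower bound in \eqref{equation dynamic potential gradient versus distance} is exactly the paper's (one-line) proof: integrate the time-dependent Hessian identity of Claim \ref{claim with identities over time} along minimizing $g_t$-geodesics from $\partial \cC^{\Sg}_{r_3}$, just as in the derivation of \eqref{equation distance potential comparison}, with the torsion/curvature corrections controlled uniformly in $t$ by \eqref{equation time dependent curvature decay} and the boundary terms controlled via the scaling relations \eqref{equation distance estimate} and \eqref{equation distance potential comparison}.

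The one genuine problem is your justification of the \emph{upper} bound in \eqref{equation dynamic potential gradient versus distance}. You invoke $\tau^2|\nabla f|^2-\tau f=-\tau^2 O(\log r)$ from \eqref{equation derivative identities on f 1}; but within the paper's development this identity is Proposition \ref{proposition time dependent shrinker potential estimate} (restated as part (4) of Proposition \ref{proposition backward flow reparametrization}, whose proof is the very subsection containing the present corollary), and its proof uses precisely the linear-growth bound $|\nabla(-tf)|(\ga(s))\le \tfrac{s}{2}+N_3$ in the step $\int_0^l |T|^2\,|\nabla(-tf)|\,ds \le C\int_0^l s^{-2}\cdot s\,ds$. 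So as written your argument for the gradient upper bound is circular. The fix is to stay within the scheme you already use for the lower bound: since $\nabla^2(-tf)=\tfrac12 g_t + tE_t$ with $|tE_t|_{g_t}\le CK_0/(s^2+1)$ along the geodesic, the function $s\mapsto |\nabla(-tf)|(\ga(s))$ is locally Lipschitz with $\big|\tfrac{d}{ds}|\nabla(-tf)|\big|\le |\nabla^2(-tf)|_{\mathrm{op}}\le \tfrac12 + C/(s^2+1)$, and integrating from the boundary (where $|\nabla(-tf)|(p)=\sqrt{-t}\,|\mathring\nabla\mathring f|(\Psi_{s(t)}(p))\simeq r(p)/2$ is bounded uniformly in $t$ by \eqref{equation distance estimate} and \eqref{equation distance potential comparison} --- uniformity does not follow merely from $\partial\cC^\Sg_{r_3}$ being compact, since $f(\cdot,t)$ depends on $t$) gives the matching upper bound. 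Alternatively, pulling back the static Lemma \ref{lemma fundamental shrinker identity} and rescaling works at the cost of a harmless $(-t)\log\tfrac{1}{-t}$ term; but citing the time-dependent \eqref{equation derivative identities on f 1} is not available at this stage. Finally, note that this method (like the paper's intended one) produces time-independent $O(1)$ error terms rather than errors proportional to $t$ as literally displayed in the corollary: the boundary contributions $(-tf)(p)\simeq r(p)^2/4$ and $|\nabla(-tf)|(p)\simeq r(p)/2$ do not vanish as $t\to 0$, so the $O(1)$ version is what is actually provable this way (and is all that is used later); you need not try to recover the $t$-factors.
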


\begin{proof}
Integrating the time-dependent soliton identities exactly as in the derivation of \eqref{equation distance potential comparison} yields the estimates.
\end{proof}

\begin{prop}\label{proposition time dependent shrinker potential estimate}
On $\cC^{\Sg}_{r_3} \times [-1,0)$, $f$ satisfies
\begin{equation}\label{equation log growth of shrinker potential}
 |\nabla f|_{g}^2 - \frac{f}{(-t)} = O(\log r).
\end{equation}
\end{prop}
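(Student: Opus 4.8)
The plan is to reduce the estimate to the fixed-time fundamental shrinker identity of Lemma \ref{lemma fundamental shrinker identity} by means of the scaling relations underlying Proposition \ref{proposition backward flow reparametrization}. Writing $\mathring\vp,\mathring g,\mathring f$ for the original shrinker data on $\cC^\Sg_{r_3}$ and $s(t)=-\log(-t)$, we have $g=g_t=(-t)\,\Psi_{s(t)}^*\mathring g$ and $f=\Psi_{s(t)}^*\mathring f$, so that a short computation with the inverse metric gives
$$|\nabla f|^2_g-\frac{f}{-t}=\frac{1}{-t}\,\Psi_{s(t)}^*\Bigl(|\mathring\nabla\mathring f|^2_{\mathring g}-\mathring f\Bigr).$$
Since $\cC^\Sg_{r_3}$ is an AC shrinker end, Lemma \ref{lemma fundamental shrinker identity} (after passing through the parametrization of Lemma \ref{lemma first parametrization} and the radial comparison \eqref{equation distance potential comparison}) gives $|\mathring\nabla\mathring f|^2_{\mathring g}-\mathring f=-\mathring R+O(\log r)$; substituting this and using that the scalar curvature rescales as $R(g)=\tfrac{1}{-t}\Psi_{s(t)}^*\mathring R$, together with $R=-|T|^2$ for a closed $G_2$-structure, produces
$$|\nabla f|^2_g-\frac{f}{-t}=|T|^2_g+\frac{1}{-t}\,\Psi_{s(t)}^*\bigl(O(\log r)\bigr).$$
By the uniform curvature decay \eqref{equation higher order curvature estimates} with $m=0$, the first term is $O(r^{-2})$, uniformly in $t$, so everything comes down to the second term.

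To control $\tfrac{1}{-t}\Psi_{s(t)}^*(O(\log r))$ I would use the radial comparison \eqref{equation distance estimate}: since $s(t)=-\log(-t)\ge 0$ it gives, up to uniform constants, $r\circ\Psi_{s(t)}\le C(r+1)(-t)^{-1/2}$, so the logarithm of the pulled-back radial variable is bounded by $C\bigl(\log(r+1)+\log\tfrac{1}{-t}\bigr)$, and the point is to organize this, using $-t\le 1$, so as to return the clean bound $O(\log r)$. A cleaner way to see that the negative powers of $-t$ in fact cancel---and the route I would ultimately take---is to make the conservation-law structure explicit on the time-$t$ slice: repeating the computation in the proof of Lemma \ref{lemma fundamental shrinker identity}, but starting from the identity $\nabla_i\nabla_j f=-\tfrac{g_{ij}}{2t}-\Ric_{ij}-\tfrac13|T|^2 g_{ij}-2T_i^{\;k}T_{kj}$ of Claim \ref{claim with identities over time}, taking its divergence and trace, invoking the contracted Bianchi identity, $R=-|T|^2$, and $\nabla f\lrcorner T=0$ (which still holds because $\nabla f$ is a gradient, by \cite[Prop.\ 9.4]{LotayWeiLaplacianFlow} and \cite[(9.10)]{LotayWeiLaplacianFlow} applied to the time-$t$ soliton), one obtains
$$\nabla_i\Bigl(R+|\nabla f|^2-\tfrac{f}{-t}\Bigr)=-\tfrac23|T|^2\nabla_i f+4\nabla_j T_i^{\;k}T_{kj}.$$
The right-hand side is controlled by $|T|^2|\nabla f|+|T|\,|\nabla T|$; by \eqref{equation higher order curvature estimates} one has $|T|=O(r^{-1})$ and $|\nabla T|\le C|\Rm|=O(r^{-2})$, while Corollary \ref{corollary dynamic potential versus distance} gives $(-t)|\nabla f|_g$ comparable to $\mathrm{dist}_{g_t}(\cdot,\partial\cC^\Sg_{r_3})$, which in turn is comparable to $r$. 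Integrating this identity along a minimizing $g_t$-geodesic from $\partial\cC^\Sg_{r_3}$ then contributes a logarithm, and the initial value on $\partial\cC^\Sg_{r_3}$ is pinned down at the fixed time $t=-1$, where $\Psi_0=\mathrm{Id}$ and the quantity reduces to $\mathring R+|\mathring\nabla\mathring f|^2-\mathring f=O(\log r)$ by Lemma \ref{lemma fundamental shrinker identity}.

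The main obstacle is keeping the error genuinely $O(\log r)$ \emph{uniformly} as $t\nearrow 0$. The potential $f$, the gradient $|\nabla f|^2_g$, and $\tfrac{f}{-t}$ all individually grow like $\tfrac{r^2}{4(-t)^2}$ along the fixed radial coordinate, and both the rescaling and the reparametrization \eqref{equation distance estimate} introduce factors of $(-t)^{-1}$; the content of the proposition is precisely that, thanks to the conservation-law identity above, the leading quadratic parts cancel to logarithmic order. The remaining work is to check that the torsion terms---which carry the genuinely new logarithmic growth relative to the Ricci case, and are the only source of the $\log r$ error---are integrated against the correct reference hypersurface and that the leftover powers of $-t$ are absorbed using $-t\le 1$; managing this interaction between the logarithmic torsion error and the time-rescaling is the crux of the argument.
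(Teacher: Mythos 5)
Your overall strategy coincides with the paper's: both transport the fundamental identity of Lemma \ref{lemma fundamental shrinker identity} to the time-$t$ slice (you by re-deriving it from Claim \ref{claim with identities over time}, the paper by pulling back the static identity by $\Psi_{s(t)}$ and rescaling) and then integrate along $g_t$-geodesics from $\partial \cC^\Sg_{r_3}$, using the uniform curvature decay and the comparability of $|\nabla(-tf)|_g$ with distance. The setup, the scaling reduction, and the time-$t$ identity you write down are all fine.

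The gap is that, as outlined, your argument does not produce a bound uniform in $t$, which is the entire content of the proposition. With your form of the identity the dominant term on the right-hand side is $\tfrac23|T|^2|\nabla f|$, and since (as you yourself note) $(-t)|\nabla f|_{g}\simeq r$, this term has size $\big((-t)\,r\big)^{-1}$; integrating along a geodesic therefore yields $(-t)^{-1}\log r$, not $\log r$, and your proposed absorption ``using $-t\le 1$'' goes the wrong way, since $(-t)^{-1}\ge 1$. The paper's proof closes because the identity it integrates carries the torsion term paired with $\nabla_i(-tf)$, namely $-\tfrac23|T|^2_g\nabla_i(-tf)+4\nabla_jT_i^{\;k}T_{kj}$, whose first term is $O(r^{-1})$ uniformly in $t$; this differs from your identity by a factor of $(-t)$, and since the scaling $\Psi_{s(t)}^*|\mathring T|^2_{\mathring g}=(-t)\,|T|^2_{g_t}$ produces exactly your version when one pulls back the static identity, reconciling that factor is precisely the issue your ``crux'' paragraph points at but does not resolve. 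Separately, integrating from the boundary only controls $\sE(x,t)-\sE(x_0,t)$; you still need a bound on the base value $\sE(x_0,t)$ that is uniform as $t\nearrow 0$, and ``pinning it down at $t=-1$'' does not give this: at $t=-1$ the statement is just Lemma \ref{lemma fundamental shrinker identity}, whereas for $t$ near $0$ one has $\sE(x_0,t)=\tfrac{1}{-t}\,\mathring\sE\big(\Psi_{s(t)}(x_0)\big)$ with $\Psi_{s(t)}(x_0)$ escaping to infinity, which is exactly the kind of quantity the proposition asserts control over. Until these two points (the extra $(-t)^{-1}$ in the integrand and the time-uniform control of the boundary value) are settled, your argument establishes only $|\nabla f|^2_g-\tfrac{f}{-t}=O\big((-t)^{-1}\log r\big)$ rather than the claimed $O(\log r)$.
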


\begin{proof}
We examine the identity derived in Lemma \ref{lemma fundamental shrinker identity}:
\[\mathring \nabla_i (\mathring R + |\mathring \nabla \mathring f|_{\mathring g}^2 - \mathring f) = -\frac{2}{3}|\mathring T|_{\mathring g}^2 \mathring \nabla_i \mathring f + 4\mathring \nabla_j \mathring T_{i}^{\;k}\mathring T_{kj}.
\]
Pull back by $\Psi_{s(t)}^*$ and divide both sides by $(-t)$.
\begin{equation*}\frac{1}{(-t)}\nabla_i (\Psi_{s(t)}^* \mathring R + |\nabla  f|_{\Psi_{s(t)}^* \mathring g}^2 - f) = \frac{1}{(-t)}\bigg(
-\frac{2}{3}|\Psi_{s(t)}^*\mathring T|_{\Psi_{s(t)}^*\mathring g}^2 \mathring \nabla_i f + 4 \mathring \nabla_j \Psi_{s(t)}^* \mathring T_{i}^{\;k}\Psi_{s(t)}^* \mathring T_{kj})   \bigg)
\end{equation*}
Using the rescaling identities, this equation can be rewritten in terms of the time-dependent quantities as
\[
\nabla_i \bigg(R + |\nabla f|^2_g - \frac{f}{(-t)}\bigg) = \frac{1}{(-t)}\bigg(-\frac{2}{3}(-t)|T|^2_g \nabla_i (-tf) + (-t)4\nabla_jT_{i}^{\;k}T_{kj}\bigg).
\]
As before, select a point $x_0$ on $\dd \cC^{\Sg}_{r_3}$. For any $(x,t) \in \overline{\cC^{\Sg}_{2r_3}} \times [-1,0)$, take a unit speed geodesic $\ga : [0, l] \rightarrow \cC^{\Sg}_{r_3}$ with respect to $g_t$ such that $\ga(0) = x_0$ and $\ga(l) = x$. Integrating the time-dependent identity along the geodesic $\ga$, we obtain
\[
    \int_0^{l}\nabla_{\dot \ga} \bigg(R + |\nabla f|^2_g - \frac{f}{(-t)}\bigg) ds  = \int_0^l \dot \ga \lrcorner \bigg(-\frac{2}{3} |T|^2_g \nabla_i (-tf) + 4\nabla_j T_{i}^{\;k}T_{kj}\bigg)ds
\]
Let $\sE = R + |\nabla f|^2_g - \frac{f}{(-t)}$.
\begin{align*}
    |\sE(x,t) - \sE(x_0,t)| &\le C\int_0^l (|T|_g^2+|\na T|_g |T|_g)|\nabla (-tf)|_g |\dot \gamma|_g ds \\
    &\le C\int_0^l (|T|_g^2)|\nabla (-tf)|_g |\dot \gamma|_g ds \\
    &\le C \int_0^l s^{-2} s ds \\
    &= C\log(\mathrm{dist}_{g_t}(p,x)).
\end{align*}
The last inequality comes from the uniform curvature estimate \eqref{equation spacetime curvature bound}. It remains to show that $\mathrm{dist}_{g_t}(p,x)$ is comparable to the radial parameter $r$ on $\cC^{\Sg}_{r_3}$. Notice that scaling of the metric gives us
\[\mathrm{dist}_{g_t}(p,x) = \sqrt{-t} \mathrm{dist}_{\mathring g}(\Psi_{s(t)}(p),\Psi_{s(t)}(x)).
\]
Arguing as in the proof of Claim \ref{claim uniform curvature estimates} and invoking the estimates \eqref{equation distance potential comparison} and Corollary \ref{lemma decay estimates}, which tell us that $\sqrt{-t}r(\Psi_{s(t)} (x)) \simeq r$. We conclude that 
\[ \sE(x,t) = O(\log(r(x))).
\]
Note that $\sE$ is time dependent, but is uniformly bounded for all $t$ by a constant times $\log r$.
\end{proof}

\begin{lem}\label{lemma mixed time and space derivative estimates on shrinker potential}
On $\cC^{\Sg}_{r_3} \times [-1,0)$,
\begin{equation}\label{equation for -tf} \frac{\partial}{\partial t}(-tf) = -t(\sE - R),    
\end{equation}
\begin{equation}\label{equation higher spatial derivatives of shrinker potential}
|\nabla^k \sE(x,t)| = O(r(x)^{-k})\;\;\;\text{for }k\ge 1,
\end{equation}
and
\begin{equation}\label{equation mixed time and space derivatives of shrinker potential}
|\partial_t \nabla^k \sE(x,t)| = O(r(x)^{-(k+1)}) \;\;\text{for } k\ge 0
\end{equation}
\end{lem}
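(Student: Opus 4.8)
The plan is to treat the three assertions in turn, the first two being essentially computational and the last requiring more care.

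\emph{Proof of \eqref{equation for -tf}.} This is immediate from the first relation of \eqref{equation time dependent shrinker identities}, $\partial_t f = |\nabla f|_g^2$: differentiating the product and inserting the definition $\sE := R + |\nabla f|_g^2 - \tfrac{f}{-t}$ gives
\[
\frac{\partial}{\partial t}(-tf) = -f + (-t)|\nabla f|_g^2 = (-t)\Big(|\nabla f|_g^2 - \tfrac{f}{-t}\Big) = -t(\sE - R).
\]

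\emph{Proof of \eqref{equation higher spatial derivatives of shrinker potential}.} Here I would iterate the pointwise identity $\nabla_i\sE = P_i$ established in the proof of Proposition \ref{proposition time dependent shrinker potential estimate}, whose right-hand side $P_i$ is schematically $|T|_g^2 \ast \nabla f + \nabla T \ast T$. The relevant decay inputs, all uniform on $\cC^\Sg_{r_3}$, are: from the spacetime curvature bound \eqref{equation time dependent curvature decay} and the standard identities expressing $\nabla T$ through $\Rm$ and $T\ast T$ for a closed $G_2$-structure (cf.\ \cite{LotayWeiLaplacianFlow}), $|\nabla^m T|_g = O(r^{-(m+1)})$ and hence $|\nabla^m(|T|_g^2)| = O(r^{-(m+2)})$; from the soliton identity \eqref{equation time dependent shrinker identities}, which gives $\nabla^2 f = \tfrac{1}{2(-t)}g + O(r^{-2})$, together with \eqref{equation time dependent curvature decay}, the bounds $|\nabla f|_g = O(r)$, $|\nabla^2 f|_g = O(1)$, and $|\nabla^m f|_g = O(r^{-m})$ for $m\ge 3$; and Corollary \ref{corollary dynamic potential versus distance} to replace the $g(t)$-distance to $\partial\cC^\Sg_{r_3}$ by the radial parameter $r$ throughout. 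Applying $\nabla^{k-1}$ to $\nabla_i\sE = P_i$ and expanding by the Leibniz rule, each summand is a contraction of factors from $\{\nabla^a T\}$, $\{\nabla^b\nabla f\}$ and curvature; a short count identifies the dominant terms as $\nabla^{k-1}(|T|_g^2)\ast\nabla f = O(r^{-(k+1)})\cdot O(r)$ and $\nabla^{k-2}(|T|_g^2)\ast\nabla^2 f = O(r^{-k})\cdot O(1)$, each $O(r^{-k})$, with every remaining term $O(r^{-(k+2)})$ or smaller.

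\emph{Proof of \eqref{equation mixed time and space derivatives of shrinker potential}.} For the terms with a derivative the plan is to differentiate $\nabla_i\sE = P_i$ in $t$; since $\sE$ is a scalar, $\partial_t$ commutes with one covariant derivative, so $\nabla_i(\partial_t\sE) = \partial_t P_i$. One then expands $\partial_t P_i$ and substitutes $\partial_t g = 2\Ric + \tfrac23|T|^2 g + 4T\ast T$ (from \eqref{equation backwards modified Ricci flow}, with $|\nabla^m(\partial_t g)| = O(r^{-(m+2)})$), the Laplacian-flow evolution equation of $T$ (a second-order reaction–diffusion equation whose reaction terms have size $|\Rm| + |T|^2$, so the local Shi-type estimates of \cite{LotayWeiRealAnalyticity} give $|\nabla^m(\partial_t T)|_g = O(r^{-(m+2)})$), $\partial_t f = |\nabla f|_g^2$, and the commutator $[\partial_t,\nabla] = (\partial_t\Gamma)\ast(\cdot)$ with $\partial_t\Gamma$ bounded by $\nabla(\partial_t g) = O(r^{-3})$; combined with the decay estimates above this controls $\partial_t P_i$ and, after differentiating $k-1$ further times, $\nabla^{k-1}(\partial_t P)$. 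For $k\ge 2$ one must additionally handle $[\partial_t,\nabla^k]\sE$, a sum of terms $\nabla^a(\partial_t\Gamma)\ast\nabla^b\sE$ with $a+b\le k-1$; as $\nabla^a(\partial_t\Gamma) = O(r^{-(a+3)})$ and $\nabla^b\sE = O(r^{-b})$ for $b\ge 1$ (with only the logarithmic bound $\sE = O(\log r)$ for $b=0$), these contributions acquire an extra power of $r$ up to a logarithm and are absorbed.

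The main obstacle is the delicate bookkeeping of the time derivatives, and in particular the low-order cases: the structure equation only controls $\nabla(\partial_t\sE)$, while $\sE$ itself obeys merely the logarithmic bound of Proposition \ref{proposition time dependent shrinker potential estimate}, so a naive integration of $\nabla(\partial_t\sE) = \partial_t P$ along geodesics does not by itself produce the asserted decay. To reach it one must compute the relevant time derivatives \emph{directly}, using \eqref{equation for -tf} together with $\partial_t f = |\nabla f|_g^2$, the time-dependent soliton identity \eqref{equation time dependent shrinker identities}, the relation $\nabla f \lrcorner T = 0$, and the divergence identity \eqref{equation derivative of fundamental identity}, and verify that the potentially large, logarithmically growing contributions cancel, leaving a remainder governed by curvature- and torsion-quadratic terms of the claimed size. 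Isolating this cancellation and then propagating it through the Leibniz and commutator expansions is the technical heart of the argument.
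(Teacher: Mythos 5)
Your handling of \eqref{equation for -tf} and of the spatial estimate \eqref{equation higher spatial derivatives of shrinker potential} is essentially the paper's argument: the first is the same one-line computation from \eqref{equation time dependent shrinker identities}, and the second is obtained, as in the paper, by repeatedly differentiating the identity \eqref{equation first derivative of shrinker potential} and inserting the AC decay of $T$, $\Rm$ and the potential. One caution there: do the bookkeeping with $\nabla(-tf)$, as in \eqref{equation first derivative of shrinker potential}, rather than $\nabla f$. Your stated inputs $|\nabla f|_g=O(r)$ and $|\nabla^2 f|_g=O(1)$ are not uniform on $[-1,0)$ (by \eqref{equation C0 estimates on f}, $\tau\nabla f=\tfrac r2\,\partial_r$, so $|\nabla f|\sim r/2\tau$); the uniform statements hold for $-tf$, and with that substitution your dominant-term count and the exponent $O(r^{-k})$ are correct.

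The genuine gap is in \eqref{equation mixed time and space derivatives of shrinker potential}, specifically the case $k=0$. You correctly observe that time-differentiating \eqref{equation first derivative of shrinker potential} only controls $\nabla(\partial_t\sE)$, and that integrating that bound from the boundary yields boundedness rather than decay; but your proposed remedy --- ``compute the time derivatives directly and verify that the logarithmically growing contributions cancel'' --- is never carried out, and it is doubtful it can be in the form you describe. Indeed, writing $\sE(\cdot,t)=\tfrac{1}{-t}\,\Psi_{s(t)}^*\mathring\sE$ with $\mathring\sE=\mathring R+|\mathring\nabla\mathring f|^2-\mathring f$ gives $\partial_t\sE=t^{-2}\big[\mathring\sE+\langle\mathring\nabla\mathring\sE,\mathring\nabla\mathring f\rangle\big]\circ\Psi_{s(t)}$, and the bracket is a priori only $O(\log)$; the identities you propose to exploit ($\partial_t f=|\nabla f|^2$, $\nabla f\lrcorner T=0$, \eqref{equation derivative of fundamental identity}) are precisely the ones already consumed in deriving \eqref{equation first derivative of shrinker potential}, so they furnish no additional cancellation. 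The paper's route is different at exactly this point: it differentiates \eqref{equation first derivative of shrinker potential} in time --- where the Bianchi/$\nabla f\lrcorner T=0$ cancellations are already packaged --- estimates each term (using \eqref{equation for -tf} for $\partial_t(-tf)$) to get $|\nabla\partial_t\sE|=O(r^{-3})$, and then treats $k=0$ by the geodesic-integration argument of Proposition \ref{proposition time dependent shrinker potential estimate}, i.e.\ the very step you dismiss. What is actually needed to close either argument is an anchor for the integration: since the $O(r^{-3})$ bound is integrable in $r$, one should integrate radially out to spatial infinity and identify the limit of $\partial_t\sE$ there (for instance via \eqref{equation for -tf} together with the locally smooth convergence of $-tf$ as the radius, or time, degenerates), which then yields the claimed $O(r^{-(k+1)})$ decay. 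As written, your proposal stops at a programmatic description of an unestablished cancellation, so the decisive estimate $|\partial_t\sE|=O(r^{-1})$ --- which is later used, e.g.\ in the bound $|\sI\,\sE_\tau|\le Cr^{-1}$ in Section \ref{subsection Carleman estimates to imply rapid decay} --- is not proved.
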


\begin{proof}
We prove the first inequality using the first identity in Claim \ref{claim with identities over time}.
\begin{align*}
    -t\left(|\nabla f|^2 + \frac{f}{t}\right) &= -t(\sE - R)\\
    -t(\frac{\partial f}{\partial t} + \frac{f}{t}) &= -t(\sE - R)\\
    \frac{\partial}{\partial t}(-tf) &= -t(\sE - R).
\end{align*}
Now, recall the formula
\begin{equation}\label{equation first derivative of shrinker potential}
    \nabla_i \bigg(R + |\nabla f|^2_g - \frac{f}{(-t)}\bigg) = -\frac{2}{3}|T|^2_g \nabla_i (-tf) + 4\nabla_jT_{i}^{\;k}T_{kj},
\end{equation}
and note that by the asymptotically conical condition,  
\[
|\na_i \sE| = \bigg| -\frac{2}{3}|T|^2_g \nabla_i (-tf) + 4\nabla_jT_{i}^{\;k}T_{kj} \bigg|_{g_t} = O(r^{-1}).
\]
By the asymptotics of $(g_t)^{-1}$,
\[
|\na_{l_1} \cdots \na_{l_k} \sE| = \bigg|\na_{l_1} \cdots \na_{l_k} \bigg( -\frac{2}{3}|T|^2_g \nabla_i (-tf) + 4\nabla_jT_{i}^{\;l}T_{lj} \bigg) \bigg|_{g_t} = O(r^{-k-1}).
\]
This yields \eqref{equation higher spatial derivatives of shrinker potential}. Finally, we differentiate \eqref{equation first derivative of shrinker potential} with respect to time
\begin{align*}
    \partial_t \nabla_i \bigg(R + |\nabla f|^2_g - \frac{f}{(-t)}\bigg) &= \partial_t \bigg( -\frac{2}{3}|T|^2_g \nabla_i (-tf) + 4\nabla_jT_{i}^{\;k}T_{kj}\bigg) \\
    &=  -\frac{2}{3}\partial_t (|T|^2_g) \nabla_i (-tf) + -\frac{2}{3} (|T|^2_g) \nabla_i \partial_t (-tf) \\
    & \qquad \qquad + 4\partial_t (\nabla_jT_{i}^{\;k}T_{kj})
\end{align*}
The evolution equation for the torsion given in \cite[(3.9)]{LotayWeiLaplacianFlow} implies that 
\[|\partial_t (|T|^2_g) \nabla_i (-tf)| = O(r^{-3}), \;\;\text{and }  |\partial_t (\nabla_jT_{i}^{\;k}T_{kj})| = O(r^{-5}).
\]
By \eqref{equation for -tf},
\begin{align*}
-\frac{2}{3} (|T|^2_g) \nabla_i \partial_t (-tf) &= -\frac{2}{3} (|T|^2_g) \nabla_i( -t(\sE - R)) + -\frac{2}{3} (|T|^2_g) [\na_i, \dd_\tau](-tf) \\
&= O(r^{-3}).
\end{align*}
This proves \ref{equation mixed time and space derivatives of shrinker potential} for the cases $k \ge 1$. The same argument of integrating along geodesics as in Proposition \ref{proposition time dependent shrinker potential estimate} is sufficient to prove the case $k = 0$.
\end{proof}

We now prove that $-tf$ extends to a smooth function on the space-time $\cC^{\Sg}_{r_3} \times [-1,0]$.

\begin{lem}
As $t \rightarrow 0^-$, the family $-tf(x,t)$ converges to a function $q \in C^\infty(\cC^{\Sg}_{r_3})$ in the $C^\infty_{loc}$-topology. Furthermore, $q$ satisfies
\begin{equation}\label{equation identities for q}
    \bigg|q(x) - \frac{r^2(x)}{4}\bigg| \le N \log r(x), \;\;|\nabla q|_{g_D}^2 = q, \;\text{ and }\nabla^2_{g_D} q = \frac{1}{2}g_D,
\end{equation}
where $g_D = g(0)$. Finally, the metric $g_D$ is conical with radial distance function $r_{g_D} = 2 \sqrt{q}$.
\end{lem}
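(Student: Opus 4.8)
\emph{Strategy and smooth convergence.} The plan is to show first that $-tf(\cdot,t)$ converges in $C^\infty_{\mathrm{loc}}(\cC^{\Sg}_{r_3})$ to a function $q$ as $t\nearrow0$, and then to extract the three identities for $q$ and the conicality of $g_D$ by passing to the limit $t\nearrow0$ in relations already established for the family $(-tf,g_t)$ on $\cC^{\Sg}_{r_3}\times[-1,0)$. For the convergence, the key input is \eqref{equation for -tf}, $\partial_t(-tf)=-t(\sE-R)$, together with: $|\sE|=O(\log r)$ \emph{uniformly in $t$} (Proposition~\ref{proposition time dependent shrinker potential estimate}); $|R|=|T|^2=O(r^{-2})$ uniformly in $t$ (Claim~\ref{claim uniform curvature estimates} and $R=-|T|^2$); the spatial and mixed estimates $|\nabla^k\sE|=O(r^{-k})$, $|\partial_t\nabla^k\sE|=O(r^{-k-1})$ of Lemma~\ref{lemma mixed time and space derivative estimates on shrinker potential}; and the smooth convergence $g_t\to g_D$ established earlier in this proof. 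Fixing $a>r_3$ and a finite coordinate atlas for $\overline{\cC^{\Sg}_a}$, in each chart $\partial_t\big(\partial^\alpha(-tf)\big)=\partial^\alpha\big(-t(\sE-R)\big)$; since $g_t$ is uniformly equivalent to $g_D$ on $\overline{\cC^{\Sg}_a}$, every coordinate derivative of $\sE-R$ and of $\partial_t(\sE-R)$ is bounded on $\overline{\cC^{\Sg}_a}$ uniformly in $t\in[-1,0)$, so each $\partial^\alpha(-tf)$ is uniformly Lipschitz in $t$ and extends continuously to $t=0$. Thus $q:=\lim_{t\nearrow0}(-tf)$ lies in $C^\infty(\cC^{\Sg}_{r_3})$, $-tf\to q$ in $C^\infty_{\mathrm{loc}}$, and $\partial^\alpha q=\lim\partial^\alpha(-tf)$.

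\emph{The identities for $q$.} Multiplying the gradient soliton identity of Claim~\ref{claim with identities over time}, $\nabla_i\nabla_j f=-\tfrac{g_{ij}}{2t}-\Ric_{ij}-\tfrac13|T|^2g_{ij}-2T_i^{\;k}T_{kj}$, by $-t$ gives $\nabla_i\nabla_j(-tf)=\tfrac12 g_{ij}+t\big(\Ric_{ij}+\tfrac13|T|^2g_{ij}+2T_i^{\;k}T_{kj}\big)$; the Ricci and torsion terms are $O(r^{-2})$ uniformly in $t$ by Claim~\ref{claim uniform curvature estimates} (with $|T|\le C|\Rm|^{1/2}$), so the second summand $\to0$ on compacta, while the left side $\to\nabla^2_{g_D}q$ by the previous paragraph and $g_t\to g_D$; hence $\nabla^2_{g_D}q=\tfrac12 g_D$. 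For the first identity, rewrite $\sE=R+|\nabla f|^2_{g_t}-\tfrac{f}{-t}$, multiply by $t^2$, and use $\nabla(-tf)=-t\,\nabla f$ to obtain $|\nabla(-tf)|^2_{g_t}-(-tf)=t^2(\sE-R)$; the right side is $O(t^2\log r)\to0$ on compacta and the left $\to|\nabla q|^2_{g_D}-q$, so $|\nabla q|^2_{g_D}=q$. Finally $\big|q-\tfrac{r^2}{4}\big|\le N\log r$ follows upon letting $t\nearrow0$ in the $t$-uniform bound $r^2-N_0\log r\le 4(-t)f\le r^2+N_0\log r$ of \eqref{equation C0 estimates on f}, which itself rests on the fundamental shrinker identity (Lemma~\ref{lemma fundamental shrinker identity}, Proposition~\ref{proposition time dependent shrinker potential estimate}).

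\emph{Conicality of $g_D$.} Since $\big|q-\tfrac{r^2}{4}\big|\le N\log r$, after shrinking the end we have $q>0$, so $\rho:=2\sqrt q$ is smooth and positive. From $|\nabla q|^2_{g_D}=q$ we get $|\nabla\rho|^2_{g_D}=q^{-1}|\nabla q|^2_{g_D}=1$, and using $\nabla^2_{g_D}q=\tfrac12 g_D$ with $\nabla q=q^{1/2}\nabla\rho$,
\[
\nabla^2_{g_D}\rho=q^{-1/2}\nabla^2_{g_D}q-\tfrac12 q^{-3/2}\,dq\otimes dq=\tfrac{1}{\rho}\big(g_D-d\rho\otimes d\rho\big).
\]
A smooth function with unit gradient and Hessian $\rho^{-1}(g_D-d\rho\otimes d\rho)$ is the radial coordinate of a metric cone---this is the same computation as in the corresponding step of \cite{kotschwarwang2015}---so $g_D=d\rho^2+\rho^2 g_{\bar\Sg}$ on the end for a suitable cross-sectional metric $g_{\bar\Sg}$; that is, $g_D$ is conical with radial distance function $r_{g_D}=2\sqrt q$.

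\emph{Main obstacle.} Essentially all the analytic difficulty is contained in the cited estimates, so the argument is a chain of limiting steps; the delicate link is the $C^\infty_{\mathrm{loc}}$-convergence of $-tf$, which hinges on the mixed space--time estimate \eqref{equation mixed time and space derivatives of shrinker potential} and on the fact that the time derivative of the evolving connection $\nabla^{g_t}$---arising when $\partial_t$ is commuted past $\nabla^k$---is controlled uniformly on compacta, and uniformly in $t$, by \eqref{equation backwards modified Ricci flow} and the uniform curvature decay of Claim~\ref{claim uniform curvature estimates}. Once this convergence is established, the identities for $q$ transfer to the limit by continuity of $\nabla^{g_t}\to\nabla^{g_D}$.
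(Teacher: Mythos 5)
Your argument follows essentially the same route as the paper's proof: you use the ODE $\partial_t(-tf)=-t(\sE-R)$ together with the $t$-uniform estimates on $\sE$, $R$ and their derivatives to get $C^\infty_{\mathrm{loc}}$-convergence of $-tf$ to a smooth limit $q$, you obtain $|\nabla q|^2_{g_D}=q$ by multiplying \eqref{equation log growth of shrinker potential} by $t^2$, and $\nabla^2_{g_D}q=\tfrac12 g_D$ by multiplying the Hessian identity of Claim \ref{claim with identities over time} by $-t$ and letting $t\nearrow 0$; all of this matches the paper. For the conicality you carry out the explicit computation for $\rho=2\sqrt q$ (unit gradient, $\nabla^2\rho=\rho^{-1}(g_D-d\rho\otimes d\rho)$, hence $\mathcal L_{\nabla\rho}g_D=\tfrac2\rho(g_D-d\rho\otimes d\rho)$ and $g_D=d\rho^2+\rho^2 g_{\bar\Sg}$) instead of simply citing the conditions of \cite[\S 2.2.4]{kotschwarwang2015} as the paper does; this is correct and equivalent.

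There is, however, one genuine logical flaw: your derivation of $\big|q-\tfrac{r^2}{4}\big|\le N\log r$ quotes \eqref{equation C0 estimates on f}, i.e.\ item (4) of Proposition \ref{proposition backward flow reparametrization}. But the present lemma is itself a step in the proof of that proposition, and in the paper's logical order the $\tau$-uniform bound $r^2-N_0\log r\le 4\tau f\le r^2+N_0\log r$ is obtained only downstream of this lemma (essentially by combining $|q-r^2/4|\le N\log r$ with the $O(\tau^2\log r)$ rate of convergence of $\tau f$ to $q$, cf.\ \eqref{equation comparability h and r}). So as written the step is circular: at this point of the proof no $\tau$-uniform comparison of $4\tau f$ with $r^2$ is available to you. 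The fix is to argue as the paper does: integrate \eqref{equation for -tf} over $[-1,0]$, using $|\sE-R|=O(\log r)$ uniformly in $t$, to get $q(x)-f(x,-1)=O(\log r(x))$, and then invoke the comparison between the \emph{initial, static} potential $f(\cdot,-1)$ and $r^2/4$ supplied by the parametrization and AC setup (Lemma \ref{lemma first parametrization} and \eqref{equation distance potential comparison}, transferred to the cone coordinates as in Claim \ref{claim uniform curvature estimates}), rather than the time-dependent estimate \eqref{equation C0 estimates on f}. With that substitution the proof is complete; a minor additional remark is that for the uniform-Lipschitz-in-$t$ argument you only need $t$-uniform bounds on spatial derivatives of $\sE-R$, so the mixed bound \eqref{equation mixed time and space derivatives of shrinker potential} you invoke is not actually essential there.
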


\begin{proof}
Let $a \in (-1,0)$. Integrating \eqref{equation for -tf}, we obtain 
\begin{align}\label{equation integrated eqn for -tf}
    \int_{-1}^a\frac{\partial}{\partial t}(-tf)dt &= \int_{-1}^{a}-t(\sE - R)dt
\end{align}
Observe that given any spatial compact set $K \subset \cC^{\Sg}_{r_3}$, $\sE - R$ is uniformly bounded on the space-time $K \times [-1,0)$. In particular, the right hand side of the above equation is integrable on $\{x\} \times [-1,a)$ for all $a$ and fixed $x \in K$. We simplify
\begin{align*}
    -af(x,a) - f(x,-1) &= \int_{-1}^{a}-t(\sE - R)(x)dt \\
    -af(x,a) &= f(x,-1) + \int_{-1}^{a}-t(\sE - R)(x)dt.
\end{align*}
Taking the limit as $a \rightarrow 0$, we see that $-tf$ converges locally uniformly to some continuous function $q$. To obtain the local uniform convergence of all higher derivatives, we spatially differentiate the equation \eqref{equation integrated eqn for -tf} by $\nabla^m$. Lemma \ref{lemma mixed time and space derivative estimates on shrinker potential} gives us estimates on $\nabla^m (\sE - R)$ that are uniform in time and space on any compact set $K \times [-1,0]$. We can then argue in the exact same way as above to obtain the local uniform convergence of all derivatives. Thus, $-tf$ converges locally smoothly as $t \to 0$ to the limit function $q$, and 
\begin{equation*}
    \int_{-1}^0\frac{\partial}{\partial t}(-tf)(x,t) dt = \int_{-1}^{0}-tO(\log r(x))dt \implies q(x) - f(x,-1) = O(\log r(x)) 
\end{equation*}
By the comparison of $r$ and $f$,
\[ \bigg|q(x) - \frac{r^2(x)}{4}\bigg| \le N \log r(x).
\]
This allows us to see that $q$ is proper and positive on sufficiently distant regions. Multiplying \eqref{equation log growth of shrinker potential} by $t^2$ gives us
\[|\nabla (-tf)|^2 + tf = t^2 O(\log r) \implies |\nabla q|_{g_D}^2 = q,
\]
where $g_D = g(0)$. A similar argument with the third identity of Claim \ref{claim with identities over time} gives us
\[\nabla^2_{g_D} q = \frac{1}{2}g_D.
\]
All of the conditions listed in Section 2.2.4 of \cite{kotschwarwang2015} are satisfied, and thus, we have that $g_0$ is conical with radial distance function $r_{g_D} = 2 \sqrt{q}$. That is, there exists a diffeomorphism $\hat{\Phi}: \cC^{\hat \Sg}_{r_4} \to \cC^{\Sg}_{r_3}$ such that
$$(q \circ \hat{\Phi})(\hat{r}, \hat{\sg}) = \frac{\hat r^2}{4}, \text{ and } \hat{\Phi}^*(g_D) = \hat{g} := d\hat r^2 + \hat{r}^2 g_{\hat{\Sg}},$$
where $\hat \Sg$ is a closed $6$-manifold which is not necessarily the same as $\Sg$.
\end{proof}

\begin{lem}
There exists an isometry $F: (\cC^{\Sg}_0, g_C) \rightarrow (\cC^{\Sg}_0, g_D)$ from the rescaling limit to the dynamical limit. 
\end{lem}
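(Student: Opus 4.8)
\emph{Proof sketch.} The plan is to realize the dynamical limit $\vp_D$ as a pullback of the rescaling limit $\vp_C$ by a limiting reparametrization, and to read off $F$ from that identification. Because we are working with the $3$-form $\vp$ and not merely the metric, this substitutes for Kotschwar--Wang's use of Cheeger--Colding theory an argument carried out entirely on sections of $\Lambda^3 T^*M$.

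\emph{Step 1: $\vp_D$ is the asymptotic cone of $\vp$ for the metric $g_D$.} The previous lemma (identities \eqref{equation identities for q}) shows that $q=\lim_{\tau\to0}(-t)f$ is exactly the radial potential of the cone $(\cC^\Sg_0,g_D)$: $|\nabla q|_{g_D}^2=q$ and $\nabla^2_{g_D}q=\tfrac12 g_D$. Hence the soliton vector field $\nabla f$ converges, as $\tau\searrow0$, to the generator of the $g_D$-dilations, so the reparametrizing maps $\Psi_\tau$, after the rescaling that turns them into dilations, converge to a fixed diffeomorphism. Feeding this into the self-similarity $\vp(\cdot,\tau)=\tau^{3/2}\Psi_\tau^*\vp$ together with the smooth convergence $\vp(\cdot,\tau)\to\vp_D$ from Proposition \ref{proposition backward flow reparametrization}, and adapting the blow-down argument of Conlon--Deruelle--Sun, one produces a parametrization $\Phi_D$ of the end $V$, compatible with the cone structure of $g_D$, with
\begin{equation*}
\la^{-3}(\rho^D_\la)^*\Phi_D^*\vp\longrightarrow\vp_D\quad\text{in }C^k_{\mathrm{loc}}(\cC^\Sg_0,g_D),
\end{equation*}
where $\rho^D_\la$ denotes dilation for the cone structure of $g_D$. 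Thus $\vp$ is asymptotically conical for $g_D$ with cone $\vp_D$, exactly as it is asymptotically conical for $g_C$ with cone $\vp_C$ via $\Phi_C:=\Phi$ in Definition \ref{definition asymptotically conical}.

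\emph{Step 2: comparing the two parametrizations.} For $\la$ large set
\begin{equation*}
G_\la:=\rho^D_{1/\la}\circ\Phi_D^{-1}\circ\Phi_C\circ\rho_\la,
\end{equation*}
a diffeomorphism between cone-ends. Using $(\rho^D_{1/\la})^*(\rho^D_\la)^*=\mathrm{id}$ and $(\Phi_D^{-1})^*\Phi_D^*=\mathrm{id}$ gives the exact identity
\begin{equation*}
G_\la^*\!\left(\la^{-3}(\rho^D_\la)^*\Phi_D^*\vp\right)=\la^{-3}\rho_\la^*\Phi_C^*\vp.
\end{equation*}
By Corollary \ref{corollary phi conical implies g conical} and its $g_D$-analogue, $\Phi_C^*g_\vp$ is uniformly equivalent to $g_C$ and $\Phi_D^*g_\vp$ to $g_D$, with all derivatives; transporting one through the other shows that $g_C$ and $g_D$ generate the same $C^k_{\mathrm{loc}}$-topology on sections of $\Lambda^3 T^*M$ over the end. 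Differentiating the identity above and using this equivalence, the curvature decay \eqref{equation time dependent curvature decay}, and Lemma \ref{lemma decay estimates on G2 forms}, one obtains uniform $C^k_{\mathrm{loc}}$-bounds on $G_\la$ and uniform nondegeneracy of $dG_\la$ (the latter since $G_\la^*g_D\to g_C$). By Arzel\`a--Ascoli a subsequence $G_{\la_j}$ converges in $C^{k-1}_{\mathrm{loc}}$ to a local diffeomorphism $F$, and passing to the limit in the displayed identity, together with Step 1, yields $F^*\vp_D=\vp_C$, and in particular $F^*g_D=g_C$.

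\emph{Step 3: promoting $F$ to a global cone isometry.} Since $F^*g_D=g_C$, the functions $r^2/4$ (the $g_C$-radial potential) and $F^*q$ have the same $g_C$-Hessian, so their difference has parallel differential on the cone-end; comparing the induced metrics on the cross-sections forces this difference to vanish, so $F$ preserves the radial coordinate and restricts to an isometry of cross-sections (in the flat case $\cC^\Sg_0=\bR^7\setminus\{0\}$ the same conclusion holds directly, $F$ being the restriction of a Euclidean isometry). This cross-section isometry defines a cone isometry $\cC^\Sg_0\to\cC^\Sg_0$ agreeing with $F$ near infinity, smooth by Myers--Steenrod; taking it to be $F$ proves the lemma. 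The heart of the matter is Step 2: producing uniform $C^k_{\mathrm{loc}}$-estimates on the transition maps $G_\la$ with enough regularity that the limit is a genuine local diffeomorphism and $F^*\vp_D=\vp_C$ survives in the limit. This is precisely where, lacking Cheeger--Colding, one must exploit the decay of $\Phi_C^*\vp-\vp_C$ (measured with $g_C$) and of $\Phi_D^*\vp-\vp_D$ (measured with $g_D$) simultaneously, and where the merely logarithmic control available for $G_2$-shrinkers, via Lemma \ref{lemma fundamental shrinker identity}, must be absorbed.
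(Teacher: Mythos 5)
Your route is genuinely different from the paper's, and more ambitious than the lemma requires: you aim to produce $F$ with $F^*\vp_D=\vp_C$, whereas the statement only needs a metric isometry. The paper's own proof is short and purely metric: it integrates the backward evolution equation \eqref{equation backwards modified Ricci flow} in the hat-parametrization $\hat\Phi$ furnished by the preceding lemma to obtain the quadratic-decay estimate $|\hat\Phi^*(g(-1))-\hat g|_{\hat g}\le N/(\hat r^2+1)$, and then repeats the argument of \cite[\S 2.3]{kotschwarwang2015} (which rests on Cheeger--Colding theory) verbatim to identify the two asymptotic metric cones, setting $F=\hat\Phi\circ\hat F$. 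The form-level work is deliberately deferred to Proposition \ref{prop dynamical limit equals rescaling limit}, and it is there---not here---that the comparison of the $C^k_{loc}$-topologies of $g_C$ and $g_D$ is carried out, \emph{using} the isometry $F$ produced by the present lemma.

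This points to the gap in your Step 2. You assert that uniform equivalence of $\Phi_C^*g_\vp$ with $g_C$ and of $\Phi_D^*g_\vp$ with $g_D$ lets you ``transport one through the other'' and conclude that $g_C$ and $g_D$ generate the same $C^k_{loc}$-topology for $3$-forms. Uniform equivalence at infinity gives only $C^0$-comparability of norms there; it controls neither the difference of Levi-Civita connections needed for $C^k$-norms nor, more seriously, the behaviour on fixed compact subsets of $\cC^\Sg_0$, since the two cone structures carry different dilation families ($\rho_\la$ versus $\rho^D_\la$) and comparability does not descend under blow-down unless one knows the dilations are intertwined---which is exactly what the isometry $F$ supplies in Proposition \ref{prop dynamical limit equals rescaling limit}. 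Using this equivalence to construct $F$ is therefore circular as written. The same missing input undermines the compactness step: the uniform $C^k_{loc}$-bounds and uniform nondegeneracy of $dG_\la$ require quantitative control of the fixed transition map $\Phi_D^{-1}\circ\Phi_C$ at infinity (a radial comparison $r_{g_D}\circ\Phi_D^{-1}\circ\Phi_C\simeq r$ together with derivative bounds), and citing ``$G_\la^*g_D\to g_C$'' to get nondegeneracy assumes essentially the conclusion. Such estimates can plausibly be extracted from \eqref{equation dynamic potential versus distance}, Lemma \ref{lemma dynamical limit of metrics is rescaling limit} and the AC condition, but they are the actual content of the lemma (they are what Kotschwar--Wang's \S 2.3, via Cheeger--Colding, provides), and your sketch leaves them unproved. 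Steps 1 and 3 are fine in outline (Step 1 is the Corollary following Lemma \ref{lemma dynamical limit of G2 structures is rescaling limit}, whose proof does not use the present lemma, so there is no circularity there; in Step 3 you should also treat cones admitting a parallel vector field without being flat), but as it stands the key step is not closed.
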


\begin{proof}
By integrating the evolution equation for the metric
\[\int_{-1}^0 \frac{\dd \hat {g}_{ij}}{\dd t} dt = \int_{-1}^0 -2\widehat \Ric_{ij} - \frac{2}{3}|\hat T|^2g_{ij} - 4\hat T_{i}^{\; l}\hat T_{lj}dt,
\]
we can estimate the difference
\[|\hat \Phi^*(g(-1)) - \hat g_c|_{\hat g_c} \le \frac{N}{\hat r^2 + 1}. 
\]
The argument given in Section 2.3 of \cite{kotschwarwang2015} can be repeated verbatim to yield an isometry $\hat F$ between $(\cC^{\Sg}_0, g_C)$ and $(\cC^{\hat \Sg}_0, \hat g)$. The composition $F := \hat \Phi \circ \hat F$ yields the desired isometry  $F: (\cC^{\Sg}_0, g_C) \rightarrow (\cC^{\Sg}_0, g_D)$.
\end{proof}

The remaining difficulty is to show that $\vp_C = \vp_D$. Unlike \cite{kotschwarwang2015}, we cannot simply pull back the flow $\vp(t)$ by $F$ to obtain a flow limiting to $\vp_C$---this is due to the fact that $F$ may be an isometry but not an automorphism of $G_2$-structures. Instead, we show that the dynamical limit of the flow we have chosen must coincide exactly with the rescaling limit. 

Henceforth, we drop the $\mathring \vp$ notation and denote the initial, static $G_2$-structure simply by $\vp$ and its associated metric by $g$. The time slices at time $t$ of the flow of $G_2$-structures will be denoted by $\vp_t$ and their associated metrics by $g_t$. 

\begin{lem}\cite[Claim 3.9]{ConlonDeruelleSun19}\label{lemma dynamical limit of metrics is rescaling limit} For all $x \in \cC^{\Sg}_{r_3}$ and $k \in \bZ_{\ge 0}$,
\begin{equation}\label{equation decay of metric to dynamic limit}
    |(\nabla^{g})^k( g- g_D)|_{ g}(x) \le C_k r_{g_D}^{-2-k},
\end{equation}
where $ g = g_{-1}$ is the initial shrinker metric.
\end{lem}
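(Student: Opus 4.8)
The plan is to adapt the proof of \cite[Claim 3.9]{ConlonDeruelleSun19} to the Laplacian-flow setting by integrating the evolution equation \eqref{equation backwards modified Ricci flow} for the family $g_t$ from $t=-1$, where $g_{-1}=g$, to $t=0$, where $g_0=g_D$, and exploiting the fact that its right-hand side decays like $r^{-2}$. Indeed, the curvature decay \eqref{equation time dependent curvature decay} (equivalently \eqref{equation higher order curvature estimates}) gives $|\Ric(g_t)|_{g_t}\le Cr^{-2}$, and since $R(g_t)=-|T_t|^2$ by \eqref{equation R = -|T|^2} we also have $|T_t|^2=O(r^{-2})$, so the remaining (torsion) terms on the right-hand side of \eqref{equation backwards modified Ricci flow} are $O(r^{-2})$ in $g_t$-norm, uniformly for $t\in[-1,0)$. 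More generally, differentiating $R=-|T|^2$ and combining \eqref{equation higher order curvature estimates} with the Shi-type estimates of \cite[Theorem 6.3]{LotayWeiRealAnalyticity} (as in the proof of Claim \ref{claim uniform curvature estimates}), every $(\nabla^{g_t})^m$-derivative of the right-hand side of \eqref{equation backwards modified Ricci flow} is $O(r^{-2-m})$, uniformly in $t$.

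For $k=0$: since the right-hand side of \eqref{equation backwards modified Ricci flow} is bounded in $g_t$-norm by an $x$- and $t$-independent constant, the standard ODE comparison for a family of metrics shows that $g_{-1}$ and $g_t$ are uniformly equivalent for $t\in[-1,0]$. Writing $g-g_D=-\int_{-1}^0\partial_t g_t\,dt$ (the integral converges by Proposition \ref{proposition backward flow reparametrization}(2)) and estimating the integrand in the $g_{-1}$-norm via this equivalence gives $|g-g_D|_g(x)\le C\int_{-1}^0|\partial_t g_t|_{g_t}(x)\,dt\le Cr(x)^{-2}$. Since $r_{g_D}=2\sqrt q$ with $|q-r^2/4|\le N\log r$ (see \eqref{equation identities for q}), the functions $r$ and $r_{g_D}$ are comparable on $\cC^\Sg_{r_3}$, after enlarging $r_3$ if necessary, which yields the bound for $k=0$.

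For higher $k$ we induct on $k$. Differentiating $g_{-1}-g_t=-\int_{-1}^t\partial_s g_s\,ds$ by $\nabla=\nabla^{g_{-1}}$, the difficulty is that \eqref{equation backwards modified Ricci flow} controls only the $\nabla^{g_t}$-derivatives of its right-hand side. We pass between $\nabla^{g_{-1}}$ and $\nabla^{g_t}$ via the difference tensor $A(t):=\Gamma(g_{-1})-\Gamma(g_t)$, whose $\nabla^{g_{-1}}$-derivatives of order $\le j$ are controlled by those of $g_{-1}-g_t$ of order $\le j+1$ (using the uniform metric equivalence and the corresponding bounds on $g_t^{-1}$). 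Expanding $(\nabla^{g_{-1}})^k$ of the right-hand side of \eqref{equation backwards modified Ricci flow} as a sum of contractions of $\nabla^{g_t}$-derivatives of $\Rm(g_t)$ and $T_t$ with $\nabla^{g_{-1}}$-derivatives of $A(t)$, and writing $u_j(t):=\sup_{\cC^\Sg_{r_3}}r^{2+j}|(\nabla^{g_{-1}})^j(g_{-1}-g_t)|_{g_{-1}}$, one finds that the only term of top order in $k$ is $(\nabla^{g_t})^k$ applied to the right-hand side, which is $O(r^{-2-k})$, while every other term carries at least two extra negative powers of $r$. This leads to an integral inequality of the form $u_k(t)\le C_k\int_{-1}^t\bigl(1+r_3^{-2}u_k(s)+\sum_{j<k}u_j(s)\bigr)\,ds$ on $[-1,0]$; since $u_0,\dots,u_{k-1}$ are bounded by the inductive hypothesis and $u_k(-1)=0$, Grönwall's inequality gives $u_k\le C_k$ on $[-1,0]$, and evaluating at $t=0$ yields $|(\nabla^g)^k(g-g_D)|_g\le C_kr^{-2-k}\le C_kr_{g_D}^{-2-k}$.

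The main obstacle is this last step: the combinatorial bookkeeping in expanding $(\nabla^{g_{-1}})^k$ of the curvature-and-torsion terms, verifying that the only contribution of top order in $k$ is $(\nabla^{g_t})^k$ of the right-hand side (manifestly decaying like $r^{-2-k}$), that the terms involving $(\nabla^{g_{-1}})^k(g_{-1}-g_t)$ occur with a bounded, small coefficient (so that they can be absorbed in the Grönwall step), and that all remaining terms are genuinely lower order. This step also uses the torsion-derivative estimates $|(\nabla^{g_t})^mT_t|_{g_t}=O(r^{-1-m})$, which follow from the identities $R=-|T|^2$ and $|\nabla T|\le C|\Rm|$ (as used in Claim \ref{claim uniform curvature estimates}), the decay \eqref{equation higher order curvature estimates}, and the Shi-type estimates. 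Apart from this bookkeeping, the argument runs exactly parallel to \cite[Claim 3.9]{ConlonDeruelleSun19} and the companion estimates of \cite{kotschwarwang2015}.
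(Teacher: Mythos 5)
Your argument is correct and is essentially the paper's proof: the paper simply invokes \cite[Claim 3.9]{ConlonDeruelleSun19} after replacing $-2\Ric(g_t)$ by $-2\Ric(g_t)-\tfrac{2}{3}|T|^2g_t-4T\ast T$, citing the uniform curvature decay of Claim \ref{claim uniform curvature estimates} and the uniform comparability of the metrics $\{g_t\}$, which are exactly the ingredients (quadratic decay of the right-hand side of \eqref{equation backwards modified Ricci flow}, torsion decay via $R=-|T|^2$, integration of the flow from $t=-1$, and the inductive Gr\"onwall argument through the connection difference) that you spell out. Your reconstruction just makes explicit the bookkeeping that the cited Conlon--Deruelle--Sun argument performs, so there is no gap.
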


\begin{proof}
The proof is identical to that of \cite[Claim 3.9]{ConlonDeruelleSun19} after replacing $-2\Ric(g_t)$ by 
$$-2\Ric(g_t) - \frac{2}{3}|T|^2g_t - 4T*T.$$
Claim \ref{claim uniform curvature estimates} and the uniform comparability of the metrics $\{g_t\}_{t \in [-1,0)}$ allows us to make this substitution without further modifications.
\end{proof}

The next step is to prove an analogous result for the difference $\vp - \vp_D$ and its derivatives. Ultimately, by Lemma \ref{lemma decay estimates on G2 forms} this will tell us that $\vp$ is asymptotic to $\vp_D$ in the $C^k_{loc}(C^{\Sg}_0, g_D)$-topology. Then, using the fact that $(\cC^{\Sg}_0, g_C)$ and $(\cC^{\Sg}_0, g_D)$ are isometric, we show that this topology is equivalent to the $C^k_{loc}(C^{\Sg}_0, g_C)$-topology. Thus, the limits $\vp_C$ and $\vp_D$ must be the same, finally proving that the dynamical limit and the rescaling limit coincide. 

\begin{lem}\label{lemma dynamical limit of G2 structures is rescaling limit} For all $x \in \cC^{\Sg}_{r_3}$ and $k \in \bZ_{\ge 0}$,
\begin{equation}\label{equation decay of G2 form to dynamic limit}
    |(\nabla^{ g})^k( \vp- \vp_D)|_{ g}(x) \le C_k r_{g_D}^{-2-k},
\end{equation}
where $\vp = \vp_{-1}$ is the initial $G_2$-shrinker.
\end{lem}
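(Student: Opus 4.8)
The plan is to integrate the backward Laplacian flow in time at the level of the $3$-form, mirroring the proof of Lemma \ref{lemma dynamical limit of metrics is rescaling limit} but with $\Delta_{\vp_t}\vp_t$ in place of $-2\Ric(g_t)-\tfrac23|T|^2g_t-4T\ast T$. Since $\vp_t$ solves $\partial_t\vp_t=\Delta_{\vp_t}\vp_t$ on $\cC^{\Sg}_{r_3}\times[-1,0)$ and extends smoothly to $\vp_0=\vp_D$, I would first write
\[
\vp-\vp_D\;=\;\vp_{-1}-\vp_0\;=\;-\int_{-1}^0\Delta_{\vp_t}\vp_t\,dt,
\]
so that the whole statement reduces to controlling $(\nabla^{g})^k(\Delta_{\vp_t}\vp_t)$ uniformly in $t\in[-1,0]$ by $C_k\,r_{g_D}^{-2-k}$. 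By \cite[\S 3]{LotayWeiLaplacianFlow}, $\Delta_{\vp_t}\vp_t$ is a universal contraction of $g_t$, $g_t^{-1}$, $\vp_t$, $\Rm(g_t)$ and the torsion $T=T_{\vp_t}$ (which is algebraic in $\nabla^{g_t}\vp_t$ by \eqref{equation nabla phi and torsion}), and is quadratic in $(\Rm(g_t),T)$; in particular $|\Delta_{\vp_t}\vp_t|_{g_t}\le C(|\Rm(g_t)|_{g_t}+|T|^2_{g_t})$. Using $R=-|T|^2$ and the uniform curvature estimate \eqref{equation higher order curvature estimates} with $m=0$, this is $\le Cr^{-2}$ uniformly in $t$; since the $g_t$ are uniformly equivalent to $g=g_{-1}$ on $\cC^{\Sg}_{r_3}$ and $r\simeq r_{g_D}$ by \eqref{equation identities for q}, integrating gives the case $k=0$ of \eqref{equation decay of G2 form to dynamic limit}.

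For $k\ge 1$ I would differentiate under the integral with the fixed connection $\nabla=\nabla^{g}$ and expand $(\nabla^{g})^k(\Delta_{\vp_t}\vp_t)$ by writing $\nabla^{g}=\nabla^{g_t}+A_t$, where $A_t:=\Gamma(g_t)-\Gamma(g)$. The resulting terms split into two kinds. The first are the \emph{geometric} terms, $(\nabla^{g_t})^k$ of the universal expression: here each of the $k$ derivatives either differentiates a curvature or torsion factor, gaining a power of $r^{-1}$ by \eqref{equation higher order curvature estimates} together with the Shi-type and Bianchi estimates (as in \cite[Theorem 6.3]{LotayWeiRealAnalyticity}, using $|\nabla T|\le C|\Rm|$, so that $|(\nabla^{g_t})^jT|_{g_t}\le C_jr^{-2-j}$), or differentiates $\vp_t$, which by \eqref{equation nabla phi and torsion} produces an extra torsion factor and therefore again gains $r^{-1}$, while $g_t^{-1}$ is $\nabla^{g_t}$-parallel and is never hit; hence the geometric terms are $O(r^{-2-k})$ uniformly in $t$.

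The second kind are the \emph{connection-error} terms, each of which carries at least one factor of a $\nabla^{g}$-derivative of $A_t$. Since $A_t$ is a contraction of $g_t^{-1}$ with $\nabla^{g}(g_t-g)$, the quantities $(\nabla^{g})^jA_t$ are controlled by $(\nabla^{g})^{\le j+1}(g_t-g)$, and the integration-in-time argument behind Lemma \ref{lemma dynamical limit of metrics is rescaling limit} in fact yields $|(\nabla^{g})^j(g_t-g)|_g\le C_jr^{-2-j}$ uniformly in $t\in[-1,0]$ (the case $t=0$ being \eqref{equation decay of metric to dynamic limit}); thus each connection-error term carries an extra factor of size $r^{-3}$ relative to the $r^{-2-k}$ baseline and is of strictly higher order. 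Combining the two, $|(\nabla^{g})^k(\Delta_{\vp_t}\vp_t)|_g\le C_kr^{-2-k}$ uniformly in $t$, and integrating over the compact interval $[-1,0]$ proves \eqref{equation decay of G2 form to dynamic limit}. The main obstacle is precisely this last step: separating the genuinely geometric derivatives from the connection-difference errors and checking that the latter are higher order, which forces one to use the metric estimate uniformly on $[-1,0]$ and not merely at $t=0$.

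A cleaner route that I would probably adopt avoids quoting the uniform-in-time metric estimate altogether: run a single Gr\"onwall estimate for the coupled scale-invariant quantities $v_k(t):=\sup_{\cC^{\Sg}_{r_3}}r^{2+k}\,|(\nabla^{g})^k(\vp_t-\vp)|_g$ and $u_k(t):=\sup_{\cC^{\Sg}_{r_3}}r^{2+k}\,|(\nabla^{g})^k(g_t-g)|_g$. Both vanish at $t=-1$, and expanding the flow equations for $\vp_t$ and $g_t$ exactly as above gives differential inequalities of the form $v_k'(t),u_k'(t)\le C_k\big(1+\sum_{j\le k}(v_j+u_j)\big)$ on the bounded interval $[-1,0]$ (the inhomogeneous term coming from the geometric part, the linear terms from the connection-error part); Gr\"onwall's inequality then bounds $v_k$ and $u_k$, and evaluating at $t=0$ simultaneously gives \eqref{equation decay of G2 form to dynamic limit} and re-derives \eqref{equation decay of metric to dynamic limit}. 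Finally, combining \eqref{equation decay of G2 form to dynamic limit} with Lemma \ref{lemma decay estimates on G2 forms} shows that $\vp$ is asymptotic to $\vp_D$ in $C^k_{loc}(\cC^{\Sg}_0,g_D)$, which is the form in which this lemma will be used.
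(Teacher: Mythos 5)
Your proposal is correct and essentially reproduces the paper's argument: the $k=0$ case is the same direct integration of the flow, bounding $|\Delta_{\vp_t}\vp_t|\le C(|\Rm|+|T|^2)=O(r^{-2})$ uniformly in $t$ and using the comparability of $r$, $d_{g_t}$ and $r_{g_D}$, while for $k\ge 1$ the paper runs precisely your ``cleaner'' coupled Gr\"onwall route, deriving a differential inequality for $|\na^{g}(\vp_t-\vp)|^2$ in which the connection-error term is converted into form-difference terms via Proposition \ref{proposition estimates on derivatives of difference of metrics} together with the uniform-in-time bounds on $(\na^{g})^j(g-g_t)$ from Lemma \ref{lemma dynamical limit of metrics is rescaling limit}, absorbing by Young's inequality, and integrating from $\tau=1$ where the quantity vanishes. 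The only (harmless) slip is the claim $|(\na^{g_t})^jT|\le C_jr^{-2-j}$, which should read $C_jr^{-1-j}$; this does not affect your conclusion, since the torsion enters $\Delta_{\vp}\vp$ quadratically, so the geometric terms are still $O(r^{-2-k})$.
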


\begin{proof}
We again follow the proof of \cite[Claim 3.9]{ConlonDeruelleSun19}, but we must be more careful to check the validity of each step. By the equation for Laplacian flow and the uniform equivalence of the metrics $g_t$, we have
\begin{equation}\label{equation decay to dynamical limit k equals 0}
 | \vp - \vp_D|_{ g}(x) \le \int_{-1}^0 \bigg|\frac{\dd \vp}{\dd t}(t)\bigg|_{ g} dt \le C \int_{-1}^0 \big|\Delta_{\vp_t} \vp_t \big|_{ g} dt
\end{equation}
By the uniform equivalence of the metrics $g_t$, it is sufficient to estimate $|\Delta_{\vp_t}\vp_t|_{g_t}$. Recall the operator $i_\vp : S^2T^*M \rightarrow \La^3 T^*M$ from symmetric 2-tensors to 3-forms which is defined locally by
\[ i_\vp (h) = \frac{1}{2} h_i^l \vp_{ljk} dx^i \we dx^j \we dx^k.\]
It is shown in \cite[(2.20)-(2.21)]{LotayWeiLaplacianFlow} that the Hodge Laplacian satisfies
\[ \Delta_\vp \vp = i_\vp(h),\]
where $h$ is a symmetric 2-tensor given by
\[h_{ij} = -\Ric_{ij} - \frac{1}{3}|T|^2g_{ij} - 2 T_{i}^{\;k} T_{kj}.
\]
By \cite{Bryant03Remarks}, $|i_\vp(h)|^2 = 8|h|^2$, so it suffices to estimate the magnitude of $|h_t|_{g_t}$. By Claim \ref{claim uniform curvature estimates}, we know that each summand in $h_{ij}$ has quadratic decay and thus 
\[ |\Delta_{\vp_t}\vp_t|_{g}(x) \le C |\Delta_{\vp_t}\vp_t|_{g_t}(x) = C|h|_{g_t}(x) \le C d_{g_t}(p,x)^{-2},
\]
for $x \in \cC^{\Sg}_{r_3}$ and $p \in \dd \cC^{\Sg}_{r_3}$. Applying the estimate \eqref{equation dynamic potential versus distance} then yields the desired estimate
\[|\Delta_{\vp_t}\vp_t|_{ g}(x) \le Cr_{g_D}(x)^{-2}.
\]
Plugging into \eqref{equation decay to dynamical limit k equals 0} gives the estimate
\[| \vp - \vp_D|_{ g}(x) \le Cr_{g_D}(x)^{-2},
\]
which completes the proof of the case $k=0$. For the case $k=1$, we again follow the basic outline of \cite{ConlonDeruelleSun19}.
\begin{align*}
\partial_t|\nabla^{ g} (\vp_t -  \vp)|^2_{ g}(x) &\ge -2|\nabla^{ g} h(\vp_t)|_{ g}(x)|\nabla^{ g} (\vp_t -  \vp)|_{ g}(x)\\
&\ge -2 \bigg( |\nabla^{g_t} h(\vp(t))|_{ g}(x) \\
& \qquad \qquad + \big( \big|\big(\nabla^{ g} - \nabla^{g_t}\big)h(\vp_t)\big|_{ g}(x)\big)\bigg)|\nabla^{ g} (\vp_t -  \vp)|_{ g}(x)\\
&\ge -C\bigg( d_{g_t}(p,x)^{-3} + \big( \big|\big(\nabla^{ g} - \nabla^{g_t}\big)h(\vp_t)\big|_{ g}(x)\big)\bigg)|\nabla^{ g} (\vp_t -  \vp)|_{ g}(x)\\
&\ge -C\big(r_{g_D}(x)^{-3} + |\nabla^{ g}( g - g_t)|_{ g}(x)|h(\vp_t)|_{ g}(x))\big)|\nabla^{ g} (\vp_t -  \vp)|_{ g}(x)
\end{align*}
Here we invoke Proposition \ref{proposition estimates on derivatives of difference of metrics} to obtain an inequality of the form
\[
|\na^{ g} ( g - g_t)|_{ g} \le C  \bigg( 1 + \sum_{j=0}^1 |(\na^{ g})^j( g - g_t)|_{ g} + \sum_{j=0}^1 |(\na^{ g})^j  \vp|_{ g} \bigg)\bigg( \sum_{j=0}^1 |(\na^{ g})^j ( \vp - \vp_t)|_{ g} \bigg).
\]
By Lemma \ref{lemma dynamical limit of metrics is rescaling limit}, the terms $\sum_{j=0}^1 |(\na^{ g})^j( g - g_t)|_{ g}$ are uniformly bounded on $\cC^{\Sg}_{r_3}$ and the asymptotic conicality of $ \vp$ allows us to bound the summands $|(\na^{ g})^j  \vp|_{ g}$. The $k = 0$ case tells us that $| \vp - \vp_D|_{ g}(x) \le Cr_{g_D}(x)^{-2}$, and we know that $|h(\vp_t)|_{ g} = O(r_{g_D}^{-2})$. These inequalities combine to give the following estimate
\[|\nabla^{ g}( g - g_t)|_{ g}(x)|h(\vp_t)|_{ g}(x)) \le C(|\na^{ g} ( \vp - \vp_t)|_{ g}(x) + r_{g_D}(x)^{-4}).
\]
Plugging this into the our previous calculation yields
\begin{align*}
\partial_t|\nabla^{ g} (\vp_t -  \vp)|^2_{ g}(x)
&\ge -C\big(r_{g_D}(x)^{-3} + |\na^{ g} ( \vp - \vp_t)|_{ g}(x)\big)|\nabla^{ g} (\vp_t -  \vp)|_{ g}(x)\\
&\ge -C|\nabla^{ g} (\vp_t -  \vp)|^2_{ g}(x) -Cr_{g_D}(x)^{-6},
\end{align*}
where the final line follows from Young's inequality. For the purposes of integrating this inequality, we consider the backward flow with time parameter $\tau = -t$ and note that on the interval $\tau \in (0,1]$, the inequality 
\[\partial_\tau|\nabla^{ g} (\vp_\tau -  \vp)|^2_{ g}(x) \ge -C|\nabla^{ g} (\vp_\tau -  \vp)|^2_{ g}(x) -Cr_{g_D}(x)^{-6}
\]
holds, as all of our estimates involved taking norms and thus were insensitive to sign. Since $|\nabla^{ g} (\vp_\tau -  \vp)|^2_{ g}(x) = 0$ when $\tau = 1$, integrating this differential inequality between $\tau \in (0,1)$ and $1$ gives the estimate
\[|\nabla^{ g} (\vp_\tau -  \vp)|^2_{ g}(x) \le C\rho(x)^{-6},
\]
for all $x \in \cC^{\Sg}_{r_3}$ and $\tau \in (0,1]$, for a uniform constant $C>0$. Sending $\tau \searrow 0$ gives the lemma. 
\end{proof}

\begin{cor}
The initial closed $G_2$-structure $\vp$ is asymptotic to $\vp_D$ in the $C^k_{loc}(C^{\Sg}_0, g_D)$-topology.
\end{cor}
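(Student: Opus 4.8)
The plan is to unpack the two decay estimates already established — Lemma~\ref{lemma dynamical limit of metrics is rescaling limit} for $g - g_D$ and Lemma~\ref{lemma dynamical limit of G2 structures is rescaling limit} for $\vp - \vp_D$ — and to feed them into the characterization of conical asymptotics recorded in Lemma~\ref{lemma decay estimates on G2 forms}. Lemma~\ref{lemma dynamical limit of G2 structures is rescaling limit} gives $|(\nabla^{g})^k(\vp - \vp_D)|_{g} \le C_k\, r_{g_D}^{-2-k}$ for all $k \ge 0$ on $\cC^{\Sg}_{r_3}$, with $g = g_{-1}$; but the topology we are after is generated by the conical metric $g_D$, so the first task is to replace every $\nabla^{g}$-covariant derivative and $g$-norm by its $g_D$-counterpart.

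To do this I would introduce the difference tensor $A := \nabla^{g} - \nabla^{g_D}$, a $(1,2)$-tensor obtained as a universal contraction of $g^{-1}$ against $\nabla^{g_D}(g - g_D)$. The $k=0$ case of Lemma~\ref{lemma dynamical limit of metrics is rescaling limit} makes $g$ and $g_D$ (and their inverses) uniformly equivalent on $\cC^{\Sg}_{r_3}$, so the two systems of norms are comparable. A simultaneous induction on the order of differentiation — converting $\nabla^{g}$-derivatives of $g - g_D$ into $\nabla^{g_D}$-derivatives at the cost of lower-order terms built from $A$, and using the bounds $|(\nabla^{g})^j(g-g_D)|_{g} \le C_j\, r_{g_D}^{-2-j}$ to absorb those terms once $r_{g_D}$ is large — then gives $|(\nabla^{g_D})^m A|_{g_D} \le C_m\, r_{g_D}^{-3-m}$ for all $m \ge 0$. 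Substituting these bounds into the expansion of $(\nabla^{g})^k(\vp - \vp_D)$ in terms of $(\nabla^{g_D})^{\le k}(\vp - \vp_D)$ and the $g_D$-derivatives of $A$ (each correction term carrying at least one extra power of $r_{g_D}^{-1}$) converts Lemma~\ref{lemma dynamical limit of G2 structures is rescaling limit} into
\[
|(\nabla^{g_D})^k(\vp - \vp_D)|_{g_D}(x) \le C_k\, r_{g_D}(x)^{-2-k}, \qquad k \ge 0,
\]
and in particular $\lim_{b\to\infty} b^l\,\|(\nabla^{g_D})^l(\vp - \vp_D)\|_{C^0(\{r_{g_D}\ge b\},\, g_D)} = 0$ for every $l \ge 0$.

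The final step is to transport this to an honest cone via the diffeomorphism $\hat\Phi : \cC^{\hat\Sg}_{r_4} \to \cC^{\Sg}_{r_3}$ constructed above, under which $g_D$ becomes the standard cone metric $\hat g = d\hat r^2 + \hat r^2 g_{\hat\Sg}$, $r_{g_D}$ becomes the radial coordinate $\hat r$, and the closed $G_2$-cone $\vp_D$ becomes a dilation-invariant form $\hat\Phi^*\vp_D$. The displayed estimate becomes $|(\nabla^{\hat g})^k(\hat\Phi^*\vp - \hat\Phi^*\vp_D)|_{\hat g} \le C_k\, \hat r^{-2-k}$ on $\cC^{\hat\Sg}_{r_4}$, which is exactly the hypothesis of Lemma~\ref{lemma decay estimates on G2 forms} with the roles of $(\vp_C, g_C)$ played by $(\hat\Phi^*\vp_D, \hat g)$; that lemma then yields $\la^{-3}\rho_\la^*\hat\Phi^*\vp \to \hat\Phi^*\vp_D$ in $C^k_{loc}(\cC^{\hat\Sg}_0, \hat g)$ for every $k$, which is precisely the assertion that $\vp$ is asymptotic to $\vp_D$ in the $C^k_{loc}(\cC^{\Sg}_0, g_D)$-topology. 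The only step demanding any care is the change-of-connection bookkeeping in the middle paragraph; but since $g - g_D$, and hence $A$, decays quadratically, every error term introduced is strictly lower order, the decay rates survive untouched, and so I expect no genuine obstacle.
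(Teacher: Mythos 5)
Your proposal is correct and follows essentially the same route as the paper, which deduces the corollary directly from Lemma \ref{lemma dynamical limit of G2 structures is rescaling limit} together with Lemma \ref{lemma decay estimates on G2 forms}; the only difference is that you spell out the change from $\nabla^{g}$-derivatives and $g$-norms to $\nabla^{g_D}$-derivatives and $g_D$-norms via the difference tensor and the decay of $g-g_D$ from Lemma \ref{lemma dynamical limit of metrics is rescaling limit}, a bookkeeping step the paper leaves implicit (justified there by the uniform equivalence of the metrics and the quadratic decay of their difference).
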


\begin{proof}
The corollary follows immediately from Lemma \ref{lemma dynamical limit of G2 structures is rescaling limit} and Lemma \ref{lemma decay estimates on G2 forms}.
\end{proof}

\begin{prop}\label{prop dynamical limit equals rescaling limit}
The dynamical limit and the rescaling limit coincide. That is,
\[\vp_C \equiv \vp_D \;\; \text{ on }\cC^{\Sg}_0.
\]
\end{prop}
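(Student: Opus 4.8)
The plan is to realize $\vp_C$ and $\vp_D$ as limits of one and the same family of rescalings of $\vp$ --- a priori measured in two different topologies, one built from $g_C$ and one built from $g_D$ --- and then to exploit the fact that these two topologies coincide, so that uniqueness of limits forces $\vp_C \equiv \vp_D$. First I would assemble the inputs. By hypothesis $\vp$ is asymptotic to $\vp_C$ in $C^k_{loc}(\cC^\Sg_0, g_C)$, while by the preceding corollary (which combines Lemma~\ref{lemma dynamical limit of G2 structures is rescaling limit} with Lemma~\ref{lemma decay estimates on G2 forms}) $\vp$ is asymptotic to $\vp_D$ in $C^k_{loc}(\cC^\Sg_0, g_D)$. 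Moreover the induced metric $g_\vp$ is asymptotic at infinity both to $g_C$ (Corollary~\ref{corollary phi conical implies g conical}) and to $g_D$ (Lemma~\ref{lemma dynamical limit of metrics is rescaling limit}), so $g_C$ and $g_D$ are uniformly equivalent near infinity, and their radial functions $r$ and $r_{g_D} = 2\sqrt q$ satisfy $r_{g_D}/r \to 1$ there, using the comparison $q = r^2/4 + O(\log r)$. In particular $\psi := \vp_C - \vp_D$ obeys $|\psi|_{g_C}(x) \le |\vp - \vp_C|_{g_C}(x) + C\,|\vp - \vp_D|_{g_D}(x) \to 0$ as $r(x) \to \infty$ on $\cC^\Sg_{r_3}$.

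Next I would run a scaling argument and compare the topologies. The homothety identity underlying the proof of Lemma~\ref{lemma decay estimates on G2 forms} --- namely $|\nabla^k_{g_C}(\lambda^{-3}\rho_\lambda^*\beta)|_{g_C}(p) = \lambda^k|\nabla^k_{g_C}\beta|_{g_C}(\rho_\lambda(p))$, which holds for every $3$-form $\beta$ --- applied with $\beta = \psi$ and $k = 0$ gives $\lambda^{-3}\rho_\lambda^*\psi \to 0$ in $C^0_{loc}(\cC^\Sg_0, g_C)$ as $\lambda \to \infty$. Since $\vp_C$ is a $G_2$-cone with radial function $r$ we have $\lambda^{-3}\rho_\lambda^*\vp_C = \vp_C$, so the previous limit reads $\lambda^{-3}\rho_\lambda^*\vp_D \to \vp_C$. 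On the other hand, I would show that the $\rho_\lambda$-blow-down of $\vp_D$ recovers $\vp_D$ itself: although $\vp_D$ is conical only with respect to its own radial function $r_{g_D}$, the discrepancy between $\rho_\lambda$ and the $r_{g_D}$-adapted dilations tends to the identity at infinity, so $\lambda^{-3}\rho_\lambda^*\vp_D \to \vp_D$ in $C^0_{loc}$ as well. Finally, on any compact subset of $\cC^\Sg_0$ the smooth metrics $g_C$ and $g_D$ are $C^\infty$-equivalent, so the $C^0_{loc}(g_C)$- and $C^0_{loc}(g_D)$-topologies agree; uniqueness of limits therefore gives $\vp_C \equiv \vp_D$ on $\cC^\Sg_{r_3}$, and a short homogeneity argument --- both limits being cone forms, their cone structures must then coincide there --- propagates this to all of $\cC^\Sg_0$.

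The step I expect to be the main obstacle is showing that $\vp_D$, which by construction is only the \emph{dynamical} limit of $\vp$, is also its \emph{rescaling} limit with respect to the dilations $\rho_\lambda$ of Definition~\ref{definition asymptotically conical}. This is exactly the difficulty flagged in the introduction: because the Laplacian-shrinker identity of Lemma~\ref{lemma fundamental shrinker identity} carries a logarithmic error, $q$ fails to equal $r^2/4$ --- it differs by a term of order $\log r$ --- so $r_{g_D}$ and $r$ are genuinely distinct functions and $\vp_D$ is a priori invariant only under the dilation flow of its own cone structure, not under $\rho_\lambda$. One must therefore transfer the decay estimate of Lemma~\ref{lemma dynamical limit of G2 structures is rescaling limit}, which is phrased in terms of $r_{g_D}$, into the precise decay along $\rho_\lambda$ required by Lemma~\ref{lemma decay estimates on G2 forms}, using the comparability $r \simeq r_{g_D}$ and its derivatives; once this is in hand, the identification of the two limits follows from the elementary equivalence of $C^k$ norms on compact sets and the uniqueness of limits.
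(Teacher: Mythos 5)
Your overall framing---realizing $\vp_C$ and $\vp_D$ as limits of the single family $\la^{-3}\rho_\la^*\vp$ in two topologies and invoking uniqueness of limits---is exactly the paper's strategy, and your first paragraph assembles the correct inputs. The genuine gap is the pivotal claim in your second paragraph that $\la^{-3}\rho_\la^*\vp_D \to \vp_D$ in $C^0_{loc}$ because ``the discrepancy between $\rho_\la$ and the $r_{g_D}$-adapted dilations tends to the identity at infinity.'' Write $\hat\rho_\la$ for the dilations adapted to $r_{g_D}=2\sqrt q$ and $G_\la:=\hat\rho_\la^{-1}\circ\rho_\la$, so that (granting invariance of $\vp_D$ under $\hat\rho_\la$) one has $\la^{-3}\rho_\la^*\vp_D = G_\la^*\vp_D$. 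The scalar comparison $q=r^2/4+O(\log r)$ gives $r_{g_D}-r=O(\log r/r)$ at infinity, but it does not make $G_\la$ converge to the identity on a fixed compact set: for $p$ with $r(p)=r_0$ fixed, $r_{g_D}(G_\la(p))\to r_0$, whereas $r_{g_D}(p)=r_0+O(\log r_0/r_0)$, so any limit of $G_\la$ is a non-identity map intertwining the two cone structures; moreover the scalar estimate says nothing about the angular drift between the $\na q$-flow lines and the rays $\{\sg=\mathrm{const}\}$ (the AC condition carries no rate for $g-g_C$), so even the existence of $\lim G_\la$ is unclear. And even if $G_\la\to G_\infty$, you would still need $G_\infty^*\vp_D=\vp_D$, which is essentially the statement being proved. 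You are also implicitly assuming that $\vp_D$ is exactly invariant under its own cone dilations; the paper's definition of a closed $G_2$-cone constrains only the induced metric, and this invariance is never established for $\vp_D$. Your third paragraph correctly flags this transfer as the main obstacle, but the proposed fix (``using the comparability $r\simeq r_{g_D}$ and its derivatives'') again only compares the radial functions, not the dilation maps or the form $\vp_D$ itself, so the gap is not closed.

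By contrast, the paper's proof of the proposition never blows down $\vp_D$ at all: it takes the preceding corollary at face value (the same family $\la^{-3}\rho_\la^*\vp$ converges to $\vp_D$ in the $C^k_{loc}(\cC^\Sg_0,g_D)$-topology, via the decay estimate in $r_{g_D}$ and the Lemma~\ref{lemma decay estimates on G2 forms} mechanism), pairs it with the AC hypothesis ($C^k_{loc}(\cC^\Sg_0,g_C)$-limit equal to $\vp_C$), and then shows the two topologies of local convergence coincide, using the vertex-preserving isometry $F:(\cC^\Sg_0,g_C)\to(\cC^\Sg_0,g_D)$ from the previous lemma to get uniform comparability of $g_C$ and $g_D$ on a countable atlas of charts $U_k\times(1/j,j)$; uniqueness of limits then yields $\vp_C\equiv\vp_D$ on all of $\cC^\Sg_0$, with no separate homogeneity step. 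To repair your argument, either carry out your paragraph-one plan as stated (two limits of the one family, norms compared chart by chart, dilation groups never compared), or genuinely prove an asymptotic $\rho_\la$-invariance of $\vp_D$---which is the hard content and is not supplied by the radial comparability alone.
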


\begin{proof}
The smooth, vertex-preserving isometry $F: (\cC^{\Sg}_0, g_C) \rightarrow (\cC^{\Sg}_0, g_D)$ must be generated by an isometry $F|_{\Sg \times \{1\}}$ of the link. Thus, one can take a finite atlas $\{U_k\}_{k=1}^m$ of the link $\Sg$ such that $g_C|_\Sg$ and $g_D|_\Sg$ are comparable metrics when restricted to each chart $U_k$. Extend this to a countable atlas of $\cC^{\Sg}_0$ given by $\{U_k \times (1/j, j)\}_{j,k}$, where $k \in \{1, \ldots, m\}$ and $j \in \bZ \cap [2,\infty)$. As $g_D$ and $g_C$ are isometric, they are again comparable on each chart $U_k \times (1/j, j)$.

This means that the topology of convergence in $C^k_{loc}(U_k \times (1/j, j), g_C)$ is equivalent to that in  $C^k_{loc}(U_k \times (1/j, j), g_D)$ on each chart $U_k \times (1/j, j)$. Since these charts form an atlas for the cylinder $\cC^{\Sg}_0$, the $C^k_{loc}(\cC^{\Sg}_0, g_C)$-limit of $\la^{-3}\rho_\la^*\vp$ as $\la \rightarrow \infty$ must coincide with its $C^k_{loc}(\cC^{\Sg}_0, g_D)$-limit. Since these are $\vp_C$ and $\vp_D$ respectively, $\vp_C \equiv \vp_D$ on $\cC^\Sg_0$. 
\end{proof}

Reset $r_0 := r_3$. We conclude that given any two $G_2$-structures $\vp$ and $\tvp$ asymptotic to $\vp_C$, we may construct backward Laplacian flows $\vp_\tau$ and $\tvp_\tau$ on $\cC^\Sg_{r_0}$, for $\tau \in [0,1]$, such that $\vp_1 = \vp$, $\tvp_1 = \tvp$, and $\vp_\tau$ and $\tvp_\tau$ coincide at $\tau = 0$ (i.e. $\vp_0 = \tvp_0 = \vp_C$). Then, for the flows thus constructed, to prove Theorem \ref{theorem main}, it suffices to prove the following theorem.

\begin{thm}\label{theorem parabolic version main theorem}
Let $(\cC^\Sg_{r_3}, \vp_\tau)$ and $(\cC^\Sg_{r_3}, \tvp_\tau)$ be the shrinking flows constructed above. Then there exists $r' \ge r_3$ and $\tau' \in (0,1)$ such that $\vp_\tau \equiv \tvp_\tau$ on $\cC^\Sg_{r'} \times [0, \tau']$.
\end{thm}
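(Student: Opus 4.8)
The plan is to follow the scheme of Kotschwar--Wang~\cite{kotschwarwang2015}: reduce Theorem~\ref{theorem parabolic version main theorem} to a backward uniqueness statement for a closed ODE--PDE system built from the differences of the two flows, and then establish that backward uniqueness by Carleman estimates. Since Laplacian flow is gauge-invariant and DeTurck's trick is unavailable, the naive difference $\vp_\tau - \tvp_\tau$ does not satisfy a strictly parabolic equation, so the first step is to assemble a finite collection of difference tensors $\mathbf{X}$ --- comprising $\vp - \tvp$, $g^{-1}-\tg^{-1}$, the difference of Levi-Civita connections $\nabla - \tilde\nabla$, the torsion difference $T - \tilde T$, the curvature difference $\Rm(g) - \Rm(\tg)$, and a bounded number of their covariant derivatives --- and to derive a closed system for $\mathbf{X}$. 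Using the backward Laplacian flow \eqref{equation backwards Laplacian flow}, the modified backward Ricci flow \eqref{equation backwards modified Ricci flow}, the torsion evolution equation of~\cite{LotayWeiLaplacianFlow}, and the contracted Bianchi identities for closed $G_2$-structures, one finds that the components of $\mathbf{X}$ appearing differentiated twice (essentially $\vp - \tvp$ and the curvature difference) satisfy parabolic inequalities $|(\partial_\tau + \Delta)u| \le C(|\nabla u| + |u| + |v|)$, while the rest satisfy ODE inequalities $|\partial_\tau v| \le C(|\nabla u| + |u| + |v|)$; here $C$ is controlled \emph{uniformly} on $\cC^{\Sg}_{r_3}\times[0,1]$ by the curvature decay \eqref{equation time dependent curvature decay} and the potential estimates of Proposition~\ref{proposition backward flow reparametrization} and Corollary~\ref{corollary dynamic potential versus distance}. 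This derivation is long but essentially mechanical.

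The second step is to reweight the system by the shrinker potential. Since $\tau f \simeq r^2/4$ and $\tau\nabla^2 f \approx \tfrac12 g$ by Proposition~\ref{proposition backward flow reparametrization}, conjugating the heat operator by a suitable power of a smoothed version of $f$ --- as in Escauriaza--Seregin--{\v{S}}ver{\'a}k~\cite{EscauriazaSereginSverak} --- turns backward uniqueness into a quantitative unique continuation statement near $\{\tau = 0\}$. This is where the $G_2$ case genuinely departs from the Ricci case: by Lemma~\ref{lemma fundamental shrinker identity} and Proposition~\ref{proposition time dependent shrinker potential estimate}, the analogue of the Ricci conservation law $R + |\nabla f|^2 = f$ carries an $O(\log r)$ error, so the Carleman weights and the commutator terms they generate acquire logarithmically growing errors. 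Consequently one can only prove a \emph{weakened} Carleman estimate, in which the potential-type error term has the form $(Nr^{\de}+M)|u|$ with $\de < 1/2$ rather than being bounded --- precisely the phenomenon isolated abstractly in Theorem~A.1.

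The main obstacle --- and the step demanding genuinely new work --- is extracting backward uniqueness from this weaker Carleman estimate, since the usual argument relies on a bounded potential and the $r^{\de}$ error cannot simply be absorbed. I would proceed by a bootstrap. The construction already gives $|\vp_\tau - \tvp_\tau| = O(r^{-2})$ at $\tau = 0$ (Lemma~\ref{lemma dynamical limit of G2 structures is rescaling limit} together with Proposition~\ref{prop dynamical limit equals rescaling limit}), while over the whole spacetime $[0,1]$ all components of $\mathbf{X}$ obey a crude bound of the type $|u|\le Me^{Mr^2}$ needed to start the scheme. A first, coarse Carleman estimate upgrades this to Gaussian, $e^{-cr^2/\tau}$-type, decay of $\mathbf{X}$ as $\tau\searrow 0$. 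Feeding the Gaussian decay into the second, potential-weighted Carleman estimate --- after subdividing $\cC^{\Sg}_{r_3}$ into dyadic annuli in $r$ together with a short time interval $[0,\tau']$, on each of which the $O(\log r)$ term is effectively constant and is balanced against the gain in the Carleman weight, with the ``bad'' boundary terms absorbed --- forces $\mathbf{X}$ to vanish to infinite order along $\{\tau = 0\}$, uniformly on some end $\cC^{\Sg}_{r'}$. A final unique continuation argument in $\tau$ then gives $\vp_\tau - \tvp_\tau\equiv 0$, hence $\vp_\tau\equiv\tvp_\tau$, on $\cC^{\Sg}_{r'}\times[0,\tau']$, and the remaining components of $\mathbf{X}$ vanish by the ODE part of the system. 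The delicate point throughout is the bookkeeping in the annular decomposition: one must show that the logarithmic losses accumulated over the dyadic scales remain summable, which is what ultimately pins down the admissible range $\de < 1/2$.
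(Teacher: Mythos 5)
Your plan is essentially the paper's argument: Theorem \ref{theorem parabolic version main theorem} is reduced to backward uniqueness for the coupled ODE--PDE system of Proposition \ref{proposition ODE PDE system}, one proves Gaussian-type $L^2$ decay and then vanishing via two Escauriaza--Seregin--\v{S}ver\'ak/Kotschwar--Wang-type Carleman estimates, and the genuinely new ingredient is exactly the one you isolate, namely absorbing the logarithmic error coming from Lemma \ref{lemma fundamental shrinker identity} by balancing it against the Carleman parameter on a suitably truncated domain. The only discrepancies are organizational rather than substantive: in the paper $\varphi-\tilde\varphi$ and $T-\tilde T$ sit in the ODE part $\mathbf{Y}$ (their evolutions involve no Laplacian of themselves), the logarithmic defect appears in the Gaussian-weight estimate of Proposition \ref{proposition second Carleman estimate PDE} and is absorbed already in the decay step (Proposition \ref{proposition exponential decay}) by working on the single annulus $A(\rho/6,24\rho)$ with Carleman exponent $k_1\sim\log\rho$ rather than over dyadic annuli in the vanishing step, and the final vanishing is obtained directly from the $e^{\alpha(\tau_0-\tau)h^{2-\delta}+h^2}$-weighted estimate of Proposition \ref{proposition first carleman estimate} (where $\alpha h^{2-\delta}$ dominates $\log r$), not from infinite-order vanishing at $\tau=0$ followed by a separate unique-continuation-in-$\tau$ argument.
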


\subsection{Some identities in preparation for Carleman estimates}

In this section, we prove $G_2$-Laplacian flow versions of various lemmas and identities in \cite[\S 4]{kotschwarwang2015}.

Consider a flow $\vp(x,\tau)$ associated to the gradient shrinker $(M, \vp, f)$, constructed as in the proof of Proposition \ref{proposition backward flow reparametrization}, and let $f(x,\tau)$ denote the pullback of the potential function, $\Psi_\tau^*f$. As in \cite[Lemma 4.5]{kotschwarwang2015}, we introduce the $\tau$-dependent radial function 
\begin{equation}\label{definition time dependent radial function}
     h(x,\tau) := \left\{
     \begin{array}{lr}
       2\sqrt{\tau f(x,\tau)} & \textrm{for}\;\;\; \tau > 0; \\
       r(x) & \textrm{for}\;\;\; \tau = 0.
     \end{array}
   \right.
\end{equation}
By identity \eqref{equation C0 estimates on f}, we see immediately that 
\begin{equation}\label{equation comparison of h and r}
    \frac{1}{2}r(x) \le h(x,\tau) \le 2r(x)
\end{equation}
holds.

\begin{lem}\label{lemma estimates on time dep distance function}
On $\cC_{r_0}^{\Sigma} \times (0, 1]$, the derivatives of $h$ satisfy
\begin{equation}\label{equation derivative identities on h 1}
    \nabla h = \frac{2\tau}{h} \nabla f, \;\;\; h \nabla_i \nabla_j h = g - \nabla_i h \nabla_j h + 2\tau \bigg(-\Ric_{ij} - \frac{1}{3} |T|^2g_{ij} - 2T_{i}^{\;k}T_{kj}\bigg),
\end{equation}
\begin{equation}\label{equation derivative identities on h 2}
  |\nabla h|^2 = 1 + \frac{4\tau^2}{h^2}(\sE - R), \;\;\; h \Delta h = 7 -\frac{4}{3}\tau R - |\nabla h|^2,
\end{equation}
and
\begin{equation}
\label{equation partial tau h}
   \frac{\partial h}{\partial \tau} = \frac{h}{2\tau}(1 - |\nabla h|^2) = \frac{2\tau}{h}(R-\sE).
\end{equation}
\end{lem}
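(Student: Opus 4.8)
The plan is to derive every identity by differentiating the single defining relation $h^2 = 4\tau f$ (valid for $\tau>0$) and substituting the shrinker identities for $f$ recorded in Proposition~\ref{proposition backward flow reparametrization} and Claim~\ref{claim with identities over time}, together with the closed $G_2$-relation $R = -|T|^2$. After possibly enlarging $r_0$, the lower bound in~\eqref{equation C0 estimates on f} shows $h = 2\sqrt{\tau f}$ is smooth and positive on $\cC_{r_0}^{\Sigma}\times(0,1]$, so all of the manipulations below are legitimate.

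First I would handle the spatial derivatives. Differentiating $h^2 = 4\tau f$ once gives $2h\,\na h = 4\tau\,\na f$, i.e.\ $\na h = \tfrac{2\tau}{h}\na f$, the first identity in~\eqref{equation derivative identities on h 1}. Differentiating again gives $2\,\na_i h\,\na_j h + 2h\,\na_i\na_j h = 4\tau\,\na_i\na_j f$, so $h\,\na_i\na_j h = 2\tau\,\na_i\na_j f - \na_i h\,\na_j h$; plugging in the Hessian identity~\eqref{equation derivative identities on f 2} for $f$ yields the Hessian formula in~\eqref{equation derivative identities on h 1}. Tracing this last formula with $g^{ij}$, using $g^{ij}g_{ij}=7$, $g^{ij}\Ric_{ij}=R$, and the contraction $g^{ij}T_i{}^{k}T_{kj} = -|T|^2$ (a consequence of the antisymmetry of the torsion $2$-form $T$), I get $h\,\De h = 7 - |\na h|^2 + 2\tau\bigl(-R - \tfrac13|T|^2 + 2|T|^2\bigr)$, and then $|T|^2 = -R$ collapses the torsion terms into $h\,\De h = 7 - \tfrac43\tau R - |\na h|^2$, the second identity in~\eqref{equation derivative identities on h 2}.

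Next I would compute $|\na h|^2$ and $\partial_\tau h$. From $\na h = \tfrac{2\tau}{h}\na f$ we have $|\na h|^2 = \tfrac{4\tau^2}{h^2}|\na f|^2$; recalling that, with $\tau=-t$, $\sE = R + |\na f|^2 - \tfrac{f}{\tau}$, so that $|\na f|^2 = \sE - R + \tfrac{f}{\tau}$, and using $h^2 = 4\tau f$, this becomes exactly $|\na h|^2 = 1 + \tfrac{4\tau^2}{h^2}(\sE - R)$, the first identity in~\eqref{equation derivative identities on h 2}. For the time derivative, differentiating $h^2 = 4\tau f$ in $\tau$ gives $2h\,\partial_\tau h = 4f + 4\tau\,\partial_\tau f$; by Claim~\ref{claim with identities over time} together with $\tau=-t$ we have $\partial_\tau f = -|\na f|^2$, hence $\partial_\tau h = \tfrac{2}{h}\bigl(f - \tau|\na f|^2\bigr)$. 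Writing $f - \tau|\na f|^2 = -\tau(\sE - R)$, once more from the definition of $\sE$, gives $\partial_\tau h = \tfrac{2\tau}{h}(R-\sE)$; alternatively, substituting $f = \tfrac{h^2}{4\tau}$ and $\tau|\na f|^2 = \tfrac{h^2}{4\tau}|\na h|^2$ gives $\partial_\tau h = \tfrac{h}{2\tau}\bigl(1 - |\na h|^2\bigr)$, which completes~\eqref{equation partial tau h} (the two forms are consistent via the formula for $|\na h|^2$ just established).

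There is no genuine obstacle here; the argument is entirely algebraic, so the ``hard part'' is only careful bookkeeping. The two points I would watch are the consistent translation between the forward time $t$ (in which Claim~\ref{claim with identities over time} is phrased) and the backward time $\tau = -t$, and the double appeal to $R = -|T|^2$ — once through the contraction $g^{ij}T_i{}^{k}T_{kj} = -|T|^2$ and once directly — which is exactly what turns the three separate torsion terms in the trace of the Hessian into the single clean term $-\tfrac43\tau R$ appearing in $h\,\De h$.
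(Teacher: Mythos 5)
Your argument is correct and follows essentially the same route as the paper: differentiate the defining relation $h^2 = 4\tau f$, substitute the time-dependent shrinker identities of Claim~\ref{claim with identities over time} (equivalently \eqref{equation derivative identities on f 2}) and the definition of $\sE$, and use $R = -|T|^2$ together with the antisymmetry of $T$ when tracing. The only blemish is the displayed intermediate in the trace computation: the trace of $-\tfrac13|T|^2 g_{ij}$ contributes $-\tfrac73|T|^2$, so the bracket should read $-R-\tfrac73|T|^2+2|T|^2=-R-\tfrac13|T|^2$; this is a harmless bookkeeping slip, since the final identity you state, $h\,\De h = 7-\tfrac43\tau R-|\na h|^2$, is exactly what the correct trace yields.
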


\begin{rmk}
Recall that estimates for the growth of the quantity $\sE(x,\tau)$ and all of its derivatives are given in Lemma \ref{lemma mixed time and space derivative estimates on shrinker potential}. In particular, there exists $C>0$, independent of time, such that $|\sE(x,\tau)| \le C\log r$ on the space-time $\cC^\Sg_{r_0} \times (0,1]$.
\end{rmk}

\begin{proof}
The first identity in~\eqref{equation derivative identities on h 1} follows directly from the definition. The second identity in~\eqref{equation derivative identities on h 1} is a consequence of \eqref{equation time dependent shrinker identities}, the time-dependent gradient shrinker equation for the metric. The first identity in~\eqref{equation derivative identities on h 2} is derived by squaring the first identity in \eqref{equation derivative identities on h 1} and substituting \eqref{equation log growth of shrinker potential}. That is,
\begin{align*}
    |\na h|^2 &= \frac{4\tau^2}{h^2}|\na f|^2 \\
    &= \frac{4\tau^2}{h^2}\bigg(\frac{f}{\tau} + \sE - R\bigg)\\
    &= 1 + \frac{4\tau^2}{h^2}(\sE - R).
\end{align*}

The second identity in~\eqref{equation derivative identities on h 2} is the trace of the second identity in~\eqref{equation derivative identities on h 1}, where we use the fact that $|T|^2 = - R$ and $T_{ij}$ is antisymmetric for closed $G_2$-structures. The last identity~\eqref{equation partial tau h}  follow by combining~\eqref{equation derivative identities on f 1},~\eqref{equation derivative identities on h 1}, and~\eqref{equation derivative identities on h 2}. Indeed,

\begin{align*}
    \frac{\dd h}{\dd \tau} = 2 \frac{\dd}{\dd \tau} \sqrt{\tau f(x,\tau)} &= \frac{1}{\sqrt{\tau f(x,\tau)}} \frac{\dd}{\dd \tau}\big(\tau f(x,\tau)\big) \\
    &= \frac{2}{h}\bigg( f(x,\tau) + \tau \frac{\dd f}{\dd \tau}\bigg)\\
    &= \frac{2}{h\tau}\big( \tau f(x,\tau) - \tau^2 |\nabla f|^2\big)\\
    &= \frac{2}{h \tau}\bigg(\frac{h^2}{4} - \frac{h^2}{4}|\nabla h|^2 \bigg)\\
    &= \frac{h}{2\tau} (1 - |\nabla h|^2).
\end{align*}
\end{proof}

We now prove our weakened version of \cite[Lemma 4.6]{kotschwarwang2015}.

\begin{lem}
There exists a universal constant $C$ such that
\begin{equation}\label{equation comparability h and r}
    |h(x,\tau) - r(x)| \le \frac{C \tau^2 \log{r(x)}}{r(x)}
\end{equation}
for all $(x,\tau) \in \cC_{r_0}^{\Sigma}\times [0, \tau_0).$
\end{lem}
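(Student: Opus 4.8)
The plan is to integrate the evolution equation~\eqref{equation partial tau h} for $h$ in the time variable, starting from the initial value $h(x,0) = r(x)$. Note first that this initial value is consistent with continuity from $\tau > 0$: by part~(4) of Proposition~\ref{proposition backward flow reparametrization}, $\tau f \to r^2/4$ smoothly as $\tau \searrow 0$, hence $h(x,\tau) = 2\sqrt{\tau f(x,\tau)} \to r(x)$.

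First I would bound $\partial_\tau h$. By~\eqref{equation partial tau h} we have $\partial_\tau h = \tfrac{2\tau}{h}(R - \sE)$. The curvature estimate~\eqref{equation time dependent curvature decay} with $m=0$, together with $R = -|T|^2$, gives $|R| \le C r^{-2}$ on $\cC^\Sg_{r_0} \times (0,1]$; the remark following Lemma~\ref{lemma estimates on time dep distance function} gives $|\sE| \le C\log r$ on the same spacetime; and~\eqref{equation comparison of h and r} gives $h \ge \tfrac12 r$. Since $\log r \ge 1$ on $\cC^\Sg_{r_0}$, the bounded term $Cr^{-2}$ may be absorbed into the constant, so that
\begin{equation*}
  \left|\frac{\partial h}{\partial \tau}(x,\tau)\right| \le \frac{2\tau}{h}\big(|R| + |\sE|\big) \le \frac{C_1\,\tau \log r(x)}{r(x)}
\end{equation*}
for all $(x,\tau) \in \cC^\Sg_{r_0}\times(0,1]$.

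Then, fixing $x$ and $\tau \in (0,\tau_0)$, for any $\tau_1 \in (0,\tau)$ the fundamental theorem of calculus yields
\begin{equation*}
  |h(x,\tau) - h(x,\tau_1)| \le \int_{\tau_1}^{\tau}\left|\frac{\partial h}{\partial s}(x,s)\right| ds \le \frac{C_1\log r(x)}{r(x)}\int_0^{\tau} s\,ds = \frac{C_1\,\tau^2 \log r(x)}{2\,r(x)},
\end{equation*}
and letting $\tau_1 \searrow 0$ and invoking $h(x,\tau_1) \to r(x)$ gives the claimed bound (which holds trivially when $\tau = 0$). The estimate is essentially immediate once~\eqref{equation partial tau h} and the logarithmic bound on $\sE$ from Lemma~\ref{lemma mixed time and space derivative estimates on shrinker potential} are available, so I do not anticipate a serious obstacle; the only point requiring a little care is the limit $\tau_1 \searrow 0$, i.e.\ that $h$ extends continuously to the initial slice with value exactly $r(x)$, which is exactly the content of Proposition~\ref{proposition backward flow reparametrization}(4).
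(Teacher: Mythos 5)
Your proof is correct and follows essentially the same route as the paper: integrate the evolution equation \eqref{equation partial tau h} in time using the logarithmic bound on $\sE$, the curvature decay, and the comparability $\tfrac12 r \le h \le 2r$, together with $h(x,0)=r(x)$. The only cosmetic difference is that the paper first multiplies by $2h$ and integrates $\partial_\tau(h^2)=4\tau(R-\sE)$, then factors $|h^2-r^2|=|h-r|\,|h+r|$, whereas you divide by $h\ge r/2$ inside the integrand before integrating; the two are equivalent in substance.
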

\begin{proof}
Fixing any $x \in \cC^\Sg_{r_0}$, multiplying both sides of~\eqref{equation partial tau h} by $2h$, and integrating with respect to $\tau$ yields
\begin{align*}
     \int_0^\tau 2h\frac{\partial h}{\partial \tau} d\tau &= \int_0^\tau 4\tau (R-\sE) d\tau \\
     \int_0^\tau \frac{\partial h^2}{\partial \tau} - \frac{\partial}{\partial \tau}r^2(x) d\tau &= \int_0^\tau 4\tau (R-\sE) d\tau \\
     |h^2(x,\tau) - r^2(x) - (h^2(x, 0) - r^2(x))| &\le C\tau^2 \log r.
\end{align*}
Observe that by definition $h^2(x, 0) - r^2(x) = 0$. Then simplify
\[
\abs{h(x,\tau) - r(x)} \abs{h(x, \tau) + r(x)} \le C\tau^2 \log r(x).
\] 
The comparability of $r(x)$ and $h(x,\tau)$ yields the claim.
\end{proof}

\subsection{An ODE-PDE system}\label{section ODE PDE}

In this section, we construct an ODE-PDE system which will be used to prove the backward uniqueness problem Theorem \ref{theorem parabolic version main theorem}. One flow of $G_2$-structures and its associated quantities are denoted by letters with tildes, and another flow of $G_2$-structures and its associated quantities are denoted by letters without tildes. These two $G_2$-structures come from $\vp_{\tau}$ and $\tvp_{\tau}$ in Theorem \ref{theorem parabolic version main theorem}. The ODE-PDE system is constructed from evolution equations for differences of geometric quantities from each of these two $G_2$-structures.

Consider the following definitions. We adopt notation for these quantities to be as consistent as possible with our source for evolution equations~\cite[\S 6.2]{LotayWeiLaplacianFlow}.

\begin{align*}\label{equation definitions}
    \ga&:= \vp - \tvp\\
    \mu&:= g - \tg\\
    A&:=\na - \tna\\
    B&:= \na A\\
    F&:= T-\tT\\
    V&:= \na T - \tna \tT\\
    S&:= \Rm - \tRm\\
    Q&:=\na \Rm - \tna \tRm\\
    W&:= \na^2 T - \tna^2 \tT
\end{align*}
The quantity $A$ is the $(2,1)$-tensor $A_{ij}^k = \Ga_{ij}^k - \tGa_{ij}^k$. Now, we find differential inequalities for the evolution of norms of the above quantities. All norms will be taken with respect to $g_t = g(t)$, i.e.\ $|\cdot| = |\cdot|_{g_t}$. Recall that $\tau = -t$. Let $K$ and $\tilde{K}$ be the constants from part (c) of Lemma \ref{lemma decay estimates} associated to $g$ and $\tg$, respectively. 

\begin{lem}\label{lemma evolution equations for ODE PDE system}
There exists $r_0$ and $C>0$, depending on $K$ and $\tilde{K}$, such that the following inequalities hold on $\cC_{r_0}^{\Sg} \times [-1,0]$:
\begin{equation*}\label{equation evolution for gamma}
    \Big|\frac{\dd}{\dd t}\ga \Big| \leq C|V| + \frac{C}{r}|A|,
\end{equation*}
\begin{equation*}\label{equation evolution for metric}
    \Big|\frac{\dd}{\dd t}\mu \Big| \leq C|S| + \frac{C}{r}\Big(|\ga| + |F|\Big),
\end{equation*}
\begin{equation*}\label{equation evolution for A}
    \Big|\frac{\dd}{\dd t}A \Big| \leq C|\na S| +\frac{C}{r}\Big(|A|+|\ga|+ |V|+|F|\Big),
\end{equation*}
\begin{equation*}\label{equation evolution for B}
    \Big|\frac{\dd}{\dd t}B\Big| \leq C|\na Q| + \frac{C}{r}\Big(|A| + |B|+|\ga|  + |V| +|W| +|F|\Big),
\end{equation*}
\begin{equation*}\label{equation evolution for F}
    \Big|\frac{\dd}{\dd t}F \Big| \leq C|\na V| +\frac{C}{r}\Big(|A| +|\ga|+|V|+|F|+|S|\Big),
\end{equation*}
\begin{equation*}\label{equation evolution for V}
    \Big|\frac{\dd}{\dd t}V - \De V\Big| \leq \frac{C}{r}\Big(|A| + |B| + |\ga|+|V|+|W| +|F|+ |S|+|Q|\Big),
\end{equation*}
\begin{equation*}\label{equation evolution for S}
    \Big|\frac{\dd}{\dd t}S - \De S\Big| \leq \frac{C}{r}\Big(|A| + |B|+|\ga|+ |V| +|W| +|F|+|S|\Big),
\end{equation*}
\begin{equation*}\label{equation evolution for Q}
    \Big|\frac{\dd}{\dd t}Q - \De Q\Big| \leq \frac{C}{r}|\na W| + \frac{C}{r}\Big(|A| +|B|+ |\ga| +|V| +|W|  + |F|+|S|+|Q| \Big),
\end{equation*}
\begin{multline*}\label{equation evolution for W}
    \Big|\frac{\dd}{\dd t}W - \De W\Big| \leq \frac{C}{r}\Big(|\na Q| + |\na W|\Big)\\
    +\frac{C}{r}\Big(|A| + |B|+|\ga|+ |V|+|W|+ |F|   +|S|+|Q| \Big),
\end{multline*}
and
\begin{multline}|\ga|+|\mu|+|A| + |B|+ |F|+|V|+|S|+|Q|+|W|\\
+ |\na V| + |\na S| +|\na Q| + |\na W|\leq \frac{C}{r^2}\end{multline}
\end{lem}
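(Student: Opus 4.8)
The strategy is to prove the collection of evolution inequalities as a single package, by systematically differentiating the defining expressions in Section~\ref{section ODE PDE} and exploiting the decay of all geometric quantities of each individual flow. The unified mechanism is this: each of $\ga,\mu,A,B,F,V,S,Q,W$ is a difference of a quantity built from $\vp_\tau$ and the same quantity built from $\tvp_\tau$. Writing an evolution equation for the un-tilded quantity (these are exactly the Laplacian flow evolution equations of~\cite[\S 6.2]{LotayWeiLaplacianFlow}), subtracting the tilded analogue, and then bracketing terms of the form $P(\vp_\tau) - P(\tvp_\tau)$ using the telescoping identity $P(\vp) - P(\tvp) = \int_0^1 \tfrac{d}{ds}P(s\vp + (1-s)\tvp)\,ds$ (or, more simply, using that all the geometric operators depend polynomially on $\vp$, $g_\vp^{-1}$, $\star_\vp\vp$ and their derivatives), one obtains a Lipschitz-type bound: the difference of any zeroth-order contraction is controlled by $|\ga|$ plus (where inverse metrics or Hodge stars enter) $|\mu|$, and these are in turn controlled by $|\ga|$ via Proposition~\ref{proposition estimates on difference of metrics} and Lemma~\ref{lemma difference of hodge star estimate}; derivatives of differences bring in $|A|$, $|B|$, etc., via Proposition~\ref{proposition estimates on derivatives of difference of metrics} and Remark~\ref{remark bound on derivatives of tg-g}. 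The coefficients in all the ``error'' terms carry an explicit $1/r$ because every coefficient appearing is itself a geometric quantity of one of the two flows, and by Claim~\ref{claim uniform curvature estimates} (equivalently Proposition~\ref{proposition backward flow reparametrization}(3)) together with $|\na T|\le C|\Rm|$ and its higher analogues, all such quantities and their derivatives decay like $r^{-2}$, $r^{-3}$, etc.; one keeps a single power of $r^{-1}$ and absorbs the rest of the decay into the constant $C$ (which is why $C$ depends on $K$ and $\tilde K$).

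First I would fix notation: work on the common parametrization $\cC^\Sg_{r_0}\times[-1,0]$ produced in Section~\ref{section setting up the parabolic problem}, with all norms and covariant derivatives taken with respect to $g_t = g(t)$ (the untilded flow), and use the uniform equivalence of $g_t$ and $\tg_t$ on this region (a consequence of Lemma~\ref{lemma dynamical limit of metrics is rescaling limit} and Claim~\ref{claim uniform curvature estimates}) to move freely between the two metrics at the cost of a universal constant. The genuinely parabolic quantities $V,S,Q,W$ are handled by recalling that $\Rm$, $\na\Rm$, $T$, $\na T$, $\na^2 T$ all satisfy reaction--diffusion equations under Laplacian flow of schematic form $\partial_t X = \Delta X + \Rm * X + \na\Rm * (\text{lower}) + T*\cdots$; subtracting the two copies, the principal parts combine to $\Delta$ acting on the difference plus a \emph{commutator} term $(\Delta - \tilde\Delta)X$, which is first order in $\mu$ (hence controlled by $|A|$, and since a further derivative of $\mu$ appears, by $|B|$) times $\na^2 X$ or lower --- but $\na^2 X$ here is a bounded geometric quantity of the untilded flow decaying like $r^{-m}$, so this commutator contributes a $\tfrac{C}{r}(|A|+|B|)$-type term, not a principal one. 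The reaction terms $\Rm*X - \tRm*\tilde X = S * X + \tRm * (X - \tilde X)$ split into one factor that is a decaying coefficient ($\Rm$ or $\tRm$, giving the $1/r$) times a difference quantity, plus $|S|$ times a bounded decaying quantity; all of this lands in the allowed right-hand sides. The non-parabolic quantities $\ga,\mu,A,B,F$ are even easier: $\partial_t\ga = \Delta_{\vp_\tau}\vp_\tau - \Delta_{\tvp_\tau}\tvp_\tau = i_{\vp_\tau}(h_\tau) - i_{\tvp_\tau}(\tilde h_\tau)$, and expanding $h = -\Ric - \tfrac13|T|^2 g - 2T\cdot T$, the difference is $i_{\vp_\tau}(-S - \cdots) + (i_{\vp_\tau} - i_{\tvp_\tau})(\tilde h_\tau)$; the first piece contains $|S|$ (no good decay available on $S$ itself, so it appears \emph{without} the $1/r$) and $|V|$-type terms (difference of $\na T\cdot T$ contributes $|V|$ times the decaying factor $|T|$, giving $\tfrac{C}{r}|V|$), while $(i_{\vp} - i_{\tvp})$ is $O(|\ga|)$ multiplied into $\tilde h_\tau = O(r^{-2})$, yielding $\tfrac{C}{r^2}|\ga| \le \tfrac{C}{r}|A|$ after using that $\na\mu$ relates $|\ga|$ and $|A|$ --- matching the stated inequality $|\partial_t\ga|\le C|V| + \tfrac{C}{r}|A|$. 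The remaining cases follow the same bookkeeping, with $F$'s evolution coming from the torsion evolution equation~\cite[(3.9)]{LotayWeiLaplacianFlow} and $A$, $B$ from differentiating the Christoffel-symbol formula $A = g^{-1}*\na\mu$ and commuting $\na$ with $\partial_t$.

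Finally, the last line --- the uniform $C/r^2$ bound on the sum of all these quantities and of $|\na V|,|\na S|,|\na Q|,|\na W|$ --- is not a consequence of the evolution inequalities but a \emph{separate, prior} estimate, established directly from the asymptotically conical hypothesis: by Proposition~\ref{proposition backward flow reparametrization}(3) each flow has $|\na^m\Rm| \le C_m r^{-m-2}$ uniformly in $\tau$, which bounds $|S|,|\na S|,|Q|,|\na Q|$ and (via $|\na^j T|\le C|\na^{j-1}\Rm|$-type inequalities) $|F|,|V|,|\na V|,|W|,|\na W|$; then Lemma~\ref{lemma dynamical limit of G2 structures is rescaling limit}, Lemma~\ref{lemma dynamical limit of metrics is rescaling limit} and the convergence $\vp_\tau\to\vp_C$, $\tvp_\tau\to\vp_C$ at $\tau=0$ give $|\ga|,|\mu| = O(r^{-2})$ uniformly, whence $|A|,|B| = O(r^{-2})$ by Proposition~\ref{proposition estimates on derivatives of difference of metrics}/Remark~\ref{remark bound on derivatives of tg-g}; these are combined and the constant enlarged to absorb the finitely many terms. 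I expect the main obstacle to be purely organizational rather than conceptual: carefully tracking, in the genuinely parabolic equations for $V,Q,W$, exactly which difference-quantities arise when one subtracts the two Bochner-type formulas and which coefficients carry one (versus two or more) powers of decay --- in particular making sure that no term that should appear with a $1/r$ ends up without one (this is the source of the somewhat delicate-looking presence of $|S|$ without a $1/r$ in the $\ga$, $A$, $B$, $F$, $V$, $S$, $Q$, $W$ inequalities but its presence \emph{with} a $1/r$ elsewhere, all of which has to be consistent), and conversely that the ``bad'' term $|\na W|$ only appears with a $1/r$ in the equations for $Q$ and $W$ where it must. This requires writing out the schematic evolution equations from~\cite[\S 6.2]{LotayWeiLaplacianFlow} with enough precision to see the structure, but involves no new analytic input beyond what is already assembled in Sections~2 and~3.
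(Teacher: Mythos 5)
Your overall strategy is the same as the paper's: first establish the uniform $C/r^2$ decay of all the difference quantities from the dynamical-limit lemmas, the identification $\vp_C=\vp_D$, and the curvature decay of Proposition \ref{proposition backward flow reparametrization}(3); then subtract the Lotay--Wei evolution equations for the two flows, letting the decaying coefficients supply the $1/r$ factors and treating the difference of Laplacians/divergences as a commutator term controlled by $\mu$, $\na\mu$, $A$, $B$ acting on decaying derivatives of the \emph{tilded} quantities (a small slip in your write-up: you say ``untilded''). This is exactly the paper's proof, including the separate treatment of $|\na Q|$, $|\na W|$ and the estimates $|\na F|\le |V|+\tfrac{C}{r}|A|$, $|\na V|\le |W|+\tfrac{C}{r^2}|A|$, $|\na S|\le |Q|+\tfrac{C}{r^2}|A|$ that convert gradients of differences into the listed quantities.

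There is, however, one concrete step that fails as written: in your derivation of the first inequality you claim $\tfrac{C}{r^2}|\ga|\le\tfrac{C}{r}|A|$, justified by ``$\na\mu$ relates $|\ga|$ and $|A|$''. No such pointwise bound exists; the available relations go the other way ($|A|\le C|\na\mu|\le C(|\ga|+|F|+|A|)$-type bounds), and $|\ga|$ cannot be absorbed into $r|A|$. Moreover, expanding $\partial_t\ga = i_{\vp}(h)-i_{\tvp}(\tilde h)$ as you do naturally produces a bound of the form $C|S|+\tfrac{C}{r}\big(|\ga|+|\mu|+|F|\big)$ (note also that $h=-\Ric-\tfrac13|T|^2g-2T\cdot T$ contains no $\na T\cdot T$ term, so no $|V|$ arises this way); that bound would in fact be perfectly adequate for Proposition \ref{proposition ODE PDE system}, since $S$ belongs to $\mathbf{X}$, but it is not the stated inequality. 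To obtain the stated form $|\partial_t\ga|\le C|V|+\tfrac{C}{r}|A|$ one should instead use that for closed $G_2$-structures the Laplacian flow velocity is built schematically from first derivatives of the torsion (this is how Lotay--Wei compute $\tfrac{\dd\ga}{\dd t}$), so that $|\partial_t\ga|\le C|\na F|$, and then apply the estimate \eqref{equation na F bound}, $|\na F|\le |V|+C|A||\tT|\le |V|+\tfrac{C}{r}|A|$. With that local repair (or with the weaker but sufficient right-hand side), the rest of your outline matches the paper's argument.
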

\begin{proof}
We begin by verifying the last statement on the decay of all quantities. By Proposition \ref{prop dynamical limit equals rescaling limit} applied to Lemmas \ref{lemma dynamical limit of metrics is rescaling limit} and \ref{lemma dynamical limit of G2 structures is rescaling limit}, we have that 
\begin{equation}\label{equation decay of derivatives of g-tg}|\na^{k}(g - \tg)| \leq |\na^k (g-g_c)| + |\na^k (g_c - \tg)| \leq C_k r^{-2-k},\end{equation}
and
\begin{equation}\label{equation decay of derivatives of vp-tvp}|\na^{k}(\vp - \tvp)| \leq |\na^k (\vp-\vp_c)| + |\na^k (\vp_c - \tvp)| \leq C_k r^{-2-k},\end{equation}
and the constants $C_k$ are independent of time, which follows from the arguments in Lemmas \ref{lemma dynamical limit of G2 structures is rescaling limit} and \ref{lemma dynamical limit of G2 structures is rescaling limit}. By the arguments of Lemmas \ref{lemma dynamical limit of metrics is rescaling limit} and \ref{lemma dynamical limit of G2 structures is rescaling limit}, the estimates~\eqref{equation decay of derivatives of g-tg} and~\eqref{equation decay of derivatives of vp-tvp} also hold for $g_t - \tg_t$ and $\vp_t - \tvp_t$. Now,
$$\Ga(g_s) - \Ga(g_t) = g_t^{-1}\ast \na (g_t - g_s).$$
Taking the limit as $s \to 0$, using Proposition \ref{prop dynamical limit equals rescaling limit}, and then taking the norm, we find that 
$$|\Ga(g_c) - \Ga(g_t)| \leq |\na(g_t - g_c)|.$$
Using Lemma \ref{lemma dynamical limit of metrics is rescaling limit} and applying the triangle inequality to $A = \Ga(g_t) - \Ga(\tg_t)$, we find that 
\begin{equation}\label{equation decay on difference of Christoffel symbols}
|A| \leq \frac{C}{r^3}.
\end{equation}
By the definition of torsion~\cite[(2.11)]{LotayWeiLaplacianFlow},
$$T_i^{\; j} = \frac{1}{24}\na_i \vp_{lmn} (\star_{\vp} \vp)^{jlmn},$$
and so by adding and subtracting the terms with and without tildes,
\begin{multline*}T - \tT= (g^{-1} - \tg^{-1})\ast \na \vp \ast (\star_{\vp} \vp) +  \na \vp \ast (\star_{\vp}(\vp - \tvp)) + \na \vp \ast \big((\star_{\vp} - \star_{\tvp})\tvp\big)\\ + \na(\vp - \tvp) \ast (\star_{\tvp} \tvp) +  A \ast \tvp \ast (\star_{\tvp} \tvp),\end{multline*}
where $\ast$ refers to contractions in both $g$ and $\tg$. Note that the AC condition gives uniform bounds on $|\tvp|$, $|\tg|$, $\abs{\star_{\tvp}\tvp}$, and $|\na \vp|$. Using~\eqref{equation decay of derivatives of vp-tvp}, ~\eqref{equation decay on difference of Christoffel symbols}, and Lemma \ref{lemma difference of hodge star estimate}, we find that
\begin{equation}\label{equation decay on difference of torsions estimate}
    |T - \tT| \leq \frac{C}{r^2}
\end{equation}

Now, by Proposition \ref{proposition backward flow reparametrization}, we know that $|\na^m \Rm| = O(r^{-2})$ for all $m \geq 0$. By~\eqref{equation decay on difference of Christoffel symbols} and~\eqref{equation decay on difference of torsions estimate}, we find that $|B|$, $|V|$, and $|W|$ are $O(r^{-2})$ as well. It follows immediately from the estimates on derivatives of curvature and torsion that $|\na V|$, $|\na S|$, $|\na W|$, and $|\na Q|$ are also $O(r^{-2})$. Putting it all together, we conclude that
\begin{multline}\label{equation spacetime O(r^-2) decay of all quantities}\sup_{\cC_{r}^{\Sg} \times [-1, 0]} \Big(|\ga|+|\mu|+|A| + |B|+ |F|+|V|+|S|+|Q|+|W|\\
+ |\na V|+|\na S| + |\na Q| + |\na W|\Big) \leq \frac{C}{r^2}.\end{multline}
Note that every constant $C$ or $C_k$ used thus far in this argument has not depended on time, which allows us to take the supremum over $t \in [-1,0]$ in~\eqref{equation spacetime O(r^-2) decay of all quantities}.

We may now begin to verify the stated differential inequalities, using the evolution equations of Lotay--Wei for these quantities~\cite[Lemma 6.2]{LotayWeiLaplacianFlow} as well as some additional estimates.

By Proposition \ref{proposition estimates on difference of metrics} and bounds on $|\tg|$ due to the AC condition, we know that $|\ga|\leq C |\mu|$. Using the fact that $R = -|T|^2$, the decay of curvature~\eqref{equation time dependent curvature decay}, and~\eqref{equation decay on difference of torsions estimate}, we find that for some $r_0$, on $\cC_{r_0}^{\Sg}$,
$$|T|, |\tT| \leq \frac{C}{r}.$$
Applying these estimates to the calculations for $\frac{\dd \ga}{\dd t}$ and $\frac{\dd \mu}{\dd t}$ in~\cite[Lemma 6.2]{LotayWeiLaplacianFlow}, we find the two stated differential inequalities for these quantities. 

By~\cite[(6.13)]{LotayWeiLaplacianFlow},
\begin{multline}\label{equation Lotay Wei evolution for A}
\frac{\dd A}{\dd t} = g^{-1}\ast \na S + \tg^{-1}\ast \tT \ast V +  \na T \ast F + \tRm \ast A\\
 + (\tg^{-2}\ast \tT \ast \tna \tT + \tg^{-1}
 \ast\tna \tRm + \tg^{-1}\ast \na T \ast T) \ast \mu,
\end{multline}
where the $\ast$ here represents simple contractions in the indicated tensors. If $\cS$ is a $(k,l)$-tensor, then
$$|\cS|^2_{g} \leq |\cS|^2_{\tg} + C(k,l, |\tg|)|\mu||\cS|^2_{g}$$
where we use the fact that for $r_0$ large enough, $|\mu|^m \leq |\mu|$ on $\cC_{r_0}^{\Sg}$ for $m\geq 1$. Since $|\mu| \ll 1$ in $\cC_{r_0}^{\Sg}$, we find that
$$|\cS|^2_{g} \leq C |\cS|^2_{\tg},$$
and this implies that we may switch between taking the norm with respect to $g$ and with respect to $\tg$. This implies that $|\tna^k \tRm|$ is $O(r^{-2-k})$ and $|\tna^k \tT|$ is $O(r^{-1-k})$, using expressions for $\tna^k \tT$ in terms of curvature quantities~\cite[\S 2.3]{LotayWeiLaplacianFlow}. We may now estimate every term in~\eqref{equation Lotay Wei evolution for A} and we find our third differential inequality for $\frac{\dd A}{\dd t}$.

Now, by Remark \ref{remark bound on derivatives of tg-g}, $|\na \mu| \leq C (|\ga|+|\na \ga|)$. We estimate the term $|\na \ga|$ by applying ~\eqref{equation nabla phi and torsion},
\begin{multline*}
\na \ga =  \na \vp - \tna \tvp + A \ast \tvp =  (g^{-1} - \tg^{-1})\ast T \ast (\star_{\vp}\vp) + (T - \tT) \ast (\star_{\vp}\vp)\\
+ \tT \ast (\star_{\vp}(\vp - \tvp)) +\tT \ast ((\star_{\vp} - \star_{\tvp})\tvp)  + A \ast \tvp.
\end{multline*}
By Lemma \ref{lemma difference of hodge star estimate}, we find that 
\begin{equation}\label{equation na ga bound}
    |\na \ga| \leq C(|\ga|+|F| + |A|),
\end{equation}
and so
\begin{equation}\label{equation na mu bound}
    |\na \mu| \leq C(|\ga| + |F| + |A|),
\end{equation}
on $\cC_{r_0}^{\Sg}$. Also, on $\cC_{r_0}^{\Sg}$,
\begin{equation}\label{equation na F bound}
    |\na F| = |\na T - \tna \tT - (\na - \tna)\tT| \leq |V| + C|A||\tT| \leq |V| + \frac{C}{r}|A|,
\end{equation}
\begin{equation}\label{equation na V bound}
    |\na V| = |\na^2 T - \tna^2 \tT - (\na - \tna)\tna \tT| \leq |W| + C|A||\tna \tT|\leq |W| + \frac{C}{r^2}|A|,
\end{equation}
\begin{equation}\label{equation na S bound}
    |\na S| = |\na \Rm - \tna \tRm - (\na - \tna)\tRm| \leq |Q| + C|A||\tRm| \leq |Q| + \frac{C}{r^2}|A|.
\end{equation}
By the argument of ~\cite[Lemma 6.6]{LotayWeiLaplacianFlow}, we have that on $\cC_{r_0}^{\Sg}$,
\begin{align}\Big|\frac{\dd B}{\dd t}\Big| &\leq \Big| \na \frac{\dd A}{\dd t}\Big| + |A|\Big| \frac{\dd g}{\dd t}\Big|\nonumber\\
& \leq \Big| \na \frac{\dd A}{\dd t}\Big| + \frac{C}{r}|A|\nonumber\\
\intertext{By differentiating~\eqref{equation Lotay Wei evolution for A}, and writing $\na \tg^{-1} = \tg^{-1}\ast \tg \ast \na (\tg - g)$,}
&\leq C(|\na^2 S|) + \frac{C}{r}(|A| + |B| + |F|+ |\na F| + |\na V| + |V| + |\na \mu| + |\mu|)\nonumber\\
\intertext{Applying~\eqref{equation na mu bound}, ~\eqref{equation na F bound}, and~\eqref{equation na V bound},}
&\leq C(|\na^2 S|) + \frac{C}{r}(|A| + |B| + |\ga|+ |V|+|F|+ |W| )\nonumber\\
&\leq C|\na Q| + \frac{C}{r}\Big(|A| + |B|+|\ga|  + |V| +|W|+|F|\Big),\nonumber
\end{align}
where the last line follows by
\begin{align*}
    |\na^2 S| &= |\na(\na \Rm - \tna \tRm) - \na((\na - \tna)\tRm)|\\
    &\leq |\na Q| + C|B||\tRm| + C|A||\na \tRm|\\
    &\leq |\na Q| + C|B| |\tRm| + C|A|^2|\tRm| + C |A||\tna \tRm|\\
    &\leq |\na Q| + \frac{C}{r^2}(|A| + |B|).
\end{align*}
This completes the derivation of the differential inequality for $\frac{\dd B}{\dd t}$.

By the derivation in~\cite[Lemma 6.2]{LotayWeiLaplacianFlow},
\begin{multline*}
    \frac{\dd F}{\dd t} = \na V + \tna \tT \ast A + (\tT+\tT \ast (\star_{\tvp}\tvp))\ast S + \tT \ast \tvp \ast V+ \Rm \ast \tT \ast (\star_{\tvp} \tvp - \star_{\vp} \vp)\\
    + \na T \ast T \ast \ga  + (\Rm + \Rm \ast (\star_{\tvp}\tvp) + T^2 + \tT^2 +T\ast \tT) \ast F
\end{multline*}
Using Lemma \ref{lemma difference of hodge star estimate} and the known decay on torsion and curvature, the differential inequality for $\frac{\dd F}{\dd t}$ follows.

Now, applying the estimates we have found to the evolution equation in~\cite[Lemma 6.2]{LotayWeiLaplacianFlow}, just as in our previous derivations, we find
\begin{align}\label{equation estimate on V}
    \Big|\frac{\dd V}{\dd t} - \De V\Big| - &\big|\text{div}\big((g^{ab}\na_b - \tg^{ab}\tna_b)\tna_i\tT_{jk}\big)\big|\nonumber\\
    &\leq \frac{C}{r}(|A|+ |\na S|+ |F|+ |\ga|+|V|+|S| + |\na V|)\nonumber\\
\intertext{Applying~\eqref{equation na S bound} and~\eqref{equation na V bound},}
    &\leq \frac{C}{r}(|A|+  |\ga|+ |V|+ |W|+|F|+ |S| + |Q|)
\end{align}
Now, we estimate the divergence term as
\begin{align}\label{equation div estimate on na T}\big|\text{div}\big((g^{ab}\na_b - \tg^{ab}\tna_b)\tna_i\tT_{jk}\big)\big|& = \big|g^{-2}\ast \na^2 \tna \tT - g^{-1} \tg^{-1}\ast \na \tna^2 \tT - g^{-1}\ast\na \tg^{-1} \ast\tna^2 \tT\big|\nonumber\\
&\leq  \big|g^{-2} \ast \na^2 \tna \tT - g^{-1} \tg^{-1}\ast \na \tna^2 \tT\big|\nonumber\\
&\qquad + |\na (g^{-1} -\tg^{-1})  \tna^2 \tT|,\nonumber\\
\intertext{When $|\mu|$ is small enough, we bound $|\na (g^{-1} - \tg^{-1})|$ by $C|\na \mu|$ to find that}
&\leq  \big|g^{-2} \ast \na^2 \tna \tT - g^{-1} (g^{-1} +(\tg^{-1} - g^{-1}))\ast \na \tna^2 \tT\big| \nonumber\\
&\qquad+ \frac{C}{r}|\na \mu|\nonumber\\
&\leq \big|g^{-2} \ast \na^2 \tna \tT - g^{-2} \ast \na \tna^2 \tT\big|+ \frac{C}{r}(|\mu|+|\na \mu|)\nonumber\\
&\leq \big|\na(\na - \tna)\tna \tT\big|+ \frac{C}{r}(|\mu|+|\na \mu|)\nonumber\\
&\leq \frac{C}{r}(|\mu|+|\na \mu| +|A|+|B|),\nonumber\\
\intertext{and from ~\eqref{equation na ga bound} and~\eqref{equation na mu bound},}
&\leq \frac{C}{r}(|\ga| + |F|+|A|+ |B|).\end{align}

The differential inequality for $\frac{\dd V}{\dd t}$ now follows from~\eqref{equation na V bound} and ~\eqref{equation div estimate on na T}. The differential inequality for $\frac{\dd S}{\dd t}$ follows similarly, using the evolution equation in~\cite[Lemma 6.2]{LotayWeiLaplacianFlow} and the following estimate, analogous to ~\eqref{equation div estimate on na T},
\begin{align*}\big|\text{div}\big((g^{ab}\na_b - \tg^{ab}\tna_b)\widetilde{Rm}_{ijk}^l\big)\big|& \leq \frac{C}{r}(|\mu|+|\na \mu| +|A|+|B|)\\
&\leq \frac{C}{r}(|\ga| + |F|+|A|+ |B|).\end{align*}
Now, from~\cite[(4.12)]{LotayWeiLaplacianFlow},
\begin{align}\label{equation evolution of na Rm}
    \frac{\dd}{\dd t}\na \Rm &= \De \na \Rm + \Rm \ast \na \Rm + \na \Rm \ast T^2 + \Rm \ast T \ast \na T\\
    &\qquad +  \na^3 T \ast T + \na^2 T \ast \na T,
\end{align}
where the $\ast$ represents metric contractions by $g$. For any tensors $\cS$, $\tilde{\cS}$, we compute
\begin{align}\label{equation difference of De na Rm}
    \De \cS - \tilde{\De} \tilde{\cS} &= \De (\cS - \tilde{\cS}) + A \ast \tna \tT + B \ast \tT + A \ast \tT + A \ast \na \tT + \tGa \ast A\ast \tT  \nonumber\\
    & \qquad + (g^{-1} - \tg^{-1})\ast (\tna^2 \tilde{\cS} - \tGa \ast \tna \tilde{\cS}).
\end{align}
Note that we can replace contractions in $\tg$ by contractions in $g$ by estimating the difference in terms of $|\mu|$ and decaying curvature and torsion terms. If $\ast$ represents contractions in just $\tg$, then
\begin{equation}\label{equation which gets na W}
    \na^3 T \ast T - \tna^3 \tT \ast \tT = \na W \ast T + A \ast \tna^2 \tT \ast T + \tna^3 \tT \ast F
\end{equation}
Using ~\eqref{equation evolution of na Rm},~\eqref{equation difference of De na Rm}, and previously found estimates on decaying terms, we find the desired differential inequality for $\frac{\dd }{\dd t}Q$:
\begin{align*}
    \Big|\frac{\dd}{\dd t}Q - \De Q\Big| &\leq \frac{C}{r} |\na W| + \frac{C}{r}\big(|A|+ |B| + |\mu| +|V|+|W|+|F|+|S|+|Q|\big)\\
    &\leq \frac{C}{r} |\na W| + \frac{C}{r}\big(|A|+ |B| + |\ga| +|V|+|W|+|F|+|S|+|Q|\big).
\end{align*}
Now, we have from~\cite[(4.14)]{LotayWeiLaplacianFlow} that
\begin{align}\label{equation evolution of na^2 T}
    \frac{\dd}{\dd t}\na^2 T &= \De \na^2 T + \na^3 T \ast T \ast \vp + \na \Rm \ast (\na T + \na T \ast \psi + T^2 \ast \vp)+ \na^2 T \ast \na T \ast \vp \nonumber\\
    &\quad +\Rm \ast (\na^2 T + \na^2 T \ast \psi + \na T \ast T \ast \vp + T^3 \ast \vp) + \na^2 \Rm \ast (T+ T\ast \psi)\nonumber\\
    &\quad + \na T \ast T^3 \ast \vp + \na^2 T\ast (T^2 + T^2 \ast \psi)+ \na T \ast \na T \ast (T + T \ast \psi)
\end{align}
The differential inequality for $\frac{\dd}{\dd t}W$ follows from~\eqref{equation evolution of na^2 T} after applying~\eqref{equation difference of De na Rm} and expressing the terms involving $\na^3 T$ and $\na^2 \Rm$ using $\na W$ and $\na Q$, just as in~\eqref{equation which gets na W}.
\end{proof}

Let $\cS^{(k,l)}$ be the vector bundle of $(k,l)$-tensors over $\cC_{r_0}^{\Sg}$. Then, define
$$\cX := \cS^{(0,3)} \oplus \cS^{(1,3)}+\oplus \cS^{(1,4)}\oplus \cS^{(0,4)},$$
$$\cY := \cS^{(0,3)} \oplus \cS^{(0,2)}+\oplus \cS^{(1,2)}+\oplus \cS^{(1,3)}+\oplus \cS^{(0,2)}.$$
Now, for $t \in [-1, 0]$, we define the section $\bf{X}(t) \in C^{\infty}(\cX)$ by
$$\textbf{X} = V \oplus S \oplus Q\oplus W,$$
and we define the section $\bf{Y}(t) \in C^{\infty}(\cY)$ by
$$\textbf{Y} = \ga \oplus \mu \oplus A \oplus B \oplus F.$$
These sections form our ODE-PDE system, as described in the following proposition. Recall that $\tau = -t$.
\begin{prop}\label{proposition ODE PDE system}
There is $C>0$ and $r_0>0$ such that for all $(x,\tau) \in \cC_{r_0}^{\Sg} \times [0,1]$,
\begin{align*}
    \Big|\frac{\partial \bf{Y}}{\partial \tau}\Big| &\leq C\big(|\bf{X}| + |\bf{\na X}|\big) + \frac{C}{r} |\bf{Y}|\\
    \Big|\frac{\partial \bf{X}}{\partial \tau} + \De \bf{X}\Big| &\leq \frac{C}{r}\big(|\bf{X}| + |\nabla \bf{X}| + |\bf{Y}|\big)
\end{align*}
\end{prop}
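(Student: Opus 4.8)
The plan is to derive Proposition~\ref{proposition ODE PDE system} as a direct bookkeeping consequence of Lemma~\ref{lemma evolution equations for ODE PDE system} by repackaging the individual differential inequalities into the block form demanded by the ODE-PDE structure. First I would recall that $\mathbf{Y} = \ga \oplus \mu \oplus A \oplus B \oplus F$ collects precisely the ``ODE'' quantities whose time derivatives in Lemma~\ref{lemma evolution equations for ODE PDE system} contain no Laplacian, while $\mathbf{X} = V \oplus S \oplus Q \oplus W$ collects the ``PDE'' quantities whose evolution equations are of heat type. Since $|\mathbf{X}|$ and $|\mathbf{Y}|$ are (up to a dimensional constant) equivalent to the sums of the norms of their respective summands, and likewise $|\nabla \mathbf{X}|$ is equivalent to $|\nabla V| + |\nabla S| + |\nabla Q| + |\nabla W|$, the task reduces to checking that the right-hand side of each inequality in Lemma~\ref{lemma evolution equations for ODE PDE system} is bounded by the asserted combination.

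For the $\mathbf{Y}$-inequality, I would simply add the five inequalities for $\frac{d}{dt}\ga$, $\frac{d}{dt}\mu$, $\frac{d}{dt}A$, $\frac{d}{dt}B$, $\frac{d}{dt}F$. Inspecting the right-hand sides: the terms $|V|$, $|\nabla V|$, $|\nabla S|$, $|\nabla Q|$, and $|S|, |Q|, |W|$ all appear either without a $1/r$ factor or with one, and in every case they are dominated by $C(|\mathbf{X}| + |\nabla \mathbf{X}|)$ since $1/r \le 1/r_0$ is bounded. The remaining terms $|A|, |B|, |\ga|, |F|$ always appear with a $\frac{C}{r}$ prefactor, and these are exactly the summands of $\mathbf{Y}$, giving the $\frac{C}{r}|\mathbf{Y}|$ contribution. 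One subtlety: in the $\frac{d}{dt}A$ and $\frac{d}{dt}F$ inequalities the terms $|\nabla S|$, $|\nabla V|$ appear without a $1/r$ factor; these are absorbed into $C|\nabla\mathbf{X}|$, which is harmless. Here I would also note that $|\mu|$ does not appear on any right-hand side but is a summand of $\mathbf{Y}$, which only makes the estimate easier.

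For the $\mathbf{X}$-inequality, I would add the four inequalities for $\frac{d}{dt}V - \Delta V$, $\frac{d}{dt}S - \Delta S$, $\frac{d}{dt}Q - \Delta Q$, $\frac{d}{dt}W - \Delta W$ from Lemma~\ref{lemma evolution equations for ODE PDE system}. Each of these has the crucial feature that \emph{every} term on the right-hand side carries a $\frac{C}{r}$ factor. Grouping: the curvature-and-torsion-difference terms $|V|, |S|, |Q|, |W|$ and the gradient terms $|\nabla Q|, |\nabla W|$ (appearing in the $Q$- and $W$-inequalities) together give $\frac{C}{r}(|\mathbf{X}| + |\nabla\mathbf{X}|)$—using the bounds $|\nabla S| \le |Q| + \frac{C}{r^2}|A|$ and $|\nabla V| \le |W| + \frac{C}{r^2}|A|$ from \eqref{equation na S bound} and \eqref{equation na V bound} to replace any stray $|\nabla S|$, $|\nabla V|$ that might appear—while the connection-and-tensor-difference terms $|A|, |B|, |\ga|, |F|$ give $\frac{C}{r}|\mathbf{Y}|$. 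The sign convention $\Delta \mathbf{X}$ versus $-\Delta \mathbf{X}$ is merely a matter of how one writes $\frac{\partial}{\partial \tau} = -\frac{\partial}{\partial t}$, so the statement $|\frac{\partial \mathbf{X}}{\partial \tau} + \Delta \mathbf{X}| = |\frac{d\mathbf{X}}{dt} - \Delta\mathbf{X}|$ matches Lemma~\ref{lemma evolution equations for ODE PDE system} directly.

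The main (minor) obstacle is purely organizational: one must verify that the componentwise Laplacians assemble into a genuine Laplacian-type operator on the direct-sum bundle $\cX$, i.e.\ that writing $\Delta \mathbf{X}$ for the operator acting diagonally by the rough Laplacian on each summand $\cS^{(k,l)}$ is consistent, and that the error in commuting the (time-dependent) connection with this operator is already accounted for by the lower-order terms $\frac{C}{r}(|\mathbf{X}| + |\nabla\mathbf{X}| + |\mathbf{Y}|)$—this is exactly what the divergence-term estimates \eqref{equation div estimate on na T} and its analogue in the proof of Lemma~\ref{lemma evolution equations for ODE PDE system} already handle. Thus the proof is: choose $r_0$ at least as large as the one from Lemma~\ref{lemma evolution equations for ODE PDE system} so that $1/r \le 1$ and all the auxiliary smallness hypotheses hold, sum the relevant inequalities, use the norm-equivalences $|\mathbf{X}| \simeq \sum(\text{summands})$ etc., substitute \eqref{equation na S bound}--\eqref{equation na V bound} to clean up gradient terms, and collect. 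I would present this as a short paragraph rather than a long computation, since no new analytic input beyond Lemma~\ref{lemma evolution equations for ODE PDE system} is required.
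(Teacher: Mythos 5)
Your proposal is correct and follows essentially the same route as the paper: both proofs simply sum the componentwise differential inequalities of Lemma \ref{lemma evolution equations for ODE PDE system}, group the summands into $|\mathbf{X}|$, $|\nabla\mathbf{X}|$, $|\mathbf{Y}|$, and absorb the stray $\tfrac{C}{r}(|V|+|S|+|W|)$-type terms using $r \ge r_0 \ge 1$ after adjusting constants. (Your invocation of \eqref{equation na S bound}--\eqref{equation na V bound} is unnecessary, since the heat-type inequalities only involve $|\nabla Q|$ and $|\nabla W|$, which are already components of $|\nabla\mathbf{X}|$, but this does no harm.)
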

\begin{proof}
Putting together the results of Lemma \ref{lemma evolution equations for ODE PDE system}, 
\begin{align*}
    \Big|\frac{\dd}{\dd \tau}\bf{Y}\Big| &\leq C \big(|V|+|S| +|\na V|+|\na S| + |\na Q|\big)\\
    &\qquad \qquad + \frac{C}{r} \Big(|A|+|B|+|\ga|+|V|+|W|+|F|+|S|\Big)\\
    &\leq C (|\bf{X}|+|\na {X}|) + \frac{C}{r} |\bf{Y}| + \frac{C}{r}(|V|+|S|+|W|)\\
\intertext{By adjusting the constants $C$ and $r_0 \geq 1$,}
    &\leq C (|\bf{X}|+|\na \bf{X}|) + \frac{C}{r} |\bf{Y}|.
\end{align*}
This gives the differential inequality for $\bf{Y}$. The differential inequality for $\frac{\dd\bf{X}}{\dd \tau}$ follows immediately from Lemma \ref{lemma evolution equations for ODE PDE system}.

\end{proof}

\section{Carleman Estimates}\label{section carleman estimates}

In this section, we derive Carleman estimates analogous to those in \cite{EscauriazaSereginSverak} which will eventually be applied in Section 6 to show that the tensors $\bf{X}$ and $\bf{Y}$ decay at least exponentially and ultimately vanish everywhere.

We will adopt the same notational conventions as in \cite{kotschwarwang2015} for Section \ref{section carleman estimates} and Section \ref{section proof of backwards uniqueness}---namely, we assume a backward Laplacian flow $\vp_\tau$ with associated metric $g_\tau$ and potential function $f_\tau$ on the space-time $\cC^\Sg_{r_0} \times [0,\tau_0]$, with $r_0 \ge 1$ and $\tau_0 \in (0,1)$, that satisfies all the properties stated in Proposition \ref{proposition backward flow reparametrization} with constant $K_0 \ge 1$. Henceforth, we take all inner products with respect to $g_\tau$ and let all connections be Levi-Civita with respect to $g_\tau$. The parameter $r$ will continue to denote both the radial coordinate on $\cC^\Sg_{r_0}$ as well as the radial distance with respect to the conical metric which we continue to denote $g_C$ (associated to $\vp_C$). As in \cite{kotschwarwang2015} we take $A_0$ to be the volume of the link $\Sg$ with respect to the restriction of $g_C$ to the cross-section at $\{r=1\}$. Finally, for the rest of section Section \ref{section carleman estimates}, we consider a generic time-dependent section $Z$ of the $(\ka, \nu)$-tensor bundle $\cZ = T^\kappa_\nu(\cC^\Sg_{r_0})$. As in \cite{kotschwarwang2015}, all constants without explicit dependencies will depend on the constants $A_0, K_0, \nu,$ and $\ka$.

\subsection{Carleman estimates to imply backward uniqueness}

We being by deriving the fundamental divergence identity behind the Carleman estimates that we use to prove backward uniqueness. This identity is analogous to that in \cite[Lemma 1]{EscauriazaSereginSverak} and very similar to that in \cite[Lemma 4.1]{kotschwarwang2015}. However, Lemma \ref{lemma fundamental divergence identity} differs from \cite[Lemma 4.1]{kotschwarwang2015} precisely where the evolution equations for $\dd_\tau g$, $\dd_\tau \vol_{g(\tau)}$, and $\dd_\tau \Ga^k_{ij}$ under the Laplacian flow are used in lieu of those for Ricci flow. Keeping this in mind, we carefully re-derive the Laplacian flow version of this identity.

Let $Z \in C^\infty(\cZ \times [0,\tau_0])$ and $\sF, \sH \in C^\infty(\cC^\Sg_{r_0} \times [0,\tau_0])$ with $\sH > 0$ and $\phi = \log \sH$. We define the following operators acting on sections in $C^\infty(\cZ \times [0,\tau_0])$.

\begin{equation}\label{equation antisymmetric operator}
    \cA := \frac{\dd }{\dd \tau} - \na_{\na \phi} + \frac{\sF}{2} \mathrm{Id}
\end{equation}
\begin{equation}\label{equation symmetric operator}
    \cS := \Delta + \na_{\na \phi} - \frac{\sF}{2} \mathrm{Id}
\end{equation}
The fundamental divergence identity \eqref{equation fundamental divergence identity} is built up from the identity
\begin{equation}\label{equation antisymmetric symmetric identity}
    \bigg\langle \cA Z, \bigg(\frac{\dd}{\dd \tau} + \Delta\bigg)Z \bigg \rangle - 2 |\cA Z|^2 = \langle \cA Z, \cS Z \rangle, 
\end{equation}
and the following Rellich-Ne\v{c}as identity applied to a vector field $\na \sH$,
\begin{align}\label{equation rellich necas identity}
\na_i \big[ |\na Z|^2 \na_i \sH - 2\langle \na_{\na \sH} Z, \na_i Z \rangle \big] &= 2\langle \na_i \na_j Z, \na_j Z \rangle \na_i \sH + |\na Z|^2\Delta \sH \\
& \qquad -2\langle \na_i (\na_{\na \sH} Z), \na_i Z \rangle  - 2\langle \na_{\na \sH} Z, \Delta Z\rangle  \nonumber\\
&= 2\langle [\na_i, \na_j]Z, \na_j Z \rangle \na_i \sH + |\na Z|^2\Delta \sH \nonumber \\
& \qquad - 2 \langle \na_i Z, \na_j Z \rangle \na_i\na_j \sH - 2\langle \na_{\na \sH} Z, \Delta Z\rangle. \nonumber 
\end{align}

\begin{lem}\label{lemma fundamental divergence identity}
For all $Z \in C^\infty(\cZ \times [0,\tau_0])$ and all smooth $\sF$ and $\sH > 0$, the following identity holds on $\cC^\Sg_{r_0} \times [0,\tau_0]$:
\begin{multline}\label{equation fundamental divergence identity}
\nabla_i \bigg\{ 2 \bigg\langle \frac{\partial Z}{\partial \tau}, \nabla_i Z \bigg\rangle \sH + |\nabla Z|^2\nabla_i \sH - 2 \langle \nabla_{\nabla \sH} Z, \nabla_i Z \rangle + \frac{\sF\sH}{2}\nabla_i |Z|^2 \\ + \frac{1}{2}(\sF\nabla_i \sH - \sH \nabla_i \sF) |Z|^2 \bigg\} d\mu
- \frac{\partial}{\partial \tau} \bigg\{ \bigg(|\nabla Z|^2 + \frac{\sF}{2}|Z|^2\bigg) \sH d\mu \bigg\} \\
=\bigg\{ 2 \bigg \langle \cA Z, \bigg(\frac{\partial}{\partial \tau} + \De\bigg)Z\bigg \rangle \sH - 2|\cA Z|^2 \sH \\ +  \bigg( \sF - \sH^{-1}\bigg(\frac{\partial \sH}{\partial \tau} - \Delta \sH + \frac{2}{3}R\sH\bigg)\bigg)\bigg( |\nabla Z|^2 + \frac{\sF}{2}|Z|^2 \bigg) \sH \\
-\frac{1}{2}\bigg(\frac{\partial \sF}{\partial \tau} + \Delta \sF\bigg)|Z|^2\sH - 2 \nabla_i \nabla_j \phi \langle \nabla_i Z, \nabla_j Z \rangle \sH - \frac{\partial g}{\partial \tau}(\nabla Z, \nabla Z)\sH\\
- \frac{\sF\sH}{2}\frac{\partial g}{\partial \tau}( Z,  Z)\sH + 2 E(Z, \nabla Z)\sH \bigg\} d\mu, 
\end{multline}
where 
\begin{multline}
\label{equation EZ}
E(Z, \nabla Z) := \bigg\langle \bigg[ \nabla_i, \frac{\partial}{\partial \tau}\bigg] Z, \nabla_iZ \bigg \rangle - \langle [\nabla_i, \nabla_j]Z, \nabla_i Z \rangle \nabla_j \phi \\
= \frac{1}{2}g^{qm}\big( \nabla_i \eta_{pm} + \nabla_p \eta_{im} - \nabla_m \eta_{ip} + R^j_{pmi} \nabla_j \phi \bigg) \langle \Theta_q^p Z, \nabla_i Z \rangle,
\end{multline}
and $\Theta_q^p$ is the operator
\begin{multline}\Theta_q^p(Z^\beta_\al) = \delta_{\alpha_1}^p Z^{\beta_1 \beta_2 \cdots \beta_\kappa}_{q\al_2 \cdots \al_\nu} + \delta_{\alpha_2}^p Z^{\beta_1 \beta_2 \cdots \beta_\kappa}_{\al_1 q \al_3 \cdots \al_\nu} + \cdots \delta_{\alpha_\nu}^p Z^{\beta_1 \beta_2 \cdots p}_{\al_1 \al_2 \cdots q}\\ - \delta_{q}^{\beta_1} Z^{p \beta_2 \cdots \beta_\kappa}_{\al_1 \al_2 \cdots \al_\nu}  - \delta_{q}^{\beta_2} Z^{\beta_1 p \cdots \beta_\kappa}_{\al_1 \al_2 \cdots \al_\nu} - \cdots - \delta_{q}^{\beta_\ka} Z^{\beta_1 \beta_2 \cdots p}_{\al_1 \al_2 \cdots \al_\nu}.
\end{multline}
\end{lem}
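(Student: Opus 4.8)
The plan is to prove \eqref{equation fundamental divergence identity} by a direct, if lengthy, computation: I would expand both terms on the left-hand side, substitute the Laplacian-flow evolution equations from Proposition \ref{proposition backward flow reparametrization}, and reorganize the result using the algebraic identity \eqref{equation antisymmetric symmetric identity} and the Rellich--Ne\v{c}as identity \eqref{equation rellich necas identity}. The argument parallels the scheme of \cite[Lemma 4.1]{kotschwarwang2015}; the only conceptual content is bookkeeping, so I will mainly indicate the places where the geometry of Laplacian flow enters differently. At the outset I note that $\cA + \cS = \partial_\tau + \Delta$, so that \eqref{equation antisymmetric symmetric identity} is just the algebraic statement obtained by pairing $\cA Z$ against $(\partial_\tau + \Delta)Z = \cA Z + \cS Z$; it will be invoked at the end to rewrite the top line of the right-hand side of \eqref{equation fundamental divergence identity}.

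First I would expand $\nabla_i\{\cdots\}$ term by term with the product rule. The piece $2\langle\partial_\tau Z,\nabla_i Z\rangle\sH$ contributes $2\langle\nabla_i\partial_\tau Z,\nabla_i Z\rangle\sH + 2\langle\partial_\tau Z,\Delta Z\rangle\sH + 2\langle\partial_\tau Z,\nabla_i Z\rangle\nabla_i\sH$; the combination $|\nabla Z|^2\nabla_i\sH - 2\langle\nabla_{\nabla\sH}Z,\nabla_i Z\rangle$ is precisely the Rellich--Ne\v{c}as vector field, so its divergence \eqref{equation rellich necas identity} produces the Hessian term $-2\langle\nabla_i Z,\nabla_j Z\rangle\nabla_i\nabla_j\sH$, the term $|\nabla Z|^2\Delta\sH$, the curvature commutator $2\langle[\nabla_i,\nabla_j]Z,\nabla_j Z\rangle\nabla_i\sH$, and $-2\langle\nabla_{\nabla\sH}Z,\Delta Z\rangle$; and the last two pieces of the bracket expand via $\Delta|Z|^2 = 2\langle\Delta Z,Z\rangle + 2|\nabla Z|^2$ together with $\nabla_i(\sF\nabla_i\sH - \sH\nabla_i\sF) = \sF\Delta\sH - \sH\Delta\sF$. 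Using $\nabla_{\nabla\phi} = \sH^{-1}\nabla_{\nabla\sH}$ and $\nabla_i\nabla_j\phi = \sH^{-1}\nabla_i\nabla_j\sH - \sH^{-2}\nabla_i\sH\nabla_j\sH$, the terms involving $\Delta Z$ assemble into $2\langle\cA Z,\Delta Z\rangle\sH$ and the Hessian terms into $-2\nabla_i\nabla_j\phi\langle\nabla_i Z,\nabla_j Z\rangle\sH$.

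Next I would expand $\partial_\tau\{(|\nabla Z|^2 + \tfrac{\sF}{2}|Z|^2)\sH\,d\mu\}$, keeping in mind that $g$, the connection $\nabla$, and $d\mu$ are all $\tau$-dependent. Writing $\eta := \partial_\tau g$, the backward modified Ricci flow \eqref{equation backwards modified Ricci flow} gives $\eta_{ij} = 2\Ric_{ij} + \tfrac{2}{3}|T|^2 g_{ij} + 4T_i^{\;l}T_{lj}$; using $R = -|T|^2$ from \eqref{equation R = -|T|^2} and the antisymmetry of $T$ one computes $\mathrm{tr}_g\eta = \tfrac{4}{3}R$, hence $\partial_\tau d\mu = \tfrac{1}{2}(\mathrm{tr}_g\eta)\,d\mu = \tfrac{2}{3}R\,d\mu$. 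This is the origin of the $\tfrac{2}{3}R\sH$ correction in \eqref{equation fundamental divergence identity}, and is the first point at which the computation departs from Ricci flow, where one instead has $\partial_\tau d\mu = R\,d\mu$. Differentiating $|\nabla Z|^2$ yields $2\langle\nabla\partial_\tau Z,\nabla Z\rangle$ — which cancels the term $2\langle\nabla_i\partial_\tau Z,\nabla_i Z\rangle\sH$ from the previous step — together with the metric-contraction term $-\eta(\nabla Z,\nabla Z)$ and a commutator term $2\langle[\partial_\tau,\nabla_i]Z,\nabla_i Z\rangle$; similarly $\partial_\tau(\tfrac{\sF}{2}|Z|^2)$ produces $\tfrac{1}{2}\partial_\tau\sF\,|Z|^2$, $\sF\langle\partial_\tau Z,Z\rangle$, and an $\eta$-contraction on $|Z|^2$. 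The two commutators are then handled in the usual abstract way: for any evolution $\partial_\tau g = \eta$ one has $\partial_\tau\Gamma^k_{ij} = \tfrac{1}{2}g^{kl}(\nabla_i\eta_{jl} + \nabla_j\eta_{il} - \nabla_l\eta_{ij})$, and combining $2\langle[\partial_\tau,\nabla_i]Z,\nabla_i Z\rangle$ with $2\langle[\nabla_i,\nabla_j]Z,\nabla_j Z\rangle\nabla_i\phi$ assembles, through the operator $\Theta_q^p$ and the curvature commutation formula for $[\nabla_i,\nabla_j]$, into exactly $2E(Z,\nabla Z)$ with the formula \eqref{equation EZ}. The only Laplacian-flow input here is the value of $\eta$.

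Finally, collecting all terms: the surviving $\langle\partial_\tau Z,\cdot\rangle$ and $\langle\Delta Z,\cdot\rangle$ pairings, after adding and subtracting $2|\cA Z|^2\sH$ and invoking \eqref{equation antisymmetric symmetric identity}, yield the top line $2\langle\cA Z,(\partial_\tau + \Delta)Z\rangle\sH - 2|\cA Z|^2\sH$; the $\sF\sH|\nabla Z|^2$ contribution from $\Delta|Z|^2$, the $|\nabla Z|^2\Delta\sH$ term, and the leftover time-derivatives of $(|\nabla Z|^2 + \tfrac{\sF}{2}|Z|^2)\sH$ combine into the factor $\bigl(\sF - \sH^{-1}(\partial_\tau\sH - \Delta\sH + \tfrac{2}{3}R\sH)\bigr)(|\nabla Z|^2 + \tfrac{\sF}{2}|Z|^2)\sH$; the $\partial_\tau\sF$ and $\Delta\sF$ terms combine into $-\tfrac{1}{2}(\partial_\tau\sF + \Delta\sF)|Z|^2\sH$; and the Hessian term, the metric-contraction terms $-\tfrac{\partial g}{\partial\tau}(\nabla Z,\nabla Z)\sH$ and $-\tfrac{\sF\sH}{2}\tfrac{\partial g}{\partial\tau}(Z,Z)\sH$, and $+2E(Z,\nabla Z)\sH$ appear as written. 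I expect the main obstacle to be simply the length and sign-sensitivity of this reorganization: there is no single hard step, but the two places where \cite[Lemma 4.1]{kotschwarwang2015} cannot be quoted verbatim — and where care is essential — are the trace computation $\partial_\tau d\mu = \tfrac{2}{3}R\,d\mu$ and the appearance of the full tensor $\eta = 2\Ric + \tfrac{2}{3}|T|^2 g + 4T_i^{\;l}T_{lj}$, rather than $2\Ric$, inside both the metric-contraction terms and the error term $E(Z,\nabla Z)$.
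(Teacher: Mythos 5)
Your proposal is correct and follows essentially the same route as the paper's proof: expand the left-hand side, invoke the identity \eqref{equation antisymmetric symmetric identity} and the Rellich--Ne\v{c}as identity \eqref{equation rellich necas identity}, and substitute the Laplacian-flow evolution equations for $g$, $d\mu$, and $\Gamma^k_{ij}$, with the only genuinely new inputs being $\partial_\tau d\mu = \tfrac{2}{3}R\,d\mu$ (via $\operatorname{tr}_g\eta = \tfrac{4}{3}R$, using $R=-|T|^2$ and the antisymmetry of $T$) and the full tensor $\eta$ entering the contraction terms and $E(Z,\nabla Z)$. Your identification of these two departures from \cite[Lemma 4.1]{kotschwarwang2015} is exactly where the paper's proof also localizes the difference.
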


\begin{proof}
Combine the identities \eqref{equation antisymmetric symmetric identity} and \eqref{equation rellich necas identity} with the evolution equations 
\[
    \frac{\dd g_{ij}}{\dd \tau} = 2 \Ric_{ij} + \frac{2}{3}|T|^2 g_{ij} + 4T_{i}^{\;k}T_{kj},\;\;\;\; \frac{\dd}{\dd \tau}\vol_{g(\tau)} = \frac{2}{3}R,\;\;\; \text{ and } 
\]
\[   \frac{\dd}{\dd_\tau} \Ga^k_{ij} = \frac{1}{2}g^{kl}(\na_i \eta_{jl} + \na_j\eta_{il} - \eta_l \eta_{ij}), 
\]
where 
\[\eta = 2\Ric + \frac{2}{3}|T|^2g + 2T*T. 
\]
This yields the identity \eqref{equation fundamental divergence identity} after expanding the left-hand side. The expression \eqref{equation EZ} follows by substituting the evolution equation for the Christoffel symbols in for the commutator $[\na_i, \dd_\tau]$.
\end{proof}

Next, we show that an analogue of the inequality~\cite[(4.6)]{kotschwarwang2015} from Remark 4.2 holds in our setting.

\begin{lem} If $\nabla \phi = \Upsilon \nabla f$ for some function $\Upsilon$, the following identity holds:
\begin{equation}
    |E(Z, \nabla Z)| \le C(|\nabla \Rm| + |\Rm||T|) (|\nabla Z|^2 + (1+ \Upsilon^2)|Z|^2)
\end{equation}
\end{lem}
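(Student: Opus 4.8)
The plan is to estimate $E(Z,\nabla Z)$ directly from the coordinate expression in~\eqref{equation EZ}. Starting from
\[
E(Z,\nabla Z) = \tfrac{1}{2}g^{qm}\big(\nabla_i\eta_{pm} + \nabla_p\eta_{im} - \nabla_m\eta_{ip} + R^j_{\;pmi}\nabla_j\phi\big)\langle\Theta_q^p Z,\nabla_i Z\rangle,
\]
the first step is to bound the purely algebraic factor $\langle\Theta_q^p Z,\nabla_i Z\rangle$. Since $\Theta_q^p$ is a fixed bilinear contraction scheme acting on the tensor slots of $Z$ (the precise combinatorics depend only on $\kappa,\nu$), one has $|\Theta_q^p Z|\le C(\kappa,\nu)|Z|$ pointwise, and hence $|\langle\Theta_q^p Z,\nabla_i Z\rangle|\le C|Z||\nabla Z|$ after summing over the finitely many index values. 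This reduces the problem to bounding the tensor $\nabla\eta$ and the curvature term.

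The second step is to control $|\nabla\eta|$. Recall $\eta = 2\operatorname{Ric} + \tfrac{2}{3}|T|^2 g + 2T*T$, so $\nabla\eta$ is a sum of terms of the form $\nabla\operatorname{Ric}$, $|T||\nabla T|$, and again $|T||\nabla T|$ (the metric is parallel). Now I invoke the standard Laplacian-flow fact, already used in the paper (e.g.\ around Claim~\ref{claim uniform curvature estimates} and in the torsion evolution), that $|\nabla T|\le C|\Rm|$ for a closed $G_2$-structure, together with $|\nabla\operatorname{Ric}|\le C|\nabla\Rm|$. Thus
\[
|\nabla\eta| \le C\big(|\nabla\Rm| + |T||\nabla T|\big) \le C\big(|\nabla\Rm| + |\Rm||T|\big).
\]
The third step handles the curvature term $R^j_{\;pmi}\nabla_j\phi$: under the hypothesis $\nabla\phi = \Upsilon\nabla f$ we have $|\nabla_j\phi| = |\Upsilon||\nabla f|$, and from the soliton identities (see~\eqref{equation C0 estimates on f}, $\tau\nabla f = \tfrac{r}{2}\partial_r$, so $|\nabla f|$ is comparable to a bounded multiple of $r/\tau$, but more simply $|\nabla f|\le C h/\tau \le C r$ up to the time factors)—actually the clean way is to note $|\nabla_j\phi|\le C(1+\Upsilon^2)^{1/2}\cdot(\text{bounded geometric factor})$ is \emph{not} what we want; instead we keep the $\Upsilon$ explicit and absorb $|\nabla f|$ into the curvature bound, writing $|R^j_{\;pmi}\nabla_j\phi|\le C|\Rm||\Upsilon||\nabla f|$. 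Here I would use that on the AC shrinker $|\Rm||\nabla f|\le C|\Rm|\cdot Cr \le C r^{-1}$ is bounded, or more robustly, pair the factor $|\Upsilon||\nabla f|$ against one copy of $|Z|$ and the other against $|Z|$, producing the $\Upsilon^2|Z|^2$ term; the coefficient $|\Rm|$ then combines with $|\nabla\Rm| + |\Rm||T|$ after noting $|\Rm|\le C(|\nabla\Rm|+|\Rm||T|)$ is false in general, so instead I simply write $|\Rm||\nabla f|^2\le C$ so that $|\Rm||\nabla_j\phi|\langle\Theta Z,\nabla Z\rangle$ contributes at most $C(|\nabla\Rm|+|\Rm||T|)(1+\Upsilon^2)|Z||\nabla Z|$ once I factor $|\nabla f|^2$ through—this is the delicate bookkeeping point.

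Combining the three steps, every term in $E(Z,\nabla Z)$ is bounded by $C(|\nabla\Rm| + |\Rm||T|)|Z||\nabla Z|$ or by $C(|\nabla\Rm|+|\Rm||T|)(1+\Upsilon^2)|Z||\nabla Z|$, and a final application of Young's inequality $|Z||\nabla Z|\le \tfrac{1}{2}(|\nabla Z|^2 + |Z|^2)\le \tfrac12(|\nabla Z|^2 + (1+\Upsilon^2)|Z|^2)$ yields the claimed estimate
\[
|E(Z,\nabla Z)| \le C(|\nabla\Rm| + |\Rm||T|)\big(|\nabla Z|^2 + (1+\Upsilon^2)|Z|^2\big).
\]
I expect the main obstacle to be the careful accounting of the curvature-times-$\nabla\phi$ term: one must make sure the factor of $|\nabla f|$ (which grows like $r$) is correctly absorbed, either into the bounded quantity $|\Rm||\nabla f|^2$ using the quadratic curvature decay from Proposition~\ref{proposition backward flow reparametrization}, or split symmetrically so that exactly the combination $(1+\Upsilon^2)|Z|^2$ appears and no uncontrolled power of $r$ is left over. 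Everything else is the routine pointwise estimation of a fixed algebraic contraction plus the known Shi-type/torsion bounds $|\nabla T|\lesssim|\Rm|$.
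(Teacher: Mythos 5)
Your Steps 1 and 2 (the pointwise bound $|\langle\Theta_q^p Z,\nabla_i Z\rangle|\le C|Z||\nabla Z|$ and the estimate $|\nabla \eta|\le C(|\nabla \Rm|+|\Rm||T|)$ via $|\nabla T|\le C|\Rm|$) are correct and match the paper. The gap is in Step 3, and it is genuine: you try to estimate the term $R^j_{\;pmi}\nabla_j\phi=\Upsilon\,R^j_{\;pmi}\nabla_j f$ directly, which produces a coefficient $|\Rm||\nabla f|$, and there is no way to dominate this by $C(|\nabla\Rm|+|\Rm||T|)$. On the AC shrinker $|\nabla f|$ grows linearly in $r$ (indeed $\tau\nabla f=\tfrac r2\partial_r$, so it even blows up as $\tau\to 0$ at fixed $r$), while $|\nabla\Rm|$ and $|\Rm||T|$ decay like $r^{-3}$; so $|\Rm||\nabla f|\sim r^{-1}$ is strictly larger than the target coefficient, and your fallback manoeuvres ($|\Rm||\nabla f|^2\le C$, or splitting $|\nabla f|$ between the two copies of $Z$) at best yield a bound with an absolute-constant coefficient rather than the stated one. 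That weaker bound is not the lemma, and it would also break the later applications, which absorb $|E|$ into terms of size $N/r^2$ precisely because the coefficient $|\nabla\Rm|+|\Rm||T|$ has cubic decay while $\Upsilon$ is kept separate.

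The missing idea — and the entire reason the hypothesis $\nabla\phi=\Upsilon\nabla f$ is imposed — is to eliminate $\nabla f$ from the curvature term using the soliton structure rather than estimating it. Differentiate the time-dependent shrinker identity $\tau\nabla_i\nabla_j f=\tfrac12 g_{ij}-\tau\Ric_{ij}-\tfrac\tau3|T|^2g_{ij}-2\tau T_i^{\;k}T_{kj}$, antisymmetrize in the first two indices, and commute derivatives to obtain
\begin{equation*}
R_{ijk}^{\;\;\;p}\nabla_p f=\nabla_i\Ric_{jk}-\nabla_j\Ric_{ik}-\nabla_i\big(\tfrac13|T|^2g_{jk}+2T_j^{\;l}T_{lk}\big)+\nabla_j\big(\tfrac13|T|^2g_{ik}+2T_i^{\;l}T_{lk}\big).
\end{equation*}
Substituting this into the expression for $E(Z,\nabla Z)$ replaces $R^j_{\;pmi}\nabla_j\phi$ by $\Upsilon$ times derivatives of Ricci and of the quadratic torsion terms, all of which are bounded by $C(|\nabla\Rm|+|\Rm||T|)$ (using $\nabla_i(|T|^2g_{jk})=-g_{jk}\nabla_i R$ and $|\nabla(T\ast T)|\le C|\nabla T||T|\le C|\Rm||T|$); no factor of $|\nabla f|$ survives, $\Upsilon$ appears only linearly, and Cauchy--Schwarz together with Young's inequality on $|Z||\nabla Z|$ then yields exactly $C(|\nabla\Rm|+|\Rm||T|)(|\nabla Z|^2+(1+\Upsilon^2)|Z|^2)$.
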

\begin{proof}
Differentiating \eqref{equation derivative identities on f 2} yields
\[ \nabla_i \Ric_{jk} + \nabla_i \nabla_j \nabla_k f = - \nabla_i \big(\tfrac{1}{3}|T|^2 g_{jk} + 2T_j^{\; l} T_{lk}\big).
\]
Exchanging the roles of $i$ and $j$ and subtracting the two equations gives
\[R_{ijk}^p \nabla_p f = \nabla_i \Ric_{jk} - \nabla_j \Ric_{ik} - \nabla_i \big(\tfrac{1}{3}|T|^2 g_{jk} + 2T_j^{\; l} T_{lk} \big) + \nabla_j \big(\tfrac{1}{3}|T|^2 g_{ik} + 2T_{i}^{\; l} T_{lk} \big).
\]
Substituting this into~\eqref{equation EZ}, we obtain
\begin{multline*}
\Bigg(\nabla_i \eta_{pq} + (1 + \Upsilon)(\nabla_p \eta_{iq} - \nabla_i \eta_{pq}) - \Upsilon(\nabla_p \big(\tfrac{1}{3}|T|^2 g_{iq} + 2T_{i}^{\; l} T_{lq}\big) - \nabla_i \big(\tfrac{1}{3}|T|^2 g_{pq} + 2T_p^l T_{lq}\big)) \\ - \Upsilon \left( \nabla_i \left(\tfrac{1}{3}|T|^2 g_{jk} + 2T_j^{\; l} T_{lk} \right) - \nabla_j \left(\tfrac{1}{3}|T|^2 g_{ik} + 2T_{i}^{\; l} T_{lk} \right) \right)\Bigg)\langle \Theta_q^p Z, \nabla_i Z \rangle.
\end{multline*}
Note that 
\[\nabla_i (|T|^2g_{jk}) = g_{jk}\nabla_i |T|^2 = - g_{jk}\nabla_i R
\]
and 
\[ \nabla_i (T_j^{\; l} T_{lk}) \le C|\nabla T||T| \le C|\Rm||T|.
\]
Thus, the additional corrections terms compared to the Ricci shrinker case can be estimated by
\[\bigg| \nabla_i \bigg(\frac{1}{3}|T|^2 g_{jk} + 2T_j^{\; l} T_{lk} \bigg) - \nabla_j \bigg(\frac{1}{3}|T|^2 g_{ik} + 2T_{i}^{\; l} T_{lk} \bigg) \bigg| \le C (|\nabla \Rm| + |\Rm||T|). 
\]
After estimating the torsion terms with this bound, obtain
\[|E(Z, \nabla Z)| \le C (|\nabla \Rm| + |\Rm||T|)(|\nabla Z|^2 + (1 + \Upsilon^2)|Z|^2)
\]
using Cauchy-Schwarz and Young's inequality.
\end{proof}

\begin{rmk}
When this lemma is applied, the salient bit is that the decay $h^2(|\Rm| + |\nabla \Rm|) \le CK_0$. So it is not so important that the bound is specifically $|\nabla \Ric|$ as in \cite[Remark 4.2]{kotschwarwang2015}. 
\end{rmk}

\subsubsection{Weighted $L^2$-inequality for the operator $\dd_\tau + \De$}

When we choose 
\begin{equation}
\label{equation FH}
    \sF := \sH^{-1} \left(\dd_\tau \sH - \De \sH + \frac{2}{3}R \sH\right),
\end{equation}
we obtain a cancellation of terms on the right-hand side of~\eqref{equation fundamental divergence identity}. 
As in \cite{kotschwarwang2015}, we integrate \eqref{equation fundamental divergence identity} over $\cC^\Sg_{r_0} \times [0,\tau_0]$ to obtain the following inequality, analogous to \cite[Lemma 4.3]{kotschwarwang2015}.

\begin{lem}
There exists a constant $N = N(\ka, \nu, K_0)$ such that if $Z \in C^{\infty}(\cZ \times [0, \tau_0])$ is compactly supported in $\cC^\Sg_{r_0}$ for each $\tau$ and satisfies $Z(\cdot, 0) = 0$, then, for any smooth $\sH >0$, we have
\begin{multline}\label{equation first L2 quadratic lower bound}
    \frac{1}{2}\int_0^{\tau_0} \int_{\cC^\Sg_{r_0} \times \{\tau\}} \bigg| \frac{\dd Z}{\dd \tau} + \De Z\bigg|^2 \sH d\mu d\tau + \int_{\cC^\Sg_{r_0}\times \{\tau_0\}} \bigg(|\nabla Z|^2 + \frac{\sF}{2}|Z|^2\bigg) \sH d\mu \\
    \ge\int_0^{\tau_0} \int_{\cC^\Sg_{r_0} \times \{\tau\}} (Q_1(\nabla Z, \nabla Z) + Q_2(Z,Z) -2E(Z, \nabla Z)) \sH d\mu d\tau,
\end{multline}
where $\sF$ is defined by~\eqref{equation FH} and 
\begin{align*}
    Q_1(\nabla Z, \nabla Z) &= 2(\nabla_i \nabla_j \phi) \langle \nabla_i Z, \nabla_j Z \rangle - \frac{N}{r^2}|\nabla Z|^2 \\
    Q_2(Z, Z) &= \frac{1}{2}\bigg(\frac{\dd \sF}{\dd \tau} + \De \sF \bigg) |Z|^2 - \frac{N|\sF|}{r^2}|Z|^2
\end{align*}
\end{lem}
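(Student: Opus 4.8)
The plan is to deduce \eqref{equation first L2 quadratic lower bound} by integrating the fundamental divergence identity \eqref{equation fundamental divergence identity} of Lemma~\ref{lemma fundamental divergence identity} over the space-time $\cC^\Sg_{r_0}\times[0,\tau_0]$ with the distinguished weight $\sF$, exactly as in the proof of \cite[Lemma~4.3]{kotschwarwang2015}; the only real difference is the bookkeeping of the torsion terms that appear in the evolution of $g$.

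First I would specialize \eqref{equation fundamental divergence identity} to the choice $\sF = \sH^{-1}\!\left(\partial_\tau\sH - \Delta\sH + \tfrac23 R\sH\right)$ from \eqref{equation FH}, which makes the factor $\sF - \sH^{-1}(\partial_\tau\sH - \Delta\sH + \tfrac23 R\sH)$ vanish, so the corresponding term on the right of \eqref{equation fundamental divergence identity} drops out. Integrating the resulting identity over $\cC^\Sg_{r_0}\times[0,\tau_0]$, the spatial divergence term integrates to zero since $Z(\cdot,\tau)$ is compactly supported in $\cC^\Sg_{r_0}$ for each $\tau$, while the total $\tau$-derivative integrates to $\int_{\cC^\Sg_{r_0}\times\{\tau_0\}}(|\nabla Z|^2 + \tfrac{\sF}{2}|Z|^2)\sH\,d\mu$ minus the same integral at $\tau=0$, the latter being zero because $Z(\cdot,0)\equiv 0$ forces both $Z$ and $\nabla Z$ to vanish there. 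Thus the $\tau_0$-boundary term equals (up to sign) the space-time integral of what remains of the right-hand side of \eqref{equation fundamental divergence identity}. I would then bound the leading pair of terms from below by the elementary identity $2|\cA Z|^2 - 2\langle\cA Z,(\partial_\tau+\Delta)Z\rangle = 2\,\big|\cA Z - \tfrac12(\partial_\tau+\Delta)Z\big|^2 - \tfrac12|(\partial_\tau+\Delta)Z|^2 \ge -\tfrac12|(\partial_\tau+\Delta)Z|^2$, which produces the term $\tfrac12\int_0^{\tau_0}\!\int|\partial_\tau Z + \Delta Z|^2\sH$ on the left of \eqref{equation first L2 quadratic lower bound}. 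The terms $-2(\nabla_i\nabla_j\phi)\langle\nabla_i Z,\nabla_j Z\rangle\sH$ and $-\tfrac12(\partial_\tau\sF + \Delta\sF)|Z|^2\sH$ reproduce the first summands of $Q_1$ and $Q_2$, and $2E(Z,\nabla Z)\sH$ is carried over unchanged, yielding the $-2E$ on the right. Finally, to absorb the two terms involving $\partial_\tau g$ I would use the modified backward Ricci evolution \eqref{equation backwards modified Ricci flow}, $\partial_\tau g_{ij} = 2\Ric_{ij} + \tfrac23|T|^2 g_{ij} + 4T_i^{\;l}T_{lj}$, together with the $m=0$ curvature decay of \eqref{equation time dependent curvature decay} and the identity $R = -|T|^2$ from \eqref{equation R = -|T|^2}: these make $|\Ric|$, $|T|^2$ and $|T\ast T|$ all $O(K_0/r^2)$, hence $|\partial_\tau g|_{g(\tau)} + |\partial_\tau g^{-1}|_{g(\tau)} \le N/r^2$ on $\cC^\Sg_{r_0}$ (using $r\ge r_0\ge 1$), with $N = N(\ka,\nu,K_0)$, the dependence on $\ka,\nu$ arising from the number of index contractions in $\tfrac{\partial g}{\partial\tau}(\nabla Z,\nabla Z)$ and $\tfrac{\partial g}{\partial\tau}(Z,Z)$. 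Then $\tfrac{\partial g}{\partial\tau}(\nabla Z,\nabla Z) \ge -\tfrac{N}{r^2}|\nabla Z|^2$ and $\tfrac{\sF}{2}\tfrac{\partial g}{\partial\tau}(Z,Z) \ge -\tfrac{N|\sF|}{r^2}|Z|^2$, and folding these into the two pointwise terms above converts them into $Q_1(\nabla Z,\nabla Z)$ and $Q_2(Z,Z)$; rearranging and using $\sH>0$ gives \eqref{equation first L2 quadratic lower bound}.

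The argument is essentially routine once the identity \eqref{equation fundamental divergence identity} is in hand; the step I expect to require the most care is checking that \emph{every} torsion-induced correction — in $\partial_\tau g$ and, through $E(Z,\nabla Z)$, in the commutator $[\nabla_i,\partial_\tau]$ and in $\partial_\tau\Gamma^k_{ij}$ — exhibits the same quadratic spatial decay $O(r^{-2})$ as its Ricci-flow counterpart, so that it is absorbable with a constant depending only on $\ka,\nu,K_0$. This is precisely where \eqref{equation time dependent curvature decay}, \eqref{equation R = -|T|^2}, and the normalization $r\ge r_0\ge 1$ are used, and it is the only place where the derivation departs from \cite[Lemma~4.3]{kotschwarwang2015}.
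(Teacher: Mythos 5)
Your proposal is correct and follows essentially the same route as the paper: the paper's proof is exactly "integration by parts and Young's inequality" applied to the divergence identity \eqref{equation fundamental divergence identity} with the choice \eqref{equation FH}, with the $r^{-2}$ terms in $Q_1$ and $Q_2$ arising from the quadratic decay of the right-hand side of \eqref{equation backwards modified Ricci flow}. Your completion-of-the-square step, the handling of the boundary terms, and the absorption of the torsion-induced corrections via \eqref{equation time dependent curvature decay} and $R=-|T|^2$ are precisely the details the paper leaves implicit.
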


\begin{proof}
Integration by parts and Young's inequality yield the result. Note that the decay terms of order $r^{-2}$ in $Q_1$ and $Q_2$ come from the fact that the right-hand side of the evolution equation 
\[ \frac{\dd g_{ij}}{\dd \tau} = 2 \Ric_{ij} + \frac{2}{3}|T|^2 g_{ij} + 4T_{i}^{\;k}T_{kj}
\]
decays quadratically.
\end{proof}

\subsubsection{A weighted $L^2$-inequality for the ODE component}

The $L^2$-inequality for the ODE component is the same as \cite[Lemma 4.4]{kotschwarwang2015}, with the same proof. We restate it here:

\begin{lem}\cite[Lemma 4.4]{kotschwarwang2015}
There exists a constant $N = N(\ka, \nu, K_0)$ such that if $Z \in C^\infty(\cZ \times [0,\tau_0])$ is compactly supported in $\cC^\Sg_{r_0}$ for each $\tau$ and satisfies $Z(\cdot, 0) =0$, then for all smooth $\sH >0$,
\begin{equation}
    -\int_0^{\tau_0} \int_{\cC^\Sg_{r_0} \times \{\tau\}} \bigg( N + \frac{\dd \phi}{\dd \tau} \bigg) |Z|^2 \sH d\mu d\tau \le \int_0^{\tau_0} \int_{\cC^\Sg_{r_0} \times \{\tau\}} \bigg| \frac{\dd Z}{\dd \tau}\bigg|^2 \sH d\mu d\tau.
\end{equation}
\end{lem}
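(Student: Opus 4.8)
The statement to prove is the weighted $L^2$-inequality for the ODE component, \cite[Lemma 4.4]{kotschwarwang2015}, which bounds $-\int_0^{\tau_0}\int (N+\partial_\tau\phi)|Z|^2\sH\,d\mu\,d\tau$ by $\int_0^{\tau_0}\int|\partial_\tau Z|^2\sH\,d\mu\,d\tau$ for compactly supported $Z$ vanishing at $\tau=0$.

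The plan is to integrate by parts in the time variable, exploiting the vanishing of $Z$ at $\tau=0$ and the compact spatial support. First I would observe that $\partial_\tau(|Z|^2\sH\,d\mu)=2\langle\partial_\tau Z,Z\rangle\sH\,d\mu+|Z|^2\partial_\tau(\sH\,d\mu)$, and that $\partial_\tau(\sH\,d\mu)=(\partial_\tau\phi+\tfrac23 R)\sH\,d\mu$ since $\phi=\log\sH$ and the volume form evolves by $\partial_\tau\,d\mu=\tfrac23 R\,d\mu$ under the backward Laplacian flow (this is exactly the evolution used in Lemma \ref{lemma fundamental divergence identity}). Integrating this identity over $\cC^\Sg_{r_0}\times[0,\tau_0]$, the left endpoint contributes nothing because $Z(\cdot,0)=0$, so
\[
\int_{\cC^\Sg_{r_0}\times\{\tau_0\}}|Z|^2\sH\,d\mu=\int_0^{\tau_0}\int_{\cC^\Sg_{r_0}\times\{\tau\}}\Big(2\langle\partial_\tau Z,Z\rangle+(\partial_\tau\phi+\tfrac23 R)|Z|^2\Big)\sH\,d\mu\,d\tau.
\]
Since the left side is nonnegative, this rearranges to give a lower bound on $-\int\int(\partial_\tau\phi+\tfrac23 R)|Z|^2\sH$ in terms of $\int\int 2\langle\partial_\tau Z,Z\rangle\sH$.

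Next I would apply Young's inequality to the cross term: $2\langle\partial_\tau Z,Z\rangle\le|\partial_\tau Z|^2+|Z|^2$, so $-\int\int(\partial_\tau\phi+\tfrac23 R)|Z|^2\sH\le\int\int|\partial_\tau Z|^2\sH+\int\int|Z|^2\sH$. Absorbing the $\tfrac23 R$ term: by the curvature decay \eqref{equation time dependent curvature decay} (and $R=-|T|^2$) we have $|R|\le CK_0/r^2\le CK_0$ on $\cC^\Sg_{r_0}$ with $r_0\ge1$, so $|\tfrac23 R|$ is bounded by a constant depending only on $K_0$. Collecting the $|Z|^2\sH$ terms into a single constant $N=N(\ka,\nu,K_0)$ (the dependence on $\ka,\nu$ entering only through the definition of the tensor norm $|\cdot|$, which plays no essential role here) yields
\[
-\int_0^{\tau_0}\int_{\cC^\Sg_{r_0}\times\{\tau\}}\Big(N+\partial_\tau\phi\Big)|Z|^2\sH\,d\mu\,d\tau\le\int_0^{\tau_0}\int_{\cC^\Sg_{r_0}\times\{\tau\}}|\partial_\tau Z|^2\sH\,d\mu\,d\tau,
\]
which is the claim.

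The argument is entirely routine; there is no serious obstacle, since it is a pure integration-by-parts-in-time estimate that does not interact with the spatial geometry except through the harmless bound on $R$. The only point requiring a modicum of care is confirming that the volume-form evolution picks up the $\tfrac23 R$ factor rather than the $-R$ (trace of Ricci) factor one would get in Ricci flow — but this is precisely the input recorded in the proof of Lemma \ref{lemma fundamental divergence identity}, and it gets swallowed into $N$ regardless. Hence the proof is a verbatim transcription of \cite[Lemma 4.4]{kotschwarwang2015} with the Ricci-flow volume evolution replaced by the Laplacian-flow one.
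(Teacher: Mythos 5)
Your overall strategy is the right one, and it is essentially the argument the paper has in mind: the paper simply cites \cite[Lemma 4.4]{kotschwarwang2015} ``with the same proof,'' and that proof is precisely the time-integration-by-parts you carry out, i.e.\ integrate $\frac{\dd}{\dd\tau}\int |Z|^2\sH\,d\mu$ from $0$ to $\tau_0$, use $Z(\cdot,0)=0$ and the nonnegativity of the $\tau_0$-slice term, apply Young's inequality to $2\langle \partial_\tau Z, Z\rangle$, and absorb the bounded geometric terms into $N$.

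There is, however, one small but genuine slip in the computation: you write $\partial_\tau\big(|Z|^2\sH\,d\mu\big)=2\langle\partial_\tau Z,Z\rangle\sH\,d\mu+|Z|^2\,\partial_\tau(\sH\,d\mu)$, but since the norm $|\cdot|=|\cdot|_{g_\tau}$ and the contractions defining it are taken with respect to the evolving metric, $\partial_\tau|Z|^2$ is \emph{not} equal to $2\langle\partial_\tau Z,Z\rangle$; it also contains terms of the schematic form $\partial_\tau g \ast Z\ast Z$ coming from differentiating the $\ka+\nu$ metric factors in $|Z|^2$. Under the backward flow $\partial_\tau g_{ij}=2\Ric_{ij}+\tfrac23|T|^2g_{ij}+4T_i^{\;k}T_{kj}$, these extra terms are bounded by $C(\ka,\nu)\,|\partial_\tau g|\,|Z|^2\le C(\ka,\nu)K_0\,|Z|^2$ on $\cC^\Sg_{r_0}$ (using the quadratic decay \eqref{equation time dependent curvature decay} together with $R=-|T|^2$ and $r_0\ge 1$), so they are harmless and get swallowed into $N$ exactly like the $\tfrac23 R$ volume-form term you do track. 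But note that this is precisely where the dependence of $N$ on $\ka$ and $\nu$ enters, so your remark that the tensor type ``plays no essential role'' is slightly misleading: without the metric-evolution term your $N$ would have no reason to depend on $\ka,\nu$ at all. With that term restored, the rest of your argument (Young's inequality and rearrangement) goes through verbatim and yields the stated inequality.
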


\subsubsection{First Carleman estimate for the PDE system}

For this estimate, we use the same weight function $\sH_{1; \alpha, \tau_0}$ as the one defined in \cite[(4.19)]{kotschwarwang2015}. Fix $\de \in (0,1)$ and for any $\al >0$ define the function
\begin{equation}
    \sH_1 := \sH_{1; \alpha, \tau_0} = \exp\big[\al(\tau_0-\tau)h^{2-\de}(x,\tau) + h^2(x,\tau)\big].
\end{equation}
Let $\phi_1 := \phi_{1; \al, \tau_0} := \log \sH_1$. Define also
\begin{equation}
    \sF_1 := \sF_{1; \alpha, \tau_0} := \frac{\dd \phi_1}{\dd \tau} - \De \phi_1 - | \nabla \phi_1|^2 + \frac{2}{3}R
\end{equation}

We estimate the spatial and temporal derivatives of $\phi_1$ and obtain a pointwise estimate on $\sF_1$. This result is analogous to \cite[Lemma 4.7]{kotschwarwang2015}.

\begin{lem}\label{lemma estimates on phi 1}
For any $\al$ and any $\tau_0$, $\de \in (0,1)$, the function $\phi_1 = \phi_{1;\al, \tau_0}$ satisfies
\begin{align}
\frac{\dd \phi_1}{\dd \tau} &= 4\tau (R-\sE) - \al h^{2-\de}\bigg( 1 - \frac{2(2-\de)\tau(\tau_0 - \tau)(R - \sE)}{h^2} \bigg), \\
\nabla \phi_1 &= \big(\al(2-\de)(\tau_0 - \tau) h^{1-\de} + 2h \big) \nabla h\\
\nabla_i \na_j \phi_1 &= \frac{\al(2-\de)(\tau_0 - \tau)}{h^\de} (g_{ij} - 2\tau(\Ric_{ij} + \tfrac{1}{3}|T|^2 g_{ij} + 2 T_i^{\;k}T_{kj}) - \de \nabla_i h \nabla_j h) \\ 
& \qquad + 2(g_{ij} - 2\tau(\Ric_{ij} + \tfrac{1}{3}|T|^2 g_{ij} + 2 T_i^{\;k}T_{kj}))
\end{align}
These estimates imply that there exists a constant $r_1 \ge r_0$ depending only on $\de$ and $K_0$ such that on $\cC^\Sg_{r_1} \times [0,\tau_0]$,
\begin{align*}
    \nabla^2 \phi_1 &\ge g, \;\;\;\text{ and } \\
    0 \ge \sF_1 &\ge - N\big(1 + h^2 + \al h^{2-\de} + \al^2 (\tau_0 - \tau)^2 h^{2-2\de} \big)
\end{align*}
for all $\al \ge 1$.
\end{lem}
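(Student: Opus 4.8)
The plan is to establish the three displayed formulas for $\partial_\tau \phi_1$, $\nabla \phi_1$, and $\nabla_i\nabla_j\phi_1$ by direct differentiation of $\phi_1 = \alpha(\tau_0-\tau)h^{2-\delta} + h^2$, and then to read off the two lower bounds from these formulas using the estimates already available on $h$. First I would compute $\nabla \phi_1$: by the chain rule $\nabla\phi_1 = (\alpha(2-\delta)(\tau_0-\tau)h^{1-\delta} + 2h)\nabla h$, which is the stated formula. For the Hessian, I differentiate once more, using the product rule and the formula for $h\nabla_i\nabla_j h$ from~\eqref{equation derivative identities on h 1}, namely $h\nabla_i\nabla_j h = g_{ij} - \nabla_i h\nabla_j h + 2\tau(-\Ric_{ij} - \tfrac13|T|^2 g_{ij} - 2T_i^{\;k}T_{kj})$. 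The two terms $\nabla(\alpha(2-\delta)(\tau_0-\tau)h^{1-\delta})\otimes\nabla h$ and $\alpha(2-\delta)(\tau_0-\tau)h^{1-\delta}\nabla^2 h$ combine (the first contributing $\alpha(2-\delta)(\tau_0-\tau)(1-\delta)h^{-\delta}\nabla_i h\nabla_j h$, the second contributing $\alpha(2-\delta)(\tau_0-\tau)h^{-\delta}(g_{ij}-\nabla_i h\nabla_j h + 2\tau(\cdots))$) to give the $h^{-\delta}$ term $\tfrac{\alpha(2-\delta)(\tau_0-\tau)}{h^\delta}(g_{ij} - 2\tau(\Ric_{ij}+\cdots) - \delta\nabla_i h\nabla_j h)$, and similarly the $2h\nabla h$ piece differentiates to $2(g_{ij} - 2\tau(\Ric_{ij}+\cdots))$; this matches the claimed $\nabla_i\nabla_j\phi_1$. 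For $\partial_\tau\phi_1$ I apply $\partial_\tau$ to each factor, using $\partial_\tau h = \tfrac{2\tau}{h}(R-\sE)$ from~\eqref{equation partial tau h}: the $h^2$ term gives $2h\partial_\tau h = 4\tau(R-\sE)$, and the $\alpha(\tau_0-\tau)h^{2-\delta}$ term gives $-\alpha h^{2-\delta} + \alpha(\tau_0-\tau)(2-\delta)h^{1-\delta}\partial_\tau h = -\alpha h^{2-\delta}(1 - \tfrac{2(2-\delta)\tau(\tau_0-\tau)(R-\sE)}{h^2})$, as stated.

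Next I would derive the bound $\nabla^2\phi_1 \ge g$ on $\cC^\Sg_{r_1}\times[0,\tau_0]$. From the Hessian formula, $\nabla^2\phi_1 \ge 2g - 4\tau|\Ric + \tfrac13|T|^2 g + 2T*T| g - 2\delta\nabla h\otimes\nabla h + (\text{nonnegative }h^{-\delta}\text{ term, up to error})$. The key inputs are: the curvature decay~\eqref{equation time dependent curvature decay} and $|T|^2 = -R = O(r^{-2})$ (from Proposition~\ref{proposition backward flow reparametrization}(3) together with $R=-|T|^2$), which make $4\tau|\Ric+\cdots|$ as small as we like for large $r$; the bound $|\nabla h|^2 = 1 + O(r^{-2}\log r)$ from~\eqref{equation derivative identities on h 2} and the remark on $\sE$, so $\nabla h\otimes\nabla h$ is a bounded rank-one term; and the fact that the $h^{-\delta}$ coefficient $\alpha(2-\delta)(\tau_0-\tau)h^{-\delta}$ is nonnegative, with its matrix factor $g - 2\tau(\Ric+\cdots) - \delta\nabla h\otimes\nabla h$ being $\ge (1-\delta - o(1))g$ hence nonnegative for $r$ large. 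Choosing $r_1$ large enough (depending only on $\delta$ and $K_0$) so that $4\tau|\Ric+\cdots| \le \tfrac12$ and the various $o(1)$ errors from torsion and from $|\nabla h|^2 - 1$ are controlled, we get $\nabla^2\phi_1 \ge (2 - \tfrac12 - \text{small})g \ge g$; note the $-2\delta\nabla h\otimes\nabla h$ term costs at most $2\delta(1+o(1))g < 4\delta g \le \tfrac12 g$ only if $\delta$ is small, so more carefully one keeps the positive $h^{-\delta}$ contribution to partly cancel it, or simply notes $2g - 2\delta\nabla h\otimes\nabla h \ge 2(1-\delta)g - o(1) \ge g$ since $\delta<1/2$ would suffice — in general one absorbs $\nabla h\otimes\nabla h \le |\nabla h|^2 g \le (1+o(1))g$ into the spare $g$, which works because $2 - 2\delta(1+o(1)) \ge 1$ requires $\delta \le 1/2$, and indeed the paper fixes $\delta\in(0,1)$ but one may shrink $\delta$; I will handle this by the cleaner route of retaining the nonnegative $h^{-\delta}$ term.

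Then I would bound $\sF_1 = \partial_\tau\phi_1 - \Delta\phi_1 - |\nabla\phi_1|^2 + \tfrac23 R$. Expanding: $\Delta\phi_1 = \operatorname{tr}_g\nabla^2\phi_1 = \alpha(2-\delta)(\tau_0-\tau)h^{-\delta}(7 - 2\tau(R + \tfrac73|T|^2) - \delta|\nabla h|^2) + 2(7 - 2\tau(R+\tfrac73|T|^2))$ using $\operatorname{tr}_g(\Ric+\tfrac13|T|^2 g + 2T*T) = R + \tfrac73|T|^2$ and the antisymmetry of $T$; and $|\nabla\phi_1|^2 = (\alpha(2-\delta)(\tau_0-\tau)h^{1-\delta}+2h)^2|\nabla h|^2$. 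The dominant negative term is $-|\nabla\phi_1|^2$, whose leading pieces are $-4h^2|\nabla h|^2 \sim -4h^2$, $-4\alpha(2-\delta)(\tau_0-\tau)h^{2-\delta}|\nabla h|^2 \sim -4\alpha h^{2-\delta}$, and $-\alpha^2(2-\delta)^2(\tau_0-\tau)^2 h^{2-2\delta}|\nabla h|^2$; together with the $O(h^2)$ from $\partial_\tau\phi_1$ (since $R-\sE = O(\log r) = o(h^2)$ so $4\tau(R-\sE) = O(\log r)$, negligible) and the $O(1) + O(\alpha(\tau_0-\tau)h^{-\delta})$ from $\Delta\phi_1$, one gets $\sF_1 \ge -N(1 + h^2 + \alpha h^{2-\delta} + \alpha^2(\tau_0-\tau)^2 h^{2-2\delta})$ for a constant $N = N(\delta, K_0)$ and all $\alpha\ge 1$, after using $h\le 2r$, $|\nabla h| \le 1 + o(1)$, $\tau\le\tau_0\le 1$, and $|R|, |T|^2, |\sE/\log r| \le C K_0$. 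The upper bound $\sF_1 \le 0$ follows because $-|\nabla\phi_1|^2 + \tfrac23 R \le 0$ while $\partial_\tau\phi_1 - \Delta\phi_1 \le 0$: indeed $\partial_\tau\phi_1 \le 0$ for $r$ large (the $-\alpha h^{2-\delta}$ term dominates $4\tau(R-\sE) = O(\log r)$) and $\Delta\phi_1 \ge 0$ for $r$ large (since $\nabla^2\phi_1 \ge g > 0$); one arranges $r_1$ large enough that both hold. The main obstacle is purely bookkeeping: keeping track of the many $O(r^{-2})$ and $O(r^{-2}\log r)$ error terms coming from torsion and curvature and from $|\nabla h|^2 - 1 = \tfrac{4\tau^2}{h^2}(\sE - R)$, and verifying that after choosing $r_1$ (depending only on $\delta$ and $K_0$) all of these are subordinate to the main terms in both the Hessian positivity and the $\sF_1$ lower bound — there is no conceptual difficulty once the three exact identities are in hand.
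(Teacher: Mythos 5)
Your proposal is correct and follows essentially the same route as the paper: the three identities by direct differentiation via Lemma \ref{lemma estimates on time dep distance function}, Hessian positivity from the quadratic curvature/torsion decay together with $|\nabla h|^2 = 1 + \tfrac{4\tau^2}{h^2}(\sE - R)$ (the paper splits into directions parallel and orthogonal to $\nabla h$, while your cruder bound $\nabla h\otimes\nabla h \le |\nabla h|^2 g$ works equally well for every $\de \in (0,1)$, so your worry about needing $\de < 1/2$ is a red herring — the $-\de\,\nabla_i h\nabla_j h$ term sits only inside the nonnegative $h^{-\de}$ coefficient and never hits the second summand $2(g_{ij}-2\tau(\cdots))\ge g$), and the upper and lower bounds on $\sF_1$ by grouping terms in powers of $\al$ exactly as in the paper. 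The only slip is the trace of the torsion term: by antisymmetry $g^{ij}\,2T_i^{\;k}T_{kj} = -2|T|^2 \neq 0$, so $\operatorname{tr}_g\big(\Ric + \tfrac13|T|^2 g + 2T\ast T\big) = \tfrac23 R$ and $h\De h = 7 - \tfrac43 \tau R - |\nabla h|^2$; since this is an $O(r^{-2})$ quantity it does not affect any of your estimates.
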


\begin{proof}
The first three inequalities follow from the identities proven in Lemma \ref{lemma estimates on time dep distance function}. The rest of the proof proceeds along the same lines as in \cite{kotschwarwang2015}, with some small but necessary modifications. To prove that 
\[\nabla^2 \phi_1 \ge g,
\]
first note that the symmetric 2-tensors $\Ric$, $|T|^2 g$, and $T_i^{\;k}T_{kj}$ decay quadratically, so in particular, we can select $r_1$ large enough that 
\[|\Ric| + \tfrac{1}{3}|T|^2 + |T * T| \le \frac{(1- \de)}{4}
\]
on $\cC_{r_1}^\Sg \times [0,\tau_0]$.  Since $\tau_0 < 1$, this implies that
\[ g_{ij} - 2\tau\big(\Ric_{ij} + \tfrac{1}{3}|T|^2 g_{ij} + 2 T_i^{\;k}T_{kj}\big) \ge \frac{(1+ \de)}{2}g_{ij},
\]
in the sense of quadratic forms. It remains to prove that the first summand in the expression for $\nabla_i \nabla_j \phi_1$ is positive-definite. Restricting to the orthogonal complement of $\nabla h$,
\begin{multline}\label{equation possibly non pos def term} g_{ij} - 2\tau\big(\Ric_{ij} + \tfrac{1}{3}|T|^2 g_{ij} + 2 T_{i}^{\;k}T_{kj}\big) - \de \nabla_i h \nabla_j h \\
= g_{ij} - 2\tau\big(\Ric + \tfrac{1}{3}|T|^2 g_{ij} + 2 T_{i}^{\;k}T_{kj}\big) \ge \frac{(1+ \de)}{2}g_{ij},
\end{multline}
so it remains to check positive-definiteness on the space spanned by $\nabla h$. Evaluating on this subspace, we obtain
\begin{equation*}
     \big(g_{ij} - 2\tau(\Ric_{ij} + \tfrac{1}{3}|T|^2 g_{ij} + 2 T_{i}^{\;k}T_{kj}) - \de \nabla_i h \nabla_j h \big)\nabla_i h \nabla_j h \ge \frac{(1+ \de)}{2}|\nabla h|^2 - \de |\nabla h|^4. 
\end{equation*}
Substituting in the expression for $|\nabla h|^2$ given in Lemma \ref{lemma estimates on time dep distance function}, we obtain
\begin{align*}
    \frac{(1+ \de)}{2}|\nabla h|^2 - \de |\nabla h|^4 &= |\nabla h|^2\bigg(\frac{(1+ \de)}{2} - \de \bigg(1 + \frac{4\tau^2}{h^2}(\sE -R) \bigg) \bigg) \\
    &= |\nabla h|^2\bigg(\frac{(1 - \de)}{2} - \frac{4\de\tau^2}{h^2}(\sE - R) \bigg).
\end{align*}
Increasing $r_1$ if necessary, we see that \eqref{equation possibly non pos def term} is positive-definite and thus we can conclude that $\nabla \nabla \phi_1 \ge g$. 

To begin estimating $\sF_1$, we first calculate
\[ |\nabla \phi_1|^2 = \big(\al(2-\de)(\tau_0 - \tau)h^{1-\de} + 2h\big)^2|\nabla h|^2,
\]
\[ \De \phi_1 = 2(7 - \frac{4}{3}\tau R) + \frac{\al(2-\de)(\tau_0 - \tau)}{h^\de}\bigg(7 - \frac{4}{3}\tau R - \de |\nabla h|^2 \bigg).
\]
For $r_1$ sufficiently large, the second summand in the expression for $\De \phi_1$ is positive, and the first summand can be bounded below by $7$---so $\De \phi_1 \ge 7$. Furthermore,
\[\frac{\dd \phi_1}{\dd \tau} = -\al h^{2-\de} \bigg( 1 - \frac{2(2-\de)\tau(\tau_0 - \tau) (R-\sE)}{h^2} - \frac{4\tau (R - \sE)}{\al h^{2-\de}} \bigg) \le - \frac{\al}{2}h^{2-\de},
\]
for sufficiently large $r_1$. Combining these estimates, we obtain the upper bound on $\sF_1$ on $\cC_{r_1}^\Sg \times [0, \tau_0]$.
\begin{align*}
    \sF_1 &= \frac{\dd \phi_1}{\dd \tau} - \De \phi_1 - | \nabla \phi_1|^2 + \frac{2}{3}R \\
    &\le  - \frac{\al}{2}h^{2-\de} - 7 + CK_0r^{-2}\\
    &\le - \frac{1}{2}\bigg(\frac{\al}{2}h^{2-\de} + 7\bigg)\\
    &\le 0.
\end{align*}
This establishes the upper bound. To compute the lower bound, we calculate an explicit expression for $\sF_1$.
\begin{align*}
    \sF_1 &= 4\tau (R-\sE) - \al h^{2-\de}\bigg( 1 - \frac{2(2-\de)\tau(\tau_0 - \tau)(R - \sE)}{h^2} \bigg) \\
     &\qquad - 2(7 - \frac{4}{3}\tau R) + \frac{\al(2-\de)(\tau_0 - \tau)}{h^\de}\bigg(7 - \frac{4}{3}\tau R - \de |\nabla h|^2 \bigg) \\
     &\qquad - \big(\al(2-\de)(\tau_0 - \tau)h^{1-\de} + 2h\big)^2|\nabla h|^2 + \frac{2}{3}R
\end{align*}
We can rearrange the terms to group like powers of $\al$. 
\begin{align}\label{equation F1 in powers of alpha}
    \sF_1 &= \bigg(- 2(7 - \frac{4}{3}\tau R) + \frac{2}{3}R + 4\tau (R-\sE)  +4h^2|\nabla h|^2 \bigg) \\ 
    &\qquad - \al h^{2-\de}\bigg( 1 - \frac{2(2-\de)\tau(\tau_0 - \tau)(R - \sE)}{h^2} \nonumber \\
    &\qquad \qquad + \frac{(2-\de)(\tau_0 - \tau)}{h^2}\bigg(7 - \frac{4}{3}\tau R - \de |\nabla h|^2 \bigg) - 4(2-\de)(\tau_0 -\tau)|\nabla h|^2 \bigg) \nonumber \\
     &\qquad - \al^2h^{2-2\de}\big((2-\de)^2(\tau_0 - \tau)^2|\nabla h|^2\big). \nonumber
\end{align}
Considering the asymptotics of each term, one can see that for sufficiently large $r_1$, on $\cC_{r_1}^\Sg \times [0,\tau_0]$
\[|\sF_1| \le N(1 + h^2 + \al h^{2-\de} + \al(\tau_0 - \tau)^2h^{2-2\de}).
\]
This concludes the proof of the lemma.
\end{proof}
Next, the expression $(\dd_\tau + \De)\sF_1$ must be bounded from below. In our setting, we can still obtain the lower bound given in \cite[Lemma 4.8]{kotschwarwang2015}. We give their statement accompanied by the appropriately modified proof below.
\begin{lem}\label{lemma lower bound on derivative of F1}
For all $\de \in (0,1)$, there exists $r_2 \ge r_0$ depending only on $\de$ and $K_0$, such that the function $\sF_1 = \sF_{1; \al, \tau_0}$ satisfies
\begin{equation}
    \frac{\dd \sF_1}{\dd \tau} + \De \sF_1 \ge 3 \al h^{2-\de} + \al^2(\tau_0 - \tau)h^{2-2\de}
\end{equation}
on $\cC^\Sg_{r_2} \times [0,\tau_0]$ for all $\al \ge 1$ and $\tau_0 \in (0,1]$.
\end{lem}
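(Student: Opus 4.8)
The plan is to apply $\dd_\tau+\De$ directly to the closed-form expression \eqref{equation F1 in powers of alpha} for $\sF_1$, organised in powers of $\al$, and to observe, following \cite[Lemma 4.8]{kotschwarwang2015}, that the positive leading terms are exactly those produced when $\dd_\tau$ falls on the factors $(\tau_0-\tau)$ and $(\tau_0-\tau)^2$. Write $\sF_1 = P_0 + \al P_1 + \al^2 P_2$ for the coefficients of $1,\al,\al^2$; then $P_1 = -h^{2-\de} - 4(2-\de)(\tau_0-\tau)h^{2-\de}|\na h|^2$ up to terms of size $O(h^{-\de}\log r)$, and $P_2 = -(2-\de)^2(\tau_0-\tau)^2 h^{2-2\de}|\na h|^2$. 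Because these coefficients carry overall minus signs, differentiating produces $(\dd_\tau+\De)(\al P_1) \supset +4\al(2-\de)h^{2-\de}|\na h|^2$ and $(\dd_\tau+\De)(\al^2 P_2) \supset +2\al^2(2-\de)^2(\tau_0-\tau)h^{2-2\de}|\na h|^2$; since $|\na h|^2 = 1 + O(r^{-2}\log r)$ and, for $\de\in(0,1)$, $4(2-\de)\ge 4 > 3$ and $2(2-\de)^2 \ge 2 > 1$, these terms alone already dominate the desired right-hand side, so it remains only to see that every other contribution is lower order.

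First I would record the auxiliary identities. From Lemma \ref{lemma estimates on time dep distance function}, $\dd_\tau h = \tfrac{2\tau}{h}(R-\sE)$, $h\De h = 7 - \tfrac43\tau R - |\na h|^2$, $h\na^2 h = g - \na h\otimes\na h + 2\tau(-\Ric - \tfrac13|T|^2 g - 2T\ast T)$, and $|\na h|^2 = 1 + \tfrac{4\tau^2}{h^2}(\sE-R)$, while $\tfrac12 r \le h \le 2r$ by \eqref{equation comparison of h and r}. These yield $|\na^2 h| = O(h^{-1})$ and, for each fixed real $p$, $\De(h^p) = O(h^{p-2})$ and $\dd_\tau(h^p) = O(h^{p-2}\log r)$, all uniformly in $\tau\in[0,\tau_0]$. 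For the remainders I will also invoke the uniform-in-time quadratic decay of $\Rm$ and all its derivatives (Proposition \ref{proposition backward flow reparametrization}(3)), the bound $|T| = O(r^{-1})$ (so torsion quadratics and their derivatives are $O(r^{-2})$), and the estimates $\sE = O(\log r)$, $\na^k\sE = O(r^{-k})$, $\dd_\tau\na^k\sE = O(r^{-k-1})$ of Lemma \ref{lemma mixed time and space derivative estimates on shrinker potential}.

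With these, a term-by-term computation — writing $h^p|\na h|^2 = h^{p-2}\big(h^2 + 4\tau^2(\sE-R)\big)$ to make the Laplacians transparent — gives $(\dd_\tau + \De)P_0 = O(\log r)$ (the $h^2$-contributions cancel under $\dd_\tau + \De$ up to bounded and logarithmic remainders), $(\dd_\tau + \De)P_1 = 4(2-\de)h^{2-\de}|\na h|^2 + O(h^{-\de}\log r)$, and $(\dd_\tau + \De)P_2 = 2(2-\de)^2(\tau_0-\tau)h^{2-2\de}|\na h|^2 + O\big((\tau_0-\tau)^2 h^{-2\de}\log r\big)$. Summing these, and using $\al\ge 1$ together with the fact that $r^{2-\de}$ (with $2-\de > 1$) dominates $\log r$, one chooses $r_2 = r_2(\de,K_0) \ge r_0$ large enough that on $\cC^\Sg_{r_2}\times[0,\tau_0]$ all the $O(\cdot)$ remainders are absorbed into the surpluses $4(2-\de)-3 \ge 1$ and $2(2-\de)^2 - 1 \ge 1$; this yields $\dd_\tau\sF_1 + \De\sF_1 \ge 3\al h^{2-\de} + \al^2(\tau_0-\tau)h^{2-2\de}$. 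Note that $r_2$ is independent of $\al$ and of $\tau_0$ because every error estimate is.

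The principal obstacle is the error bookkeeping. Unlike the Ricci-shrinker case of \cite{kotschwarwang2015}, the identity $|\na h|^2 = 1 + \tfrac{4\tau^2}{h^2}(\sE-R)$ carries the logarithmically growing quantity $\sE$, and $h\na^2 h$, $h\De h$ carry torsion quadratics, so the remainders acquire $\log r$ factors and extra curvature/torsion contributions. The step requiring care is to confirm that all of these are genuinely of order $O(h^{-\de}\log r)$ or $O(\log r)$ — rather than of the critical order $h^{2-\de}$ — which rests on the quadratic decay of $\Rm$, $\na\Rm,\ldots$ and of $T$, and on the gain of a factor $h^{-1}$ each time a negative power of $h$ is differentiated. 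The fixed margins $4(2-\de)>3$ and $2(2-\de)^2>1$, valid throughout $\de\in(0,1)$, then supply exactly the room needed to absorb these remainders once $r$ is large.
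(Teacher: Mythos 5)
Your proposal is correct and follows essentially the same route as the paper: apply $\dd_\tau + \De$ to the expansion \eqref{equation F1 in powers of alpha} grouped by powers of $\al$, extract the dominant positive terms $4\al(2-\de)h^{2-\de}$ and $2\al^2(2-\de)^2(\tau_0-\tau)h^{2-2\de}$ produced when $\dd_\tau$ hits the $(\tau_0-\tau)$-factors, and absorb all remaining contributions (controlled via Lemma \ref{lemma estimates on time dep distance function}, Lemma \ref{lemma mixed time and space derivative estimates on shrinker potential}, and the quadratic curvature/torsion decay) by enlarging $r_2$, using $2-\de\ge 1$ to reach the stated constants. Your error bookkeeping, including the crucial point that the $\al^2$-remainders retain a factor $(\tau_0-\tau)^2\le(\tau_0-\tau)$ so they absorb into the $\al^2$ main term uniformly in $\al$ and $\tau_0$, matches the paper's argument (note only that the $O(\log r)$ bound on the $\al$-free part needs no cancellation, since $\De(h^2)$ is bounded and $\dd_\tau(h^2)=4\tau(R-\sE)=O(\log r)$ termwise).
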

\begin{proof}First, recall that $\sE_\tau = O(\log r)$, $\nabla \sE =O(r^{-1})$, and $\De \sE = O(r^{-2})$ by Lemma \ref{lemma mixed time and space derivative estimates on shrinker potential}. Calculate the value of the operator $(\dd_\tau + \De)$ on each individual power of $\al$ occurring in \eqref{equation F1 in powers of alpha}. 
\begin{multline*}
    \frac{\dd}{\dd \tau} \bigg(- 2(7 - \frac{4}{3}\tau R) + \frac{2}{3}R + 4\tau (R-\sE)  +4h^2\bigg(1+ \frac{4\tau^2}{h^2}(\sE-R)\bigg) \bigg) \\
    \ge -8h \frac{\dd h}{\dd\tau} -  C_1\sE - C_2 \frac{\dd \sE}{\dd \tau} - C_3 \frac{\dd  R}{\dd \tau} - C_4
\end{multline*}    
Plugging in $\dd_\tau h = 2\tau h^{-1}(R -\sE)$, we obtain that this whole expression is bounded below by $-C_5 \log(r)$. Take the Laplacian.
\begin{multline*}
    \De  \bigg(- 2(7 - \frac{4}{3}\tau R) + \frac{2}{3}R + 4\tau (R-\sE)  +4h^2\bigg(1+ \frac{4\tau^2}{h^2}(\sE-R)\bigg) \bigg) \\
    \ge -C(h\De h + |\nabla h|^2) \ge - C
\end{multline*}
by Lemma \ref{lemma estimates on time dep distance function} and Lemma \ref{lemma mixed time and space derivative estimates on shrinker potential}. Thus, 
\[ \bigg(\frac{\dd}{\dd \tau} + \De \bigg)\bigg(- 2(7 - \frac{4}{3}\tau R) + \frac{2}{3}R + 4\tau (R-\sE)  +4h^2\bigg(1+ \frac{4\tau^2}{h^2}(\sE-R)\bigg)\bigg) \ge -C \log(r).
\]
This can be absorbed into the terms of higher order in $\al$, which we now calculate. Taking the time derivative and the Laplacian are lengthy but straightforward calculations. Plugging in the decay estimates for the curvature and the shrinker potential $\sE$ given in Lemma \ref{lemma mixed time and space derivative estimates on shrinker potential} into the resulting expressions yields the following asymptotics for the terms with coefficient $\al$.
\begin{multline*}
    - \al \frac{\dd}{\dd \tau}\bigg( h^{2-\de}\bigg( 1 - \frac{2(2-\de)\tau(\tau_0 - \tau)(R - \sE)}{h^2}  \\
    + \frac{(2-\de)(\tau_0 - \tau)}{h^2}\bigg(7 - \frac{4}{3}\tau R - \de |\nabla h|^2 \bigg) - 4(2-\de)(\tau_0 -\tau)|\nabla h|^2 \bigg)\bigg)
\end{multline*}
\[= \al\big((4(2-\de))h^{2-\de} + o(h^{-\de/2}) \big)
\]
and
\begin{multline*}
    - \al \De\bigg( h^{2-\de}\bigg( 1 - \frac{2(2-\de)\tau(\tau_0 - \tau)(R - \sE)}{h^2}  \\
    + \frac{(2-\de)(\tau_0 - \tau)}{h^2}\bigg(7 - \frac{4}{3}\tau R - \de |\nabla h|^2 \bigg) - 4(2-\de)(\tau_0 -\tau)|\nabla h|^2 \bigg)\bigg)
\end{multline*}
\[= \al\big(o(h^{-\de/2}) \big)
\]
Turning to the terms with coefficient $\al^2$, we observe that after differentiating by $\dd_\tau + \De$, the dominant term is that of order $h^{2-2\de}$.
\begin{align*}
    - \al^2\frac{\dd}{\dd \tau}\bigg(h^{2-2\de}\big((2-\de)^2(\tau_0 - \tau)^2|\nabla h|^2\big)\bigg) &= \al^2\big(2(\tau_0 -\tau)(2-\de)^2h^{2-2\de} + o(h^{-\de})\big)
\end{align*}
and
\begin{align*}
    - \al^2\De \bigg(h^{2-2\de}\big((2-\de)^2(\tau_0 - \tau)^2|\nabla h|^2\big)\bigg) &= \al^2\big(o(h^{-\de})\big)
\end{align*}
By increasing $r_2$ as needed, we take the sum of the order 0,1, and 2 terms in $\al$ and absorb the lower order terms into the dominant terms. Since the terms with coefficient $\al, \al^2 \ge 1$ dominate $\log(r)$, for sufficiently large $r_2$, the terms of order 0 in $\al$ can be absorbed into those of order $1$ and $2$ in $\al$. Then, possibly further increasing $r_2$ to absorb other small terms, we obtain the desired inequality,
\begin{equation*}
    \frac{\dd \sF_1}{\dd \tau} + \De \sF_1 \ge 3\al(2-\de)h^{2-\de} + \al^2(2-\de)^2(\tau_0 - \tau) h^{2-2\de}
\end{equation*}
on $\cC_{r_2}^\Sg \times [0,\tau_0]$.
\end{proof}

Now we can prove the first Carleman estimate, which is identical to that in \cite{kotschwarwang2015}.

\begin{prop}\label{proposition first carleman estimate}\cite[Proposition 4.9]{kotschwarwang2015}
For all $\de \in (0,1)$, there exists a constant $r_3 = r_3(\de, K_0) \ge r_0$ such that for all $\al \ge 1$ and all $Z \in C^\infty (\cZ \times [0, \tau_0])$ satisfying that $Z(\cdot, 0) = 0$ and that $Z(\cdot, \tau)$ is compactly supported in $\cC_{r_3}^\Sg$ for each $\tau \in [0,\tau_0]$, we have the estimate
\begin{multline}\label{equation first carleman estimate PDE}
\al ||Z \sH_1^{1/2}||^2_{L^2(\cC^\Sg_{r_3}\times [0,\tau_0])} + ||\nabla Z \sH_1^{1/2}||^2_{L^2(\cC^\Sg_{r_3}\times [0,\tau_0])} \\
\le \frac{1}{2}||(\dd_\tau + \De) Z \sH_1^{1/2}||^2_{L^2(\cC^\Sg_{r_3}\times [0,\tau_0])} + ||\nabla Z \sH_1^{1/2}||^2_{L^2(\cC^\Sg_{r_3}\times \{\tau_0\})}
\end{multline}
and
\begin{equation}\label{equation first carleman estimate ODE}
    \al ||Z \sH_1^{1/2}||^2_{L^2(\cC^\Sg_{r_3}\times [0,\tau_0])} \le 2||\dd_\tau Z \sH_1^{1/2}||^2_{L^2(\cC^\Sg_{r_3}\times [0,\tau_0])},
\end{equation}
where $\sH_1 = \sH_{1; \al, \tau_0}$.
\end{prop}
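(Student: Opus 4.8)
The plan is to combine the two weighted $L^2$-inequalities proved above --- the ``first $L^2$ quadratic lower bound'' \eqref{equation first L2 quadratic lower bound}, obtained by integrating the fundamental divergence identity \eqref{equation fundamental divergence identity}, and the weighted $L^2$-inequality for the ODE component restated above --- with the choice of weight $\sH = \sH_1 = \sH_{1;\al,\tau_0}$ and the pointwise estimates for $\phi_1$ and $\sF_1$ from Lemma \ref{lemma estimates on phi 1} and Lemma \ref{lemma lower bound on derivative of F1}. Throughout, $r_3$ is taken larger than $r_0, r_1, r_2$ and large enough that every error term below is absorbed, with all absorptions uniform in $\al \ge 1$ and $\tau_0 \in (0,1]$. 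As in \cite{kotschwarwang2015} the argument is essentially bookkeeping once these inputs are assembled; the one point to watch is that the logarithmically growing quantity $\sE$ --- bounded in the Ricci-flow setting --- could in principle spoil an absorption, but it does not here because $\log r$ is dominated by $\al h^{2-\de}$.

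For \eqref{equation first carleman estimate PDE}, first note that with $\sH = \sH_1$ the function $\sF$ prescribed by \eqref{equation FH} is exactly $\sF_1$, since $\sH_1^{-1}\De \sH_1 = \De\phi_1 + |\na\phi_1|^2$. Applying \eqref{equation first L2 quadratic lower bound} to $Z$ with this weight, we discard the boundary term $\tfrac12 \sF_1|Z|^2$ at $\tau = \tau_0$ using $\sF_1 \le 0$ (Lemma \ref{lemma estimates on phi 1}), leaving $|\na Z|^2$ there. On the right-hand side, $\na^2\phi_1 \ge g$ (Lemma \ref{lemma estimates on phi 1}) gives $Q_1(\na Z, \na Z) \ge (2 - Nr^{-2})|\na Z|^2 \ge \tfrac32|\na Z|^2$ for $r_3$ large; and combining the lower bound $\partial_\tau\sF_1 + \De\sF_1 \ge 3\al h^{2-\de} + \al^2(\tau_0-\tau)h^{2-2\de}$ (Lemma \ref{lemma lower bound on derivative of F1}) with the upper bound $|\sF_1| \le N(1 + h^2 + \al h^{2-\de} + \al^2(\tau_0-\tau)^2 h^{2-2\de})$ and $h \simeq r$ (see \eqref{equation comparison of h and r}), and using $(\tau_0-\tau) \le 1$ and $\de \in (0,1)$, all the terms in $Nr^{-2}|\sF_1|$ are lower order, so that $Q_2(Z,Z) \ge \al h^{2-\de}|Z|^2 \ge 2\al|Z|^2$ for $r_3$ large.

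It remains to absorb $E(Z,\na Z)$. Writing $\na\phi_1 = \Upsilon\na f$, the formula for $\na\phi_1$ in Lemma \ref{lemma estimates on phi 1} together with \eqref{equation derivative identities on h 1} gives $\Upsilon = 2\tau(\al(2-\de)(\tau_0-\tau)h^{-\de} + 2)$, hence $1 + \Upsilon^2 \le C(1 + \al^2(\tau_0-\tau)^2 h^{-2\de})$. The pointwise bound on $E$ proved above, with the curvature decay $h^2(|\Rm| + |\na\Rm|) \le CK_0$ from Proposition \ref{proposition backward flow reparametrization} and $|T| \le C$, then yields $|E(Z,\na Z)| \le Ch^{-2}(|\na Z|^2 + (1 + \al^2(\tau_0-\tau)^2 h^{-2\de})|Z|^2)$. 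For $r_3$ large the $|\na Z|^2$-part is absorbed into the surplus $\tfrac12|\na Z|^2$ left from $Q_1$, and the $|Z|^2$-parts into $Q_2$ (the factor $h^{-2-2\de}$ against the good $h^{2-2\de}$ leaves ample room). Hence the right-hand integrand dominates $|\na Z|^2 + \al|Z|^2$, which is \eqref{equation first carleman estimate PDE}. The only genuinely delicate point is precisely this absorption and the one in $Q_2$: it works because $\sH_1$ is built from the exponents $h^{2-\de}$, $h^{2-2\de}$ with $(\tau_0-\tau)$ entering the dominant good terms linearly but the errors quadratically, and because the worst potential-dependent error, involving $\sE$, only grows like $\log r$. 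For \eqref{equation first carleman estimate ODE} I would apply the weighted $L^2$-inequality for the ODE component with $\sH = \sH_1$; it then suffices to check $-(N + \partial_\tau\phi_1) \ge \tfrac{\al}{2}$ on $\cC^\Sg_{r_3}\times[0,\tau_0]$. From the formula for $\partial_\tau\phi_1$ in Lemma \ref{lemma estimates on phi 1}, together with $R = O(h^{-2})$ and $\sE = O(\log h)$, for $r_3$ large we have $1 - 2(2-\de)\tau(\tau_0-\tau)(R-\sE)h^{-2} \ge \tfrac12$ and $|4\tau(R-\sE)| \le C\log h$, so $\partial_\tau\phi_1 \le -\tfrac{\al}{2} h^{2-\de} + C\log h$; increasing $r_3$ once more so that $\tfrac12(h^{2-\de}-1) - C\log h \ge N$ gives the claim, and hence \eqref{equation first carleman estimate ODE}.
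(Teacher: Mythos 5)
Your proposal is correct and follows exactly the route the paper takes: the paper's proof simply observes that Kotschwar--Wang's proof of their Proposition 4.9 goes through once their Lemmas 4.7 and 4.8 are replaced by Lemma \ref{lemma estimates on phi 1} and Lemma \ref{lemma lower bound on derivative of F1}, and your write-up carries out precisely that argument (same weight $\sH_1$, same use of \eqref{equation first L2 quadratic lower bound} and the ODE inequality, same absorptions of the $r^{-2}$-errors, of $E(Z,\nabla Z)$ via $\nabla\phi_1=\Upsilon\nabla f$, and of the $\log r$ terms coming from $\sE$). The absorptions you flag are handled correctly and uniformly in $\al\ge 1$, so there is nothing to add.
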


\begin{proof}
After replacing \cite[Lemma 4.7]{kotschwarwang2015} and \cite[Lemma 4.8]{kotschwarwang2015} by Lemma \ref{lemma estimates on phi 1} and Lemma \ref{lemma lower bound on derivative of F1} respectively, the proof of \cite[Proposition 4.9]{kotschwarwang2015} goes through and is sufficient to prove Proposition \ref{proposition first carleman estimate}.
\end{proof}

\subsection{Carleman estimates to imply rapid decay}\label{subsection Carleman estimates to imply rapid decay}

Since the weight function $\sH_1$ in the first Carleman estimate grows at the rate $\exp(Ch^2)$, we must show that our solution satisfying the ODE-PDE system actually has decay of the same order or faster. This is done by means of a second Carleman estimate, similar to that in \cite[(1.4)]{EscauriazaSereginSverak} and \cite[(5.23)]{kotschwarwang2015}, which is given in Proposition \ref{proposition second Carleman estimate PDE}. This estimate differs significantly from the two cited estimates in that it contains a $\log$ growth term in $|Z|^2$ on the right-hand side of the inequality. However, it will be shown in Section \ref{section proof of backwards uniqueness} how the exponential decay of the solution $Z$ can be salvaged from the inequality as stated in \eqref{equation second Carleman estimate}. 

As in \cite{kotschwarwang2015}, we begin with a perturbed version of the divergence identity given in Lemma \ref{lemma fundamental divergence identity}, which is analogous to \cite[Lemma 5.1]{kotschwarwang2015}.

\begin{lem}\label{lemma perturbed fundamental divergence identity}
For all $Z \in C^\infty(\cZ \times [0,\tau_0])$, $\sF,\sH \in C^\infty(\cC^\Sg_{r_0} \times [0,\tau_0])$ with $\sH > 0$, and positive functions $\sigma, \theta \in C^\infty([0,\tau_0])$ with $\sg$ increasing, the following identity holds:
\begin{multline}
\te \sg^{-\al}\nabla_i \bigg\{ 2 \bigg\langle \frac{\partial Z}{\partial \tau}, \nabla_i Z \bigg\rangle \sH + |\nabla Z|^2\nabla_i \sH - 2 \langle \nabla_{\nabla \sH} Z, \nabla_i Z \rangle \\ 
+ \frac{\sF\sH}{2}\nabla_i |Z|^2 + \frac{1}{2}(\sF\nabla_i \sH - \sH \nabla_i \sF) |Z|^2 \bigg\} d\mu \\
- \frac{\partial}{\partial \tau} \bigg\{ \bigg(|\nabla Z|^2 + \frac{\sF}{2}|Z|^2\bigg) \te \sg^{-\al}\sH d\mu \bigg\} \\
=\bigg\{ 2 \bigg \langle \cA Z, \bigg(\frac{\partial}{\partial \tau} + \De\bigg)Z\bigg \rangle  - 2|\cA Z|^2  -\frac{1}{2}\bigg(\frac{\partial \sF}{\partial \tau} + \Delta \sF\bigg)|Z|^2 \\
\hspace{2cm} + \bigg( \sF - \sH^{-1}\bigg(\frac{\partial \sH}{\partial \tau} - \Delta \sH\bigg) + \frac{2}{3}R - \al \frac{\dot \sg}{\sg} \bigg)\bigg( |\nabla Z|^2 + \frac{\sF}{2}|Z|^2 \bigg)  \\
\hspace{2cm} - \frac{\dot \te}{\te} \bigg( |\nabla Z|^2 + \frac{\sF}{2}|Z|^2 \bigg) - 2 \nabla_i \nabla_j \phi \langle \nabla_i Z, \nabla_j Z \rangle - \frac{\partial g}{\partial \tau}(\nabla Z, \nabla Z) \\ - \frac{\sF\sH}{2}\frac{\partial g}{\partial \tau}( Z,  Z) + 2 E(Z, \nabla Z) \bigg\} \te \sg^{-\al} \sH d\mu, 
\end{multline}
where $\cA$ and $E(Z, \nabla Z)$ are defined as before. 
\end{lem}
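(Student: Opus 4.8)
The plan is to derive this perturbed identity directly from the fundamental divergence identity of Lemma~\ref{lemma fundamental divergence identity} by multiplying it through by the purely temporal weight $w(\tau) := \te(\tau)\,\sg(\tau)^{-\al}$. The first observation I would make is that, since $w$ depends on $\tau$ alone, it commutes with every spatial operation occurring in~\eqref{equation fundamental divergence identity}; in particular $w\,\nabla_i\{\,\cdots\,\} = \nabla_i\{\,w\,\cdots\,\}$, which reproduces the spatial-divergence term of the perturbed identity verbatim, now carrying the factor $w$. Likewise, each term on the right-hand side of~\eqref{equation fundamental divergence identity} already carries a factor $\sH$, and the operators $\cA$, $E(Z,\nabla Z)$, $\nabla_i\nabla_j\phi$ and $\partial_\tau g$ see nothing of the $\tau$-only factor $w$; so multiplying by $w$ simply attaches $w$ to each of those terms and produces the right-hand side of the perturbed identity apart from the two genuinely new terms.

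Those two new terms I would extract from the temporal-derivative term $-\frac{\partial}{\partial\tau}\big\{\big(|\nabla Z|^2 + \frac{\sF}{2}|Z|^2\big)\sH\,d\mu\big\}$ of~\eqref{equation fundamental divergence identity}. Writing $G := \big(|\nabla Z|^2 + \frac{\sF}{2}|Z|^2\big)\sH$ and applying the product rule gives $w\,\frac{\partial}{\partial\tau}\{G\,d\mu\} = \frac{\partial}{\partial\tau}\{w\,G\,d\mu\} - \dot w\,G\,d\mu$; since $\dot w = w\big(\dot\te/\te - \al\,\dot\sg/\sg\big)$, the correction $-\dot w\,G\,d\mu$ moves to the right-hand side and contributes exactly the $\dot\te/\te$ and $\al\,\dot\sg/\sg$ multiples of $\big(|\nabla Z|^2 + \frac{\sF}{2}|Z|^2\big)$, each carrying the common factor $w\,\sH\,d\mu$, as displayed in the statement. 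In this step one should treat $G\,d\mu$ as a density-valued object, since $d\mu = \vol_{g(\tau)}$ is itself $\tau$-dependent; the accompanying $\partial_\tau\,d\mu = \tfrac{2}{3}R\,d\mu$ contribution is already packaged into the $\tfrac{2}{3}R$ term of~\eqref{equation fundamental divergence identity} and does not interact with $w$. Collecting everything then yields the claimed identity.

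I expect essentially no difficulty here: the argument is purely algebraic, built entirely on the already-established Lemma~\ref{lemma fundamental divergence identity}, and the hypothesis that $\sg$ is increasing plays no role in the identity itself (it is only used when the identity is later applied). The only place demanding care is the bookkeeping of the Leibniz rule for $\partial_\tau(w\,G\,d\mu)$ together with the signs coming from $\dot w/w = \dot\te/\te - \al\,\dot\sg/\sg$; in particular, one must check that the torsion-corrected evolution equations for $g$, $\vol_{g(\tau)}$ and $\Ga^k_{ij}$ used in Lemma~\ref{lemma fundamental divergence identity}, and hence the torsion terms inside $E(Z,\nabla Z)$ and inside $\partial_\tau g(\nabla Z,\nabla Z)$, pass through the multiplication by $w$ unchanged — which they do, since each such term already occurs in the identity being rescaled.
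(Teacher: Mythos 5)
Your route---multiply the identity of Lemma \ref{lemma fundamental divergence identity} by the purely temporal weight $w(\tau)=\te\,\sg^{-\al}$, commute $w$ past the spatial divergence, and apply the Leibniz rule to $-\partial_\tau\{(|\na Z|^2+\tfrac{\sF}{2}|Z|^2)\sH\,d\mu\}$---is exactly the intended argument (it is the Laplacian-flow transcription of Kotschwar--Wang's Lemma 5.1), and your structural observations are right: a $\tau$-only weight does not interact with $\cA$, $E(Z,\na Z)$, $\na_i\na_j\phi$, $\partial_\tau g$, or the evolution of $d\mu$, and the monotonicity of $\sg$ plays no role in the identity itself.

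The one step you defer---the sign bookkeeping for $\partial_\tau(w\,G\,\sH\,d\mu)$ with $G=|\na Z|^2+\tfrac{\sF}{2}|Z|^2$---is precisely where your conclusion that the corrections come out ``as displayed'' fails to hold literally. The correction term is $-\dot w\,G\,\sH\,d\mu$ with $\dot w=w\bigl(\tfrac{\dot\te}{\te}-\al\tfrac{\dot\sg}{\sg}\bigr)$, so it contributes $\bigl(-\tfrac{\dot\te}{\te}+\al\tfrac{\dot\sg}{\sg}\bigr)G$ times $\te\sg^{-\al}\sH\,d\mu$: the $\al\dot\sg/\sg$ piece enters with a \emph{plus} sign. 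Likewise the $\tfrac23R$ coming from $\partial_\tau d\mu=\tfrac23R\,d\mu$ must keep the sign it has in Lemma \ref{lemma fundamental divergence identity}, i.e.\ it remains inside $-\sH^{-1}\bigl(\partial_\tau\sH-\De\sH+\tfrac23R\sH\bigr)$. Hence the bracket multiplying $G$ should read $\sF-\sH^{-1}\bigl(\partial_\tau\sH-\De\sH+\tfrac23R\sH\bigr)+\al\tfrac{\dot\sg}{\sg}$, not $\sF-\sH^{-1}\bigl(\partial_\tau\sH-\De\sH\bigr)+\tfrac23R-\al\tfrac{\dot\sg}{\sg}$ as printed: the printed version is inconsistent with Lemma \ref{lemma fundamental divergence identity} itself (set $\al=0$, $\te\equiv1$) and would not be annihilated by the subsequent choice $\sF=\sH^{-1}(\partial_\tau\sH-\De\sH)+\tfrac23R-\al\dot\sg/\sg$, whose whole purpose is to cancel that bracket. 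So the displayed statement carries two sign typos; your method, carried out carefully, proves the corrected identity that is actually used later, and you should state that version explicitly rather than assert agreement with the display.
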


We choose the function $\sF$ to give a cancellation in the right-hand side of the identity. 
\[\sF := \frac{1}{\sH} \bigg( \frac{\partial \sH}{\partial \tau} - \Delta \sH \bigg) + \frac{2}{3}R - \al \frac{\dot \sg}{\sg},
\]
and let
\[\widetilde{\sF} := \sF + \al \frac{\dot \sg}{\sg}.
\]
We also choose
\[\theta := \frac{\sg}{\dot \sg}
\]
for the remainder of the section. The results of \cite[\S 5.2]{kotschwarwang2015} and their proofs go through unchanged into our setting. The first result that we restate is the following integral inequality, which will yield the $L^2$ Carleman estimate.

\begin{lem}\cite[Lemma 5.2]{kotschwarwang2015}\label{lemma integral inequality for backward heat operator} There exists a constant $N = N(\ka, \nu, K_0)$ such that for any $\al$ and any $Z \in C^\infty(\cZ \times [0, \tau_0])$ that is compactly supported in $\cC^\Sg_{r_0} \times [0, \tau_0)$ and vanishes on $\cC^\Sg_{r_0} \times \{0\}$, the inequality 
\begin{multline}\label{equation integral inequality for backward heat operator}
\int_0^{\tau_0}\int_{\cC^\Sg_{r_0} \times \{\tau\}} \frac{\sg^{1-\al}}{\dot \sg} \big(Q_3(\nabla Z, \nabla Z) + Q_4(Z,Z) - 2E(Z, \nabla Z)\big)\sH d\mu d\tau \\
\le \int_0^{\tau_0}\int_{\cC^\Sg_{r_0} \times \{\tau\}} \frac{\sg^{1-\al}}{\dot \sg} \bigg| \frac{\partial Z}{\partial \tau} + \Delta Z \bigg|^2 \sH d\mu d\tau
\end{multline}
holds, where
\begin{equation}
Q_3(\nabla Z, \nabla Z) \ge \bigg( 2 \nabla_i \nabla_j \phi - \frac{\sg}{\dot \sg} \ddot{\widehat{\log \sg}}g_{ij} \bigg) \langle \nabla_i Z, \nabla_j Z \rangle - \frac{N}{r^2} |\nabla Z|^2
\end{equation}
and
\begin{equation}
Q_4( Z,  Z) \ge \frac{1}{2}\bigg( \frac{\partial \wsF}{\partial \tau} + \Delta \wsF + \frac{\dot \te}{\te}  \wsF \bigg)|Z|^2 - \frac{N}{r^2} |\sF||Z|^2.
\end{equation}
\end{lem}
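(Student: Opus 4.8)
The plan is to follow the strategy of \cite[Lemma 5.2]{kotschwarwang2015} essentially verbatim, since our perturbed divergence identity (Lemma \ref{lemma perturbed fundamental divergence identity}) has exactly the same structure as theirs once the choices $\sF := \sH^{-1}(\dd_\tau \sH - \De \sH) + \frac{2}{3}R - \al \dot \sg/\sg$ and $\te := \sg/\dot\sg$ are made. First I would plug these choices into the identity of Lemma \ref{lemma perturbed fundamental divergence identity}: the $(\sF - \sH^{-1}(\dd_\tau \sH - \De\sH) + \frac{2}{3}R - \al\dot\sg/\sg)$ coefficient of $(|\na Z|^2 + \frac{\sF}{2}|Z|^2)$ vanishes identically, while the $-\frac{\dot\te}{\te}(|\na Z|^2 + \frac{\sF}{2}|Z|^2)$ term is retained. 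A short computation with $\te = \sg/\dot\sg$ shows $\dot\te/\te = 1 - \sg\ddot\sg/\dot\sg^2 = -\sg \,\ddot{\widehat{\log\sg}}/\dot\sg \cdot \dot\sg/\sg$; tracking the bookkeeping, the $\dot\te/\te$ contribution to the quadratic form in $\na Z$ is precisely the $-\frac{\sg}{\dot\sg}\ddot{\widehat{\log\sg}} g_{ij}$ term appearing in $Q_3$.

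Next I would integrate the resulting pointwise identity over $\cC^\Sg_{r_0} \times [0,\tau_0]$. The spatial divergence term integrates to zero because $Z(\cdot,\tau)$ is compactly supported in $\cC^\Sg_{r_0}$ for each $\tau$; the $\dd_\tau$ term integrates to a boundary contribution at $\tau = 0$ and $\tau = \tau_0$, both of which vanish — the former because $Z(\cdot,0) = 0$, the latter because $Z$ is compactly supported in $\cC^\Sg_{r_0} \times [0,\tau_0)$ (so $\te\sg^{-\al}$ times the integrand vanishes near $\tau_0$). This leaves an inequality of the claimed form after (i) discarding the nonpositive $-2|\cA Z|^2$ term, (ii) applying Cauchy--Schwarz/Young to absorb the cross term $2\langle \cA Z, (\dd_\tau + \De)Z\rangle$ into $|\cA Z|^2$ plus $|(\dd_\tau + \De)Z|^2$, and (iii) absorbing the metric-evolution terms $\frac{\dd g}{\dd\tau}(\na Z, \na Z)$ and $\frac{\sF\sH}{2}\frac{\dd g}{\dd\tau}(Z,Z)$ into the $-\frac{N}{r^2}|\na Z|^2$ and $-\frac{N}{r^2}|\sF||Z|^2$ error terms in $Q_3$ and $Q_4$. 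The key point that makes step (iii) work identically to Kotschwar--Wang is that $\frac{\dd g_{ij}}{\dd \tau} = 2\Ric_{ij} + \frac{2}{3}|T|^2 g_{ij} + 4T_i^{\;k}T_{kj}$ decays like $O(r^{-2})$ by Proposition \ref{proposition backward flow reparametrization}(3) together with $R = -|T|^2$ and the torsion decay from Lemma \ref{lemma evolution equations for ODE PDE system}; likewise the commutator terms folded into $E(Z,\na Z)$ are handled by the $E$-estimate already established. The constant $N$ then depends only on $\ka, \nu, K_0$, as claimed.

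The one place where care is genuinely needed, rather than a verbatim citation, is verifying that the $\frac{2}{3}R$ and torsion-quadratic corrections to $\sF$ (which are absent in the Ricci case) do not degrade the $Q_3, Q_4$ bounds. Since $R = O(r^{-2})$ and $T*T = O(r^{-2})$ uniformly in $\tau$, these extra pieces of $\sF$ are themselves $O(r^{-2})$, so $\dd_\tau \sF + \De\sF + \frac{\dot\te}{\te}\sF$ differs from the Ricci-flow quantity by terms controlled by $r^{-2}|\sF|$ plus derivatives of $R$ and $T$, which by Proposition \ref{proposition backward flow reparametrization}(3) are $O(r^{-3})$ — all absorbable into the stated $-\frac{N}{r^2}|\sF||Z|^2$ slack. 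I expect the main (modest) obstacle to be organizing this absorption cleanly: one must keep $\wsF = \sF + \al\dot\sg/\sg$ and $\sF$ straight throughout, since $Q_4$ is phrased in terms of $\wsF$ in its leading part but $\sF$ in its error term, and one needs the cancellation $\dd_\tau(\al\dot\sg/\sg) + \De(\al\dot\sg/\sg) = \al\,\ddot{\widehat{\log\sg}}$ (a pure function of $\tau$) to land the $\wsF$-expression in exactly the form stated. With that bookkeeping done, the argument of \cite[Lemma 5.2]{kotschwarwang2015} closes the proof.
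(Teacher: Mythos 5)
Your proposal is correct and is essentially the paper's own argument: the paper proves this lemma by invoking \cite[Lemma 5.2]{kotschwarwang2015}, whose proof is exactly what you describe --- integrate Lemma \ref{lemma perturbed fundamental divergence identity} with the stated choices of $\sF$ and $\te$ (so the spatial divergence and both temporal boundary terms vanish by the support hypotheses), apply Young's inequality to $2\langle \cA Z,(\dd_\tau+\De)Z\rangle-2|\cA Z|^2$, and absorb the $\partial_\tau g$ contributions into the $N/r^2$ errors using the $O(r^{-2})$ decay of $2\Ric+\tfrac{2}{3}|T|^2g+4T\ast T$, with the $\wsF$ versus $\sF$ bookkeeping handled by the cancellation $\partial_\tau(\al\dot\sg/\sg)+\tfrac{\dot\te}{\te}\,\al\dot\sg/\sg=0$ that you identify. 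The only blemish is cosmetic: your displayed chain for $\dot\te/\te$ conflates $\dot\te$ with $\dot\te/\te$, but the identity you actually use, $\dot\te/\te=-\tfrac{\sg}{\dot\sg}\,\ddot{\widehat{\log\sg}}$, is correct.
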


The next inequality helps to estimate $|Z|$ by terms of the form $|\nabla Z|$ and $|(\dd_\tau + \De)Z|$.

\begin{lem}\label{lemma control Z by gradient}
There exists a constant $N = N(\ka, \nu, K_0)$ such that, for all $\al >0$, all smooth positive $\sH = \sH(x,\tau)$, and all positive increasing $\sg = \sg(\tau)$, the inequality 
\begin{multline}
\int_0^{\tau_0}\int_{\cC^\Sg_{r_0} \times \{\tau\}} \sg^{-2\al}\bigg(\al \frac{\dot \sg}{\sg} - \tilde{\sF} - \frac{N}{r^2}\bigg)|Z|^2 \sH d\mu d\tau \\
\le \int_0^{\tau_0}\int_{\cC^\Sg_{r_0} \times \{\tau\}} \sg^{-2\al}\bigg( 2 |\nabla Z|^2 + \frac{\sg}{\al \dot \sg} \bigg| \frac{\partial Z}{\partial \tau} + \Delta Z \bigg|^2 \bigg) \sH d\mu d\tau
\end{multline}
holds for every $Z \in C^\infty(\cZ \times [0, \tau_0])$ that is compactly supported in $\cC^\Sg_{r_0} \times [0, \tau_0)$ and vanishes on $\cC^\Sg_{r_0} \times \{0\}$.
\end{lem}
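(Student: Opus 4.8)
The plan is to adapt the proof of \cite[Lemma 5.3]{kotschwarwang2015} to our setting, with the key point being that the extra torsion contributions (which decay quadratically) and the logarithmic error terms in $\wsF$ do not disturb the structure of the argument. The starting point is the same integration-by-parts identity used in \cite{kotschwarwang2015}: one considers the weighted integral $\int_0^{\tau_0}\int_{\cC^\Sg_{r_0}\times\{\tau\}} \sg^{-2\al}\langle \na_{\na\phi} Z, Z\rangle \sH\, d\mu\, d\tau$ — or, equivalently, the divergence of $\sg^{-2\al}|Z|^2 \na\sH$ — and expands it using the evolution equations for $g$ and $\vol_{g(\tau)}$ under the (modified) backward Ricci flow \eqref{equation backwards modified Ricci flow} together with the definitions of $\cA$, $\sF$, and $\wsF$. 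The identity produces a term of the form $\al \tfrac{\dot\sg}{\sg}|Z|^2$ with the correct sign, a term $-\wsF |Z|^2$, error terms proportional to $r^{-2}|Z|^2$ coming from the torsion and curvature contributions to $\tfrac{\dd g}{\dd\tau}$, and a gradient term controlled by $|\na Z|^2$.

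First I would write out the pointwise identity: integrating $\na_i(\sg^{-2\al}|Z|^2\na_i\sH)$ over $\cC^\Sg_{r_0}\times\{\tau\}$ (using compact support in space) gives, after using $\na\sH = \sH\na\phi$,
\[
\int \sg^{-2\al}\Big( |Z|^2 \Delta\phi + |Z|^2|\na\phi|^2 + 2\langle \na_{\na\phi} Z, Z\rangle\Big)\sH\, d\mu = 0.
\]
Then I would rewrite $\langle \na_{\na\phi}Z, Z\rangle$ in terms of $\cA Z$, $\dd_\tau Z$, and $\sF$, integrate in $\tau$ over $[0,\tau_0]$, and use the vanishing of $Z$ at $\tau = 0$ together with the compact support to discard boundary terms. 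Young's inequality on the cross term $\langle \cA Z, Z\rangle$ — absorbing it into $2|\na Z|^2$ and $\tfrac{\sg}{\al\dot\sg}|(\dd_\tau + \De)Z|^2$, exactly as in \cite{kotschwarwang2015} — yields the stated inequality, with the $N/r^2$ term absorbing all the torsion-induced corrections in the metric evolution. The curvature and torsion bounds $h^2(|\Rm| + |\na\Rm| + |T|^2)\le CK_0$ from Proposition \ref{proposition backward flow reparametrization} guarantee these corrections are of the claimed size.

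The main obstacle, such as it is, is bookkeeping: one must verify that every place where the Ricci flow evolution $\dd_\tau g = 2\Ric$ was used in \cite[Lemma 5.3]{kotschwarwang2015} can be replaced by \eqref{equation backwards modified Ricci flow} at the cost of only a $C r^{-2}$ error, and that the definition of $\wsF$ here (which includes the $\tfrac{2}{3}R$ term and, via $\sF$, absorbs the logarithmically-growing $\sE$ contributions) still satisfies the algebraic cancellation needed for the cross terms to collapse. Since $\wsF = \sF + \al\tfrac{\dot\sg}{\sg}$ and the $\al\tfrac{\dot\sg}{\sg}$ term is precisely the one producing the favorable coefficient on $|Z|^2$, this cancellation is formal and carries over verbatim; the logarithmic growth of $\sF$ is harmless here because the inequality only asserts a lower bound with $\wsF$ appearing with a minus sign, which is exactly where such a term belongs. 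I therefore expect the proof to be a faithful transcription of \cite[Lemma 5.3]{kotschwarwang2015} with the torsion terms tracked through as $O(r^{-2})$ errors and folded into $N/r^2$.
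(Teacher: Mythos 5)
Your proposal is correct and is essentially the paper's approach: the paper supplies no separate argument for this lemma, stating only that the proof of the corresponding Kotschwar--Wang lemma goes through unchanged once $\partial_\tau g$ and $\partial_\tau d\mu$ are replaced by their Laplacian-flow counterparts, with the quadratically decaying curvature and torsion corrections (using $|T|^2=-R=O(r^{-2})$) absorbed into the $N/r^2$ term --- which is exactly your plan. The only point to tighten is the last step: Young's inequality is applied to $\langle(\partial_\tau+\De)Z,Z\rangle$ after writing $\cA Z=(\partial_\tau+\De)Z-\cS Z$ and integrating the $\De Z$ contribution by parts once more (it cannot be applied to $\langle\cA Z,Z\rangle$ pointwise), and it is precisely this extra integration by parts that reinstates the $-\wsF$ on the left-hand side, which otherwise cancels against the $\sF$ coming from your substitution $\na_{\na\phi}Z=\partial_\tau Z+\tfrac{\sF}{2}Z-\cA Z$.
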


\subsection{An approximate solution to the conjugate heat equation}

We use precisely same function defined in \cite{kotschwarwang2015} as our integration kernel. For any $a \in (0,1)$ and $\rho \in (r_0, \infty)$, define
\begin{equation}
    \sH_2(x,\tau) := \sH_{2; a, \rho}(x,\tau) = (\tau + a)^{-7/2} \exp \bigg(-\frac{(h(x,\tau) - \rho)^2}{4(\tau + a)} \bigg)
\end{equation}
on $\cC^\Sg_{r_0} \times [0,\tau_0]$. We note that analogously to the Ricci soliton case, $\sH_{2;0,0} = \tau^{-7/2}e^{-f}$ satisfies
\[\partial_\tau \sH_{2; 0,0} - \Delta \sH_{2; 0,0} + \frac{2}{3}R\sH_{2; 0,0} = 0.
\]

\subsubsection{Estimates on the derivatives of $\sH_2$}
We compute the image of $\sH_2$ under the conjugate heat operator $\dd_\tau - \De$. 

\begin{lem}\label{lemma conjugate heat operator acting on H2}
Let $\sH_2$ be defined as above. The following identity holds
\begin{align}\label{equation conjugate heat operator acting on H2}
    \sH_2^{-1} \bigg( \frac{\partial \sH_2}{\partial \tau}  -\Delta  \sH_2 \bigg) &= \bigg( \frac{\tau^2 (h- \rho)^2}{h^2(\tau + a)^2} + \frac{2 \tau^2 \rho}{h^3 (\tau + a)} + \frac{\tau(h-\rho)}{h(\tau + a)} \bigg)(R - \sE) \nonumber\\
    &\qquad - \frac{3\rho  }{ h (\tau + a)}   + \frac{2\tau R}{3(\tau + a)}\bigg( \frac{\rho}{h} - 1 \bigg).
\end{align}
\end{lem}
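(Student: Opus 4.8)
The plan is to reduce the identity to the algebra of the radial function $h$, whose first and second derivatives are recorded in Lemma~\ref{lemma estimates on time dep distance function}. First I would write $\sH_2 = (\tau+a)^{-7/2}e^{-w}$ with $w := \frac{(h-\rho)^2}{4(\tau+a)}$, so that the chain rule gives
\begin{equation*}
\sH_2^{-1}\!\left(\frac{\partial \sH_2}{\partial \tau} - \Delta \sH_2\right) = -\frac{7}{2(\tau+a)} - \frac{\partial w}{\partial \tau} + \Delta w - |\nabla w|^2 ,
\end{equation*}
and the whole computation is thereby reduced to evaluating $\partial_\tau w$, $\Delta w$, and $|\nabla w|^2$.

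From the definition of $w$ one has $\nabla w = \frac{h-\rho}{2(\tau+a)}\nabla h$, $|\nabla w|^2 = \frac{(h-\rho)^2}{4(\tau+a)^2}|\nabla h|^2$, $\Delta w = \frac{1}{2(\tau+a)}\big(|\nabla h|^2 + (h-\rho)\Delta h\big)$, and $\partial_\tau w = \frac{h-\rho}{2(\tau+a)}\partial_\tau h - \frac{(h-\rho)^2}{4(\tau+a)^2}$. Into these I would substitute the identities of Lemma~\ref{lemma estimates on time dep distance function}, namely $\partial_\tau h = \frac{2\tau}{h}(R-\sE)$, $|\nabla h|^2 = 1 + \frac{4\tau^2}{h^2}(\sE-R)$, and $h\Delta h = 7 - \frac43\tau R - |\nabla h|^2$ (equivalently $h\Delta h = 6 - \frac43\tau R + \frac{4\tau^2}{h^2}(R-\sE)$ once $|\nabla h|^2$ is eliminated), together with $|T|^2 = -R$, and then collect terms. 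The combination $\Delta w - |\nabla w|^2$ produces a contribution proportional to $(h-\rho)^2\big(1-|\nabla h|^2\big) = (h-\rho)^2\frac{4\tau^2}{h^2}(R-\sE)$, which supplies the leading $(R-\sE)$-term $\frac{\tau^2(h-\rho)^2}{h^2(\tau+a)^2}(R-\sE)$; the constant pieces $-\frac{7}{2(\tau+a)}$ and $\frac{|\nabla h|^2}{2(\tau+a)}$ combine with the ``$6/h$'' part of $(h-\rho)\Delta h/(2(\tau+a))$ to give $-\frac{3\rho}{h(\tau+a)}$; the ``$\frac43\tau R$'' part of $\Delta h$ gives the $\frac{2\tau R}{3(\tau+a)}\big(\frac{\rho}{h}-1\big)$-term; and the leftover $(R-\sE)$-pieces coming from $\partial_\tau h$, from the $\frac{4\tau^2}{h^2}(R-\sE)$ part of $\Delta h$, and from $|\nabla h|^2$ combine --- after the $\frac{2\tau^2}{h^2(\tau+a)}(R-\sE)$ contributions cancel --- into the remaining $(R-\sE)$-terms in the statement. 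Repeatedly rewriting $\frac{h-\rho}{h} = 1 - \frac{\rho}{h}$ is what converts the raw expressions into the $\rho$-explicit form displayed.

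The computation is elementary, so the only real difficulty is the bookkeeping. The feature absent from the Ricci-soliton case treated in \cite{kotschwarwang2015} is that here $|\nabla h|^2$ is genuinely not equal to $1$ but carries the correction $\frac{4\tau^2}{h^2}(\sE-R)$ --- reflecting the logarithmically growing error term in the fundamental shrinker identity \eqref{equation fundamental shrinker identity} --- so the $(R-\sE)$-contributions must be tracked simultaneously through $\partial_\tau w$, $\Delta w$, and $|\nabla w|^2$, and the cancellation of the $\frac{2\tau^2}{h^2(\tau+a)}(R-\sE)$ terms verified. As an end-of-proof consistency check I would set $a=\rho=0$: then $w=f$ (since $h^2=4\tau f$), and the identity should collapse to $\sH_2^{-1}(\partial_\tau\sH_2 - \Delta\sH_2) = -\frac23 R$, in agreement with the remark preceding the lemma and with the trace of \eqref{equation derivative identities on f 2} combined with $|T|^2=-R$.
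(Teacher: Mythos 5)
Your proposal is correct and is essentially the paper's own argument: the paper likewise computes $\partial_\tau\sH_2$ and $\Delta\sH_2$ directly and substitutes the identities of Lemma~\ref{lemma estimates on time dep distance function}; organizing the calculation through the exponent $w=\tfrac{(h-\rho)^2}{4(\tau+a)}$ rather than through the Hessian of $\sH_2$ and its trace is only a difference of bookkeeping. One caveat: if you carry your bookkeeping through faithfully (and your intermediate formulas are all correct), the coefficient of $(R-\sE)$ comes out as
\begin{equation*}
\frac{\tau^2(h-\rho)^2}{h^2(\tau+a)^2} \;-\; \frac{2\tau^2\rho}{h^3(\tau+a)} \;-\; \frac{\tau(h-\rho)}{h(\tau+a)},
\end{equation*}
with minus signs on the last two terms, so you will not literally reproduce ``the remaining $(R-\sE)$-terms in the statement'': the lemma as displayed carries two sign typos, and even the last line of the paper's proof differs from its statement (it has the minus on the third term, while its plus on the middle term comes from regrouping $-\tfrac{7\rho}{2h(\tau+a)}+\tfrac{\rho|\nabla h|^2}{2h(\tau+a)}$ as $-\tfrac{6\rho}{2h(\tau+a)}+\tfrac{\rho}{2h(\tau+a)}(1-|\nabla h|^2)$ instead of with a minus). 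Your $a=\rho=0$ consistency check is valuable precisely here: only with the minus on $\tfrac{\tau(h-\rho)}{h(\tau+a)}$ does the identity collapse to $-\tfrac{2}{3}R$, in agreement with the exact equation satisfied by $\tau^{-7/2}e^{-f}$. None of this matters downstream, since the subsequent estimates only use $|\sI|$, $|\sI_\tau|$, $|\nabla\sI|$, $|\Delta\sI|$ (and the later definition of $\sI$ already carries the minus on the third term), but you should state the corrected signs rather than defer to the displayed formula.
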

\begin{proof}
We compute each term separately. First, take the time derivative:
\begin{equation}\label{equation time derivative of H2}
    \sH_2^{-1} \frac{\partial \sH_2}{\partial \tau} = \frac{(h- \rho)^2}{4(\tau + a)^2} - \frac{7}{2(\tau + a)} - \frac{\tau(h-\rho)}{h(\tau + a)} (R-\sE).
\end{equation}
The last summand is obtained by substituting in the expression for $\tfrac{\dd h}{\dd \tau}$ given in Lemma \ref{lemma estimates on time dep distance function}.
The spatial second derivative is computed just as in \cite[(5.10)]{kotschwarwang2015}, by first observing that $\sH_2^{-1}\na \sH_2 = -(h-\rho)/(2(\tau + a))\na h$ and plugging in expressions for the derivatives of $h$ from Lemma \ref{lemma estimates on time dep distance function}.
\begin{multline*}
     \sH_2^{-1} \nabla_i \nabla_j  \sH_2 =  \frac{1}{2(\tau + a)}\bigg(-g_{ij} + 2\tau\big(\Ric_{ij} + \tfrac{1}{3}|T|^2g_{ij} + 2T_i^{\;k}T_{kj} \big)\bigg) \\
     + \frac{\rho}{2(\tau + a)} \nabla_i \nabla_j h + \frac{(h- \rho)^2}{4(\tau + a)^2} \nabla_i h \nabla_j h.
\end{multline*}
Tracing, we find that
\begin{multline}\label{equation Laplacian of H2}
     \sH_2^{-1} \Delta  \sH_2 = - \frac{7}{2(\tau + a)} + \frac{2\tau R}{3(\tau + a)} + \frac{\rho}{2(\tau + a)} \Delta h + \frac{(h- \rho)^2}{4(\tau + a)^2} |\nabla h|^2.
\end{multline}
Subtract \eqref{equation Laplacian of H2} from \eqref{equation time derivative of H2}.
\begin{align*}
 \sH_2^{-1} \bigg( \frac{\partial \sH_2}{\partial \tau}  -\Delta  \sH_2 \bigg) &= \frac{(h- \rho)^2}{4(\tau + a)^2}(1 - |\nabla h|^2) - \frac{\rho \Delta h}{2(\tau + a)} \\ 
 &\qquad - \frac{2\tau R}{3(\tau + a)} - \frac{\tau(h-\rho)}{h(\tau + a)} (R-\sE)\\
 &= \frac{(h- \rho)^2}{4(\tau + a)^2}(\frac{4\tau^2}{h^2}(R-\sE)) - \frac{\rho }{2 h (\tau + a)}\bigg(7 - |\nabla h|^2 - \frac{4\tau R}{3}\bigg) \\
 &\qquad - \frac{2\tau R}{3(\tau + a)} - \frac{\tau(h-\rho)}{h(\tau + a)} (R-\sE)\\
 &= \frac{\tau^2 (h- \rho)^2}{h^2(\tau + a)^2}(R-\sE) - \frac{6\rho}{2 h (\tau + a)} + \frac{\rho}{2h (\tau + a)}(1-|\na h|^2)  \\
 &\qquad + \frac{2 \rho \tau R }{3 h (\tau + a)} - \frac{2\tau R}{3(\tau + a)} - \frac{\tau(h-\rho)}{h(\tau + a)} (R-\sE)\\
 &= \bigg( \frac{\tau^2 (h- \rho)^2}{h^2(\tau + a)^2} + \frac{2 \tau^2 \rho}{h^3 (\tau + a)} - \frac{\tau(h-\rho)}{h(\tau + a)} \bigg)(R- \sE) \\
 &\qquad - \frac{3\rho }{h (\tau + a)}   + \frac{2\tau R}{3(\tau + a)}\bigg( \frac{\rho}{h} - 1 \bigg) \\
\end{align*}
This concludes the proof of the lemma.
\end{proof}

\begin{lem}
For all $a \in (0,1)$, $\ga > 0$, and $\rho > r_0 \ge 1$, there exist constants $C = C(\ga) > 0$ and $r_4 = r_4(\ga, K_0) \ge r_0$, such that $\sH_2 = \sH_{2; a, \rho}$ and $\widetilde{\sF}_2 = \widetilde{\sF}_{2;a, \rho}$ satisfy
\begin{equation}\label{equation compare kernels with h and r}
    \frac{1}{2}e^{-\frac{(r(x) - \rho)^2}{4(\tau + a)}} \le  (\tau + a)^\frac{7}{2} \sH_2(x, \tau) \le 2 e^{-\frac{(r(x) - \rho)^2}{4(\tau + a)}},
\end{equation}
and
\begin{equation}\label{equation pointwise bounds on F2}
    -C \log(r) - \frac{3\rho}{h(\tau + a)} - \frac{1}{8h} \le \widetilde{\sF}_2 \le  C \log(r) - \frac{3\rho}{h(\tau + a)} + \frac{1}{8h},
\end{equation}
and $\phi_2 := \log \sH_2$ satisfies 
\begin{equation}\label{equation lower bound on log H2}
    \nabla \nabla \phi_2 \ge - \frac{g}{2(\tau + a)} - \frac{g}{48}
\end{equation}
on the domain $\cC^\Sg_{\max\{r_4,\ga \rho\}} \times [0, \tau_0]$.
\end{lem}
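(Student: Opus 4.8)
The plan is to derive all three estimates by direct substitution: plug the identities of Lemma~\ref{lemma estimates on time dep distance function} (for $\nabla h$, $\nabla^2 h$, $\Delta h$, $|\nabla h|^2$, $\partial_\tau h$) and the identity~\eqref{equation conjugate heat operator acting on H2} of Lemma~\ref{lemma conjugate heat operator acting on H2} (for $\sH_2^{-1}(\partial_\tau \sH_2 - \Delta \sH_2)$) into the definitions of $\sH_2$, of $\widetilde{\sF}_2 = \sH_2^{-1}(\partial_\tau \sH_2 - \Delta \sH_2) + \tfrac23 R$, and of $\phi_2 = \log \sH_2$, then estimate the resulting error terms using the uniform asymptotics at our disposal: $|\Rm| = O(r^{-2})$, hence $|R| = |T|^2 = O(r^{-2})$, from Proposition~\ref{proposition backward flow reparametrization}; $\sE = O(\log r)$ with $|\nabla\sE| = O(r^{-1})$, $|\Delta\sE| = O(r^{-2})$ from Lemma~\ref{lemma mixed time and space derivative estimates on shrinker potential}; the comparability $\tfrac12 r \le h \le 2r$ of~\eqref{equation comparison of h and r}; the finer comparison $|h^2 - r^2| \le C\tau^2\log r$ coming from~\eqref{equation C0 estimates on f} and~\eqref{equation comparability h and r}; and the crude bounds $\tau \le \tau_0 < 1$, $a \le \tau + a < 2$. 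The domain constraint $r \ge \ga\rho$ is used only through $\rho/h \le 2/\ga$, and throughout I allow the constants to depend on $\ga$ and $K_0$.

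For the kernel comparison~\eqref{equation compare kernels with h and r}, I would note that $(\tau+a)^{7/2}\sH_2 = \exp(-(h-\rho)^2/(4(\tau+a)))$, so the claim amounts to controlling the exponent difference $|(h-\rho)^2 - (r-\rho)^2| = |h^2 - r^2|\,\big|1 - 2\rho/(h+r)\big|$. Using $|h^2 - r^2| \le C\tau^2\log r$ together with $\rho/(h+r) \le \rho/r \le 1/\ga$ bounds this quantity; dividing by $4(\tau+a) \ge 4a$ and exponentiating then yields the two-sided bound between $\tfrac12$ and $2$ on $\cC^\Sg_{\max\{r_4, \ga\rho\}}$ once $r_4$ is chosen large. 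For the bound~\eqref{equation pointwise bounds on F2} on $\widetilde{\sF}_2$, I would read off its explicit form from~\eqref{equation conjugate heat operator acting on H2}. The leading term is precisely $-3\rho/(h(\tau+a))$, which appears on both sides of~\eqref{equation pointwise bounds on F2}; of the remaining terms, the three that multiply $R - \sE$ have coefficients bounded by constants depending on $\ga$ (via $\rho/h \le 2/\ga$, $(h-\rho)^2/h^2 \le (1+2/\ga)^2$, $\tau^2/(\tau+a)^2 \le 1$), so since $|R-\sE| \le C\log r$ two of them contribute $O(\log r)$ and one is $O(\log r / r^2)$; the terms $\tfrac{2\tau R}{3(\tau+a)}(\rho/h - 1)$ and $\tfrac23 R$ are $O(r^{-2})$. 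Absorbing the $O(\log r)$ contributions into $\pm C(\ga)\log r$ and the $O(r^{-2})$ and $O(\log r / r^2)$ contributions into $\pm\tfrac1{8h}$ for $r \ge r_4$ large gives~\eqref{equation pointwise bounds on F2}.

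Finally, for the Hessian bound~\eqref{equation lower bound on log H2}, from $\phi_2 = -\tfrac72\log(\tau+a) - (h-\rho)^2/(4(\tau+a))$ and $\nabla_i\nabla_j((h-\rho)^2) = 2\nabla_i h\nabla_j h + 2(h-\rho)\nabla_i\nabla_j h$, substituting $\nabla_i\nabla_j h$ from~\eqref{equation derivative identities on h 1} yields
\begin{equation*}
\nabla_i\nabla_j\phi_2 = -\frac{1}{2(\tau+a)}\left[\frac{\rho}{h}\nabla_i h\nabla_j h + \frac{h-\rho}{h}g_{ij} + \frac{2\tau(h-\rho)}{h}\Big(-\Ric_{ij} - \tfrac13|T|^2 g_{ij} - 2T_i^{\;k}T_{kj}\Big)\right].
\end{equation*}
The point is that the rank-one term $\tfrac{\rho}{h}\nabla h\otimes\nabla h$ must not be estimated in isolation (that leaves an uncontrolled factor $\rho/h$); instead, using $|\nabla h|^2 = 1 + \tfrac{4\tau^2}{h^2}(\sE - R)$ from~\eqref{equation derivative identities on h 2}, one checks that $\tfrac{\rho}{h}\nabla h\otimes\nabla h + \tfrac{h-\rho}{h}g \le \big(1 + \tfrac{4\tau^2\rho}{h^3}(\sE - R)\big)g \le (1 + \epsilon)g$, the error $\epsilon = O(\log r / r^2)$ being made small by taking $r_4$ large, while the torsion/Ricci bracket is $O(r^{-2})g$. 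Thus $\nabla^2\phi_2 \ge -\tfrac{1}{2(\tau+a)}(1 + \epsilon)g - O(r^{-2})g$, which absorbs into~\eqref{equation lower bound on log H2} for $r_4$ large. I expect this last step to be the main obstacle: it is the one place where the naive quadratic-form estimate is lossy, so one must exploit both that $\nabla h$ is almost $g$-unit and the fine decay of $\sE - R$; in the same spirit, the kernel comparison genuinely relies on $|h^2 - r^2| = O(\tau^2\log r)$ rather than mere comparability of $h$ and $r$.
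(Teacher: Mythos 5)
Your treatments of the bound on $\wsF_2$ and of the Hessian follow the same route as the paper and are essentially sound, but there is a genuine gap in your proof of the kernel comparison \eqref{equation compare kernels with h and r}. Your (correct) factorization gives the exponent error $\frac{|h^2-r^2|\,|1-2\rho/(h+r)|}{4(\tau+a)} \le \frac{C(\ga)\tau^2\log r}{4(\tau+a)}$, and you then conclude that this is small ``once $r_4$ is chosen large.'' It is not: after dividing by $4(\tau+a)$ you only get $\tau^2/(\tau+a)\le \tau \le \tau_0$, so at $\tau=\tau_0$ the bound is of order $\log r$, which \emph{grows} on the unbounded domain $\cC^\Sg_{\max\{r_4,\ga\rho\}}$; enlarging $r_4$ makes it worse, and no choice of $r_4(\ga,K_0)$ yields the factor-$2$ comparison uniformly in $(x,\tau,a)$. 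The paper's proof instead invokes the pointwise estimate $|h-r|\le CK_0\tau^2\log r /r$ from \eqref{equation comparability h and r} and records an exponent error of the form $CK_0\tau^2\log r/((\tau+a)r)$, which does tend to zero; but note that to pass from your factorization $(h^2-r^2)\bigl(1-2\rho/(h+r)\bigr)$ to an error carrying that extra factor $r^{-1}$ one must bound the companion factor $|h+r-2\rho|$ by a constant, which holds only in a bounded neighbourhood of $\{r\approx\rho\}$ and not on the whole cone. So you cannot close this step by ``take $r_4$ large'': you need either to restrict where the two-sided comparison is asserted, or to prove a one-sided comparison with a relaxed Gaussian exponent (this weaker form, with weight $e^{-(r-\rho)^2/(8(\tau+a))}$, is what is actually used downstream), or to supply the missing control on $|h+r-2\rho|$ that the paper's displayed error term implicitly assumes.

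A secondary, easily fixable slip occurs at the end of your Hessian argument. Your key maneuver, combining $\tfrac{\rho}{h}\na h\otimes\na h$ with $\tfrac{h-\rho}{h}g$ via $|\na h|^2 = 1+\tfrac{4\tau^2}{h^2}(\sE-R)$, is exactly the paper's; but you then replace the correction by $\epsilon = O(\log r/r^2)$ uniformly in $\tau,a$ and leave it inside the prefactor $\tfrac{1}{2(\tau+a)}$. Since $a\in(0,1)$ is arbitrary and $r_4$ may depend only on $\ga,K_0$, the quantity $\tfrac{\epsilon}{2(\tau+a)}$ need not be $\le \tfrac{1}{48}$ when $a$ is small. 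Keep the $\tau^2$ until after dividing: $\tfrac{1}{2(\tau+a)}\cdot\tfrac{4\tau^2\rho}{h^3}|\sE-R| \le \tfrac{2\tau\rho}{h^3}|\sE-R| \le C(\ga)\tfrac{\log r}{h^2}$, which is uniform in $a$ and is precisely how the paper absorbs this term — the same $\tau/(\tau+a)\le 1$ cancellation you already used, implicitly, when pulling the torsion/Ricci bracket out as $O(r^{-2})g$. Your derivation of \eqref{equation pointwise bounds on F2} is fine and matches the paper.
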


\begin{proof}
To prove the first inequality \eqref{equation compare kernels with h and r}, we appeal to the estimate \eqref{equation comparability h and r} and obtain 
\[\frac{(r-\rho)^2}{(\tau + a)} - \frac{CK_0\tau^2 \log(r)}{(\tau + a)r} \le \frac{(h - \rho)^2}{(\tau + a)}\le \frac{(r-\rho)^2}{(\tau + a)} + \frac{CK_0\tau^2 \log(r)}{(\tau + a)r}.
\]
When $\tau, a \in (0,1)$, $\tfrac{\tau^2}{\tau + a} \le 1$, so for sufficiently large $r_4 \ge r_0$ (depending only on $K_0$), $\tfrac{\log r}{r}$ is arbitrarily small and the error terms can be made arbitrarily close to $e^0 = 1$. This gives \eqref{equation compare kernels with h and r}.

The second inequality \eqref{equation pointwise bounds on F2} comes from examining the terms of \eqref{equation conjugate heat operator acting on H2}. First, note that $\big|\tfrac{\tau}{\tau + a}\big|\le 1$ and $\big|\tfrac{h-\rho}{h}\big|\le C(\ga)$, so the coefficient of $(R-\sE)$ in \eqref{equation conjugate heat operator acting on H2} is a constant $C(\ga)$. Thus, this term can be bounded above and below by $\pm C\log (r)$. Observe that
\[\bigg|  \frac{2\tau R}{3(\tau + a)}\bigg( \frac{\rho}{h} - 1 \bigg) \bigg| \le |R|\big|2 \ga^{-1} - 1\big| \le \frac{C K_0}{h^2},
\]
where $C$ depends on $\ga$. Since we can make $r_4 \ge r_0$ large enough that $C K_0 h^{-1} \le \tfrac{1}{8}$, we obtain the estimate \eqref{equation pointwise bounds on F2}.  

The third inequality \eqref{equation lower bound on log H2}, the lower bound on the Hessian of $\phi_2$ is obtained along the same lines as \cite[(5.16)]{kotschwarwang2015} with the requisite modifications made for the Laplacian flow case.
\begin{align*}
        \na_i \na_j \phi_2 &= -\frac{1}{2(\tau + a)}((h-\rho)\na_i \na_j h + \na_i h \na_j h)\\
        &= -\frac{1}{2(\tau + a)}\big(g_{ij} - 2\tau\big(\Ric_{ij} + \tfrac{1}{3}|T|^2g_{ij} - 2T_i^{\;k}T_{kj} \big)\big) \\
        &\qquad+\frac{\rho}{2h(\tau + a)}\big(g_{ij} - 2\tau\big(\Ric_{ij} + \tfrac{1}{3}|T|^2g_{ij} - 2T_i^{\;k}T_{kj} \big) -\na_i h\na_j h\big)\\
        &= -\frac{g_{ij}}{2(\tau+a)} + \frac{\tau(h- \rho)}{h(\tau + a)}\big(\Ric_{ij} + \tfrac{1}{3}|T|^2g_{ij} - 2T_i^{\;k}T_{kj} \big) \\
        &\qquad +\frac{\rho g_{ij}}{2h(\tau + a)} - \frac{\rho \na_i h \na_j h}{2h(\tau+a)}
\end{align*}
Since $|\Ric_{ij} + \tfrac{1}{3}|T|^2g_{ij} - 2T_i^{\;k}T_{kj}| \le CK_0 h^{-2}$ and $|\na h|^2 \le 1 + C \tau^2 \log(r)h^{-2}$, we can obtain the following estimates (in the sense of quadratic forms) with a constant $C=C(\ga)$.
\[\frac{\tau(h- \rho)}{h(\tau + a)}\big(\Ric_{ij} + \tfrac{1}{3}|T|^2g_{ij} - 2T_i^{\;k}T_{kj} \big) \ge - \frac{CK_0g_{ij}}{h^2}
\]
and
\[\frac{\rho \na_i h \na_j h}{2h(\tau+a)} \le \frac{\rho g}{2h(\tau+a)} + \frac{C \log(r)g}{h^2}.
\]
For sufficiently large $r_4 \ge r_0$, we can make $h^{-2}$ and $\log(r)h^{-2}$ arbitrarily small. Substituting these lower bounds into our equation gives the lower bound \eqref{equation lower bound on log H2}.
\end{proof}

\subsubsection{Estimates on the derivatives of $\widetilde{\sF}_2$}

To simplify our analysis, we group terms as follows. Rewrite
\begin{align*}\label{equation conjugate heat operator acting on H2}
   \wsF_2 &= \bigg( \frac{\tau^2 (h- \rho)^2}{h^2(\tau + a)^2} + \frac{2 \tau^2 \rho}{h^3 (\tau + a)} + \frac{\tau(h-\rho)}{h(\tau + a)} \bigg)(R - \sE) \\
    &\qquad - \frac{3\rho  }{ h (\tau + a)}   + \frac{2\tau R}{3(\tau + a)}\bigg( \frac{\rho}{h} - 1 \bigg)  + \frac{2}{3}R,
\end{align*}
as
\begin{equation}\label{equation F2 with compact notation} \wsF_2 = \sI(x,\tau) \sE - \frac{\rho (n-1) }{2 h (\tau + a)}   + R\sJ\bigg(\frac{\rho}{h}, \tau \bigg),
\end{equation}
where 
\[ \sI(x,\tau) = \frac{\tau^2 (h- \rho)^2}{h^2(\tau + a)^2} + \frac{2 \tau^2 \rho}{h^3 (\tau + a)} - \frac{\tau(h-\rho)}{h(\tau + a)}
\]
and 
\[\sJ(s,\tau) = \frac{2\tau }{3(\tau + a)}\cdot s + \frac{2a }{3(\tau + a)}. \]
Using this notation, we prove the following lemma.

\begin{lem}
For all $a \in (0,1)$, $\ga > 0$, and $\rho > r_0$, there exist constants $N$ and $r_5 \ge r_0$ both depending only on $\ga$ and $K_0$ such that $\wsF_2 = \wsF_{2; a, \rho}$ satisfies
\begin{equation}\label{equation lower bound for backward heat on F2}
    \frac{\dd \wsF_2}{\dd \tau} + \De \wsF_2 \ge -\frac{N}{(\tau + a)}\log(r) + \frac{3\rho}{h(\tau + a)^2}
\end{equation}
on the set $\cC^\Sg_{\max\{r_5, \ga\rho\}} \times [0, \tau_0]$.
\end{lem}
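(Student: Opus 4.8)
The plan is to differentiate the grouped expression \eqref{equation F2 with compact notation} termwise, using that $\wsF_2 = \sI \sE - \tfrac{3\rho}{h(\tau+a)} + R\sJ$, and to show that exactly one term produces a leading positive contribution of the size $\tfrac{3\rho}{h(\tau+a)^2}$ while every other term is bounded below by $-N(\tau+a)^{-1}\log r$ on the region $\cC^\Sg_{\max\{r_5,\ga\rho\}}\times[0,\tau_0]$. First I would record the elementary estimates that will be used repeatedly: $\tfrac{\tau}{\tau+a}\le 1$, $\tfrac{\tau^2}{\tau+a}\le 1$, and on the domain $h\ge\ga\rho$ (so $|h-\rho|/h\le C(\ga)$ and $\rho/h\le C(\ga)$), together with the curvature and potential decay from Lemma \ref{lemma mixed time and space derivative estimates on shrinker potential} and Proposition \ref{proposition backward flow reparametrization}, namely $h^2(|R|+|\na R|)\le CK_0$, $|\sE|\le C\log r$, $|\na\sE|=O(r^{-1})$, $|\De\sE|=O(r^{-2})$, and the derivative identities for $h$ from Lemma \ref{lemma estimates on time dep distance function}: $\na h=\tfrac{2\tau}{h}\na f$, $|\na h|^2=1+\tfrac{4\tau^2}{h^2}(\sE-R)$, $h\De h=7-\tfrac43\tau R-|\na h|^2$, and $\dd_\tau h=\tfrac{2\tau}{h}(R-\sE)$.

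Next I would handle the three groups separately. For the term $\sI\,\sE$: $\sI$ is a sum of rational expressions in $h,\rho,\tau,a$ which are $O((\tau+a)^{-1})$ uniformly on $h\ge\ga\rho$, and applying $\dd_\tau+\De$ to $\sI\sE$ spreads derivatives onto $\sI$ (producing factors no worse than $(\tau+a)^{-2}$ but multiplied by $\tau$'s and inverse powers of $h$, all bounded by $C(\tau+a)^{-1}$ after using $h\ge\ga\rho\ge\ga r_0$) and onto $\sE$ (producing $\na\sE,\De\sE=O(r^{-1}),O(r^{-2})$, which kill the $(\tau+a)^{-1}$); the net lower bound is $-N(\tau+a)^{-1}\log r$. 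For $R\sJ(\rho/h,\tau)$: since $\sJ=\tfrac{2\tau}{3(\tau+a)}\cdot\tfrac{\rho}{h}+\tfrac{2a}{3(\tau+a)}$ is bounded by $C(\ga)$ and all its $\dd_\tau,\na,\De$ derivatives are $O((\tau+a)^{-1})$ in the relevant region, while $R,\na R,\De R$ decay like $h^{-2}$, this entire group is $O(h^{-2}(\tau+a)^{-1})$ and hence absorbed into $-N(\tau+a)^{-1}\log r$ once $r_5$ is large. The decisive term is $-\tfrac{3\rho}{h(\tau+a)}$: here $\dd_\tau\bigl(-\tfrac{3\rho}{h(\tau+a)}\bigr)=\tfrac{3\rho}{h(\tau+a)^2}+\tfrac{3\rho}{h^2(\tau+a)}\dd_\tau h$, and since $\dd_\tau h=\tfrac{2\tau}{h}(R-\sE)$ the correction is $O\bigl(\tfrac{\rho}{h^3(\tau+a)}\log r\bigr)$, which for $h\ge\ga\rho$ is $O\bigl(\tfrac{1}{h^2(\tau+a)}\log r\bigr)$ and hence much smaller than the main term; similarly $\De\bigl(-\tfrac{3\rho}{h(\tau+a)}\bigr)=-\tfrac{3\rho}{\tau+a}\De(h^{-1})=-\tfrac{3\rho}{\tau+a}\bigl(\tfrac{2|\na h|^2}{h^3}-\tfrac{\De h}{h^2}\bigr)$, which using $|\na h|^2\le 1+C\tau^2 h^{-2}\log r$ and $h\De h=7-\tfrac43\tau R-|\na h|^2$ is $O\bigl(\tfrac{\rho}{h^3(\tau+a)}\bigr)=O\bigl(\tfrac{1}{h^2(\tau+a)}\bigr)$, again negligible relative to $\tfrac{3\rho}{h(\tau+a)^2}$ after enlarging $r_5$.

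Finally I would combine: choose $r_5\ge r_0$ large enough (depending only on $\ga$ and $K_0$) that all the error terms identified above are dominated, in absolute value, by the sum of the main positive term $\tfrac{3\rho}{h(\tau+a)^2}$ and a term $N(\tau+a)^{-1}\log r$; this yields \eqref{equation lower bound for backward heat on F2} on $\cC^\Sg_{\max\{r_5,\ga\rho\}}\times[0,\tau_0]$. The main obstacle I anticipate is bookkeeping rather than conceptual: one must be careful that the inverse powers of $(\tau+a)$ generated by the two parabolic derivatives do not accumulate to $(\tau+a)^{-2}$ in any term except the one supplying the main positive contribution, and the mechanism that prevents this is precisely that every factor of $(\tau+a)^{-1}$ coming from differentiating the Gaussian-type weight structure is accompanied either by a factor $\tau$ (from $\dd_\tau h$ or from the explicit $\tfrac{\tau}{\tau+a}$ prefactors) or by an extra inverse power of $h$, both of which are controlled on $h\ge\ga\rho\ge\ga r_0$. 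Tracking this, together with keeping the $\log r$ from $\sE$ separated from the curvature decay, is the only delicate point; the rest follows the template of \cite[Lemma 4.8, Lemma 5.x]{kotschwarwang2015}.
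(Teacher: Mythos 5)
Your plan is correct and follows essentially the same route as the paper: apply $\dd_\tau+\De$ termwise to the grouped form of $\wsF_2$, extract the exact positive term $\tfrac{3\rho}{h(\tau+a)^2}$ from the $\tau$-derivative of $-\tfrac{3\rho}{h(\tau+a)}$, and bound every remaining contribution by $N(\tau+a)^{-1}\log r$ using the identities for $h$, the decay of $\sE$, $\na \sE$, $\De \sE$, the curvature decay, and the observation that each $(\tau+a)^{-2}$ produced by differentiating $\sI$ or $\sJ$ carries a compensating factor of $\tau$ or $a$. The only detail to add when writing it out is a bound on the $\sJ\,\dd_\tau R$ term (which follows from the flow equation together with the uniform curvature decay), a point the paper treats at the same level of brevity.
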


\begin{proof}
Note that throughout the proof we tacitly assume that $r_5 \ge r_0$ increases whenever necessary. We begin by applying the operator $\dd_\tau + \De$ to the expression \eqref{equation F2 with compact notation}, and then analyzing each term.

\begin{align*}\frac{\dd \wsF_2}{\dd \tau} + \De \wsF_2 &= (\sI_\tau + \De \sI) \sE + \sI \sE_\tau + 2 \langle \nabla \sI , \nabla \sE \rangle + \sI \De \sE\\
&\qquad + \frac{3\rho}{(\tau + a)^2 h}  + \frac{3\rho}{(\tau + a) h^2} \bigg(\frac{\dd h}{\dd \tau} + \De h - \frac{2|\nabla h|^2}{h} \bigg) \\ 
&\qquad + \sJ \bigg( \frac{\dd R}{\dd \tau} + \De R \bigg)  + \sJ_\tau R - \frac{\rho \sJ_s R}{h^2}\bigg(\frac{\dd h}{\dd \tau} + \De h\bigg)  \\ 
&\qquad - \frac{2\rho \sJ_s}{h^2} \langle \nabla R, \nabla h \rangle + \frac{\rho R}{h^3}\bigg( 2 \sJ_s + \frac{\rho \sJ_{ss}}{h} \bigg) |\nabla h|^2.
\end{align*}
We first estimate 
\begin{align*}
    \bigg|\frac{3\rho}{(\tau + a) h^2} \bigg(\frac{\dd h}{\dd \tau} + \De h - \frac{2|\nabla h|^2}{h} \bigg)\bigg| = \frac{C(\ga)}{(\tau + a)} \frac{\log (r)}{h} \le \frac{N}{(\tau + a)}.
\end{align*}
Next, we estimate the terms with $\sJ$ and its derivatives.
\begin{align*}
|\sJ(\rho h^{-1}, \tau)| &= \bigg| \frac{2\tau }{3(\tau + a)}\cdot s + \frac{2a }{3(\tau + a)}\bigg| \le C(\ga^{-1} + 1)\\
|\sJ_\tau(\rho h^{-1}, \tau)| &= \bigg|\frac{2\rho}{3h} \frac{\tau^2}{(\tau + a)^2} \frac{a}{\tau^2} - \frac{2}{3}\frac{a}{(\tau +a)^2}\bigg| \le \frac{C\ga^{-1}}{\tau + a} \\
|\sJ_s (\rho h^{-1}, \tau)| & = \bigg|\frac{2\tau}{3(\tau + a)}\bigg| \le \frac{2}{3} \\
|\sJ_{ss} (\rho h^{-1}, \tau)| & = 0
\end{align*}
Observing that 
\[\frac{a}{\tau^2} = \frac{a}{(\tau +a)^2} \cdot \frac{(\tau + a)^2}{\tau^2}
\]
gives the second inequality. The curvature decay estimates of Claim \ref{claim uniform curvature estimates} and the identities in Lemma \ref{lemma estimates on time dep distance function} give us that each term involving $\sJ$ or its derivatives can be subsumed into a term of the form $N/(\tau + a)$.

It remains to estimate the terms involving $\sI$ and its derivatives. It is immediate that 
\[|\sI(x,\tau)| = \bigg| \frac{\tau^2 (h- \rho)^2}{h^2(\tau + a)^2} + \frac{2 \tau^2 \rho}{h^3 (\tau + a)} - \frac{\tau(h-\rho)}{h(\tau + a)} \bigg| \le C,
\]
where $C$ depends on $\ga$. This and Lemma \ref{lemma mixed time and space derivative estimates on shrinker potential} yields the estimates
\[ |\sI \sE_\tau| \le Cr^{-1}, |\sI \De \sE| \le C h^{-2},
\]
which can be absorbed into the term $-N\log(r)/(\tau + a)$ in \eqref{equation lower bound for backward heat on F2}. Next, we differentiate with respect to $\tau$.
\begin{align*}
     \sI_\tau &= \frac{(1-\rho/h)}{(1 + a/\tau)^2}\frac{2\rho}{h^2} \frac{\dd h}{\dd \tau} + 2 \frac{(1 - \rho/h)^2}{(1 + a/\tau)^3}\frac{a}{\tau^2}\\ 
     &\qquad +\frac{4\tau}{(\tau + a)}\frac{\rho}{h^3} - \frac{6\tau^2 \rho}{h^4 (\tau + a)} \frac{\dd h}{\dd \tau} - \frac{2\tau^2 \rho}{h^2(\tau+a)^2} \\
     &\qquad - \frac{1}{(1 + a/\tau)}\frac{\rho}{h^2}\frac{\dd h}{\dd \tau} - \frac{(1 - \rho/h)}{(1 + a/\tau)^2}\frac{a}{\tau^2}.\\
     &= \frac{(1-\rho/h)}{(1 + a/\tau)^2}\frac{2\rho}{h^2} \frac{\dd h}{\dd \tau} + 2 \frac{(1 - \rho/h)^2}{(1 + a/\tau)}\frac{a}{(\tau+a)^2}\\ 
     &\qquad +\frac{4\tau}{(\tau + a)}\frac{\rho}{h^3} - \frac{6\tau^2 \rho}{h^4 (\tau + a)} \frac{\dd h}{\dd \tau} - \frac{2\tau^2 \rho}{h^2(\tau+a)^2} \\
     &\qquad - \frac{1}{(1 + a/\tau)}\frac{\rho}{h^2}\frac{\dd h}{\dd \tau} - \frac{a(1 - \rho/h)}{(\tau + a)^2}.
\end{align*}
We obtain the second equality by applying the identity
\[\frac{a}{\tau^2} = \frac{a}{(\tau +a)^2} \cdot \big(1 + a/\tau)^2.
\]
Thus, $|\sI_\tau| \le N/(\tau + a)$, where $N$ depends on $\ga$. Finally, we estimate the spatial derivatives of $\sI$. 
\begin{align*}
    \nabla \sI &= \frac{(1-\rho/h)}{(1 + a/\tau)^2}\frac{2\rho}{h^2} \nabla h - \frac{6 \tau^2 \rho}{h^4 (\tau + a)}\nabla h - \frac{1}{(1 + a/\tau)}\frac{\rho}{h^2}\nabla h \\
    \De \sI &= \frac{1}{(1 + a/\tau)^2}\frac{2\rho^2}{h^4} |\nabla h|^2 -\frac{(1-\rho/h)}{(1 + a/\tau)^2}\frac{4\rho}{h^3} |\nabla h|^2 \\
    &\qquad +\frac{(1-\rho/h)}{(1 + a/\tau)^2}\frac{2\rho}{h^2} \De h  + \frac{24 \tau^2 \rho}{h^5 (\tau + a)}|\nabla h|^2 - \frac{6 \tau^2 \rho}{h^4 (\tau + a)}\De h \\
    &\qquad + \frac{2}{(1 + a/\tau)}\frac{\rho}{h^3}|\nabla h|^2  - \frac{1}{(1 + a/\tau)}\frac{\rho}{h^2}\De h
\end{align*}
Applying Claim \ref{claim uniform curvature estimates} and the identities in Lemma \ref{lemma estimates on time dep distance function} tell us that $|\na \sI| + |\De \sI| \le N$, where $N$ depends on $\ga$. This allows us to make the final estimates
\[|(\sI_\tau + \De \sI)| \le \frac{N}{(\tau + a)}, |\langle \na \sE, \na \sI \rangle| \le |\na \sE||\na \sI| \le Nr^{-1}.
\]
Combining all these estimates gives the desired lower bound.
\end{proof}
 
\subsubsection{Carleman estimate for the PDE component}

We now set the function $\sg(\tau) := \tau e^{-\frac{\tau}{3}}$ and define its translates $\sg_a(\tau) := \sg(\tau + a)$. We repeat some useful estimates and an identity for $\sg_a$ from \cite[\S 5]{kotschwarwang2015}.
\begin{equation}\label{equation bounds on sigma and its derivatives}
    \frac{1}{3e}(\tau +a) \le \sg_a(\tau) \le (\tau + a) \qquad \text{and} \qquad \frac{1}{3e} \le \dot \sg_a(\tau) \le 1,
\end{equation}
for $\tau \in [0,1]$. The following identity will also be useful.
\begin{equation}
    -\frac{\sg_a}{\dot \sg_a} \ddot{\widehat{\log \sg_a}} = \frac{1}{(\tau + a)(1 - (1/3)(\tau + a))}
\end{equation}
Let $Q_3$ and $Q_4$ be as in Lemma \ref{lemma integral inequality for backward heat operator}.

\begin{lem}\label{lemma quadratic terms lower bound}
For all $\al > 0$, $a \in (0,1)$, and $\ga > 0$, there exist $N$ and $r_6 \ge r_0$ depending on $\ga, \ka, \nu,$ and $K_0$, such that the quadratic forms $Q_3$ and $Q_4$ and the commutator term $E(Z, \nabla Z)$ satisfy
\begin{equation}\label{equation lower bound on quadratic terms}
    Q_3(\nabla Z, \nabla Z) + Q_4(Z,Z) - 2E(Z, \nabla Z) \ge \frac{1}{4}|\na Z|^2 - \sg_a^{-1}\bigg(N \log r + \frac{\alpha}{1000000} \bigg) |Z|^2
\end{equation}
on $\cC^\Sg_{\max\{r_6, \ga\rho\}} \times [0, \tau_0]$.
\end{lem}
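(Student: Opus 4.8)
The plan is to substitute the explicit formulas for $\sF_2$, $\wsF_2$, and the Hessian of $\phi_2$ from the preceding lemmas into the definitions of $Q_3$ and $Q_4$ given in Lemma \ref{lemma integral inequality for backward heat operator}, and to show term-by-term that the resulting quantity is bounded below by the stated expression. First I would handle $Q_3$: by Lemma \ref{lemma integral inequality for backward heat operator},
\[
Q_3(\nabla Z, \nabla Z) \ge \bigg(2\nabla_i \nabla_j \phi_2 - \frac{\sg_a}{\dot \sg_a}\ddot{\widehat{\log \sg_a}}g_{ij}\bigg)\langle \nabla_i Z, \nabla_j Z\rangle - \frac{N}{r^2}|\nabla Z|^2.
\]
Using the lower bound $\nabla \nabla \phi_2 \ge -\frac{g}{2(\tau+a)} - \frac{g}{48}$ from \eqref{equation lower bound on log H2} together with the identity $-\frac{\sg_a}{\dot \sg_a}\ddot{\widehat{\log \sg_a}} = \frac{1}{(\tau+a)(1-(1/3)(\tau+a))}$, the coefficient of $|\nabla Z|^2$ is at least
\[
-\frac{1}{(\tau+a)} - \frac{1}{24} + \frac{1}{(\tau+a)(1-(1/3)(\tau+a))} = \frac{(1/3)}{1-(1/3)(\tau+a)} - \frac{1}{24} - \frac{N}{r^2} \ge \frac{1}{4}
\]
for $\tau_0 < 1$ and $r_6$ large enough, since $\frac{1/3}{1-(1/3)(\tau+a)} \ge 1/3$. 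So $Q_3(\nabla Z, \nabla Z) \ge \frac{1}{4}|\nabla Z|^2$ after absorbing the $r^{-2}$ term.

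Next I would treat $Q_4$, which is the source of the $\log r$ term that distinguishes this estimate from its Ricci-flow counterpart. By Lemma \ref{lemma integral inequality for backward heat operator},
\[
Q_4(Z,Z) \ge \frac{1}{2}\bigg(\frac{\partial \wsF_2}{\partial \tau} + \Delta \wsF_2 + \frac{\dot \te}{\te}\wsF_2\bigg)|Z|^2 - \frac{N}{r^2}|\sF_2||Z|^2.
\]
With the choice $\te = \sg_a/\dot \sg_a$, so that $\dot \te/\te = -\frac{\sg_a}{\dot\sg_a}\ddot{\widehat{\log \sg_a}} = \frac{1}{(\tau+a)(1-(1/3)(\tau+a))} \ge \frac{1}{(\tau+a)}$, and using the lower bound \eqref{equation lower bound for backward heat on F2} for $\frac{\partial \wsF_2}{\partial \tau} + \Delta \wsF_2$ together with the pointwise bound \eqref{equation pointwise bounds on F2} for $\wsF_2$ (which contributes $\frac{\dot\te}{\te}\wsF_2 \ge -\frac{C\log r}{\tau+a} - \frac{3\rho}{h(\tau+a)^2} - \frac{1}{8h(\tau+a)}$), the leading $\frac{3\rho}{h(\tau+a)^2}$ terms cancel, leaving
\[
\frac{\partial \wsF_2}{\partial \tau} + \Delta \wsF_2 + \frac{\dot\te}{\te}\wsF_2 \ge -\frac{N\log r}{\tau + a} - \frac{1}{8h(\tau+a)} \ge -\frac{N'\log r}{\tau+a} \ge -\frac{N''\log r}{\sg_a},
\]
using $\sg_a \le \tau + a$ from \eqref{equation bounds on sigma and its derivatives}. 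The term $\frac{N}{r^2}|\sF_2||Z|^2$ is lower order because on $\cC^\Sg_{\ga\rho}$ we have $|\sF_2| \le |\wsF_2| + \al \dot\sg_a/\sg_a \le C\log r + C\rho/(h(\tau+a)) + 3e\al/(\tau+a)$, and dividing by $r^2 \ge \ga^2\rho^2$ makes each piece either $O(\log r/r^2)$ or $O(\al/((\tau+a)r))$, the latter being at most $\frac{\al}{1000000\,\sg_a}$ after enlarging $r_6$ (absorbing the harmless constant, using $\sg_a \ge (\tau+a)/(3e)$). Combining, $Q_4(Z,Z) \ge -\sg_a^{-1}(N\log r + \frac{\al}{1000000})|Z|^2$.

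Finally I would dispose of the commutator term $E(Z,\nabla Z)$. By the lemma following Lemma \ref{lemma fundamental divergence identity}, since $\nabla \phi_2 = \Upsilon \nabla f$ with $\Upsilon = -\frac{\tau(h-\rho)}{h^2}\cdot\frac{1}{\tau}$ — more precisely $\nabla\phi_2 = -\frac{h-\rho}{2(\tau+a)}\nabla h = -\frac{h-\rho}{h(\tau+a)}\tau\nabla f$, so $\Upsilon$ is bounded by $C(\ga)$ on $\cC^\Sg_{\ga\rho}$ — we get
\[
|E(Z,\nabla Z)| \le C(|\nabla \Rm| + |\Rm||T|)(|\nabla Z|^2 + (1+\Upsilon^2)|Z|^2) \le \frac{CK_0}{h^2}(|\nabla Z|^2 + |Z|^2),
\]
using the curvature and torsion decay of Claim \ref{claim uniform curvature estimates} and Proposition \ref{proposition backward flow reparametrization}. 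For $r_6$ large, $\frac{CK_0}{h^2} \le \frac{1}{8}$, so $2|E(Z,\nabla Z)| \le \frac14|\nabla Z|^2 \cdot \frac12 + \frac{1}{4}|Z|^2$; the $|\nabla Z|^2$ part halves the $\frac14$ gained from $Q_3$ into a usable $\frac14$ again only if I am slightly more careful — I would instead absorb $\frac{CK_0}{h^2}|\nabla Z|^2$ directly by choosing $r_6$ so that $\frac{CK_0}{h^2} \le \frac{1}{16}$ and note $Q_3 \ge \frac{5}{16}|\nabla Z|^2$, leaving $\frac14|\nabla Z|^2$ after subtracting $2|E|$; the $|Z|^2$ contribution from $E$ is $O(h^{-2})|Z|^2$, absorbed into $N\log r/\sg_a \cdot |Z|^2$ since $h^{-2} \le \sg_a^{-1}$ for $r_6$ large. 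Adding the three estimates gives \eqref{equation lower bound on quadratic terms}. The main obstacle is the bookkeeping around the cancellation of the $\frac{3\rho}{h(\tau+a)^2}$ singular terms between $\frac{\partial\wsF_2}{\partial\tau} + \Delta\wsF_2$ and $\frac{\dot\te}{\te}\wsF_2$, and verifying that every leftover error — especially the $|\sF_2|/r^2$ term carrying a factor of $\al$ — is genuinely subordinate to $\al/(1000000\,\sg_a)$ on the restricted domain $\cC^\Sg_{\ga\rho}$; the $\log r$ growth, absent in Kotschwar--Wang, must be carried through as an irreducible feature of the estimate rather than absorbed.
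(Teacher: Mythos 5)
Your overall route is the same as the paper's: plug the bounds of the preceding lemmas into $Q_3$, $Q_4$ and the commutator lemma, use the exact identity for $\dot\te/\te$, and absorb everything not of the form $\tfrac14|\na Z|^2$ into $-\sg_a^{-1}(N\log r + \tfrac{\al}{1000000})|Z|^2$; your treatment of the $\al$-carrying term $N r^{-2}|\sF_2|$ (with $r_6$ independent of $\al$) is exactly what the paper does. Two steps, however, are not correct as written. First, in $Q_4$ you justify the ``cancellation'' of the $\tfrac{3\rho}{h(\tau+a)^2}$ terms by replacing $\dot\te/\te$ with its lower bound $\tfrac{1}{\tau+a}$ inside the product $\tfrac{\dot\te}{\te}\wsF_2$; since the dominant part of $\wsF_2$ is the negative term $-\tfrac{3\rho}{h(\tau+a)}$, a lower bound on $\dot\te/\te$ yields an \emph{upper} bound on that product, so your displayed lower bound for $\tfrac{\dot\te}{\te}\wsF_2$ does not follow, and in fact the cancellation is not exact. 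The paper instead uses the identity $\tfrac{\dot\te}{\te} = \tfrac{1}{(\tau+a)(1-(\tau+a)/3)}$, so the combined singular term is $\tfrac{3\rho}{2h(\tau+a)^2}\bigl(1-\tfrac{1}{1-(\tau+a)/3}\bigr)$; the mismatch $\tfrac{1}{1-(\tau+a)/3}-1$ carries a factor of $(\tau+a)$, so the residual has magnitude comparable to $\tfrac{\rho}{h(\tau+a)}$, which on $\cC^\Sg_{\ga\rho}$ (where $\rho/h \le 2/\ga$) is $O(\sg_a^{-1})$ and is swallowed by $N\log r\,\sg_a^{-1}$. This precision matters: a crude two-sided bound such as $\dot\te/\te \le 3/(\tau+a)$ would leave a genuine $(\tau+a)^{-2}$ term that cannot be absorbed, so the step you flag as ``bookkeeping'' needs the exact identity, not an inequality.

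Second, your fallback claim $Q_3 \ge \tfrac{5}{16}|\na Z|^2$ is numerically unattainable: your own computation gives the coefficient $\tfrac{1/3}{1-(\tau+a)/3} - \tfrac1{24} - \tfrac{N}{r^2}$, whose infimum over the time interval is $\tfrac{7}{24} - \tfrac{N}{r^2} < \tfrac{5}{16}$. The absorption nevertheless goes through exactly as in the paper: keep $Q_3 \ge (\tfrac{7}{24} - \tfrac{N}{r^2})|\na Z|^2$, bound $|E(Z,\na Z)| \le N r^{-2}(|\na Z|^2 + |Z|^2)$ using the $\Upsilon$-lemma with $\Upsilon$ bounded by $C(\ga)$ (as you correctly set up), and use the margin $\tfrac{7}{24} - \tfrac14 = \tfrac1{24} > 0$ to absorb the gradient part of $2|E|$ for $r_6$ large, putting the $|Z|^2$ part into the $N\log r\,\sg_a^{-1}$ term. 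With those two repairs your argument coincides with the paper's proof.
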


\begin{proof}
We consider all quantities on the set $\cC^\Sg_{\max\{r, \ga\rho\}} \times [0, \tau_0]$ where $r \ge \max\{r_3,r_4,r_5\}$. The lower bound 
\begin{equation*}
    Q_3(\nabla Z, \nabla Z) \ge \bigg(\frac{7}{24} - \frac{N}{r^2}\bigg)|\nabla Z|^2
\end{equation*}
follows from the same argument as in the proof of \cite[Lemma 5.6]{kotschwarwang2015}. We now estimate $Q_4$ from below, starting with the inequality
\[
Q_4(Z,Z) \ge \frac{1}{2} \bigg( \frac{\dd \wsF_2}{\dd \tau} + \De \wsF_2 + \frac{\dot \te}{\te} \wsF_2\bigg)|Z|^2 - \frac{N|\sF_2|}{r^2}|Z|^2.
\]
From \eqref{equation pointwise bounds on F2} and \eqref{equation lower bound for backward heat on F2}, we know that 
\begin{align*}
 \frac{1}{2} \bigg( \frac{\dd \wsF_2}{\dd \tau} + \De \wsF_2 + \frac{\dot \te}{\te} \wsF_2\bigg) &\ge \frac{3\rho}{2h(\tau + a)^2}\bigg( 1 - \frac{1}{(1 - (1/3)(\tau + a))} \bigg) -\frac{N}{(\tau + a)} \log r \\
 &\qquad - \frac{1}{(\tau + a)(1- (1/3)(\tau + a))}\bigg( N\log r + \frac{1}{16h} \bigg)\\
 &\ge -N(\log r) \sg_a^{-1}.
\end{align*}
We now estimate $\sF_2$.
\begin{align*}
    |\sF_2| = |\wsF_2 - \al \dot{\widehat{\log \sg_a}}| \le (N \log r + \al) \sg_a^{-1}.
\end{align*}
By absorbing appropriate terms and making $r$ sufficiently large, we have
\begin{equation}
    Q_4(Z,Z) \ge - \sg_a^{-1} \bigg( N \log r + \frac{\al}{1000000} \bigg) |Z|^2.
\end{equation}
The same arguments as in \cite[Lemma 5.6]{kotschwarwang2015} allow us to bound the commutator term $E(Z, \nabla Z)$ as follows.
\[|E(Z, \nabla Z)| \le \frac{N}{r^2}(|\nabla Z|^2 + |Z|^2).
\]
Absorbing this quantity gives us the expression \eqref{equation lower bound on quadratic terms} and completes the proof.
\end{proof}

\begin{prop}\label{proposition second Carleman estimate PDE} For any $a \in (0,1)$, $\ga >0$, and $\rho \geq r_0$, there exists $C>0$ depending only on $\ga$, and $\alpha_0 >0$, $r_7 > r_0$ depending only on $\ga$, $\ka$, $\nu$, and $K_0$ such that for any smooth sections $Z$ of $\cZ \times [0,\tau_0]$ which is compactly supported in $\cC^\Sg_{\max\{r_7, \ga \rho\}}  \times [0,\tau_0]$, and satisfies $\cZ(\cdot, 0) \equiv 0$, we have 
\begin{multline}\label{equation second Carleman estimate}
    \sqrt{\al} ||\sigma_a^{-\al - 1/2}Z\hat{\sH}_2^{1/2}||_{L^2(\cC^\Sg_{r_0}  \times [0,\tau_0])} + ||\sigma_a^{-\al} \na Z \hat{\sH}_2^{1/2}||_{L^2(\cC^\Sg_{r_0}  \times [0,\tau_0])}\\
    \leq C||\sigma_a^{-\al} (\dd_{\tau} Z + \De Z) \hat{\sH}_2^{1/2}||_{L^2(\cC^\Sg_{r_0}  \times [0,\tau_0])} + C ||\sigma_a^{-\al - 1/2}(\log r)^\frac{1}{2} Z\hat{\sH}_2^{1/2}||_{L^2(\cC^\Sg_{r_0}  \times [0,\tau_0])}
\end{multline}
for all $\al \geq \al_0$, where
\begin{equation}\label{equation definition new weight second carleman}
\hat{\sH}_2(x, \tau) = \hat{\sH}_{2; a, \rho}(x, \tau) := \exp\Big(-\frac{(r(x) - \rho)^2}{4(\tau + a)}\Big).
\end{equation}
\end{prop}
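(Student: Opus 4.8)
The plan is to assemble the second Carleman estimate, Proposition \ref{proposition second Carleman estimate PDE}, from the integral inequalities and pointwise bounds already established in Section \ref{subsection Carleman estimates to imply rapid decay}. The structure mirrors \cite[Proposition 5.7]{kotschwarwang2015}, but with the new $\log r$ error term tracked carefully. First I would take the weight $\sH = \sH_2 = \sH_{2;a,\rho}$ and the choice $\sg = \sg_a(\tau) = (\tau+a)e^{-(\tau+a)/3}$, $\te = \sg/\dot\sg$, together with the choice of $\sF$ made just before Lemma \ref{lemma integral inequality for backward heat operator} that produces the cancellation on the right-hand side of the perturbed divergence identity (Lemma \ref{lemma perturbed fundamental divergence identity}). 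Then the two engines are: Lemma \ref{lemma integral inequality for backward heat operator}, which controls $\int \sg^{1-\al}\dot\sg^{-1}(Q_3(\na Z,\na Z) + Q_4(Z,Z) - 2E(Z,\na Z))\sH\,d\mu\,d\tau$ by $\int \sg^{1-\al}\dot\sg^{-1}|(\dd_\tau + \De)Z|^2\sH\,d\mu\,d\tau$; and Lemma \ref{lemma control Z by gradient}, which controls $\int \sg^{-2\al}(\al\dot\sg/\sg - \wsF - N/r^2)|Z|^2\sH$ by $\int \sg^{-2\al}(2|\na Z|^2 + (\sg/(\al\dot\sg))|(\dd_\tau + \De)Z|^2)\sH$.

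The second step is to insert the pointwise lower bound for the quadratic part, Lemma \ref{lemma quadratic terms lower bound}: on $\cC^\Sg_{\max\{r_6,\ga\rho\}}\times[0,\tau_0]$ one has $Q_3 + Q_4 - 2E \ge \tfrac14|\na Z|^2 - \sg_a^{-1}(N\log r + \al/1000000)|Z|^2$. Feeding this into Lemma \ref{lemma integral inequality for backward heat operator} and using the two-sided bounds $\tfrac1{3e}(\tau+a)\le\sg_a\le(\tau+a)$, $\tfrac1{3e}\le\dot\sg_a\le 1$ from \eqref{equation bounds on sigma and its derivatives} to convert $\sg_a^{1-\al}\dot\sg_a^{-1}$ into $\sg_a^{-2\al}$ up to universal constants (here one uses $\sg_a^{1-\al}\dot\sg_a^{-1}\asymp \sg_a^{-2\al}\cdot\sg_a^{1+\al}\asymp \sg_a^{-2\al}$ only after first noting the homogeneity of both Lemma statements; more precisely one rescales so both inequalities are stated against the common weight $\sg_a^{-2\al}\sH_2$), I would obtain
\begin{equation*}
\int \sg_a^{-2\al}\Big(\tfrac14|\na Z|^2 - \sg_a^{-1}\big(N\log r + \tfrac{\al}{1000000}\big)|Z|^2\Big)\sH_2 \le C\int \sg_a^{-2\al}\big|(\dd_\tau+\De)Z\big|^2\sH_2,
\end{equation*}
with all integrals over $\cC^\Sg_{r_0}\times[0,\tau_0]$ and with $Z$ supported where the pointwise bounds are valid. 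Then I would add a suitable multiple of the Lemma \ref{lemma control Z by gradient} inequality; the key point is that the weighted $|Z|^2$ coefficient $\al\dot\sg_a/\sg_a - \wsF_2 - N/r^2$ there is, using \eqref{equation pointwise bounds on F2}, bounded below by $c\,\al\sg_a^{-1} - N(\log r)\sg_a^{-1}$ for a universal $c>0$ once $\rho\le r/\ga$ keeps the $-3\rho/(h(\tau+a))$ term from spoiling the sign (this is exactly why the support restriction to $\cC^\Sg_{\max\{r_7,\ga\rho\}}$ appears). Choosing the multiplier to absorb the $\al/1000000$ loss from $Q_4$ and to dominate $\tfrac14|\na Z|^2$, I get, for $\al\ge\al_0$,
\begin{equation*}
\al\int \sg_a^{-2\al-1}|Z|^2\sH_2 + \int \sg_a^{-2\al}|\na Z|^2\sH_2 \le C\int\sg_a^{-2\al}\big|(\dd_\tau+\De)Z\big|^2\sH_2 + C\int\sg_a^{-2\al-1}(\log r)|Z|^2\sH_2.
\end{equation*}

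The final step is cosmetic: replace $\sH_2$ by $\hat\sH_2$ defined in \eqref{equation definition new weight second carleman} using \eqref{equation compare kernels with h and r}, which says $\tfrac12\hat\sH_2\le(\tau+a)^{7/2}\sH_2\le 2\hat\sH_2$ on the relevant domain (the extra $(\tau+a)^{7/2}$ is absorbed into the constants since $\tau+a$ is bounded above and below on $[0,1]$, or more carefully since it is common to both sides); then take square roots of the displayed $L^2$ inequality, using $\sqrt{a+b}\le\sqrt a+\sqrt b$ on the right to split into the two stated summands. I expect the main obstacle to be getting the bookkeeping of the $\sg_a$-powers exactly consistent between Lemma \ref{lemma integral inequality for backward heat operator} (stated with weight $\sg^{1-\al}/\dot\sg$) and Lemma \ref{lemma control Z by gradient} (stated with weight $\sg^{-2\al}$), and in particular verifying that the $\log r$ losses from both $Q_4$ and from $|\sF_2|$ can genuinely be carried through as an additive $(\log r)|Z|^2$ term on the right rather than needing to be absorbed — this is the structural novelty forcing the weaker estimate, and it is why the conclusion \eqref{equation second Carleman estimate} retains the last term. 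A secondary point requiring care is choosing $\al_0$ and $r_7 \ge \max\{r_3,r_4,r_5,r_6\}$ uniformly so that all the ``for sufficiently large $r$'' and ``for $\al\ge 1$'' hypotheses of the preceding lemmas hold simultaneously on $\cC^\Sg_{\max\{r_7,\ga\rho\}}\times[0,\tau_0]$.
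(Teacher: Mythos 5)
Your proposal is correct and follows essentially the same route as the paper's proof: insert the pointwise bound of Lemma \ref{lemma quadratic terms lower bound} into Lemma \ref{lemma integral inequality for backward heat operator}, combine with Lemma \ref{lemma control Z by gradient} using \eqref{equation pointwise bounds on F2} to absorb the $\al/1000000$ loss while carrying the $(\log r)^{1/2}|Z|$ term to the right, and finally pass from $\sH_2$ to $\hat\sH_2$ via \eqref{equation compare kernels with h and r}. The two bookkeeping points you flag are handled exactly as you suggest in your more careful parentheticals — reconciling the weights $\sg_a^{1-\al}/\dot\sg_a$ and $\sg_a^{-2\al}$ (and the extra $(\tau+a)^{7/2}$, whose treatment must not rely on $\tau+a$ being bounded below since $C$ cannot depend on $a$) amounts to a harmless shift/relabeling of the exponent $\al$, which is legitimate because the estimates hold for all $\al\ge\al_0$ and $\sqrt{\al}$ changes only by a bounded factor.
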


\begin{proof}
Plug the lower bounds given in \eqref{equation lower bound on quadratic terms} into \eqref{equation integral inequality for backward heat operator} to obtain the following $L^2$ estimates over the domain $\cC = \cC^\Sg_{\max\{r_7, \ga \rho\}}  \times [0,\tau_0]$.
\begin{align*}
    \frac{1}{12e}|| \sg_a^{-\al} \nabla Z \sH_2^{1/2}||^2_{L^2(\cC)} &\le N ||\sg_a^{-\al -1/2}(\log r)^{1/2} Z\sH_2^{1/2}||^2_{L^2(\cC)} \\ 
    &\qquad \qquad + \frac{\al}{500000} ||\sg_a^{-\al -1/2}Z\sH_2^{1/2}||^2_{L^2(\cC)}\\
    &\qquad \qquad \qquad+ ||\sg_a^{-\al}(\dd_\tau Z + \De Z)\sH_2^{1/2}||^2_{L^2(\cC)},
\end{align*}
for any $\al > 1$. Multiplying by constants and taking the square root, obtain
\begin{align}\label{equation bound norm gradient Z}
   ||\sg_a^{-\al} \nabla Z \sH_2^{1/2}||_{L^2(\cC)} &\le N ||\sg_a^{-\al -1/2}(\log r)^{1/2} Z\sH_2^{1/2}||_{L^2(\cC)} \\ 
   &\qquad \qquad + \sqrt{\frac{\al}{5000}} ||\sg_a^{-\al -1/2}Z\sH_2^{1/2}||_{L^2(\cC)} \nonumber \\ 
   &\qquad \qquad + C||\sg_a^{-\al}(\dd_\tau Z + \De Z)\sH_2^{1/2}||_{L^2(\cC)}. \nonumber
\end{align}
Now, apply Lemma \ref{lemma control Z by gradient} to obtain the following inequality.
\begin{align*}
    \frac{\al}{10}||\sg_a^{-\al - 1/2}Z\sH_2^{1/2}||^2_{L^2(\cC)} & \le N ||\sg_a^{-\al -1/2}(\log r)^{1/2}Z\sH_2^{1/2}||^2_{L^2(\cC)}\\
    &\qquad \qquad+ 2|| \sg_a^{-\al} \nabla Z \sH_2^{1/2}||^2_{L^2(\cC)} \\
    &\qquad \qquad + \frac{20}{\al} ||\sg_a^{-\al}(\dd_\tau Z + \De Z)\sH_2^{1/2}||^2_{L^2(\cC)}.
\end{align*}
Multiply by $1/8$ and take the square root of both sides to obtain 
\begin{align}\label{equation bound norm Z}
    \sqrt{\frac{\al}{80}}||\sg_a^{-\al - 1/2}Z\sH_2^{1/2}||_{L^2(\cC)}  & \le N ||\sg_a^{-\al -1/2}(\log r)^{1/2}Z\sH_2^{1/2}||_{L^2(\cC)}\\
    &\qquad \qquad + \frac{1}{2}|| \sg_a^{-\al} \nabla Z \sH_2^{1/2}||_{L^2(\cC)} \nonumber \\
    &\qquad \qquad + \sqrt{\frac{5}{2\al}} ||\sg_a^{-\al}(\dd_\tau Z + \De Z)\sH_2^{1/2}||_{L^2(\cC)}. \nonumber
\end{align}
Add \eqref{equation bound norm gradient Z} and \eqref{equation bound norm Z} together and absorb the $\sqrt{\al / 5000}$ term into the $\sqrt{ \al / 80}$ term. After multiplication by a constant, this yields \eqref{equation second Carleman estimate}.
\end{proof}

\subsubsection{Carleman-type inequality for the ODE component.}

The proof of the following Carleman inequality for the ODE component from \cite{kotschwarwang2015} depends only on the uniform comparability of $g_\tau$ and $g_C$ for $\tau \in [0, \tau_0]$, which is known in our setting by Lemma \ref{lemma decay estimates}. We restate the proposition, which applies in its unmodified form in our setting.

\begin{prop}\label{proposition second carleman estimate ODE}\cite[Propostion 5.9]{kotschwarwang2015}
Suppose $a_0 \in (0, 1/8)$, $\tau_0 \in (0, 1/4)$, and $r_0 \ge 1$. Then there exist constants $N$ and $r_8 \ge r_0$, depending only on $\ka, \nu, A_0$, and $K_0$, such that, for any smooth family $Z = Z(\tau)$ of sections of $\cZ$ with compact support in $\cC^\Sg_{r_8} \times [0, \tau_0)$ which satisfies $Z(\cdot, 0) \equiv 0$, we have, for all $\al > 0$, that
\begin{multline}
||\sg_a^{-\al - \frac{1}{2}} Z \hat\sH_2^{\frac{1}{2}}||_{L^2(\cC^\Sg_{r_0}  \times [0,\tau_0])} \le N \al^{-1}||\sg_a^{-\al} \dd_\tau Z \hat\sH_2^{\frac{1}{2}}||_{L^2(\cC^\Sg_{r_0}  \times [0,\tau_0])} \\ + Na^{-\frac{1}{2}}(\rho + \sqrt{a})^{\frac{n}{2}} \bigg( \frac{1}{ae^{\frac{1}{8}}}\bigg)^\al ||Z||_{\infty, g_C}.
\end{multline}
\end{prop}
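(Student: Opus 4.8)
Looking at this problem, I need to prove Proposition 5.9 (the ODE component Carleman estimate), which the excerpt says applies "in its unmodified form" — so I should follow Kotschwar–Wang's approach closely.

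The plan is to derive this as a weighted $L^2$ inequality for the pure ODE operator $\partial_\tau$, where the weight is built from $\sg_a^{-2\al}\hat\sH_2$, using only the uniform comparability of $g_\tau$ and $g_C$. First I would set up the integral $\int_0^{\tau_0}\int \sg_a^{-2\al-1}|Z|^2\hat\sH_2\,d\mu\,d\tau$ and manipulate it via integration by parts in $\tau$: differentiating $\sg_a^{-2\al}$ produces a factor $-2\al\,\sg_a^{-2\al-1}\dot\sg_a = -2\al\,\sg_a^{-2\al-1}(\tau+a)e^{-(\tau+a)/3}\cdot(\text{stuff})$, and using the bounds \eqref{equation bounds on sigma and its derivatives}, namely $\tfrac{1}{3e}(\tau+a)\le\sg_a\le(\tau+a)$ and $\tfrac{1}{3e}\le\dot\sg_a\le1$, this gives a coercive term of size $\gtrsim\al\,\sg_a^{-2\al-1}$. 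The transfer of the $\tau$-derivative off $\sg_a$ and onto the remaining factors $|Z|^2$ and $\hat\sH_2$ (and the volume form $d\mu$, whose $\tau$-derivative is $\tfrac{2}{3}R\,d\mu$ which is $O(r^{-2})$ by Claim \ref{claim uniform curvature estimates}) then produces the $\partial_\tau Z$ term after Young's inequality, plus lower-order terms absorbed by choosing $r_8$ large.

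The main subtlety is the boundary term at $\tau=\tau_0$: since $Z$ vanishes at $\tau=0$ but not at $\tau=\tau_0$, integration by parts in $\tau$ on $[0,\tau_0]$ leaves a boundary contribution at $\tau=\tau_0$. Following Kotschwar–Wang, this is estimated crudely: at $\tau=\tau_0$ one has $\sg_a(\tau_0)^{-\al}\le(ae^{1/8})^{-\al}$ roughly (using $\sg_a$ decreasing appropriately near the relevant range and $\tau_0$ small), and $\hat\sH_2(\cdot,\tau_0)\le 1$ pointwise with the Gaussian factor controlling the spatial integral by $\int_{\bR^n}e^{-(r-\rho)^2/(4(\tau_0+a))}r^{n-1}\,dr\lesssim(\rho+\sqrt{a})^{n}a^{n/2}$ — here the comparability $\tfrac12 e^{-(r-\rho)^2/4(\tau+a)}\le(\tau+a)^{7/2}\sH_2\le 2e^{-(r-\rho)^2/4(\tau+a)}$ from \eqref{equation compare kernels with h and r} lets one pass between $\sH_2$ and $\hat\sH_2$. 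This yields the $Na^{-1/2}(\rho+\sqrt a)^{n/2}(ae^{1/8})^{-\al}\|Z\|_{\infty,g_C}$ term. The coercive $\al\,\sg_a^{-2\al-1}|Z|^2$ integral absorbs any $\sg_a^{-2\al-1}|Z|^2$ error terms of bounded coefficient once $\al$ is large (or, more carefully, $\al$ arbitrary but $r_8$ large so the error coefficients are small).

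The step I expect to be the main obstacle — and the reason this is not entirely trivial — is bookkeeping the precise dependence on $a$, $\rho$, and $\al$ in the boundary term so that the final constant has exactly the claimed form $Na^{-1/2}(\rho+\sqrt a)^{n/2}(ae^{1/8})^{-\al}$; this requires care with the elementary estimate $\sg_a(\tau_0)=(\tau_0+a)e^{-(\tau_0+a)/3}\ge a e^{-1/8}$ for $\tau_0+a<1/8+\cdots$, hence $\sg_a(\tau_0)^{-\al}\le(ae^{-1/8})^{-\al}=(a^{-1}e^{1/8})^\al$, combined with the Gaussian integral bound. Everything else is a routine integration-by-parts-plus-Young's-inequality argument, and since the Laplacian-flow evolution of $g$ and $d\mu$ enters only through curvature/torsion terms that decay quadratically (uniformly in $\tau$, by Proposition \ref{proposition backward flow reparametrization}), no new phenomena arise beyond what Kotschwar–Wang handle; hence the proof transfers verbatim and it suffices to cite \cite[Proposition 5.9]{kotschwarwang2015}.
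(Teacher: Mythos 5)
Your bottom line---that \cite[Proposition 5.9]{kotschwarwang2015} transfers essentially unchanged because only the uniform comparability of $g_\tau$ and $g_C$ (plus the uniform curvature/torsion decay from Proposition \ref{proposition backward flow reparametrization}) enters---is exactly what the paper does; it offers no proof beyond that citation, so at the citation level you and the paper agree. However, the sketch you give to justify the transfer misidentifies where the difficulty, and hence the second right-hand-side term, comes from. After writing $2\al\dot\sg_a\sg_a^{-2\al-1}=-\partial_\tau(\sg_a^{-2\al})$ and integrating by parts in $\tau$, the endpoint contribution at $\tau=\tau_0$ appears with a \emph{favorable} sign (it is $-\int_{\{\tau_0\}}\sg_a^{-2\al}|Z|^2\hat\sH_2\,d\mu\le 0$, and the $\tau=0$ endpoint vanishes since $Z(\cdot,0)=0$), so it can simply be dropped; it is not the source of the $\|Z\|_{\infty,g_C}$ term. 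What your sketch never addresses is the bulk term created when $\partial_\tau$ falls on the Gaussian weight: $\partial_\tau\hat\sH_2=\tfrac{(r-\rho)^2}{4(\tau+a)^2}\hat\sH_2\ge 0$, producing $\int\!\!\int\sg_a^{-2\al}\tfrac{(r-\rho)^2}{4(\tau+a)^2}|Z|^2\hat\sH_2\,d\mu\,d\tau$ with an unfavorable sign. This can be absorbed into the coercive term $\al\int\!\!\int\dot\sg_a\sg_a^{-2\al-1}|Z|^2\hat\sH_2$ only where $(r-\rho)^2\lesssim\al(\tau+a)$; in the complementary region (which is the relevant one in the application, where $(r-\rho)^2\sim\rho^2$) it must be estimated crudely via $\|Z\|_{\infty,g_C}$, the exponential smallness of the Gaussian there, and the spatial Gaussian integral---and that is precisely what forces the term $Na^{-1/2}(\rho+\sqrt a)^{n/2}(ae^{1/8})^{-\al}\|Z\|_{\infty,g_C}$. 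Your sketch accounts only for the metric and volume-form evolution (which indeed give absorbable $O(r^{-2})$ errors) and so leaves the actual obstacle untreated.

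Relatedly, the bookkeeping you flag as the ``main obstacle'' does not produce the stated constant: $\sg_a(\tau_0)\ge ae^{-1/8}$ gives $\sg_a(\tau_0)^{-\al}\le a^{-\al}e^{+\al/8}=(ae^{-1/8})^{-\al}$, which exceeds the stated $(ae^{1/8})^{-\al}=a^{-\al}e^{-\al/8}$ by $e^{\al/4}$. This is not cosmetic: in the proof of Proposition \ref{proposition exponential decay} the factor $e^{1/8}$ in the base is split as $e^{1/16}\cdot e^{1/16}$, one piece killing the polynomial $l^{7/4}$ and the other supplying the margin $e^{1/16}=1+2\de>1$ that makes the final restriction to $[0,\de a]$ and the ensuing absorption work; with base $ae^{-1/8}$ that margin goes the wrong way and the step, as written, fails. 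So the precise form of the exceptional term requires the finer argument of \cite{kotschwarwang2015} (or an equivalent near/far splitting with careful constants), not the crude $\tau_0$-endpoint estimate you describe; your citation-level conclusion stands, but the reconstruction would need to be repaired along these lines before it could serve as a proof.
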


\section{Proof of Backward Uniqueness}\label{section proof of backwards uniqueness}

In this section, we prove Theorem \ref{theorem parabolic version main theorem} by proving backward uniqueness along the lines of \cite{EscauriazaSereginSverak} and \cite{kotschwarwang2015}, with some major modifications. First, we apply the second pair of Carleman estimates to prove that $\bf{X}$ and $\bf{Y}$ decay quadratically exponentially in space. Then, we apply the first pair of estimates to prove that $\bf{X}$ and $\bf{Y}$ vanish on some time interval containing $\tau = 0$. Of these two steps, the first presents a number of difficulties due to the presence of the logarithmic growth of $||Z||_2$ on the right-hand side of the PDE-type Carleman inequality. We compensate for this by adjusting our domain and our coefficients to absorb this term into the left-hand side. This introduces a growth term on the right-hand side, which we eventually determine can be absorbed into the quadratic exponential decay at the cost of increasing a universal constant. The second step, the proof of vanishing, on the other hand runs along the same lines as its counterpart in \cite{kotschwarwang2015}.

We briefly note a few choices and conventions that will hold throughout the remainder of the section. As before, the quantities $\vp$, $f$, $g$ will refer to the time-dependent versions with the subscript $\tau$ suppressed, unless necessary for the sake of clarity. We will henceforth denote the annular region given by $\cC^\Sg_{r_1} \setminus \cC^\Sg_{r_2}$ with $r_1 < r_2$ by $A(r_1, r_2)$. We set a constant $r_9 = r_9(\eps, K_0, R_0)\ge r_0$ such that
\begin{equation}\label{equation tensor fields are bounded}
     |\bf{X}| + |\bf{\na X}| + |\bf{Y}| \le N,
\end{equation}
by \eqref{equation spacetime O(r^-2) decay of all quantities}, and
\begin{equation}\label{equation ODE PDE system epsilon version}
    \bigg|\frac{\partial \bf{X}}{\partial t} + \De \bf{X}\bigg| \leq \eps \big(|\bf{X}| + |\nabla \bf{X}| + |\bf{Y}|\big),\;\;\; \bigg|\frac{\partial \bf{Y}}{\partial t}\bigg| \leq N\big(|\bf{X}| + |\bf{\na X}|\big) + \eps |\bf{Y}|,
\end{equation}
by Proposition \ref{proposition ODE PDE system}. Here $N$ depends only on $K_0$. Next, we choose the same spatial cutoff function as in \cite[Lemma 6.1]{kotschwarwang2015} 

\begin{lem}\label{lemma spatial cutoff function}
Given $\rho > 12 r_0$ and $\xi > 4\rho$, there exists $\psi_{\rho, \xi} \in C^\infty(\cC^\Sg_{r_0},[0,1])$ satisfying $\psi_{\rho, \xi} \equiv 1$ on $A(\rho/3, 2\xi)$ and $\psi_{\rho, \xi} \equiv 0$ on $A(r_0, \rho/6) \cup \cC^\Sg_{3\xi}$ whose derivatives satisfy
\begin{equation}\label{derivative estimates for spatial cutoff}
    |\na \psi_{\rho, \xi}| + |\De \psi_{\rho, \xi}| \le N \rho^{-1}
\end{equation}
for $N$ depending on $K_0$. Furthermore, \eqref{derivative estimates for spatial cutoff} holds with the same constant for norms and covariant derivatives taken with respect to $g_\tau$ and the Levi-Civita connection $\nabla^{g_\tau}$ respectively, for each $\tau \in [0,1]$.
\end{lem}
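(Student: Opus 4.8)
\textbf{Proof proposal for Lemma \ref{lemma spatial cutoff function}.}

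The plan is to build $\psi_{\rho,\xi}$ by composing a fixed one-dimensional model bump function with the conical radial coordinate $r$, and then to transfer the resulting Euclidean-cone derivative bounds to the ambient metrics $g_\tau$ using the uniform equivalence of $g_\tau$ and $g_C$ that we already have from Lemma \ref{lemma decay estimates} (combined with Proposition \ref{proposition backward flow reparametrization}(3) for the time-dependent version). First I would fix a smooth $\chi:[0,\infty)\to[0,1]$ with $\chi\equiv 0$ on $[0,1/6]$, $\chi\equiv 1$ on $[1/3,2]$, and $\chi\equiv 0$ on $[3,\infty)$, with $|\chi'|+|\chi''|\le C_0$ for a universal $C_0$. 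Then I would set
\[
\psi_{\rho,\xi}(x) := \chi\!\left(\frac{r(x)}{\rho}\right)\,\bigl(1-\chi\bigr)\!\left(\frac{r(x)}{\xi}\right)
\]
or, more transparently, take $\psi_{\rho,\xi}$ to be a smooth function of $r(x)/\rho$ that is $1$ on $[\rho/3,2\xi]$ and supported in $[\rho/6,3\xi]$; since $\xi>4\rho$ the two transition regions $[\rho/6,\rho/3]$ and $[2\xi,3\xi]$ are disjoint, so such a function exists with $|\psi'|\le C/\rho$ on the inner transition and $|\psi'|\le C/\xi\le C/\rho$ on the outer one, and similarly $|\psi''|\le C/\rho^2$ on each. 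This gives $\psi_{\rho,\xi}\equiv 1$ on $A(\rho/3,2\xi)$ and $\psi_{\rho,\xi}\equiv 0$ on $A(r_0,\rho/6)\cup\cC^\Sg_{3\xi}$ as required.

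Next I would compute the covariant derivatives with respect to $g_C$. Since $g_C = dr^2 + r^2 g_\Sg$ is conical, $|\nabla^{g_C} r|_{g_C}=1$ and $\nabla^{g_C}\nabla^{g_C} r = \tfrac{1}{r}(g_C - dr\otimes dr)$, so $|\nabla^{g_C}\nabla^{g_C}r|_{g_C}\le C/r$. By the chain rule, on the inner transition region (where $r\simeq\rho$),
\[
|\nabla^{g_C}\psi_{\rho,\xi}|_{g_C}\le \frac{C}{\rho},\qquad
|\nabla^{g_C}\nabla^{g_C}\psi_{\rho,\xi}|_{g_C}\le \frac{C}{\rho^2}+\frac{C}{\rho}\cdot\frac{1}{r}\le \frac{C}{\rho^2},
\]
and similarly on the outer transition region with $\xi$ in place of $\rho$; since $\xi>\rho$ these are all $\le N\rho^{-1}$ (for the gradient) and $\le N\rho^{-2}\le N\rho^{-1}$ for the Hessian, using $\rho>12r_0\ge 1$. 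In particular $|\nabla^{g_C}\psi_{\rho,\xi}|_{g_C}+|\Delta_{g_C}\psi_{\rho,\xi}|\le N\rho^{-1}$ with $N$ universal.

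Finally I would upgrade these to bounds for $g_\tau$. By Lemma \ref{lemma decay estimates}(b), on $\overline{\cC^\Sg_b}$ for $b>a$ the metrics $\Phi^*g$ and $g_C$ are uniformly equivalent, and by Proposition \ref{proposition backward flow reparametrization}(2)--(3) the same uniform equivalence holds for the whole family $\{g_\tau\}_{\tau\in[0,1]}$ with a single constant depending only on $K_0$; moreover $|A|=|\nabla^{g_\tau}-\nabla^{g_C}|=g^{-1}\ast\nabla^{g_C}(g_\tau-g_C)$ decays like $r^{-3}$ by the estimate \eqref{equation decay of derivatives of g-tg} (or one can use only the quadratic curvature decay of Lemma \ref{lemma decay estimates}(c), giving $|A|\le C/r$, which already suffices here). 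Since the support of $\psi_{\rho,\xi}$ sits in $\{r\ge\rho/6\}$ with $\rho>12r_0$, on this region the metrics are comparable and $|A|\le C$ is bounded. Then $|\nabla^{g_\tau}\psi_{\rho,\xi}|_{g_\tau}\le C\,|\nabla^{g_C}\psi_{\rho,\xi}|_{g_C}\le N\rho^{-1}$, and for the Laplacian,
\[
\Delta_{g_\tau}\psi_{\rho,\xi}=g_\tau^{ij}\bigl(\nabla^{g_C}_i\nabla^{g_C}_j\psi_{\rho,\xi} - A_{ij}^k\,\nabla^{g_C}_k\psi_{\rho,\xi}\bigr),
\]
so $|\Delta_{g_\tau}\psi_{\rho,\xi}|\le C\bigl(|\nabla^{g_C}\nabla^{g_C}\psi_{\rho,\xi}|_{g_C}+|A|\,|\nabla^{g_C}\psi_{\rho,\xi}|_{g_C}\bigr)\le N\rho^{-2}+N\rho^{-1}\le N\rho^{-1}$, again using $\rho\ge 1$. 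This establishes \eqref{derivative estimates for spatial cutoff} with a constant depending only on $K_0$, uniformly in $\tau\in[0,1]$, which is the assertion of the lemma.

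\textbf{Main obstacle.} There is no serious obstacle here; this is essentially a bookkeeping lemma. The only point requiring a little care is making the $\rho$-dependence of the constant disappear, i.e. checking that the outer transition near $r\simeq\xi$ contributes bounds of size $\xi^{-1}\le\rho^{-1}$ rather than $\xi^{-1}$ being treated as a separate scale, and that the passage from $g_C$- to $g_\tau$-derivatives (which brings in the connection difference $A$ and hence in principle the geometry near the inner radius $\rho/6$) is controlled uniformly in $\tau$ — both of which follow from the uniform metric equivalence and curvature decay already recorded in Lemma \ref{lemma decay estimates} and Proposition \ref{proposition backward flow reparametrization}.
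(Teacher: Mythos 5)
Your construction is correct and is essentially the paper's own approach: the paper simply imports \cite[Lemma 6.1]{kotschwarwang2015}, which is exactly this composition of a one-dimensional cutoff with the radial coordinate, with the two transition regions at scales $\rho$ and $\xi>4\rho$ contributing gradient and Hessian bounds of size $C\rho^{-1}$ and $C\rho^{-2}$ with respect to $g_C$, transferred to every $g_\tau$ via the uniform ($K_0$-dependent) equivalence of the metrics and the boundedness of the connection difference on $\{r\ge \rho/6\}$. The only blemish is the explicit formula $\chi(r/\rho)\,(1-\chi)(r/\xi)$ with a single non-monotone bump $\chi$, which as written vanishes for $r\ge 3\rho$ and so is not $1$ on $[\rho/3,2\xi]$; your subsequent ``more transparent'' description (a smooth function of $r$ equal to $1$ on $[\rho/3,2\xi]$ and supported in $[\rho/6,3\xi]$) is the correct construction and the rest of the argument goes through unchanged.
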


\subsection{Proof of exponential decay} In this section, we show that $\bf{X}$ and $\bf{Y}$ have quadratic exponential decay in space. We prove a statement that is essentially identical to \cite[Claim 6.2]{kotschwarwang2015}---however, the proof has significant differences.

\begin{prop}\label{proposition exponential decay}
There exists an absolute constant $s_0$ and constants $N = N(K_0)$, and $r_{10} = r_{10}(K_0)$, with $s_0 \in (0, 1]$ and $r_{10} \geq r_0$, such that
\begin{equation}\label{equation exponential decay}
|||\bf{X}| + |\nabla \bf{X}| + |\bf{Y}|||_{L^2(A((1-\sqrt{s})\rho,(1+\sqrt{s})\rho)\times [0,s])} \leq N e^{-\frac{\rho^2}{Ns}}
\end{equation}
for any $s \in (0, s_0]$ and $\rho > 13r_{10}$. 
\end{prop}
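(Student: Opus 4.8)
\textbf{Proof strategy for Proposition \ref{proposition exponential decay}.}

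The plan is to feed the ODE--PDE system \eqref{equation ODE PDE system epsilon version} into the second pair of Carleman estimates (Proposition \ref{proposition second Carleman estimate PDE} and Proposition \ref{proposition second carleman estimate ODE}) with a suitably localized test section, and then to iterate over a dyadic-type subdivision of the annulus to absorb the logarithmic growth term which is the novel feature here. First I would fix $\rho$ large and $s \in (0,s_0]$ small, choose the cutoff $\psi = \psi_{\rho,\xi}$ from Lemma \ref{lemma spatial cutoff function} with $\xi$ comparable to $\rho$, and set $Z := \psi \mathbf{X}$ and $Z' := \psi \mathbf{Y}$ (and also $\psi \nabla \mathbf{X}$, treated as one combined tensor section $\mathbf{Z}$ of the appropriate bundle $\cX \oplus \nabla\cX \oplus \cY$). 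Applying $\dd_\tau + \Delta$ to $\psi\mathbf{X}$ and $\dd_\tau$ to $\psi\mathbf{Y}$ produces, besides the small right-hand sides in \eqref{equation ODE PDE system epsilon version}, commutator terms supported in $A(\rho/6,\rho/3)\cup \cC^\Sg_{3\xi}$ with coefficients $O(\rho^{-1})$ by \eqref{derivative estimates for spatial cutoff}; these are the standard ``boundary error'' terms and will ultimately be controlled by the quadratic-exponential smallness of $\hat\sH_2$ on the support of $\nabla\psi$. The key point is that in Proposition \ref{proposition second Carleman estimate PDE}, unlike in Kotschwar--Wang, the right-hand side carries the extra term $C\|\sigma_a^{-\al-1/2}(\log r)^{1/2} Z \hat\sH_2^{1/2}\|_{L^2}$. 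On the support of $Z$ (an annulus around $r \simeq \rho$), $\log r \le C\log\rho$, so this term is bounded by $C(\log\rho)^{1/2}\|\sigma_a^{-\al-1/2} Z \hat\sH_2^{1/2}\|_{L^2}$; choosing $\al$ of order $\log\rho$ (say $\al = \al(\rho) := \max\{\al_0, K\log\rho\}$ for a large constant $K$) makes the coefficient $\sqrt{\al}$ on the left strictly dominate $C(\log\rho)^{1/2}$, so that this bad term can be absorbed into the left-hand side. This is the heart of the argument and the step I expect to be the main obstacle: one must verify that after absorbing the $(\log r)^{1/2}$ term, the remaining constants and the $\al$-dependence can still be arranged so that the final exponent in \eqref{equation exponential decay} is controlled by a single universal $N$.

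Concretely, after combining the PDE estimate for $Z=\psi\mathbf{X}$ (and $\psi\nabla\mathbf{X}$) with the ODE estimate for $\psi\mathbf{Y}$, using \eqref{equation ODE PDE system epsilon version} to dominate $|(\dd_\tau+\Delta)(\psi\mathbf{X})|$ and $|\dd_\tau(\psi\mathbf{Y})|$ by $\eps(|\mathbf{X}|+|\nabla\mathbf{X}|+|\mathbf{Y}|)$ plus cutoff errors, and choosing $\eps$ small (which is legitimate: $\eps$ can be shrunk by taking $r_{10}$ larger, per the discussion around \eqref{equation ODE PDE system epsilon version}), the interior terms on the right are absorbed into the left, leaving only the cutoff-error terms. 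Those are supported where $\psi$ is nonconstant, i.e.\ in $A(\rho/6,\rho/3)$ and near $r\simeq \xi \simeq \rho$; there $\hat\sH_2(x,\tau) = \exp(-(r(x)-\rho)^2/4(\tau+a))$ is bounded by $\exp(-c\rho^2/(\tau+a)) \le \exp(-c\rho^2/(s+a))$, while $\sigma_a^{-\al}$ contributes at worst a factor $(3e/a)^{\al} = e^{O(\al\log(1/a))}$. Taking $a \simeq s$ and recalling $\al \simeq \log\rho$, this parasitic factor is $e^{O(\log\rho\log(1/s))}$, which for $\rho$ large is swallowed by $e^{-c\rho^2/(s+a)}$, giving the right-hand side bound $Ne^{-\rho^2/(Ns)}$ after also using the a priori bound \eqref{equation tensor fields are bounded}. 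On the left-hand side, restricting the $L^2$-norm to the subannulus $A((1-\sqrt{s})\rho,(1+\sqrt{s})\rho)\times[0,s]$ where $\psi\equiv 1$ and where $\hat\sH_2 \ge \exp(-(\sqrt{s}\rho)^2/4a) = \exp(-\rho^2 s/(4a)) \ge e^{-c}$ (again with $a\simeq s$), and bounding $\sigma_a^{-\al-1/2} \ge (\tau+a)^{-\al-1/2}\cdot(\text{const}) \ge c$ on this set, we extract $\||\mathbf{X}|+|\nabla\mathbf{X}|+|\mathbf{Y}|\|_{L^2}$ on the target region with only an absolute constant in front. Matching the two sides yields \eqref{equation exponential decay}.

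The bookkeeping I would be most careful about: (i) the precise choice of the triple $(a,\al,\xi)$ as functions of $(s,\rho)$ so that \emph{every} parasitic exponential factor ($\sigma_a^{-\al}$ on the error annuli, $(\rho+\sqrt a)^{n/2}(1/(ae^{1/8}))^\al$ from the ODE estimate, $e^{O(\al\log(1/a))}$, and the $(\log\rho)^{1/2}$ absorption) is dominated by the Gaussian factor $e^{-\rho^2/(\text{const}\cdot s)}$ coming from $\hat\sH_2$ on $\mathrm{supp}\,\nabla\psi$; (ii) ensuring the constant $C$ multiplying the $(\log r)^{1/2}$-term in Proposition \ref{proposition second Carleman estimate PDE} depends only on $\ga$ (which it does, by its statement), so that $\sqrt{\al}$ beats it uniformly once $\al \gtrsim \log\rho$; and (iii) checking that $\psi\mathbf{X}$, $\psi\nabla\mathbf{X}$, $\psi\mathbf{Y}$ all vanish at $\tau=0$ (they do, since $\mathbf{X},\mathbf{Y}$ vanish at $\tau=0$ because $\vp_0=\tvp_0=\vp_C$), so that the hypotheses of both Carleman propositions are met. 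The remaining manipulations—applying $\dd_\tau+\Delta$ to a product with a cutoff, commuting derivatives, invoking the uniform equivalence of $g_\tau$ and $g_C$ from Lemma \ref{lemma decay estimates}, and Young's inequality for the absorptions—are routine. The genuinely new ingredient, and where the proof departs sharply from \cite{kotschwarwang2015}, is the $\al\sim\log\rho$ trick to neutralize the logarithmic error, together with the corresponding need to keep track of how $\al$'s growth feeds back into all the other $\al$-dependent factors.
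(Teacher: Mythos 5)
Your first step is on target and matches the paper's actual mechanism: taking $\xi$ comparable to $\rho$ (the paper fixes $\xi=8\rho$), cutting off, using \eqref{equation ODE PDE system epsilon version} with $\eps$ small to absorb the interior terms, and—crucially—forcing the Carleman parameter to be at least of order $\log\rho$ so that $\sqrt{\al}$ dominates the new $C\,(\log r)^{1/2}\le C(\log 24\rho)^{1/2}$ term is exactly the paper's choice of the threshold $k_1\simeq \log(24\rho)$. (One smaller omission: you need a temporal cutoff $\vartheta$ as well, since the ODE Carleman estimate requires the section to vanish near the final time; its error term lives on $A(\rho/6,24\rho)\times[1/6,1/5]$ and carries no Gaussian smallness whatsoever.)

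The genuine gap is in the second half: a single application of the Carleman estimates at one value $\al\simeq\log\rho$ with $a\simeq s$ cannot yield the decay $e^{-\rho^2/(Ns)}$. Your domination argument rests on the claim that on $\operatorname{supp}\nabla\psi$ one has $\hat{\sH}_2\le e^{-c\rho^2/(s+a)}$, but the right-hand sides of Propositions \ref{proposition second Carleman estimate PDE} and \ref{proposition second carleman estimate ODE} are integrated over the full time interval $[0,\tau_0]$ (not $[0,s]$), so the only Gaussian factor available there is $e^{-c\rho^2/(\tau_0+a)}\approx e^{-c\rho^2}$ with an absolute $c$ independent of $s$ — and the temporal-cutoff and ODE boundary terms have no Gaussian decay at all, only the weight $\sg_a^{-\al}\le e^{O(\al\log(1/a))}=e^{O(\log\rho\,\log(1/s))}$. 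Meanwhile, on the target region $A((1\pm\sqrt{s})\rho)\times[0,s]$ with $a\simeq s$, the weight $\hat{\sH}_2^{1/2}$ is only bounded below by $e^{-\rho^2/8}$, not by a constant. So after dividing you get at best a bound of the form $Ce^{\rho^2/8}\cdot(\text{polynomial in }\rho)$, and even an optimal single-$\al$ bound $Ce^{-c\rho^2}$ with $c$ absolute is strictly weaker than $Ne^{-\rho^2/(Ns)}$ for small $s$. The missing ingredient is the Escauriaza--Seregin--\v{S}ver\'ak/Kotschwar--Wang summation over the Carleman parameter: the paper runs the estimate for every integer $k=k_1+l$, multiplies by $\rho^{2l}/((2C)^l\,l!)$, and sums in $l$, which upgrades the left-hand weight to $e^{\rho^2/(C(\tau+a))}\hat{\sH}_2^{1/2}$; restricting the left side to $\tau\in[0,\de a]$ then produces the factor $e^{\rho^2/(Ca)}$ that both kills the temporal-cutoff error $e^{\rho^2/2}$ and supplies the $1/s$-dependence in the exponent (with $s\simeq\de a$). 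The $\rho$-dependence of $k_1$ then feeds back through the factors $(k_1!)^2$ and $a^{-(k_1+1/2)}$, which the paper controls by optimizing over $a$ and noting $\rho^{4k_1}k_1^{2k_1}\le e^{N(\log\rho)^2}$, sub-Gaussian and hence absorbable. Your sketch correctly identifies the $\al\sim\log\rho$ absorption as the new ingredient relative to \cite{kotschwarwang2015}, but by collapsing the summation to a single parameter value it loses the only mechanism that generates the quadratic-exponential decay in $1/s$.
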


\begin{proof}
Our setup will largely follow that of the proof of \cite[Claim 6.2]{kotschwarwang2015}, with some important differences. Choose $\ga = 1/12$ and $r_{10} = \max\{r_7, r_8\}$, where $r_7$ and $r_8$ are the quantities given in Proposition \ref{proposition second Carleman estimate PDE} and Proposition \ref{proposition second carleman estimate ODE}. Note that this quantity is allowed to increase as needed throughout the proof. Let $\rho > 0$ satsify $\rho \ge 13r_10$, and let $\xi$ be a number such that $\xi \ge 4 \rho$. The constants given by a $C$ will be universal, whereas those given by $N$ will depend on $A_0$ and $K_0$.

Let $\vartheta \in C^\infty(\bR, [0,1])$ be a temporal cutoff function such that $\vartheta \equiv 1$ for $\tau \le 1/6$ and $\vartheta \equiv 0$ for $\tau \ge 1/5$. Let $\psi_{\rho, \xi}$ be the spatial cutoff function given in Lemma \ref{lemma spatial cutoff function}. Define $\bf{X}_{\rho, \xi} := \vartheta \psi_{\rho, \xi} \bf{X}$ and $\bf{Y}_{\rho, \xi} := \vartheta \psi_{\rho, \xi} \bf{Y}$. These tensor fields are compactly supported on the space-time neighborhood $A(\rho/6, 3\xi) \times [0,1/4)$ and vanish on the time slice where $\tau = 0$. Thus, we may apply Proposition \ref{proposition second Carleman estimate PDE} and Proposition \ref{proposition second carleman estimate ODE} to $\bf{X}_{\rho, \xi}$ and $\bf{Y}_{\rho, \xi}$ respectively and add the results. Note that in the inequality below, we replace the parameter $\al \ge 1$ by an integer $k \ge k_0 \ge \al_0$, we let $a \in (0,1/8)$, and we bound the $||Z||_\infty$ term in Proposition \ref{proposition second carleman estimate ODE} using \eqref{equation tensor fields are bounded}.
\begin{align*}
    &k^{\frac{1}{2}}||\sg_a^{-k -\frac{1}{2}}\bf{X}_{\rho, \xi}\hat \sH_2^{\frac{1}{2}}||_{L^2(A(\frac{\rho}{6}, 3\xi) \times [0,\frac{1}{5}])} + ||\sg_a^{-k}\na \bf{X}_{\rho, \xi} \hat \sH_2^{\frac{1}{2}}||_{L^2(A(\frac{\rho}{6}, 3\xi) \times [0,\frac{1}{5}])} \\
    &\qquad + ||\sg_a^{-k-\frac{1}{2}} \bf{Y}_{\rho, \xi} \hat\sH_2^\frac{1}{2}||_{L^2(A(\frac{\rho}{6}, 3\xi) \times [0,\frac{1}{5}])} \\
    &\le N||\sg_a^{-k}(\dd_\tau + \De)\bf{X}_{\rho, \xi} \hat \sH_2^\frac{1}{2}||_{L^2(A(\frac{\rho}{6}, 3\xi) \times [0,\frac{1}{5}])}+ N(\rho + k^\frac{1}{2})^\frac{7}{2}a^{-\frac{1}{2}}(ae^\frac{1}{8})^{-k} \\
    &+ Nk^{-1}||\sg_a^{-k} \dd_\tau \bf{Y}_{\rho, \xi} \hat \sH_2^\frac{1}{2}||_{L^2(A(\frac{\rho}{6}, 3\xi) \times [0,\frac{1}{5}])}+ N||\sg_a^{-k-\frac{1}{2}}(\log r)^\frac{1}{2} \bf{X}_{\rho, \xi} \hat \sH_2^\frac{1}{2}||_{L^2(A(\frac{\rho}{6}, 3\xi) \times [0,\frac{1}{5}])},
\end{align*}
where $\hat \sH_2 = \hat \sH_{2;a, \rho}$ as defined in \eqref{equation definition new weight second carleman}. Next, instead of sending $\xi \rightarrow \infty$ as in \cite{kotschwarwang2015}, we let $\xi = 8\rho$. Then, we choose a number $k_1$ depending on $\rho$ such that $4N^2\log(24\rho) \le k_1 \le 16N^2 \log(24\rho)$. Observing that the maximum of $\log r$ on $A(\rho/6, 24\rho)$ is $\log(24\rho)$, we obtain the following absorption for $k \ge k_1$.
\[k^\frac{1}{2} - N(\log(24\rho))^\frac{1}{2} \ge \frac{k^\frac{1}{2}}{2}.  \]
Therefore, after increasing $N$ by a factor of 2 and taking $k \geq k_1$, 
\begin{align*}
    &k^{\frac{1}{2}}||\sg_a^{-k -\frac{1}{2}}\bf{X}_{\rho, 8\rho}\hat \sH_2^{\frac{1}{2}}||_{L^2(A(\frac{\rho}{6}, 24\rho) \times [0,\frac{1}{5}])} + ||\sg_a^{-k}\na \bf{X}_{\rho, 8\rho} \hat \sH_2^{\frac{1}{2}}||_{L^2(A(\frac{\rho}{6}, 24\rho) \times [0,\frac{1}{5}])} \\
    &\qquad + ||\sg_a^{-k-\frac{1}{2}} \bf{Y}_{\rho, 8\rho} \hat\sH_2^\frac{1}{2}||_{L^2(A(\frac{\rho}{6}, 24\rho) \times [0,\frac{1}{5}])} \\
    &\le N||\sg_a^{-k}(\dd_\tau + \De)\bf{X}_{\rho, 8\rho} \hat \sH_2^\frac{1}{2}||_{L^2(A(\frac{\rho}{6}, 24\rho) \times [0,\frac{1}{5}])} \\
    &\qquad + N(\rho + k^\frac{1}{2})^\frac{7}{2}a^{-\frac{1}{2}}(ae^\frac{1}{8})^{-k}+ Nk^{-1}||\sg_a^{-k} \dd_\tau \bf{Y}_{\rho, 8\rho} \hat \sH_2^\frac{1}{2}||_{L^2(A(\frac{\rho}{6}, 24\rho) \times [0,\frac{1}{5}])}.
\end{align*}
Although we have absorbed the logarithmic growth term, we have allowed $k_1$ to depend on $\rho$ which will introduce a new growth term later on. Next, we apply \eqref{equation ODE PDE system epsilon version} on $\cC^\Sg_{r} \times [0,1/4]$ where $r \ge r_9(\eps)$ to obtain the following inequalities.
\begin{multline*}
 \bigg|\frac{\partial \bf{X}_{\rho, 8\rho} }{\partial t} + \De \bf{X}_{\rho, 8\rho} \bigg| \leq \eps \big(|\bf{X}_{\rho, 8\rho} | + |\nabla \bf{X}_{\rho, 8\rho} | + |\bf{Y}_{\rho, 8\rho} |\big) + \psi_{\rho, 8\rho}|\vartheta'||\bf{X}| \\ + \vartheta\big(2|\na \psi_{\rho, 8\rho}||\na \bf{X}| + \eps|\na \psi_{\rho, 8\rho}||\bf{X}| + |\De \psi_{\rho, 8\rho}||\bf{X}|\big)
\end{multline*}
\begin{equation*}
    \bigg|\frac{\partial \bf{Y}_{\rho, 8\rho}}{\partial t}\bigg| \leq N\big(|\bf{X}_{\rho, 8\rho}| + |\na \bf{X}_{\rho, 8\rho}|\big) + \eps |\bf{Y}_{\rho, 8\rho}| + N\vartheta|\na \psi_{\rho, 8\rho}||\bf{X}| + \psi_{\rho, 8\rho}|\vartheta'||\bf{Y}|
\end{equation*}
Making $\eps > 0$ sufficiently small, and taking $k_1$ large enough that $N/k_1 < \eps$, we can reabsorb the terms involving $|\bf{X}_{\rho, 8\rho}|, |\nabla \bf{X}_{\rho, 8\rho} |,$ and $|\bf{Y}_{\rho, 8\rho}|$ into the right hand side and obtain the following inequality.
\begin{align*}
    &||\sg_a^{-k-\frac{1}{2}}(|\bf{X}_{\rho, 8\rho}| + |\bf{Y}_{\rho, 8\rho}|)\hat \sH_2^{\frac{1}{2}}||_{L^2(A(\frac{\rho}{6}, 24\rho) \times [0,\frac{1}{5}])} + ||\sg_a^{-k} \na \bf{X}_{\rho, 8\rho} \hat \sH_2^\frac{1}{2}||_{L^2(A(\frac{\rho}{6}, 24\rho) \times [0,\frac{1}{5}])} \\
    &\qquad \le N||\sg_a^{-k-\frac{1}{2}}(|\bf{X}| + |\bf{Y}|)\hat \sH_2^\frac{1}{2}||_{L^2(A(\frac{\rho}{6}, 24\rho) \times [\frac{1}{6},\frac{1}{5}])} \\
    &\qquad \qquad  + N||\sg_a^{-k}(|\bf{X}| + |\na \bf{X}|)\hat \sH_2^\frac{1}{2}||_{L^2(A(\frac{\rho}{6}, \frac{\rho}{3}) \times [0,\frac{1}{5}])} \\
    &\qquad \qquad  \qquad + N||\sg_a^{-k}(|\bf{X}| + |\na \bf{X}|)\hat \sH_2^\frac{1}{2}||_{L^2(A(16\rho, 24\rho) \times [0,\frac{1}{5}])}\\
    &\qquad \qquad  \qquad \qquad + N(\rho + k^\frac{1}{2})^\frac{7}{2}a^{-\frac{1}{2}}(ae^\frac{1}{8})^{-k}.
\end{align*}
Note that we use the fact that $\sg_a \le 1$ and that as before, $a \in (0, 1/8)$. Since $\sigma_a \le 1$ and $1 \le e^{(\tau + a)/3} \le e$, we can further estimate:
\begin{align}\label{equation pre estimate inequality for L2 of tensor fields}
    &||(\tau + a)^{-k}(|\bf{X}_{\rho, 8\rho}| + |\bf{Y}_{\rho, 8\rho}| +  |\na \bf{X}_{\rho, 8\rho}| )\hat \sH_2^{\frac{1}{2}}||_{L^2(A(\frac{\rho}{6}, 24\rho) \times [0,\frac{1}{5}])} \\
    &\qquad \le C^k N||(\tau + a)^{-k}(|\bf{X}| + |\bf{Y}|)\hat \sH_2^\frac{1}{2}||_{L^2(A(\frac{\rho}{6}, 24\rho) \times [\frac{1}{6},\frac{1}{5}])} \nonumber \\
    &\qquad \qquad  + C^k N||(\tau + a)^{-k}(|\bf{X}| + |\na \bf{X}|)\hat \sH_2^\frac{1}{2}||_{L^2(A(\frac{\rho}{6}, \frac{\rho}{3}) \times [0,\frac{1}{5}])} \nonumber \\
    &\qquad \qquad  \qquad + C^k N||(\tau + a)^{-k}(|\bf{X}| + |\na \bf{X}|)\hat \sH_2^\frac{1}{2}||_{L^2(A(16\rho, 24\rho) \times [0,\frac{1}{5}])} \nonumber \\
    &\qquad \qquad  \qquad \qquad + N(\rho + k^\frac{1}{2})^\frac{7}{2}a^{-\frac{1}{2}}(ae^\frac{1}{8})^{-k}. \nonumber 
\end{align}
The first term on the right-hand side of the inequality can be estimated by the bounds \eqref{equation tensor fields are bounded} and $(\tau + a) \ge 1/6$.
\begin{equation*}
    ||(\tau + a)^{-k}(|\bf{X}| + |\bf{Y}|) \hat \sH_2^\frac{1}{2}||_{L^2(A(\frac{\rho}{6}, 24\rho) \times [\frac{1}{6},\frac{1}{5}])} \le 6^k || N ||_{L^2(A(\frac{\rho}{6}, 24\rho) \times [\frac{1}{6},\frac{1}{5}])} \le C^kN\rho^\frac{7}{2}
\end{equation*}
for all $k\geq k_1$ and $a\in (0, 1/8)$. 

On the domains $A(\frac{\rho}{6},\frac{\rho}{3})\times [0, \frac{1}{5}]$ and $A(16\rho, 24\rho)\times [0, \frac{1}{5}]$, we have that
$$e^{-(r-\rho)^2/(8(\tau+a))} \leq e^{-\rho^2/(18(\tau+a))}.$$
We compute using Stirling's formula that
\[
 \max_{s>0} s^{-k}e^{-\rho^2/(18 s)}  = \rho^{-2k}(18 k)^ke^{-k} \leq \rho^{-2k}C^k k!.
\]
This observation yields the following bound.
\begin{multline*}
     ||(\tau + a)^{-k}(|\bf{X}| + |\na \bf{X}|)\hat \sH_2^\frac{1}{2}||_{L^2(A(\frac{\rho}{6}, \frac{\rho}{3}) \times [0,\frac{1}{5}])} \\+ ||(\tau + a)^{-k}(|\bf{X}| + |\na \bf{X}|)\hat \sH_2^\frac{1}{2}||_{L^2(A(16\rho, 24\rho) \times [0,\frac{1}{5}])} \le N\rho^{-2k}C^kk!\rho^{\frac{7}{2}},
\end{multline*}
for $k \ge k_1$ and $a \in (0, 1/8)$. Let $l$ be such that $k = k_1 + l$. Plugging the previous two estimates back into \eqref{equation pre estimate inequality for L2 of tensor fields}, we find
\begin{align}\label{equation preliminary estimate on L2 of tensor field}
     &||(\tau + a)^{-k}(|\bf{X}_{\rho, 8\rho}| + |\bf{Y}_{\rho, 8\rho}| +  |\na \bf{X}_{\rho, 8\rho}| )\hat \sH_2^{\frac{1}{2}}||_{L^2(A(\frac{\rho}{6}, 24\rho) \times [0,\frac{1}{5}])} \\
     &\qquad \le N\rho^\frac{7}{2}C^{k_1 + l}(1 + \rho^{-2(k_1 + l)}(k_1 + l)!) + N(\rho + (k_1 +l)^\frac{1}{2})^\frac{7}{2}a^{-\frac{1}{2}}(ae^\frac{1}{8})^{-(k_1 + l)} \nonumber
\end{align}
for all $l\geq 0$ and all $a\in (0, 1/8)$. We use several inequalities which we collect here:
\[(l+k_1)! \le C^{k_1 + l}(l!)(k_1!), \;\;   l^\frac{7}{4}e^{-\frac{l}{16}} \le C, \;\;  (\tau + a)^{-k_1} \ge 1.
\]
In addition, for $r_{10}$ sufficiently large, if we recall that $k_1 \le 16N^2\log 24 \rho$ we can deduce that
\begin{align*}
    (\rho + (k_1 + l)^\frac{1}{2})^\frac{7}{2} &\le (\rho + k_1^\frac{1}{2} + l^\frac{1}{2})^\frac{7}{2} \\
    &\le (2\rho + l^\frac{1}{2})^\frac{7}{2} \\
    &\le C\rho^\frac{7}{2}( 1 + \big(\tfrac{l}{\sqrt{2\rho}}\big)^\frac{1}{2})^\frac{7}{2}\\
    &\le C\rho^\frac{7}{2}l^\frac{7}{4}
\end{align*}
Note that in each of these inequalities, $C>0$ is a universal constant. Plugging these in to \eqref{equation preliminary estimate on L2 of tensor field}, we find
\begin{align*}
     &||(\tau + a)^{-k}(|\bf{X}_{\rho, 8\rho}| + |\bf{Y}_{\rho, 8\rho}| +  |\na \bf{X}_{\rho, 8\rho}| )\hat \sH_2^{\frac{1}{2}}||_{L^2(A(\frac{\rho}{6}, 24\rho) \times [0,\frac{1}{5}])} \\
     &\qquad \le N\rho^\frac{7}{2}(C^{k_1}C^l + C^{k_1}(k_1!)C^l\rho^{-2l}(l!) + a^{-\frac{1}{2}-k_1}(l^{\frac{7}{4}}e^{-\frac{l}{16}})(ae^\frac{1}{16})^{-l})\\
     &\qquad \le N\rho^\frac{7}{2}(C^{k_1}C^l + C^{k_1}(k_1!)C^l\rho^{-2l}(l!) + a^{-\frac{1}{2}-k_1}(ae^\frac{1}{16})^{-l})\\
     &\qquad \le N(k_1!)^2\rho^\frac{7}{2}(C^l + C^l\rho^{-2l}(l!) + a^{-\frac{1}{2}-k_1}(ae^\frac{1}{16})^{-l})
\end{align*}
Note that since $C>0$ is a universal constant, we may set $r_{10}$ to be sufficiently large that $k_1! \ge C^{k_1}$: thus, we can bound all terms involving $k_1$ by $(k_1!)^2$, with the possible exception of $a^{-(k_1 + 1/2)}$. We multiply both sides by $\rho^{2l}/((2C)^ll!)$ and sum over all $l\geq 0$ yielding
\begin{align*}
     &||(|\bf{X}_{\rho, 8\rho}| + |\bf{Y}_{\rho, 8\rho}| +  |\na \bf{X}_{\rho, 8\rho}| )e^{\frac{\rho^2}{C(\tau+a)}-\frac{(r -\rho)^2}{8(\tau + a)}}||_{L^2(A(\frac{\rho}{6}, 24\rho) \times [0,\frac{1}{5}])} \\
     &\qquad \le N(k_1!)^2\rho^\frac{7}{2}\bigg(1 + e^\frac{\rho^2}{2} + a^{-(k_1 + \frac{1}{2})}e^\frac{\rho^2}{Cae^{1/6}}\bigg).
\end{align*}
Write $e^{1/16} = 1 +2\delta$ and restrict the time interval from $[0, 1/5]$  to $[0, \de a]$. Then
\begin{align*}
     &||(|\bf{X}_{\rho, 8\rho}| + |\bf{Y}_{\rho, 8\rho}| +  |\na \bf{X}_{\rho, 8\rho}| )e^{-\frac{(r -\rho)^2}{8(\tau + a)}}||_{L^2(A(\frac{\rho}{6}, 24\rho) \times [0,\de \al])} \\
     &\qquad \le N(k_1!)^2\rho^\frac{7}{2}\bigg(\bigg(1 + e^\frac{\rho^2}{2}\bigg)e^{-\frac{\rho^2}{Ca(1+\de)}} + a^{-(k_1 + \frac{1}{2})}e^\frac{-\de \rho^2}{Ca(1+2\de)(1+\de)}\bigg).
\end{align*}
All that remains is to find a constant depending on $k_1$ which bounds
$$a^{-(k_1+1/2)}e^{-\delta\rho^2/(2Ca(1+2\delta)(1+\delta))}$$
from above. The maximum of this quantity as a function of $a$ occurs at its sole critical point on the positive real axis.
\begin{align*}
0 &= \frac{d}{da}\big(a^{-(k_1+1/2)}e^{-\delta\rho^2/(2Ca(1+2\delta)(1+\delta))}\big)\\
    &= -(k_1+1/2)a^{-1}a^{-(k_1+1/2)}e^{-\delta\rho^2/(2Ca(1+2\delta)(1+\delta))}\\
    & \qquad + \frac{\delta\rho^2}{2Ca^2(1+2\delta)(1+\delta)}a^{-(k_1+1/2)}e^{-\delta\rho^2/(2Ca(1+2\delta)(1+\delta))} \\
    &= -(k_1+1/2)a^{-1} + \frac{\delta\rho^2a^{-2}}{2C(1+2\delta)(1+\delta)}
\end{align*}
Thus, the critical point occurs at
\begin{equation}
    a_0 := a = \frac{2C(1+2\delta)(1+\delta)(k_1+1/2)}{\delta\rho^2}
\end{equation}
To find an upper bound, we plug $a_0$ into $a^{-(k_1+1/2)}e^{-\delta\rho^2/(2Ca(1+2\delta)(1+\delta))}$.
\begin{align*}
a_0^{-(k_1+1/2)}e^{-\delta\rho^2/(2Ca_0(1+2\delta)(1+\delta))} &\le \bigg(\frac{C(k_1 +1/2)}{\rho^2}\bigg)^{-(k_1 +1/2)} \le (\rho^2)^{2k_1}
\end{align*}
This estimate allows us to obtain the following upper bound. 
\[
     ||(|\bf{X}_{\rho, 8\rho}| + |\bf{Y}_{\rho, 8\rho}| +  |\na \bf{X}_{\rho, 8\rho}| )e^{-\frac{(r -\rho)^2}{8(\tau + a)}}||_{L^2(A(\frac{\rho}{6}, 24\rho) \times [0,\de \al])} \le N(\rho^{4k_1})(k_1^{k_1})^2 e^\frac{-\rho^2}{Ca},
\]
for all $0 < a\leq 1/C$. Recall that $k_1 \leq 16N^2\log(24\rho) \le 17N^2\log(\rho)$. Then,
\begin{align*}
    \rho^{4k_1} \cdot k_1^{2k_1} &\le \rho^{68N^2\log(\rho)} \cdot 17N^2\log(\rho)^{17N^2\log(\rho)}\\
    &= \exp(68N^2\log(\rho)\log(\rho) + 17N^2\log(\rho)(\log(17N^2) + \log(\log(\rho))))\\
    &\le \exp(N(\log \rho)^2),
\end{align*}
where we have simply increased $N$ if necessary. This grows much more slowly than quadratic exponential. By possibly increasing $r_{10}$, we finally obtain
\[
     ||(|\bf{X}_{\rho, 8\rho}| + |\bf{Y}_{\rho, 8\rho}| +  |\na \bf{X}_{\rho, 8\rho}| )e^{-\frac{(r -\rho)^2}{8(\tau + a)}}||_{L^2(A(\frac{\rho}{6}, 24\rho) \times [0,\de \al])} \le N e^\frac{-\rho^2}{Ca},
\]
Observe that $e^{-|r -\rho|^2/(8(\tau+a))} \geq N^{-1}$ on the domain $A((1-\sqrt{\delta a})\rho, (1+\sqrt{\delta a})\rho)\times [0, \delta a]$. Furthermore, on this set, $\psi_{\rho, 8\rho} \equiv 1$ and $\vp \equiv 1$. Thus, the cut-off tensor fields agree with their unmodified counterparts on this restricted domain and we obtain the estimate 
\begin{align}
\begin{split}
    \||\bf{X}|+|\nabla \bf{X}| +|\bf{Y}|\|_{L^2(A((1-\sqrt{s})\rho,(1+\sqrt{s})\rho)\times[0, s])}
\leq Ne^{-\frac{\rho^2}{Cs}}
\end{split}  
\end{align}
for $s\in [0, 1/C]$ and arbitrary $\rho \geq 12r_{10}$, which was the desired exponential decay. 
\end{proof}

\subsection{Proof of Theorem \ref{theorem main}} 

The main theorem, Theorem \ref{theorem main}, follows immediately from the following proposition:

\begin{prop}\label{proposition vanishing}\cite[Claim 6.3]{kotschwarwang2015}
There exist $\tau' \in (0,1)$ and $r_{11} = r_{11}(K_0)$ such that $\bf{X} \equiv 0$ and $\bf{Y} \equiv 0$ on $\cC^\Sg_{r_{11}}\times [0,\tau']$.
\end{prop}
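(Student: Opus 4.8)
The plan is to combine the two pairs of Carleman estimates (Proposition~\ref{proposition first carleman estimate} for rapid decay and the ``second'' estimates, together with the exponential decay already established in Proposition~\ref{proposition exponential decay}) in the standard two-step ESS/Kotschwar--Wang fashion, adapted to our weaker Carleman inequality with its logarithmic error term. First I would fix a small $\tau_0' \le \tau_0$ and a large $r_{11}$, and use a temporal cutoff $\vartheta$ supported in $[0, \tau_0')$ (equal to $1$ on $[0,\tau_0'/2)$) together with a spatial cutoff $\psi$ as in Lemma~\ref{lemma spatial cutoff function}, supported on an annulus $A(r_{11}/2, \infty)$ and equal to $1$ on $\cC^\Sg_{r_{11}}$. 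Apply the first Carleman estimates \eqref{equation first carleman estimate PDE} and \eqref{equation first carleman estimate ODE} with weight $\sH_1 = \sH_{1;\al,\tau_0'}$ to $Z = \psi\vartheta \bf{X}$ and $Z = \psi\vartheta \bf{Y}$ (these vanish at $\tau = 0$ and are compactly supported for each $\tau$). Adding the inequalities and using the ODE-PDE system \eqref{equation ODE PDE system epsilon version} to control $(\dd_\tau + \De)\bf{X}$ and $\dd_\tau\bf{Y}$, for $\al$ sufficiently large the interior error terms (those proportional to $\eps(|\bf{X}| + |\na\bf{X}| + |\bf{Y}|)$ and $\eps|\bf{Y}|$) are absorbed by the left-hand side, leaving only the commutator-with-cutoff terms, which are supported on $A(r_{11}/2, r_{11}) \times [0,\tau_0']$ (from $\na\psi, \De\psi$) and on $\cC^\Sg_{r_{11}} \times [\tau_0'/2, \tau_0']$ (from $\vartheta'$).

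The key point is then a comparison of weights. On the ``bad'' spatial collar $A(r_{11}/2, r_{11})$ the weight $\sH_1 = \exp[\al(\tau_0' - \tau)h^{2-\de} + h^2]$ is of size at most $\exp[(\al + 1)(2r_{11})^2]$ — bounded uniformly in $\al$ once one uses $h \le 2r$ from \eqref{equation comparison of h and r} — wait, this isn't uniform in $\al$; more precisely, $\sH_1 \le \exp[(C r_{11}^{2-\de})\al + C r_{11}^2]$ there. Meanwhile Proposition~\ref{proposition exponential decay} gives that $|\bf{X}| + |\na\bf{X}| + |\bf{Y}|$ is pointwise (after a mean-value/interior-estimate argument on the quadratic-exponential $L^2$ bound, using the local Shi-type estimates of Proposition~\ref{proposition backward flow reparametrization}) bounded by $N\exp(-\rho^2/(Ns))$ on annuli near radius $\rho$ for small time $s$; evaluated on the whole region $r \ge r_{11}/2$ this yields decay $\lesssim \exp(-c r^2)$ for each fixed time, which is faster than any $\sH_1$-type growth $\exp(Cr^2)$ provided $\tau_0'$ is chosen small enough that $\al(\tau_0' - \tau)h^{2-\de}$ stays subdominant, and after possibly absorbing the $\log r$ factors. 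So the contribution of the spatial collar term, against the weight, is bounded \emph{independently of $\al$}. The temporal boundary term at $\tau \in [\tau_0'/2,\tau_0']$ is likewise controlled because $\sH_1$ is bounded there by a constant depending only on $r$ and $\tau_0'$ (not on $\al$, since $\tau_0' - \tau \le \tau_0'/2$ keeps that exponent bounded) and because $|\bf{X}|, |\bf{Y}|$ again have the fixed exponential spatial decay. The left-hand side, on the other hand, is bounded below by $\al$ times $\int_{\cC^\Sg_{r_{11}} \times [0,\tau_0'/2]} (|\bf{X}|^2 + |\bf{Y}|^2)\sH_1\, d\mu\, d\tau$, since $\psi\vartheta \equiv 1$ there.

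Letting $\al \to \infty$: the right-hand side stays bounded, so $\int_{\cC^\Sg_{r_{11}} \times [0,\tau_0'/2]} (|\bf{X}|^2 + |\bf{Y}|^2)\sH_1\, d\mu\, d\tau$ must tend to $0$; but since $\sH_1 \ge \exp(h^2) \ge 1$ and the integrand is a fixed nonnegative function, this forces $\bf{X} \equiv \bf{Y} \equiv 0$ on $\cC^\Sg_{r_{11}} \times [0,\tau_0'/2]$. Setting $\tau' := \tau_0'/2$ completes the proof; Theorem~\ref{theorem parabolic version main theorem} (hence Theorem~\ref{theorem main}, via $\ga = \vp - \tvp \subset \bf{Y}$) follows. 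The main obstacle — and the place where our weakened Carleman estimate forces extra care compared to \cite{kotschwarwang2015} — is ensuring that the exponential decay from Proposition~\ref{proposition exponential decay}, which only holds on thin annuli over \emph{short} time windows $[0,s]$ with $s$ comparable to a parameter, genuinely dominates the $\al$-dependent growth of $\sH_1$ on the cutoff collar uniformly as $\al \to \infty$; this is what dictates choosing $\tau_0'$ small (so that the $\al(\tau_0'-\tau)h^{2-\de}$ term is a lower-order perturbation of $h^2$ on the relevant radii) and patching the thin-annulus estimates of Proposition~\ref{proposition exponential decay} into a genuine pointwise Gaussian bound valid on all of $\{r \ge r_{11}/2\}$ via a covering argument and the uniform higher-order curvature (hence, via the evolution equations, higher-order) bounds.
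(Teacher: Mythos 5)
Your overall strategy (apply the first Carleman estimate to cut-off versions of $\mathbf{X}$, $\mathbf{Y}$, control $(\dd_\tau+\De)\mathbf{X}$ and $\dd_\tau\mathbf{Y}$ via \eqref{equation ODE PDE system epsilon version}, and let $\al\to\infty$) is the right one and is essentially what the paper does by importing \cite[Claim 6.3]{kotschwarwang2015}, but your treatment of the cutoff error terms has a genuine gap. On the spatial collar $A(r_{11}/2,r_{11})$ the weight is $\sH_1=\exp[\al(\tau_0'-\tau)h^{2-\de}+h^2]$, so the collar contribution carries a factor of order $e^{c\,\al\,\tau_0'\,r_{11}^{2-\de}}$: it grows exponentially in $\al$, and the spatial decay from Proposition \ref{proposition exponential decay} is a fixed constant at the fixed radius $r_{11}$, so it cannot compensate for growth in $\al$. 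The same problem afflicts your temporal cutoff term: for $\tau\in[\tau_0'/2,\tau_0']$ the exponent still contains $\al(\tau_0'-\tau)h^{2-\de}$, up to $\al(\tau_0'/2)h^{2-\de}$ with $h$ unbounded on the support, so this term is neither bounded uniformly in $\al$ nor controlled by the Gaussian decay uniformly in $\al$. Hence the pivotal step ``the right-hand side stays bounded as $\al\to\infty$'' fails, and with it your conclusion.

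The mechanism that actually works, and that the paper invokes, is not uniform boundedness of the error terms but a comparison of $\al$-growth rates: the cutoff-derivative terms live on an inner collar where the $\al$-rate of $\sH_1$ is at most $\al\,\tau_0\sup_{\mathrm{collar}}h^{2-\de}$, while on the target region $\{\tau\le\tau_0/2,\ h^{2-\de}\ge 2\sup_{\mathrm{collar}}h^{2-\de}\}$ the rate $\al(\tau_0-\tau)h^{2-\de}$ is strictly larger; dividing by the collar rate and sending $\al\to\infty$ forces vanishing there. This necessarily gives vanishing only at radii a definite factor larger than the collar radius (one then renames $r_{11}$), not on $\cC^\Sg_{r_{11}}$ immediately adjacent to a collar at $A(r_{11}/2,r_{11})$ as in your setup. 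Moreover, no temporal cutoff should be introduced: Proposition \ref{proposition first carleman estimate} already carries the terminal boundary term $\|\na Z\,\sH_1^{1/2}\|^2_{L^2(\,\cdot\,\times\{\tau_0\})}$, and at $\tau=\tau_0$ the weight degenerates to $e^{h^2}$ with no $\al$-dependence; its finiteness is precisely what choosing $\tau_0$ small together with the quadratic-exponential decay of Proposition \ref{proposition exponential decay} (summed over annuli to cover the whole end, using \eqref{equation comparison of h and r}) provides, uniformly in $\al$. Finally, your cutoff as described is not compactly supported, which the hypotheses of Proposition \ref{proposition first carleman estimate} require; one must add an outer cutoff at radius $\xi$ and let $\xi\to\infty$ for each fixed $\al$, using that $\al h^{2-\de}$ is subquadratic and hence dominated by the Gaussian decay for fixed $\al$.
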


\begin{proof}
The proof is identical to that of \cite[Claim 6.3]{kotschwarwang2015}, after replacing \cite[Claim 6.2]{kotschwarwang2015} by Proposition \ref{proposition exponential decay}, \cite[(6.2)]{kotschwarwang2015} by \eqref{equation ODE PDE system epsilon version}, \cite[(4.13)]{kotschwarwang2015} by \eqref{equation comparison of h and r}, and \cite[Proposition 4.9]{kotschwarwang2015} by Proposition \ref{proposition first carleman estimate}. 
\end{proof}

This immediately implies Theorem \ref{theorem parabolic version main theorem}. By the reduction of Theorem \ref{theorem main} to Theorem \ref{theorem parabolic version main theorem} in Section \ref{section setting up the parabolic problem}, this proves Theorem \ref{theorem main}. 

\subsection{Proof of Corollary \ref{corollary main}}
We may prove Corollary \ref{corollary main} using Theorem \ref{theorem main}.

From Theorem \ref{theorem main}, we have that there exist ends $W_1 \subset V_1$ and $W_2 \subset V_2$ and a diffeomorphism $\Psi: W_1 \to W_2$ such that $\Psi^* \vp_2 = \vp_1$. 

From the proof of Theorem \ref{theorem main}, we have that $W_1$ is diffeomorphic to $\cC_{r_1}^{\Sg}$ for some $r_1>r$, but by assumption $V_1$ is diffeomorphic to $C_r^{\Sg}$ for $r\leq r_1$. Thus, the inclusion map $\io: W_1 \hookrightarrow V_1$ induces an isomorphism $\pi_1(W_1, x_0) \cong \pi_1(V_1, x_0)$. Likewise, $\pi_1(W_2, y_0) \cong \pi_1(V_2, y_0)$. Thus, the homomorphisms 
\begin{equation}\label{equation induced homomorphisms}\pi_1(W_1, x_0) \rightarrow \pi_1(M_1, x_0), \,\, \pi_1(W_2, x_0) \rightarrow \pi_1(M_2, x_0)\end{equation}
induced by inclusion are surjective.

Since $\Psi^* \vp_2 = \vp_1$, we have that $\Psi^* g_{\vp_2} = g_{\vp_1}$. Thus, $\Psi$ is an isometry from $W_1$ to $W_2$. By~\cite[Theorem 5.2]{KotschwarWangIsometries}, ~\eqref{equation induced homomorphisms} implies that $\Psi: W_1 \to W_2$ extends to an isometry $\Psi: (M_1, g_{\vp_1}) \to (M_2, g_{\vp_2})$. Note that this is in particular a diffeomorphism.

By~\cite[Cor. 1.2]{LotayWeiRealAnalyticity}, Laplacian shrinkers are real analytic. Since an isometry between real analytic manifolds is itself real analytic, we find that $\Psi$ is real analytic. Defining $h:= \Psi^* \vp_2 - \vp_1$, we see that $h$ is real analytic, and $h \equiv 0$ on $W_1$. Since our shrinkers are connected by assumption, we find that $h \equiv 0$ everywhere on $M_1$. This gives a diffeomorphism $\Psi: M_1 \to M_2$ such that $\Psi^* \vp_2 = \vp_1$ and concludes the Corollary.

\section{Automorphisms of AC \texorpdfstring{$G_2$}{G2}-Shrinkers}\label{section automorphisms of shrinkers}

For a closed $G_2$-structure $(M, \vp)$, we define $\Aut(M, \vp)$ by
$$\Aut(M, \vp) = \{F \in \Diff(M) \,|\,F^* \vp = \vp\}.$$
We will call any such $F$ satisfying $F^*\vp = \vp$ an automorphism of the $G_2$-structure $\vp$. Note that if $F \in \Aut(M, \vp)$, then $F \in \Isom(M, g_{\vp})$. 

In this section, we adapt the arguments of~\cite{KotschwarWangIsometries} to the Laplacian flow setting. We prove the following theorem, which implies Theorem \ref{theorem isometries}.

\begin{thm}\label{theorem isometries dynamically asymptotic}
Let $(\cC_{r}^{\Sg}, \vp, f)$ be a gradient Laplacian shrinker which is dynamically asymptotic to a closed $G_2$-cone $(\cC_r^{\Sg}, \vp_C)$. Then, $\Aut(\cC_r^{\Sg}, \vp) = \Aut(\cC_r^{\Sg}, \vp_C)$ as subsets of $\Diff(\cC_r^{\Sg})$.
\end{thm}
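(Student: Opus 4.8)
The plan is to establish the two inclusions $\Aut(\cC_r^\Sg,\vp_C)\subseteq \Aut(\cC_r^\Sg,\vp)$ and $\Aut(\cC_r^\Sg,\vp)\subseteq \Aut(\cC_r^\Sg,\vp_C)$ separately, following the strategy of \cite{KotschwarWangIsometries} but adapted to the $G_2$-setting via the uniqueness result Theorem \ref{theorem main} (equivalently Theorem \ref{theorem parabolic version main theorem}). For the inclusion $\Aut(\cC_r^\Sg,\vp_C)\subseteq \Aut(\cC_r^\Sg,\vp)$, let $\ga\in\Aut(\cC_r^\Sg,\vp_C)$, so $\ga^*\vp_C=\vp_C$ and in particular $\ga$ is an isometry of $g_C$ fixing the vertex. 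The idea is to push the shrinker $\vp$ forward by $\ga$: since $\ga$ commutes (up to the identifications in the construction of Section \ref{section setting up the parabolic problem}) with the dilations $\rho_\la$ and preserves $\vp_C$, the $G_2$-structure $\ga^*\vp$ is again a gradient Laplacian shrinker on an end, asymptotic to the \emph{same} cone $(\cC_r^\Sg,\vp_C)$ in the sense of Definition \ref{definition asymptotically conical}. By Theorem \ref{theorem main} applied to $\vp$ and $\ga^*\vp$, there is a diffeomorphism $\Psi$ between ends with $\Psi^*(\ga^*\vp)=\vp$; the work is then to upgrade this to the statement that $\ga^*\vp=\vp$ on the common end. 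This is where real analyticity enters, as in \cite{KotschwarWangIsometries}: one uses \cite[Cor.\ 1.2]{LotayWeiRealAnalyticity} (local real analyticity of Laplacian shrinkers) together with the fact—recorded in the excerpt as Corollary \ref{corollary common analytic structure shrinker}—that $\vp_C$ and the dynamical limit are analytic with respect to a common real analytic structure, so that $\ga$ and the relevant objects are real analytic; then $\ga^*\vp-\vp$ is a real analytic tensor vanishing on an open set (the image of $\Psi$, after unwinding), hence vanishing on the connected end $\cC_r^\Sg$.

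For the reverse inclusion $\Aut(\cC_r^\Sg,\vp)\subseteq \Aut(\cC_r^\Sg,\vp_C)$, take $F\in\Aut(\cC_r^\Sg,\vp)$. Then $F$ is an isometry of the shrinker metric $g_\vp$ which, by uniqueness of the soliton potential up to the symmetries of the equation, must also essentially preserve $f$ (one argues that $F^*f-f$ is a harmonic-type function forced to be constant, exactly as in the Ricci case, using $\nabla^2 f=\tfrac12 g-\Ric-\tfrac13|T|^2g-2T*T$ from \eqref{equation gradient shrinker identity for metric}; composing $F$ with a flow of $\nabla f$ if needed we may assume $F^*f=f$). Consequently $F$ commutes with the reparametrized dilation structure built in Lemma \ref{lemma first parametrization} and Proposition \ref{proposition backward flow reparametrization}, so $F$ descends to the rescaling limit: $F$ induces a diffeomorphism $F_C$ of $\cC_r^\Sg$ with $\la^{-3}\rho_\la^* F^*\vp = F_C^* (\la^{-3}\rho_\la^*\vp)\to F_C^*\vp_C$, while the left side is $\la^{-3}\rho_\la^*\vp\to\vp_C$ since $F^*\vp=\vp$; by uniqueness of the $C^3_{\mathrm{loc}}$-limit this gives $F_C^*\vp_C=\vp_C$. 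One then checks that $F_C$ is exactly (the vertex-fixing extension of) $F$ in the identifications being used, i.e.\ $F\in\Aut(\cC_r^\Sg,\vp_C)$; again real analyticity of $F$ (it is an isometry between real analytic manifolds) is used to make the passage to the limit rigorous and to identify $F_C$ with $F$ on the nose rather than merely asymptotically.

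\textbf{Main obstacle.} I expect the crux to be the same phenomenon that drives the rest of the paper: the asymptotic cone $(\cC_r^\Sg,\vp_C)$ of a Laplacian shrinker does \emph{not} itself satisfy the shrinker equation, and the dynamical limit $\vp_D$ a priori lives in a different $C^k_{\mathrm{loc}}$-topology than $\vp_C$. In \cite{KotschwarWangIsometries} the analogous argument leans on both the uniqueness theorem and on the cone being a (static, degenerate) soliton so that real analyticity of the cone is immediate; here one must instead invoke Proposition \ref{prop dynamical limit equals rescaling limit} ($\vp_C\equiv\vp_D$) and extend the local real analyticity results of \cite{LotayWeiRealAnalyticity} to produce a \emph{common} real analytic structure in which both $\vp_C$ and the shrinker's dynamical limit are analytic (this is the content of the promised Corollary \ref{corollary common analytic structure shrinker}, \emph{cf.}\ \cite[Cor.\ 3.2]{KotschwarWangIsometries}). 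Once that analytic-continuation machinery is in place, the unique-continuation step—"a real analytic section of $\La^3 T^*\cC_r^\Sg$ vanishing on a nonempty open set vanishes identically on the connected end"—is routine, and the remaining verifications (that $\ga^*\vp$ is a shrinker with the same asymptotic cone, that automorphisms preserve $f$, that the induced map on the cone is vertex-fixing) are straightforward consequences of Lemma \ref{lemma scaling of metric}, Lemma \ref{lemma first parametrization}, and Proposition \ref{proposition backward flow reparametrization}.
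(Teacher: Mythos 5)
The inclusion $\Aut(\cC_r^\Sg,\vp_C)\subseteq\Aut(\cC_r^\Sg,\vp)$ as you set it up has a genuine gap. Applying Theorem \ref{theorem main} to $\vp$ and $\ga^*\vp$ only produces \emph{some} diffeomorphism $\Psi$ of ends with $\Psi^*(\ga^*\vp)=\vp$; it gives no open set on which $\ga^*\vp$ and $\vp$ actually coincide, so the real-analytic unique continuation step has nothing to continue: the claim that $\ga^*\vp-\vp$ vanishes on ``the image of $\Psi$, after unwinding'' is not a consequence of $\Psi^*(\ga^*\vp)=\vp$ unless you first show $\Psi$ is the identity near infinity, which you do not address. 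The point the paper exploits (Lemma \ref{lemma extension of automorphism from cone}) is that no gauge-fixing is needed at all: since $\ga^*\vp_C=\vp_C$, the two backward flows $\vp_\tau$ and $\ga^*\vp_\tau$ constructed via Proposition \ref{proposition backward flow reparametrization} share the \emph{same} $\tau=0$ slice, so the parabolic backward uniqueness statement (Theorem \ref{theorem parabolic version main theorem}, not the up-to-diffeomorphism Theorem \ref{theorem main}) yields literal equality $\ga^*\vp_\tau=\vp_\tau$ on some end and time interval; one then passes back to the $\tau=1$ slice using the preserved potential and commutation with $\Psi_\tau$ (Lemma \ref{lemma isometry at time T is isometry at time 1}) and enlarges the end by the real-analyticity argument (Lemma \ref{lemma extending local isometry using real analyticity}, resting on Corollary \ref{corollary common analytic structure shrinker}). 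Without replacing your use of Theorem \ref{theorem main} by this backward-uniqueness step, the first inclusion does not close.

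Your second inclusion is essentially the paper's argument (preserve $f$, hence $F$ commutes with $\Psi_\tau$, hence $F^*\vp_\tau=\vp_\tau$, let $\tau\to0$), but two details need care. First, ``composing $F$ with a flow of $\nabla f$ if needed'' is not allowed: it changes the map whose membership in $\Aut(\cC_r^\Sg,\vp_C)$ you are trying to establish; fortunately it is unnecessary, since $f\circ F=f+\mathrm{const}$ together with $F$ being an isometry already gives $F^*(\nabla f)=\nabla f$, which is all the commutation argument needs. Second, the dichotomy of Lemma \ref{lemma flat or preserved potential function} must be handled: in the flat (equivalently torsion-free) alternative the flow is static and $\vp=\vp_C$ directly, as in Lemma \ref{lemma isometry at time T is isometry at time 1}. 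Finally, since the hypothesis is dynamical asymptoticity, it is cleaner to take $\tau\to0$ in $F^*\vp_\tau=\vp_\tau$ directly rather than route through the rescaling limit and an induced map $F_C$, whose identification with $F$ is exactly the sort of statement that needs proof.
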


\subsection{Theorem \ref{theorem isometries dynamically asymptotic} implies Theorem \ref{theorem isometries}}

If $(M, \vp, f)$ is asymptotic to $(\cC^\Sg, \vp_C)$ along the end $V$, then by Proposition \ref{proposition backward flow reparametrization}, there is $r>0$, an end $W \subset V$, and a diffeomorphism $F: \cC_r^{\Sg} \to W$ such that $(\cC_r^{\Sg}, F^*\vp, F^*f)$ is dynamically asymptotic to $(\cC_r^{\Sg}, \vp_C)$. For $\ga \in \Aut(\cC_r^{\Sg}, F^*\vp)$, $F\circ\ga\circ F^{-1}$ is an automorphism of $(W, \vp)$, and in fact, 
$$\ga \mapsto F \circ \ga \circ F^{-1}$$
is an isomorphism from $\Aut(\cC_r^{\Sg}, F^*\vp)$ to $\Aut(W, \vp)$. Assuming Theorem \ref{theorem isometries dynamically asymptotic}, 
$$\Aut(\cC_r^{\Sg}, F^*\vp) = \Aut(\cC^\Sg, \vp_C).$$
This shows that $\Aut(\cC^\Sg, \vp_C)$ and $\Aut(W, \vp)$ are isomorphic by the desired map, and thus that Theorem \ref{theorem isometries dynamically asymptotic} implies Theorem \ref{theorem isometries}.

\subsection{Real analytic structure of the flow}

A $G_2$-structure $(M, \vp)$ is real analytic if $M$ is a real analytic manifold, and there is a subatlas of real analytic coordinate systems on $M$, such that the components $(g_{\vp})_{ij}$ of $g_{\vp}$ and $\vp_{ijk}$ of $\vp$ are real analytic functions in these coordinates. In fact, when we say that $(M, \vp)$ is real analytic, we always mean that $g_{\vp}$ and $\vp$ are real analytic relative to the subatlas of normal coordinates induced by $g_{\vp}$, i.e.\ $(g_{\vp})_{ij}$ and $\vp_{ijk}$ are real analytic functions in $g_{\vp}$-normal coordinates. See~\cite[\S 3]{LotayWeiRealAnalyticity} for additional background.

In this subsection, we follow the idea of~\cite[\S 3]{KotschwarWangIsometries} to prove the constancy in time of the real analytic structure of Laplacian flow. This will be used to prove Theorem \ref{theorem isometries dynamically asymptotic}. The next lemma follows the idea of~\cite[\S 3]{KotschwarWangIsometries}.

\begin{lem}\label{lemma Kotschwar Wang analyticity claim}
Let $\vp_t$ be a smooth, possibly incomplete solution to Laplacian flow on $\cD \subset M$ for $t \in [0,T_0]$. Let $t_0 \in (0, T_0)$, and let $U = B_r(x_0)$ be a $g_{t_0}$-normal coordinate patch in $(\cD, \vp_{t_0})$ of radius $r>0$ around $x_0$ such that in $U$-coordinates $(g_{t_0})_{ij}$ are real analytic in $U$. 

Then, for any $x_0$ in $V = B_{r/4}(x_0)$, there exists $\eps>0$, depending only on $r$, $T_0-t_0$, and $C_0 := \sup_{U \times [0,T_0]}|\Rm|_{g_t}(x,t)$, such that in $U$-coordinates,
$$\sup_{V\times [t_0, t_0 + \eps]}\Big(|\dd^{k}g|(x,t)+|\dd^{k}\vp|(x,t)\Big) \leq C L^k k!$$
for all $k \geq 0$, where $C$ and $L$ are constants depending only on $r$, $T_0-t_0$, $C_0$, and $g_{t_0}$. Moreover, if $C_0$ and $r$ are fixed, then $\eps = T_0-t_0$ for all $t_0$ close enough to $T_0$. 
\end{lem}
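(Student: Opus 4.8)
The plan is to adapt the interior analyticity argument for Ricci flow (as in Kotschwar--Wang~\cite{KotschwarWangIsometries}, itself building on Kotschwar's work on analyticity of Ricci flow) to the Laplacian flow setting, exploiting the fact that the Laplacian flow system for $(g,\vp)$ is, after fixing a background metric, a quasilinear parabolic system with analytic nonlinearity. First I would set up the local problem: working in the fixed $g_{t_0}$-normal coordinate chart $U = B_r(x_0)$, view the evolving tensors $g_{ij}(x,t)$ and $\vp_{ijk}(x,t)$ as the unknowns of a coupled parabolic-type system. The metric evolves by~\eqref{equation evolution metrics}, i.e.\ $\partial_t g_{ij} = -2\Ric_{ij} - \tfrac23|T|^2 g_{ij} - 4T_i^{\;l}T_{lj}$, and $\vp$ evolves by $\partial_t \vp = \Delta_\vp \vp = i_\vp(h)$ with $h$ as in the expression from Lemma~\ref{lemma dynamical limit of G2 structures is rescaling limit}. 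In these coordinates the right-hand sides are not uniformly elliptic because of the diffeomorphism invariance, so the key device is the same as in~\cite{KotschwarWangIsometries}: one does not prove analyticity of $g,\vp$ directly but rather establishes the Bernstein-type estimate $\sup_{V\times[t_0,t_0+\eps]}(|\partial^k g| + |\partial^k \vp|) \le C L^k k!$ by an inductive argument on derivative estimates, treating the principal part $g^{pq}\partial_p\partial_q$ of $\Delta_\vp$ as the leading operator and absorbing the lower-order and non-divergence terms. Shi-type local derivative estimates (available here via~\cite[Theorem 6.3]{LotayWeiRealAnalyticity}, which we have already invoked in Claim~\ref{claim uniform curvature estimates}) give the base case $k$ small, with the bound on $|\Rm|$ by $C_0$ controlling all the geometric quantities $T$, $\nabla T$, etc.

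The key steps, in order, would be: (1) reduce to a local statement in the fixed chart $U$ and record that by Shi estimates and the curvature bound $C_0$, all of $|\nabla^j \Rm|$, $|\nabla^j T|$ on $B_{r/2}(x_0)\times[t_0+\eps_0, T_0]$ are bounded, which converts coordinate derivatives $|\partial^k g|$, $|\partial^k \vp|$ into covariant ones up to combinatorial factors; (2) write the evolution system schematically as $\partial_t u = a^{pq}(u)\partial_p\partial_q u + b(u, \partial u)$ where $u = (g,\vp)$, $a^{pq}(u) = g^{pq}$ (coming from the principal symbol of $\Delta_\vp$ and, for the metric equation, from $-2\Ric$ written in harmonic-type form — or rather one works directly with the non-gauge-fixed system and carries the extra terms), and $b$ is a polynomial in $u$, $\partial u$, and $g^{-1}$; (3) run the standard Cauchy-estimate induction: differentiate the system $k$ times, estimate $\partial_t |\partial^k u|^2 \le$ (good negative term from the Laplacian) $+ C\sum (\text{products of lower-order derivative norms})$, and feed in the inductive hypothesis $|\partial^j u|\le CL^j j!$ for $j<k$ to close with the same constants $C,L$ provided $\eps$ is small enough depending on $r$, $T_0-t_0$, $C_0$, and the $C^2$-norm of $g_{t_0}$; (4) observe that the length of the time interval $\eps$ on which the induction closes depends only on these quantities and on a lower bound for $r$, so if $C_0$ and $r$ are held fixed we may take $\eps = T_0 - t_0$ once $t_0$ is close enough to $T_0$. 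The analyticity of $(g_{t_0})_{ij}$ in $U$ enters only to guarantee that at the initial time $t_0$ the Cauchy estimates hold with some finite $C,L$; it is the hypothesis that makes the base of the induction available.

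The main obstacle I expect is bookkeeping the combinatorics of the nonlinear terms so that the inductive constant $L$ does not blow up — this is exactly where the factorial structure must be preserved through the product rule applied to the analytic nonlinearities $g^{pq}$, $i_\vp(h)$, and the quadratic torsion terms $T*T$. One must verify that every term produced by differentiating $k$ times splits as a sum, with controlled multiplicity, of products $\partial^{k_1}u \cdots \partial^{k_m}u$ with $k_1 + \cdots + k_m \le k+2$ (the $+2$ absorbed by the parabolic gain), and that $\sum_{k_1+\cdots+k_m = k} \binom{k}{k_1,\ldots,k_m} \prod (C L^{k_i} k_i!) \le C' (L')^k k!$ for suitable $C', L'$. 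The Laplacian-flow-specific complication relative to Ricci flow is the presence of the Hodge-star operator $\star_\vp$ and the torsion $T$, which are themselves nonlinear analytic functions of $\vp$ and $g^{-1}$ (via~\eqref{equation nabla phi and torsion} and the identities of~\cite[\S 2]{LotayWeiLaplacianFlow}); however, since these are rational-analytic in the entries of $\vp$ and $g$ with denominators bounded away from zero by the nondegeneracy of $\vp_{t_0}$ (preserved on a short time interval by continuity), the same Cauchy-estimate machinery applies, and the torsion terms are strictly lower order so they only contribute to $b$, not to the principal part. I would cite~\cite[\S 3]{KotschwarWangIsometries} and~\cite[\S 3]{LotayWeiRealAnalyticity} for the detailed form of this induction and indicate that the modifications needed are exactly the substitution of the Laplacian flow evolution equations and the routine verification that the extra torsion terms are analytic lower-order perturbations.
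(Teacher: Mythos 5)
Your route is genuinely different from the paper's, and as written it has two concrete gaps. First, the schematic form $\partial_t u = a^{pq}(u)\,\partial_p\partial_q u + b(u,\partial u)$ with principal part $g^{pq}\partial_p\partial_q$ is not available for the metric component of the un-gauge-fixed system: in a fixed chart $-2\Ric_{ij}$ contains second-derivative terms (the gradient-of-divergence part coming from diffeomorphism invariance) that are not subordinate to the Laplacian symbol, so the system is only weakly parabolic and the ``good negative term from the Laplacian'' cannot absorb the top-order contributions in your inductive step. Your parenthetical ``one works directly with the non-gauge-fixed system and carries the extra terms'' is exactly the unresolved point; the alternative, a Bryant--Xu type DeTurck gauge-fixing, would give a parabolic system but then requires a separate argument that the gauge diffeomorphisms are themselves spatially analytic, which you do not supply. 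Second, your base case is incomplete: the hypothesis gives real analyticity in $U$-coordinates only of $(g_{t_0})_{ij}$, not of $\vp_{t_0}$, whereas your time-forward Cauchy-estimate induction needs factorial bounds for both components at $t=t_0$ (and the lemma's conclusion at $t=t_0$ already asserts analyticity of $\vp_{t_0}$, so it must be derived, not assumed). Relatedly, your $\eps$ would depend on the analyticity constants of the data, while the lemma requires $\eps$ to depend only on $r$, $T_0-t_0$, and $C_0$.

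The paper avoids both issues by not running a coordinate Bernstein induction at all. It first bounds Lotay--Wei's quantity $\La=(|\Rm|^2+|\na T|^2)^{1/2}$ by $CC_0$ (using $\na T=\Rm\ast\vp+T\ast T\ast\vp$), then invokes the interior spatial-analyticity estimates of \cite[Theorem 6.3]{LotayWeiRealAnalyticity} on an interval starting at some $s_0<t_0$, which yield gauge-invariant covariant bounds $|\na^k\Rm|+|\na^{k+1}T|+|\na^{k+2}\vp|\le CL^{k/2}(k+1)!$ on $V\times[t_0,t_0+\eps]$ with no analyticity hypothesis on the data (this is where the parabolic smoothing lives, and why $\eps$ depends only on $r$, $T_0-t_0$, $C_0$). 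It then uses Kotschwar's Propositions 25 and 27 from \cite{KotschwarTimeAnalyticity} to bound $G=\na-\bar{\na}$ with factorial growth and convert $\na$-derivative bounds into $\bar{\na}$-derivative bounds for $g$ and $\vp$, where $\bar{\na}$ is the $g_{t_0}$-connection. Only at the final step does the analyticity of $g_{t_0}$ enter, via $|\dd^k\bar\Ga|\le \tilde C\tilde L^k k!$ and $\bar\Ga=\bar{\na}-\dd$, to pass from covariant to coordinate derivatives. So the hypothesis you treat as the base of a time induction is in fact the bridge from covariant to coordinate estimates; if you want to salvage your approach you would need to either prove analyticity of $\vp_{t_0}$ in the chart first (which essentially forces the paper's use of times before $t_0$) and resolve the gauge degeneracy, or simply follow the covariant-plus-connection-comparison route.
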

\begin{rmk}
The dependence of $\eps$ on $r$, $C_0$, and $T_0 - t_0$ is such that $\eps$ is uniformly bounded below in terms of a lower bound on $r$ and $T_0- t_0$ and and upper bound on $C_0$.
\end{rmk}
\begin{proof}
First, we recall that for any closed $G_2$-structure, $\na T = \Rm \ast \vp + T \ast T \ast \vp$, where $T$ is the torsion of the $G_2$-structure. Since $|\vp|_{g} =7$ and $R = -|T|^2$, there exists a universal constant $C$ such that $|\na T| \leq C|\Rm|$. If 
$$\La(x,t) := (|\Rm|^2(x,t) + |\na T|^2(x,t))^{1/2},$$
then $\La(x,t) \leq C |\Rm|(x,t)$ for some universal constant $C$. Thus, 
\begin{equation}\label{equation bound on Lotay Wei's Lambda}\sup_{V \times [0,T_0]} \La(x,t) \leq C C_0.\end{equation}
Combining~\eqref{equation bound on Lotay Wei's Lambda} with Lotay--Wei's local estimates of real analyticity~\cite[Theorem 6.3]{LotayWeiRealAnalyticity}, for any $s_0 \in (0, T_0]$, there exist $C, L, \eps_1>0$, depending only on $T_0-s_0$, $C_0$, and $r$, such that 
$$\sup_{V} \,(t-s_0)^{k/2} \Big(|\na^k \Rm|(x,t)+|\na^{k+1}T|(x,t)+|\na^{k+2} \vp|(x,t)\Big)\leq C L^{k/2} (k+1)!$$
for all $k \geq 0$ and $t \in (0, T_0]$ such that $t \in (s_0, s_0 +\eps_1]$. By the argument in~\cite[Theorem 6.3]{LotayWeiRealAnalyticity}, if $C_0$ and $r$ are bounded, then for all $T_0-s_0$ small enough, $\eps_1 = T_0-s_0$. Since $\eps_1$ only depends on $T_0-s_0$, $C_0$, and $r$, then for any $t_0 \in (0, T_0]$, there exists $s_0 \in [0,t_0)$ and $\eps>0$ such that 
$$[t_0-\eps, t_0 + \eps]\cap (0, T_0] \subset (s_0+\frac{\eps_1}{2}, s_0+\eps_1)\cap (0, T_0].$$ 
Then, there exists a relabelled constant $L$, only modified by a constant depending on $\eps_1$, such that
\begin{equation}\label{equation Lotay Wei first analyticity estimate}\sup_{V \times \cI} \Big(|\na^k \Rm|+|\na^{k+1}T|+|\na^{k+2} \vp|\Big)\leq C L^{k/2} (k+1)!\end{equation}
where $\cI = [t_0 - \eps, t_0 + \eps]\cap (0, T_0]$.

Now, recall from~\cite[\S 3]{KotschwarTimeAnalyticity} that there exists a universal constant $a>0$ such that
$$m! n! \leq (m+n)! \leq a^{m+n} m! n!$$
This implies that 
$$C L^{k/2}(k+1)! \leq 6Ca (La^2)^{k/2}(k-2)!$$
We rewrite the bound~\eqref{equation Lotay Wei first analyticity estimate} as 
\begin{equation}\label{equation Lotay Wei second analyticity estimate}\sup_{V \times \cI} \Big(|\na^k \Rm|+|\na^{k+1}T|+|\na^{k+2} \vp|\Big)\leq C L^{k/2} (k-2)!\end{equation}
where $C$ and $L$ are relabelled constants which have only been modified by universal constants. Now that we have estimates of the form~\eqref{equation Lotay Wei second analyticity estimate}, we may apply results from~\cite[\S 8]{KotschwarTimeAnalyticity}, as in~\cite[Proof of Theorem 1.3]{KotschwarWangIsometries}.

First, define 
$$B_{ij} := \Ric_{ij} + \frac{1}{3}|T|^2g_{ij} + 2T_{i}^{\;l}T_{lj},$$
so that $\frac{\dd g_{ij}}{\dd t} = -2B_{ij}$. Taking covariant derivatives of $B_{ij}$ with respect to $g(t)$,
\begin{align}\label{equation first estimate on derivatives of B}
    \sup_{V\times \cI}|\na^{k} B|- \sup_{V\times \cI}|\na^k \Ric| &\leq C\sup_{V\times \cI}\sum_{n=0}^{k} \binom{k}{n}|\na^{k-n}T||\na^n T|\nonumber\\
    &\leq a^k\sup_{V\times \cI}\sum_{n=0}^{k}|\na^{k-n}T||\na^n T| \nonumber\\
    &\leq a^k C^2 \sum_{n=0}^{k} L^{\frac{k-n-1}{2}} L^{\frac{n-1}{2}} (k-n-3)!(n-3)!\nonumber\\
    &\leq a^k C^2 \sum_{n=0}^{k} L^{\frac{k}{2}-1} (k-6)!\nonumber\\
    &\leq  a^k (k+1) C^2L^{\frac{k}{2}-1} (k-6)!
\end{align}
After modifying $C$ and $L$ by universal constants, 
\begin{equation}\label{equation second estimate on derivatives of B}
\sup_{V\times \cI} |\na^k B| \leq CL^{\frac{k}{2}} (k-2)!
\end{equation}
for all $k \geq 0$. Now, let $\na$ be the Levi-Civita connection of $g_{t}$, and for $t_0 \in (0,T_0)$, let $\bar{g} := g_{t_0}$, $\na^{\bar{g}} = \ov{\na}$, and $G:= \na - \ov{\na}$. Then, for $t_0 \in (0,T_0)$, we apply ~\cite[Proposition 27]{KotschwarTimeAnalyticity} with ~\eqref{equation second estimate on derivatives of B} on $[t_0, t_0+\eps]\cap (0,T_0]$ to find relabelled constants $\eps$, $C$, and $L$, depending only on $r$, $T_0-t_0$, and $C_0$, such that
\begin{equation}\label{equation bound on difference of connections G}
    \sup_{V\times [t_0, t_0+\eps]} |\na^k G|(x,t) \leq C L^{k/2} (k-2)!
\end{equation}
for all $k \geq 0$. Here, $[t_0, t_0+\eps]\subset (0,T_0]$ by the new choice of $\epsilon$. Indeed, by the argument in~\cite[Proposition 27]{KotschwarWangIsometries}, if $C$ and $L$ are bounded, then for all $t_0$ close enough to $T_0$, we have that $\eps = T_0-t_0$. By~\cite[Proposition 25]{KotschwarTimeAnalyticity},~\eqref{equation Lotay Wei second analyticity estimate} and~\eqref{equation bound on difference of connections G} imply that there exist relabelled constants $C$ and $L$ depending only on $r$, $T_0-t_0$, and $C_0$, such that 
\begin{equation}\label{equation phi bar derivative bounds}
    \sup_{V \times [t_0, t_0 + \eps]} |\ov{\na}^{k} \vp| \leq C L^{k/2} (k-2)!
\end{equation}
Since $|\na^k g| = 0$ for all $k \geq 0$, estimates of the form~\eqref{equation Lotay Wei second analyticity estimate} automatically hold for $|\na^k g|$ as well. Just as for~\eqref{equation phi bar derivative bounds}, Proposition 25 of~\cite{KotschwarTimeAnalyticity} goes to show that
\begin{equation}\label{equation g bar derivative bounds}
    \sup_{V \times [t_0,  t_0 + \eps]} |\ov{\na}^k g|\leq C L^{k/2} (k-2)!
\end{equation}
By assumption, $\bar{g}$ is analytic in $U$, so the Christoffel symbols $\bar{\Ga}_{ij}^k$ are also analytic in $U$. Thus, for each $x_0 \in U$, 
$$|\dd^{k}\bar{\Ga}| \leq \tilde{C} \tilde{L} k!$$ 
in some neighborhood of $x_0$, where $\tilde{C}$ and $\tilde{L}$ are constants not depending on $k$. Using a covering argument on the $\bar{g}$-closure of $V$, there exist constants $\tilde{C}$ and $\tilde{L}$ such that in a neighborhood of $x_0$,
\begin{equation}\label{equation estimates on christoffel symbols for analyticity}
    |\dd^{k}\bar{\Ga}|\leq \tilde{C}\tilde{L}k!
\end{equation}
for all $k \geq 0$. Writing $\bar{\Ga} = \bar{\na} - \dd$, we may combine~\eqref{equation estimates on christoffel symbols for analyticity} with~\eqref{equation g bar derivative bounds} and~\eqref{equation phi bar derivative bounds} to find constants $C$ and $L$ such that
\begin{equation}\label{equation final in analyticity lemma}\sup_{V\times [t_0,  t_0 + \eps]}\Big(|\dd^{k}g|(x,t)+|\dd^{k}\vp|(x,t)\Big) \leq C L^k k!\end{equation}
for all $k \geq 0$. The constants $\tilde{C}$ and $\tilde{L}$ from~\eqref{equation estimates on christoffel symbols for analyticity} depend on $\bar{g} = g_{t_0}$ in $U$, and the constants $C, L$ in~\eqref{equation g bar derivative bounds} and ~\eqref{equation phi bar derivative bounds} depend on $T_0-t_0$, $C_0$, and $r$. This gives the desired dependence for $C$, $L$ in~\eqref{equation final in analyticity lemma}. Similarly, $\eps$ depends on $\eps_1$, which itself depends only on $T_0-t_0$, $r$, and $C_0$. This gives the desired dependence for $\eps$, and the statement for $t_0$ close to $T_0$ follows from the arguments in~\cite[Theorem 6.3]{LotayWeiRealAnalyticity} and~\cite[Proposition 27]{KotschwarTimeAnalyticity}.
\end{proof}

\begin{prop}\label{proposition same analytic structure}
Suppose $(\cD, \vp_{t})$ is a smooth, possibly incomplete solution to Laplacian flow on $\cD \subset M$ for $t \in [0, T_0]$. Then,  $(\cD, \vp_{t})$ is real analytic with respect to the same analytic structure for all $t \in (0, T_0]$. In fact, $g_{t}$ and $\vp_t$ are real analytic relative to the subatlas of normal coordinates induced by $g_{t_0}$ for any $t_0 \in (0, T_0]$. 
\end{prop}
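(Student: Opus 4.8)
The plan is to follow Kotschwar--Wang~\cite[\S 3]{KotschwarWangIsometries}, using the forward-in-time propagation of analyticity recorded in Lemma~\ref{lemma Kotschwar Wang analyticity claim} together with the elementary fact that a single real analytic metric pins down a real analytic structure. First I would recall from Lotay--Wei~\cite{LotayWeiRealAnalyticity} that for each fixed $\tau\in(0,T_0]$ the Riemannian manifold $(\cD,g_\tau)$ is real analytic, with $g_\tau$ and $\vp_\tau$ real analytic in $g_\tau$-normal coordinates; write $\cA_\tau$ for the corresponding atlas of $g_\tau$-normal charts. The key auxiliary observation is: \emph{if a fixed metric $g$ has real analytic components in two atlases $\cA$ and $\cB$ near a point, then $\cA$ and $\cB$ are mutually compatible real analytic atlases there}. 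This is standard and follows because, when $g$ is analytic in a chart, the geodesic ODE has analytic coefficients, so $\exp^g$ and its local inverse are analytic in that chart; hence the $g$-normal charts are compatible with both $\cA$ and $\cB$, and therefore with each other.

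Next I would fix an arbitrary $x\in\cD$; since real analyticity is local, it suffices to argue in a fixed precompact neighborhood of $x$. On such a neighborhood, for any compact subinterval $[\sigma,T_0]\subset(0,T_0]$ the curvature of $g_t$ is bounded for $t\in[\sigma,T_0]$ and the $g_t$-normal radii at $x$ are uniformly bounded below, so Lemma~\ref{lemma Kotschwar Wang analyticity claim} and the uniformity in the Remark following it furnish a single $\eps_\ast=\eps_\ast(\sigma)>0$ such that, for every $s\in[\sigma,T_0)$, the components of $g_t$ and $\vp_t$ are real analytic in a fixed $g_s$-normal chart about $x$ for all $t\in[s,\min\{s+\eps_\ast,T_0\}]$. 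In particular each such $g_t$ is analytic both in the $g_s$-normal chart and in the $g_t$-normal chart, so by the auxiliary observation $\cA_t$ and $\cA_s$ are compatible about $x$; iterating in steps of size $\eps_\ast$ up to $T_0$ (using at each junction that the $g_{t_{k+1}}$-chart is already compatible with the $g_\sigma$-chart once we know $g_{t_{k+1}}$ is analytic in the latter), I conclude that $g_t$ and $\vp_t$ are analytic in the $g_\sigma$-normal chart about $x$ for all $t\in[\sigma,T_0]$.

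Finally, to reach all of $(0,T_0]$ and an arbitrary base time, fix $t_0\in(0,T_0]$. For $t\ge t_0$ the previous step with $\sigma=t_0$ gives analyticity of $g_t,\vp_t$ in the $g_{t_0}$-normal chart about $x$. For $t<t_0$, apply the previous step with $\sigma$ any time in $(0,t)$: it shows $g_{t_0}$ and $g_t$ are both analytic in the $g_\sigma$-normal chart about $x$, hence that chart is compatible with the $g_{t_0}$-normal chart about $x$, so $g_t,\vp_t$ are analytic in the $g_{t_0}$-normal chart about $x$ too. As $\sigma>0$ was arbitrary this covers every $t\in(0,T_0]$, and as $x$ was arbitrary it yields the proposition. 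I expect the only real subtlety to be bookkeeping the uniform estimates on the possibly incomplete, noncompact $\cD$ and near $t=0$ — both dealt with by reducing to a fixed point and a precompact neighborhood, where no uniformity is ever needed as $t\searrow 0$ — since the analytic core of the argument is already contained in Lemma~\ref{lemma Kotschwar Wang analyticity claim}.
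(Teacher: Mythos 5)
Your proposal is correct and follows essentially the same route as the paper: Lotay--Wei analyticity at each positive time, the forward propagation of spatial analyticity in a fixed $g_{t_0}$-normal chart from Lemma~\ref{lemma Kotschwar Wang analyticity claim}, the observation that a metric analytic in two overlapping charts forces those charts to be analytically compatible, and a chaining argument over $(0,T_0]$. The only differences are cosmetic: you justify the compatibility fact via analyticity of the exponential map (the paper invokes the Myers--Steenrod-type fact that an isometry of analytic metrics is analytic) and you make the paper's connectedness argument explicit as an $\eps_\ast$-stepping on a precompact neighborhood, which is fine.
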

\begin{proof}
By~\cite[Theorem 1.1]{LotayWeiRealAnalyticity}, for each $t \in (0, T_0]$, the Laplacian flow $(\cD, \vp_{t})$ is real analytic with respect to the analytic structure $\cA_t$ induced by the subatlas of $g_t$-normal coordinate systems. To conclude the lemma, we will show that $(\cD, \vp_{t})$ is real analytic with respect to the subatlas of $g_{t_0}$-normal coordinate systems for any $t_0 \in (0,T_0]$. 

The key fact is the following from~\cite[\S 3]{KotschwarWangIsometries}: if a fixed metric $g$ is analytic in both of the overlapping smooth charts $(U, \phi)$ and $(V, \psi)$, then these two charts belong to the same analytic structure. The reason is that the transition function between the two charts is an isometry on $\bR^n$ of analytic metrics and is therefore real analytic itself. Since the transition function is real analytic, the two charts belong to the same analytic structure.

Fix $x_0 \in M$. By~\cite[Remark 6.4]{LotayWeiRealAnalyticity}, if $(\cD, \vp_t)$ is a smooth solution to Laplacian flow in an open set $\cD \subset M$ for $t \in [0,T_0]$, then $\sup_{\cD \times [0,T_0]}|\Rm|_{g_t}(x,t) < \infty$. Let $U$ be a normal coordinate chart for $g_{t_0}$, for $t_0 \in (0,T_0)$. By Lemma \ref{lemma Kotschwar Wang analyticity claim}, $g_t$ is analytic with respect to $U$-coordinates in $V \subset U$ for $t \in [t_0, t_0 + \eps]$. For $t_0$ close enough to $T_0$, we have that $\eps = T_0 - t_0$ so that $g_t$ is analytic with respect to $U$-coordinates in $V\subset U$ for $t \in [t_0, T_0]$.

For $t \in [t_0, t_0 + \eps]$, let $W_t$ be a $g_t$-normal coordinate chart centered at $x_0$. Since $g_t$ is analytic in $W_t$-coordinates and in $U = W_{t_0}$-coordinates, which overlap in a neighborhood of $x_0$, the chart $W_t$ belongs to the same real analytic structure as $U$. This argument applies for any choice of $t_0 \in (0,T_0)$. By the connectedness of $(0, T_0]$, for each $t \in (0,T_0]$, the chart $W_t$ belongs to the same real analytic structure as the chart $U = W_{t_0}$ for any given $t_0 \in (0, T_0]$. Moreover, both $g_t$ and $\vp_t$ are real analytic in the chart $U$ for any given $t_0 \in (0, T_0]$. This argument holds for any $x_0 \in M$, so the real analytic structure on $M$ associated to the subatlas of $g_t$-normal coordinate charts is the same for each $t\in (0, T_0]$. The conclusion of the proposition follows.
\end{proof}
The next corollary follows just as in~\cite[Corollary 3.2]{KotschwarWangIsometries}.
\begin{cor}\label{corollary common analytic structure shrinker}
Let $(\cC_r^{\Sg}, \vp, f)$ be a gradient Laplacian shrinker which is dynamically asymptotic to a closed $G_2$-cone $(\cC_r^{\Sg}, \vp_C)$. Then, $(\cC_r^{\Sg}, \vp)$ and $(\cC_r^{\Sg}, \vp_C)$ are real analytic relative to the same analytic structure. 
\end{cor}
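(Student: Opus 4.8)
The plan is to exhibit both $\vp$ and $\vp_C$ as time-slices of one and the same smooth Laplacian flow of uniformly bounded curvature, with $\vp_C$ occurring at the terminal time and $\vp$ at an \emph{interior} time, and then to invoke Proposition \ref{proposition same analytic structure}. That proposition asserts that all positive-time slices of such a flow are real analytic with respect to a common analytic structure --- the one carried by the subatlas of $g_{t_0}$-normal coordinate charts, for any fixed $t_0$ in the open part of the time interval --- so once $\vp$ has been placed in the interior, taking $t_0$ to be the terminal time will pin both $\vp$ and $\vp_C$ to the analytic structure determined by $g_{\vp_C}$-normal coordinates.

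First I would recall the interpolating flow from Section \ref{section setting up the parabolic problem}. Since $(\cC^\Sg_r,\vp,f)$ is dynamically asymptotic to $(\cC^\Sg_r,\vp_C)$, the construction in the proof of Proposition \ref{proposition backward flow reparametrization}, together with Proposition \ref{prop dynamical limit equals rescaling limit}, produces (after passing to a sub-end, which we again denote $\cC^\Sg_{r_0}$) a smooth forward Laplacian flow $\{\vp_t\}_{t\in[-1,0]}$ on $\cC^\Sg_{r_0}$ with $\vp_{-1}=\vp$ and $\vp_0=\vp_C$, satisfying the uniform curvature decay \eqref{equation time dependent curvature decay}, so in particular $\sup_{\cC^\Sg_{r_0}\times[-1,0]}|\Rm|_{g_t}<\infty$. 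The reparametrizations $\Psi_\tau$ used to build this flow are integral flows of the gradient vector field $\nabla f$ (following \cite[Claim 2.3]{kotschwarwang2015}), hence are defined for the flow parameter in an open neighborhood of $0$; consequently $\{\vp_t\}$ extends to a smooth forward Laplacian flow on $\cC^\Sg_{r_0}\times[-1-\eta,0]$ for some $\eta>0$, and the scaling argument of Claim \ref{claim uniform curvature estimates} keeps $|\Rm|_{g_t}$ bounded on this slightly larger interval. The point of the extension is that the shrinker slice $t=-1$ now sits in the \emph{interior} of the time interval.

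Next I would apply Proposition \ref{proposition same analytic structure} to the extended flow, after translating time so the interval becomes $[0,1+\eta]$ with terminal time $T_0=1+\eta$. It gives: for every $t_0\in(-1-\eta,0]$, the metrics $g_t$ and the forms $\vp_t$ are real analytic relative to the subatlas of $g_{t_0}$-normal coordinate charts, simultaneously for all $t\in(-1-\eta,0]$. Choosing $t_0=0$ and noting that $-1\in(-1-\eta,0]$, we conclude that both $\vp_C=\vp_0$ and $\vp=\vp_{-1}$ are real analytic with respect to the single analytic structure on $\cC^\Sg_r$ carried by the subatlas of $g_{\vp_C}$-normal coordinate charts (here $g_{\vp_C}=g_0$ is the conical limit metric $g_D$, which equals $g_{\vp_C}$ by Proposition \ref{prop dynamical limit equals rescaling limit}). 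This is exactly the assertion of Corollary \ref{corollary common analytic structure shrinker}, cf.\ \cite[Cor.\ 3.2]{KotschwarWangIsometries}.

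The only genuinely delicate point is the extension step: one must verify that the interpolating flow really does continue smoothly a little past the shrinker slice with its curvature under control, since Proposition \ref{proposition same analytic structure} says nothing at the initial time of a flow. This is where the soliton hypothesis enters --- the self-similar flow generated by $(\vp,f)$ is ancient, and the reparametrizations defining $\vp_t$ are flows of $\nabla f$, which can be run backwards for a short time --- and it is the step that makes the argument work. One could instead cite \cite[Cor.\ 1.2]{LotayWeiRealAnalyticity} to see directly that $\vp$ is real analytic in $g_\vp$-normal coordinates, but that does not by itself place $\vp$ and $\vp_C$ in a \emph{common} atlas, so the extension argument is the cleaner route.
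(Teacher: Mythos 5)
Your strategy is essentially the paper's: deduce the corollary by applying Proposition \ref{proposition same analytic structure} to the interpolating flow of Proposition \ref{proposition backward flow reparametrization}, taking the reference time to be the terminal (cone) slice. The paper's own proof is just that one-line application, and you have correctly identified the point it glosses over: Proposition \ref{proposition same analytic structure} only covers slices at times $t\in(0,T_0]$, while the shrinker $\vp$ sits at the \emph{initial} time of the interpolating flow, so by itself the proposition does not place $\vp$ in the common atlas. Your patch --- use the ancientness of the self-similar soliton flow to run the reparametrizations $\Psi_s$ for a short negative parameter and thereby push $\vp$ into the interior of the time interval --- is sound in outline, but it has a cost you should make explicit: flowing backwards along $\nabla f$ moves points toward the inner boundary, so the extended flow on $[-1-\eta,0]$ only exists after passing to a strictly smaller end $\cC^\Sg_{r'}$ with $r'>r$. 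As written, your argument therefore yields the common analytic structure only on that sub-end, which is slightly weaker than the statement on $\cC^\Sg_r$ (and than what Lemma \ref{lemma extending local isometry using real analyticity} later uses on the full end it is applied to).

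A patch that keeps the full end, and is closer to the machinery already in the paper, is the one you set aside too quickly. By \cite[Cor.\ 1.2]{LotayWeiRealAnalyticity} the soliton slice $g_\vp$ is real analytic in its own normal coordinates; the hypotheses of Lemma \ref{lemma Kotschwar Wang analyticity claim} require only that $g_{t_0}$ be analytic in the chart $U$ together with the uniform curvature bound (the restriction $t_0>0$ in its statement is there only because, for a general flow, analyticity of $g_{t_0}$ is known only at positive times), so the lemma applies with $t_0$ equal to the initial time of the interpolating flow. Its conclusion, combined with the chart-overlap argument in the proof of Proposition \ref{proposition same analytic structure} (a fixed metric analytic in two overlapping charts forces the charts to be analytically compatible), identifies the analytic structure of $g_\vp$-normal charts with that of $g_t$-normal charts for slightly later $t$, and Proposition \ref{proposition same analytic structure} then identifies the latter with the structure at the terminal time, i.e.\ with $g_{\vp_C}$-normal charts. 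This places $\vp$ and $\vp_C$ in a single analytic atlas on all of $\cC^\Sg_r$ without shrinking the domain, and it uses the soliton hypothesis through analyticity of solitons rather than through ancientness.
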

\begin{proof}
By the definition of dynamically asymptotic, we may apply Proposition \ref{proposition same analytic structure} to conclude this corollary. 
\end{proof}

\subsection{Proof of Theorem \ref{theorem isometries dynamically asymptotic}}

From Proposition \ref{proposition backward flow reparametrization}, we let $\vp_\tau = \tau^{3/2} \Psi_{\tau}^* \vp$ be the family satisfying~\eqref{equation backwards Laplacian flow}. Let $(\cC_{r}^{\Sg}, \vp, f)$ be a gradient Laplacian shrinker which is dynamically asymptotic to a closed $G_2$-cone $(\cC_{r}^{\Sg}, \vp_C)$. 

\begin{lem}\label{lemma flat or preserved potential function}
Let $F \in \Aut(\cC_{r}^{\Sg}, \vp_\tau)$ for some $\tau \in (0, 1]$. Then, either $g_\tau$ is flat or $F^*(\na f) = \na f$ on $\cC_{r}^{\Sg}$.
\end{lem}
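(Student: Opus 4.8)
The idea is to exploit the rigidity of the soliton identity, specifically the relation $\nabla^2 f = \tfrac12 g - \Ric - \tfrac13|T|^2 g - 2T\ast T$ from \eqref{equation gradient shrinker identity for metric}, together with the fact that any $F \in \Aut(\cC_r^\Sg, \vp_\tau)$ is an isometry of $g_\tau$. First I would observe that, since the construction in Proposition \ref{proposition backward flow reparametrization} produces from $\vp_\tau$ a gradient shrinking Laplacian soliton structure (with potential the rescaled $f$ and modified dilation constant), the pair $(\vp_\tau, f_\tau)$ satisfies a shrinker identity of the form \eqref{equation derivative identities on f 2}. An $F$ with $F^*\vp_\tau = \vp_\tau$ automatically satisfies $F^*g_\tau = g_\tau$, $F^*\Ric = \Ric$, and $F^*T = T$ (the torsion being canonically determined by $\vp$ and $g$ via \eqref{equation nabla phi and torsion}), so pulling back the shrinker identity gives $\nabla^2(F^*f) = \nabla^2 f$ as symmetric $2$-tensors. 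Hence $w := F^*f - f$ satisfies $\nabla^2 w = 0$, i.e.\ $\nabla w$ is a parallel vector field on $\cC_r^\Sg$.

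The next step is to run the standard dichotomy for a manifold carrying a parallel vector field. If $\nabla w \not\equiv 0$, then $\nabla w$ is a nonvanishing parallel field (a parallel field that vanishes at one point vanishes identically), so $(\cC_r^\Sg, g_\tau)$ locally splits as a Riemannian product $\mathbb{R} \times N$ with the $\mathbb{R}$-factor in the direction of $\nabla w$. I would then feed this splitting back into the shrinker equation: decomposing $\nabla^2 f = \tfrac12 g - \eta$ with $\eta = \Ric + \tfrac13|T|^2 g + 2T\ast T$, and using that $w = F^* f - f$ has $\nabla w$ parallel, one gets strong constraints. More directly, since $F$ is an isometry fixing $\vp_\tau$ and $\nabla f$ is canonically associated to the soliton, a cleaner route is: the soliton vector field $\nabla f$ satisfies $d^*(\nabla f \lrcorner \vp) = 0$ and $\nabla f \lrcorner T = 0$ (as in Lemma \ref{lemma fundamental shrinker identity}), and $F^*(\nabla f)$ satisfies the same soliton equation for the same $\vp_\tau$; subtracting, $V := F^*(\nabla f) - \nabla f$ is a parallel vector field with $\mathcal{L}_V \vp_\tau$ controlled. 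Tracing the identity $\nabla_i V_j = 0$ against itself and using completeness/quadratic curvature decay of the end forces either $V \equiv 0$ or the existence of a line, in which case the Cheeger–Gromoll-type splitting plus the shrinker structure forces $g_\tau$ to be flat along that line; iterating (the soliton is cone-like at infinity, radially symmetric structure) upgrades this to $g_\tau$ flat.

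Concretely, the cleanest argument I would write down: from $\nabla^2 w = 0$ we get that $\langle \nabla w, \nabla w\rangle$ is constant, say $= c \ge 0$. If $c = 0$ then $\nabla w = 0$, hence $F^*(\nabla f) = \nabla f$ (since $w = F^*f - f$ and $F^*(\nabla f) = \nabla(F^*f)$ as $F$ is an isometry), which is the desired conclusion. If $c > 0$, then $E := \nabla w / \sqrt c$ is a unit parallel vector field, giving a local (in fact, since the end is of the form $\Sg \times (r,\infty)$ and we may pass to the universal cover if needed, global after the topological reduction) splitting $g_\tau = dt^2 + h$. Now apply $\nabla^2 f = \tfrac12 g - \eta$ in the $E$-direction: $\nabla^2 f(E,E) = \tfrac12 - \eta(E,E)$, but also differentiating $E(f)$ along $E$ and using $\nabla E = 0$ shows $\nabla^2 f(E,E) = E(E(f))$; combined with the asymptotically conical decay $\eta = O(r^{-2})$ one finds $E(f)$ grows linearly, which is incompatible with $f$ being (up to lower order) $r^2/4$ on a genuinely conical end \emph{unless} the cross-section degenerates — this is where one concludes $g_\tau$ must be flat (the only AC $G_2$-cone admitting a parallel vector field, equivalently a de Rham factor, being flat $\mathbb{R}^7$ or a product cone which after the splitting collapses to flat).

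\textbf{Main obstacle.} The delicate point is the last step: extracting "$g_\tau$ is flat" from the existence of a nontrivial parallel vector field on an AC $G_2$-end, rather than merely a local product splitting. One must use that the metric is asymptotically conical with a genuine conical tip structure at infinity (radial function $h$ with $|\nabla h|^2 \to 1$), that a parallel field gives a de Rham factor, and that a nonflat metric cone cannot have a de Rham factor unless the whole cone is flat — and then bootstrap from the end to the relevant conclusion. Keeping track of whether the conclusion is "$g_\tau$ flat on the end" versus "globally", and whether one needs the $\pi_1$-surjectivity hypothesis here, is the subtle bookkeeping; I expect the authors handle it by first proving $F^*(\nabla f) = \nabla f$ \emph{on the end} and deferring the flat case to a separate, essentially trivial, sub-case where everything is explicit.
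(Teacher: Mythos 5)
Your first half is exactly the paper's (i.e.\ Kotschwar--Wang's) framework: since $F^*\vp_\tau=\vp_\tau$ forces $F^*g_\tau=g_\tau$, $F^*\Ric=\Ric$ and $F^*T=T$, pulling back the time-dependent shrinker identity \eqref{equation derivative identities on f 2} gives $\na^2(f\circ F-f)=0$, so $w:=f\circ F-f$ has parallel gradient, and the branch $\na w\equiv 0$ yields $F^*(\na f)=\na f$; this is precisely how the paper proceeds, by citing \cite[Lemma 4.1]{KotschwarWangIsometries} and treating the torsion terms of \eqref{equation derivative identities on f 2} as $O(r^{-2})$ decay terms. The genuine gap is in the other branch, where you must produce flatness from a nontrivial parallel field $V=\na w$. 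Your ``cleanest argument'' is not an argument: $E(f)=\langle E,\na f\rangle$ growing linearly is perfectly compatible with $f\sim r^2/4$ (it happens on the Gaussian shrinker), so there is no incompatibility to exploit, and the sentence ``unless the cross-section degenerates --- this is where one concludes $g_\tau$ must be flat'' is exactly the missing step. Your fallback, that an AC cone with a de Rham factor is flat (a Gallot-type rigidity statement, which is true for smooth compact links), also does not close the gap as written: the parallel field lives on the shrinker end, not on the cone, so you would have to transfer the parallel $1$-form through the blow-down limit $\la^{-2}\rho_\la^*g_\tau\to g_C$; and even granting that the asymptotic cone is flat, that does not by itself say $g_\tau$ is flat --- you would still need an extra input (for instance the paper's own uniqueness theorem against the Gaussian shrinker, or a direct argument). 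The invocation of a Cheeger--Gromoll splitting is also out of place here (the end is incomplete and has no Ricci sign), though it is harmless since a parallel field already splits locally by de Rham.

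A way to complete your branch in the spirit of the cited proof: a parallel field is Killing with geodesic integral curves, so $|\Rm|$ is constant along the flow of $V$; from $\na^2 f=\tfrac{1}{2\tau}g-\eta$ with $\eta=O(r^{-2})$, the function $f$ is uniformly convex along these geodesics in the outer end, so from any sufficiently distant point one of the two integral curves of $\pm E$ has $f$ nondecreasing, hence stays in the end and escapes to infinity; quadratic curvature decay \eqref{equation time dependent curvature decay} along that curve then forces $\Rm=0$ at the starting point, giving flatness on some $\cC^\Sg_{r_1}$. Finally, flatness on all of $\cC^\Sg_{r}$ is not automatic from an outer end: the paper gets it from the real analyticity of Laplacian solitons \cite[Cor.\ 1.2]{LotayWeiRealAnalyticity}, a step your proposal omits entirely. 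So: correct and on-model through the dichotomy, but the flatness branch as written would fail, and the analytic-continuation step is missing.
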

\begin{proof}
To prove this lemma, we follow the framework given in~\cite[Lemma 4.1]{KotschwarWangIsometries}. The proof of~\cite[Lemma 4.1]{KotschwarWangIsometries} depends on~\cite[Proposition 2.1]{KotschwarWangIsometries}. Our analogue of that proposition is Proposition \ref{proposition backward flow reparametrization}. The proof of this lemma works exactly the same as in ~\cite{KotschwarWangIsometries} using Proposition \ref{proposition backward flow reparametrization} and the fact that $F \in \Isom(\cC_{r}^{\Sg}, g_\tau)$ if $F \in \Aut(\cC_{r}^{\Sg}, \vp_\tau)$, modulo two minor changes. To find estimates on the time-dependent potential function $f_\tau$ as in~\cite[Lemma 4.1]{KotschwarWangIsometries}, we simply treat the $T \ast T$ terms in~\eqref{equation derivative identities on f 2} as decay terms at infinity using~\eqref{equation time dependent curvature decay} and the fact that $R = -|T|^2$. By the argument in~\cite[Lemma 4.1]{KotschwarWangIsometries}, we find $r_1> r$ such that $g(\tau)$ is flat on $\cC_{r_1}^{\Sg}$. By \cite[Corollary 1.2]{LotayWeiRealAnalyticity}, Laplacian solitons are real analytic and in particular, the associated metrics are also real analytic. This implies that $g_\tau$ is flat on $\cC_{r}^{\Sg}$.
\end{proof}
\begin{rmk}\label{remark on flat or preserved potential function}
The proof in \cite[Lemma 4.1]{KotschwarWangIsometries} shows that $f \circ F = f+ C$ for some constant $C$. This is equivalent to $F^*(\na f) = \na f$ using the fact that $F$ is an isometry.  
\end{rmk}

\begin{lem}\label{lemma extension of automorphism from cone}
Suppose that $(\cC_r^{\Sg}, \vp, f)$ is a gradient Laplacian shrinker which is dynamically asymptotic to a closed $G_2$-cone $(\cC_r^{\Sg}, \vp_C)$. Then, for any $F \in \Aut(\cC_r^{\Sg}, \vp_C)$, there is $r_1 \geq r$ such that $F|_{\cC_{r_1}^{\Sg}} \in \Aut(\cC_{r_1}^{\Sg}, \vp)$. 
\end{lem}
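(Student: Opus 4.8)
The plan is to exploit the common real analytic structure established in Corollary \ref{corollary common analytic structure shrinker} together with the uniqueness theorem (Theorem \ref{theorem main}). Here is the strategy. Let $F \in \Aut(\cC_r^{\Sg}, \vp_C)$, so $F^*\vp_C = \vp_C$ and in particular $F \in \Isom(\cC_r^{\Sg}, g_C)$; note that $F$ preserves the vertex and hence commutes (after suitable normalization) with the dilations $\rho_\la$, i.e.\ $F \circ \rho_\la = \rho_\la \circ F$. We want to produce a new shrinker end whose asymptotic cone is again $(\cC^\Sg, \vp_C)$, namely by pulling $\vp$ back by $F$. Concretely, consider $\tvp := F^*\vp$ on $\cC_r^{\Sg}$ (shrinking $r$ if necessary so $F$ is defined on the relevant domain). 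Since $F^*f$ is a potential function for $\tvp$ and $\tvp$ satisfies the shrinker equation (pullback by a diffeomorphism preserves the soliton equation), $(\cC_r^{\Sg}, \tvp, F^* f)$ is again a gradient Laplacian shrinker end. Moreover, because $F$ commutes with the dilations and fixes $\vp_C$, one checks that $\la^{-3}\rho_\la^* \Phi^* \tvp = F^*(\la^{-3}\rho_\la^*\Phi^*\vp) \to F^*\vp_C = \vp_C$ in $C^3_{\mathrm{loc}}(\cC^\Sg, g_C)$; hence $\tvp$ is asymptotic to the \emph{same} cone $(\cC^\Sg, \vp_C)$ as $\vp$ in the sense of Definition \ref{definition asymptotically conical}.

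Next I would apply Theorem \ref{theorem main} to the two shrinker ends $(\cC_r^{\Sg}, \vp, f)$ and $(\cC_r^{\Sg}, \tvp = F^*\vp, F^*f)$, both asymptotic to $(\cC^\Sg, \vp_C)$. This yields ends $W_1, W_2 \subset \cC_r^{\Sg}$ and a diffeomorphism $\Psi : W_1 \to W_2$ with $\Psi^*\tvp = \vp$, i.e.\ $\Psi^* F^* \vp = \vp$, so $(F\circ\Psi)^*\vp = \vp$ on $W_1$. Thus $G := F \circ \Psi$ is an automorphism of $\vp$ on the end $W_1$. The remaining task is to upgrade this to the \emph{specific} map $F$ on a genuine conical end $\cC_{r_1}^{\Sg}$, i.e.\ to show $\Psi$ can be taken to be (a restriction of) the identity, so that $F$ itself restricts to an automorphism of $\vp$. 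For this I would use the real analyticity provided by Corollary \ref{corollary common analytic structure shrinker}: $\vp$ is real analytic with respect to a fixed analytic structure, and $\Psi$ is an isometry of real analytic metrics ($\Psi^* g_{\tvp} = g_\vp$, with $g_{\tvp} = F^* g_\vp$), hence $\Psi$ is itself real analytic. One then argues, as in the proof of Corollary \ref{corollary main}, that the real analytic difference $G^*\vp - \vp$ (equivalently, the behavior of $F\circ\Psi$ versus $F$) is controlled on an open set, and combines this with the rigidity of automorphisms of the cone to conclude. More precisely, following the structure of \cite[\S 4]{KotschwarWangIsometries}: Lemma \ref{lemma flat or preserved potential function} (with Remark \ref{remark on flat or preserved potential function}) shows that any automorphism of $\vp_\tau$ preserves $\nabla f$ up to the flat case, which pins down the radial direction; together with the fact that both $F$ and $F\circ\Psi$ induce the \emph{same} automorphism of the link $\Sg$ in the conical limit, one forces $\Psi$ to agree with the identity on a smaller conical end $\cC_{r_1}^\Sg$. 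Hence $F|_{\cC_{r_1}^\Sg} \in \Aut(\cC_{r_1}^\Sg, \vp)$.

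The main obstacle I anticipate is the final identification step — showing that the uniqueness diffeomorphism $\Psi$ can be normalized to the identity, rather than merely some automorphism of $\vp$. The subtlety is that Theorem \ref{theorem main} only produces \emph{some} equivalence, with no a priori control over its asymptotic behavior at infinity, whereas we need to know it matches the identity (so that $F \circ \Psi$ differs from $F$ only by an inner automorphism that can be absorbed). The resolution should come from analyzing the induced map on the asymptotic cone: both $\vp$ and $\tvp = F^*\vp$ converge to $\vp_C$ under the \emph{same} rescaling (by the computation above, using $F\circ\rho_\la = \rho_\la\circ F$), which forces $\Psi$ to be asymptotic to the identity at infinity; combined with the analyticity of $\Psi$ and a unique continuation / rigidity argument (as in \cite[\S 4]{KotschwarWangIsometries}), this gives $\Psi = \mathrm{Id}$ on a conical end. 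A secondary technical point is keeping track of domains: $F$ need not map $\cC_r^\Sg$ into itself, so at each stage one must shrink to a smaller conical end $\cC_{r_1}^\Sg$ on which all the maps ($F$, $\Psi$, their composition) are simultaneously defined, which is harmless since being an automorphism on an end is what we are after.
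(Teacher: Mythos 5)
Your first step is sound and matches the paper: since $F^*\vp_C = \vp_C$, the pullback $(\cC_r^{\Sg}, F^*\vp, F^*f)$ is again a gradient shrinker asymptotic (indeed dynamically asymptotic, since the pulled-back flow is $F^*\vp_\tau$ and converges to $F^*\vp_C=\vp_C$) to the same cone. The gap is in what you do next. You invoke Theorem \ref{theorem main} as a black box, which only produces \emph{some} diffeomorphism $\Psi\colon W_1\to W_2$ with $\Psi^*F^*\vp=\vp$, and the entire content of the lemma is then pushed into the claim that $\Psi$ can be normalized to the identity. Your proposed resolution of that step does not hold up: the statement of Theorem \ref{theorem main} gives no control whatsoever on the asymptotics of $\Psi$, so "$\Psi$ is asymptotic to the identity at infinity" is not available; even granting it, "asymptotic to the identity plus real analyticity implies $\Psi=\mathrm{Id}$" is not a unique-continuation statement (analytic continuation needs agreement on an open set, which you do not have), and appealing to rigidity of automorphisms of the end against automorphisms of the cone is circular, since that rigidity is precisely Theorem \ref{theorem isometries}, of which this lemma is an ingredient. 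As written, you have only shown that $F\circ\Psi$ is an automorphism of $\vp$ on an end, which is strictly weaker than the statement.

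The paper avoids the gauge ambiguity entirely by not using Theorem \ref{theorem main} but the exact backward uniqueness behind it, Theorem \ref{theorem parabolic version main theorem}: the two backward flows $\vp_\tau$ and $\tvp_\tau=F^*\vp_\tau$ live on the same cone end and have the \emph{same} terminal value $\vp_C$ at $\tau=0$ (again because $F^*\vp_C=\vp_C$), so backward uniqueness yields the tensor identity $\vp_\tau\equiv F^*\vp_\tau$ on $\cC^\Sg_a\times[0,T_0]$ with no intertwining diffeomorphism. One then propagates this from small $\tau$ to $\tau=1$ using the self-similar structure $\vp_\tau=\tau^{3/2}\Psi_\tau^*\vp$ (as in Kotschwar--Wang, Proposition 4.2), obtaining $\vp=F^*\vp$ on some $\cC^\Sg_b$, i.e.\ $F|_{\cC^\Sg_b}\in\Aut(\cC^\Sg_b,\vp)$. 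If you rework your argument to apply the parabolic backward uniqueness to the pair $(\vp_\tau, F^*\vp_\tau)$ directly, rather than the end-equivalence theorem, the problematic normalization of $\Psi$ disappears and your outline becomes essentially the paper's proof.
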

\begin{proof}
As in~\cite[Proposition 4.2]{KotschwarWangIsometries}, we have that $(\cC_r^{\Sg}, F^*\vp, F^*f)$ is dynamically asymptotic to $(\cC_r^{\Sg}, \vp_C)$. Applying the backward uniqueness found in the proof of Theorem \ref{theorem main}, $\vp = F^* \vp$ on $\cC^\Sg_a \times [0,T_0]$ for some $a>r$ and $T_0 \in (0,1)$. As in~\cite[Proposition 4.2]{KotschwarWangIsometries}, we find that $\vp_1 = F^*\vp_1$ on $\cC^\Sg_b$ for some $b\geq a$. This concludes the lemma.
\end{proof}

\begin{lem}\label{lemma extending local isometry using real analyticity}
Let $T_1 \in [0,1]$, and let $r_2 \geq r_1 \geq r$. Suppose $F: \cC_{r_1}^{\Sg} \to \cC_{r}^{\Sg}$ is an injective local diffeomorphism such that $F^* \vp_{T_1} = \vp_{T_1}$. If $F^*\vp_{T_2} = \vp_{T_2}$ on $\cC_{r_2}^{\Sg}$ for some $T_2 \in [0,1]$, then $F^* \vp_{T_2} = \vp_{T_2}$ on $\cC_{r_1}^{\Sg}$.
\end{lem}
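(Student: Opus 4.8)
The plan is to deduce the statement from the real analyticity established in the previous subsection together with the identity theorem for real analytic sections. The first step is to pin down a single real analytic structure shared by all the time slices. By Proposition \ref{proposition backward flow reparametrization}, the family $\vp_\tau$ is a backward Laplacian flow on $\cC_{r}^{\Sg}\times[0,1]$ that extends smoothly down to the cone $\vp_0=\vp_C$ at $\tau=0$; by Proposition \ref{proposition same analytic structure} (see also Corollary \ref{corollary common analytic structure shrinker}) there is one real analytic structure $\cA$ on $\cC_{r}^{\Sg}$ --- for concreteness the one generated by $g_{t_0}$-normal coordinates for some fixed $t_0\in(0,1]$ --- with respect to which every $g_\tau$ and every $\vp_\tau$, $\tau\in[0,1]$, is real analytic. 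In particular both $(g_{T_1},\vp_{T_1})$ and $(g_{T_2},\vp_{T_2})$ are real analytic relative to $\cA$ and to its restriction to $\cC_{r_1}^{\Sg}$.

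Next I would show that $F$ itself is real analytic relative to $\cA$. From $F^*\vp_{T_1}=\vp_{T_1}$ we get $F^*g_{T_1}=g_{T_1}$, so $F$ is an injective local isometry of the real analytic Riemannian metric $g_{T_1}$. Around any point $p\in\cC_{r_1}^{\Sg}$, choosing $\cA$-analytic charts near $p$ and near $F(p)$ in which the coefficients of $g_{T_1}$ are real analytic, $F$ becomes a local isometry between analytic metrics; by the classical fact that such a map is itself real analytic --- the same fact used in the proof of Proposition \ref{proposition same analytic structure}, \emph{cf.}~\cite[\S 3]{KotschwarWangIsometries} --- the map $F$ is real analytic relative to $\cA$. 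Note that only the local isometry property is used here, so the mere fact that $F$ is an injective local diffeomorphism preserving $\vp_{T_1}$ suffices.

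With these two facts in hand, the conclusion follows by analytic continuation. Consider the section $\eta:=F^*\vp_{T_2}-\vp_{T_2}$ of $\Lambda^3 T^*\cC_{r_1}^{\Sg}$. Since $\vp_{T_2}$ is $\cA$-analytic and $F$ is an $\cA$-analytic map, the pullback $F^*\vp_{T_2}$ is $\cA$-analytic, hence so is $\eta$. By hypothesis $\eta\equiv 0$ on the nonempty open subset $\cC_{r_2}^{\Sg}\subset\cC_{r_1}^{\Sg}$. Because $\cC_{r_1}^{\Sg}=\Sg\times(r_1,\infty)$ is connected, the identity theorem for real analytic functions forces $\eta\equiv 0$ on all of $\cC_{r_1}^{\Sg}$, i.e.\ $F^*\vp_{T_2}=\vp_{T_2}$ there.

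The only genuine content beyond bookkeeping is verifying that the common analytic structure of Proposition \ref{proposition same analytic structure} indeed covers the endpoint times $\tau=0$ (the cone) and $\tau=1$ (the static shrinker), and that the merely local hypothesis on $F$ is enough to invoke analyticity of local isometries; both are already supplied by the material assembled in this subsection, so the proof amounts to packaging those facts. I do not expect any serious obstacle.
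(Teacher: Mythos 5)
Your proposal is correct and follows essentially the same route as the paper: invoke the common real analytic structure (the paper cites Corollary \ref{corollary common analytic structure shrinker}, which is exactly the packaging of Proposition \ref{proposition same analytic structure} you describe), observe that $F$ preserves $g_{T_1}$ and hence is real analytic as a local isometry of an analytic metric, and then apply unique continuation to $h = F^*\vp_{T_2} - \vp_{T_2}$ on the connected end. No gaps.
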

\begin{proof}
This follows much the same idea as in \cite[Proposition 4.3]{KotschwarWangIsometries}. 

By Corollary \ref{corollary common analytic structure shrinker}, $\vp_{T_1}$ and $\vp_{T_2}$ and the metrics $g_{T_1}$ and $g_{T_2}$ are all real analytic with respect to the same analytic structure $\cA$.

Since $F^* \vp_{T_1} = \vp_{T_1}$, $F$ preserves the metric $g_{T_1}$ and thus is a local isometry of $g_{T_1}$. This implies that $F$ is real analytic on $\cC^\Sg_{r_1}$ with respect to $\cA$, and so 
$$h:= F^* \vp_{T_2} - \vp_{T_2}$$
is also real analytic on $\cC_{r_1}^{\Sg}$. By assumption, $h \equiv 0$ on $\cC_{r_2}^{\Sg}$, so the real analyticity of $h$ implies that $h\equiv 0$ on $\cC_{r_1}^{\Sg}$.  
\end{proof}
Recall that $\vp_\tau = \tau^{3/2}\Psi_\tau^* \vp$, where $\Psi_{\tau}$ is the map defined in Proposition \ref{proposition backward flow reparametrization}. Note that $\vp_1 = \vp$.
 
\begin{lem}\label{lemma isometry at time T is isometry at time 1}
Let $T_0 \in (0,1]$, and let $r_1 \geq r$. Suppose $F: \cC_{r_1}^{\Sg} \to \cC_{r_1}^{\Sg}$ is an injective local diffeomorphism such that $F^*\vp_{T_0} = \vp_{T_0}$. Then, $F^* \vp = \vp$ on $\cC_{r_1}^{\Sg}$.
\end{lem}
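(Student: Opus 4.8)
The statement asserts that if an injective local diffeomorphism $F$ of $\cC^\Sg_{r_1}$ fixes the $G_2$-structure $\vp_{T_0}$ at one time $T_0 \in (0,1]$, then it fixes $\vp = \vp_1$ as well. The natural strategy is to exploit the self-similar structure of the flow $\vp_\tau = \tau^{3/2}\Psi_\tau^*\vp$ together with the rigidity provided by Lemma \ref{lemma extending local isometry using real analyticity}. First I would observe that since $F^*\vp_{T_0} = \vp_{T_0}$, $F$ is in particular a local isometry of $g_{T_0}$, hence (by Lemma \ref{lemma flat or preserved potential function}, applied at time $T_0$) either $g_{T_0}$ is flat on $\cC^\Sg_{r_1}$, or $F^*(\nabla f_{T_0}) = \nabla f_{T_0}$, where $f_{T_0}$ is the time-$T_0$ potential. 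In the generic (non-flat) case, $F$ commutes with the gradient flow of $f_{T_0}$, and since the reparametrizing maps $\Psi_\tau$ are themselves built from the flow of $\nabla f$ (see Proposition \ref{proposition backward flow reparametrization}(4) and Claim \ref{claim uniform curvature estimates}, especially the relation $\tau\nabla f = \tfrac{r}{2}\partial_r$ and $\partial_\tau \Psi_\tau$ being proportional to $\nabla f \circ \Psi_\tau$), $F$ commutes with $\Psi_\tau \circ \Psi_{T_0}^{-1}$ for all $\tau$. Consequently $F^*\vp_\tau = F^*(\tau^{3/2}\Psi_\tau^*\vp)$ can be related to $F^*\vp_{T_0}$ by pulling back through the appropriate power of $\tau$ and the commuting reparametrization, yielding $F^*\vp_\tau = \vp_\tau$ for all $\tau$, and in particular at $\tau = 1$.

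More concretely, I would argue as follows. Set $\Xi_\tau := \Psi_\tau \circ \Psi_{T_0}^{-1}$, so that $\vp_\tau = (\tau/T_0)^{3/2}\,\Xi_\tau^*\,\vp_{T_0}$ up to the identification on the relevant subdomain. Since $F$ preserves $f_{T_0}$ up to an additive constant (Remark \ref{remark on flat or preserved potential function}) and hence preserves $\nabla f_{T_0}$, and since $\Xi_\tau$ is the time-$s(\tau)$ flow of (a reparametrization of) $\nabla f_{T_0}$, the maps $F$ and $\Xi_\tau$ commute: $F \circ \Xi_\tau = \Xi_\tau \circ F$. Then
\[
F^*\vp_\tau = (\tau/T_0)^{3/2}\, F^*\Xi_\tau^*\vp_{T_0} = (\tau/T_0)^{3/2}\,\Xi_\tau^* F^*\vp_{T_0} = (\tau/T_0)^{3/2}\,\Xi_\tau^*\vp_{T_0} = \vp_\tau
\]
on the common subdomain, for every $\tau \in (0,1]$; specializing to $\tau = 1$ gives $F^*\vp = \vp$, possibly after restricting to a slightly smaller end $\cC^\Sg_{r_1'}$ on which all the compositions are defined and then extending back to $\cC^\Sg_{r_1}$ via Lemma \ref{lemma extending local isometry using real analyticity} (taking $T_1 = T_0$ and $T_2 = 1$, using that $F^*\vp_1 = \vp_1$ holds on the smaller end). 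In the flat case, a separate (and easier) argument is needed: when $g_{T_0}$ is flat the soliton structure forces strong constraints, and one can invoke the real analyticity of $\vp$ and $\vp_{T_0}$ with respect to a common analytic structure (Corollary \ref{corollary common analytic structure shrinker}) together with the identity principle to propagate $F^*\vp = \vp$ from wherever it is known.

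The main obstacle I anticipate is making the commutation $F \circ \Xi_\tau = \Xi_\tau \circ F$ rigorous on the nose, including bookkeeping of the shrinking domains: $\Psi_\tau$ is only a diffeomorphism onto its image, the images $\Psi_\tau(\cC^\Sg_{r_0})$ are ends of $\overline{\cC^\Sg_{r_0}}$ that vary with $\tau$, and $F$ is only defined and injective on $\cC^\Sg_{r_1}$, so one must work on a nested family of sub-ends and track carefully where each identity holds before invoking analytic continuation to return to the full $\cC^\Sg_{r_1}$. The flat case is a genuinely separate branch and will require its own short argument, but it is the non-flat generic case — and specifically the interplay between the potential-preserving property of $F$ and the definition of the $\Psi_\tau$ via the gradient flow — that is the conceptual heart of the lemma. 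I expect the actual write-up to follow \cite[Proposition 4.3 and its surrounding lemmas]{KotschwarWangIsometries} closely, substituting Proposition \ref{proposition backward flow reparametrization}, Lemma \ref{lemma flat or preserved potential function}, and Corollary \ref{corollary common analytic structure shrinker} for their Ricci-flow analogues.
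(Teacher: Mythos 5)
Your main (non-flat) branch is essentially the paper's argument: apply Lemma \ref{lemma flat or preserved potential function} (with Remark \ref{remark on flat or preserved potential function}) at time $T_0$, deduce that $F$ commutes with the reparametrizing maps $\Psi_\tau$ because both $\Psi_\tau$ and its conjugate by $F$ solve the same gradient-flow ODE, conclude $F^*\vp_\tau=\vp_\tau$ on some smaller end, and then extend to all of $\cC^\Sg_{r_1}$ by the real-analyticity Lemma \ref{lemma extending local isometry using real analyticity}. Up to bookkeeping (the paper records the sub-end as $\cC^\Sg_b$ with $b$ depending on $T_0,r_1,N_0$, and gets $F^*\vp=\vp$ first on $\cC^\Sg_{b/\sqrt{T}}$), this is the same proof.

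The genuine gap is in your degenerate branch. You split according to whether $g_{T_0}$ is flat, and in the flat case you propose to ``invoke real analyticity together with the identity principle to propagate $F^*\vp=\vp$ from wherever it is known.'' But in that case nothing is known: the hypothesis only gives $F^*\vp_{T_0}=\vp_{T_0}$, and you have not produced any open set on which $F^*\vp=\vp$ holds, so there is nothing for unique continuation to propagate. The missing idea is that flatness of $g_{T_0}$ forces, via \eqref{equation R = -|T|^2}, that $\vp_{T_0}$ is torsion-free; then $\vp$ itself is torsion-free on the image end $\Psi_{T_0}(\cC^\Sg_{r_1})$, hence (by real analyticity of the soliton, \cite[Cor.\ 1.2]{LotayWeiRealAnalyticity}) torsion-free everywhere, so the self-similar flow $\tau\mapsto\vp_\tau$ of Proposition \ref{proposition backward flow reparametrization} is a fixed point of Laplacian flow and $\vp=\vp_{T_0}$ on $\cC^\Sg_{r_1}$; the conclusion $F^*\vp=\vp$ is then immediate from the hypothesis. (The paper in fact organizes the dichotomy as torsion-free versus not torsion-free, using not-torsion-free $\Rightarrow$ not flat to trigger the preserved-potential alternative of Lemma \ref{lemma flat or preserved potential function}; your flat/non-flat split is logically adequate, but only once the flat case is closed by the static-flow argument above, not by an identity-principle propagation that has no seed.)
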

\begin{proof}
This follows much the same as in \cite[Proposition 4.4]{KotschwarWangIsometries}. 

If $\vp_{T_0}$ is torsion-free on $\cC_{r_1}^{\Sg}$, then $\vp$ is torsion-free on $\Psi_T(\cC_{r_1}^{\Sg})$. By the local real analyticity of Laplacian solitons~\cite[Corollary 1.2]{LotayWeiRealAnalyticity}, $\vp$ must be torsion-free on all of $\cC_{r}^{\Sg}$. This then implies that the Laplacian flow $\tau \mapsto \vp_\tau$ constructed in Proposition \ref{proposition backward flow reparametrization} is a flow of torsion-free $G_2$-structures and hence a fixed point for the Laplacian flow. Thus, $\vp = \vp_{T_0}$ on $\cC_{r_1}^{\Sg}$, and the lemma follows in this case.

Now, we may suppose that $\vp_{T_0}$ is not torsion-free. Since $R = -|T|^2$, this implies that $g_{T_0}$ is not flat. As in~\cite[Proposition 4.4]{KotschwarWangIsometries}, this allows us to apply Lemma \ref{lemma flat or preserved potential function} as well as Remark \ref{remark on flat or preserved potential function} and~\eqref{equation C0 estimates on f} to find that there is $b\geq r_1$ depending on $T_0$, $r_1$, and $N_0$ from~\eqref{equation C0 estimates on f}, such that 
$$F \circ \Psi_{\tau} = \Psi_{\tau} \circ F$$
on $\cC_{b}^{\Sg}$ for all $\tau \in (0,1]$. It follows immediately that $F^* \vp = \vp$ on $\cC^\Sg_{b/\sqrt{T}}$. An application of Lemma \ref{lemma extending local isometry using real analyticity} shows that $F^* \vp = \vp$ on $\cC_{r_1}^{\Sg}$.\end{proof}

\begin{proof}[Proof of Theorem \ref{theorem isometries dynamically asymptotic}]
This follows with the same idea as~\cite[Proof of Theorem 1.2]{KotschwarWangIsometries}.

Suppose that $F \in \Aut(\cC_{r}^{\Sg}, \vp_C)$. Lemma \ref{lemma extension of automorphism from cone} gives $r_1 \geq r$ such that $F|_{\cC_{r_1}^{\Sg}} \in \Aut(\cC_{r_1}^{\Sg}, \vp)$. Then, Lemma \ref{lemma extending local isometry using real analyticity} implies that $F \in \Aut(\cC_{r}^{\Sg}, \vp)$. Thus, 
$$\Aut(\cC_r^{\Sg}, \vp_C) \subseteq \Aut(\cC_r^{\Sg}, \vp).$$

Now, suppose that $F \in \Aut(\cC_r^{\Sg}, \vp)$. If $\vp$ is torsion-free, then by construction, $\vp_\tau$ is torsion-free and thus $\tau \mapsto \vp_\tau$ is a fixed point of the Laplacian flow. This implies that $\vp = \vp_0 = \vp_C$, and the Theorem follows in this case. 

We may assume that $\vp$ is not torsion-free. By Lemma \ref{lemma flat or preserved potential function}, $F^* \na f = \na f$. Now, for the local diffeomorphisms $\Psi_{\tau}: \cC_r^{\Sg} \to \cC_r^{\Sg}$ defined by Proposition \ref{proposition backward flow reparametrization}, both $\Psi_{\tau}$ and $F^{-1}\circ \Psi_{\tau}\circ F$ satisfy equation \eqref{equation satisfied by reparametrizations} since $F$ is an isometry. This implies that $\Psi_{\tau} = F^{-1}\circ \Psi_{\tau}\circ F$, and so 
$$F \circ \Psi_{\tau} = \Psi_{\tau} \circ F$$ 
for all $\tau \in (0,1]$. 

Thus, for any $\tau \in (0, 1]$,
$$F^* \vp_\tau = \tau^{3/2} F^* \Psi^*_{\tau} \vp = \tau^{3/2}\Psi^*_{\tau} F^* \vp = \tau^{3/2}\Psi^*_{\tau} \vp = \vp_\tau.$$
Taking the limit as $\tau \to 0$, $F^* \vp_C = \vp_C$. This shows that $F \in \Aut(\cC_r^{\Sg}, \vp_C)$, and so 
$$\Aut(\cC_r^{\Sg}, \vp)\subseteq \Aut(\cC_r^{\Sg}, \vp_C),$$
thus proving the theorem.\end{proof}

\subsection{Proof of Corollary \ref{corollary complete isometries}}
The idea of this proof follows~\cite[Theorem 1.2]{KotschwarWangIsometries}.

By Theorem \ref{theorem isometries}, there is $r_1 > 0$, an end $W\subset V$ of $(M, \vp)$, and a diffeomorphism $F: \cC_{r_1}^{\Sg} \to W$ such that $\Aut(W, \vp) \cong \Aut(C, \vp_C)$. We have that $W$ is diffeomorphic to $\cC_{r_1}^{\Sg}$, but by assumption $V$ is diffeomorphic to $C_r^{\Sg}$ for $r\leq r_1$. Thus, the inclusion map $\io: W \hookrightarrow V$ induces an isomorphism $\pi_1(W, x_0) \cong \pi_1(V, x_0)$. By assumption, the map $\pi_1(V, x_0)\rightarrow \pi_1(M, x_0)$ induced by inclusion is surjective, so the induced inclusion map $\pi_1(W, x_0) \rightarrow \pi_1(M, x_0)$ is surjective. Then, by~\cite[Theorem 5.2, Proof of Theorem 1.2]{KotschwarWangIsometries}, we have that any isometry $\phi \in \Isom(W, g_{\vp}|_W)$ can be extended to a unique isometry $\bar{\phi} \in \Isom(M, g_{\vp})$ such that this correspondence gives an isomorphism 
$$\Isom(W, g_{\vp}|_W) \cong \Isom(M, g_{\vp}).$$

Now, if $F \in \Aut(W, \vp)$, then $F$ is by definition an isometry. Thus, $F \in \Isom(W, g_{\vp}|_W)$, and this naturally extends to $\bar{F} \in \Isom(M, g_{\vp})$. Since $\bar{F}$ is an isometry, it is real analytic. Thus, by the real analyticity of $\vp$, 
$$h:= \bar{F}^* \vp - \vp$$
is also real analytic and defined everywhere on $M$. By assumption, $h\equiv 0$ on $W$. Thus, by the completeness and connectedness of $M$, $h\equiv 0$ on $M$. This implies that $\bar{F} \in \Aut(M, \vp)$. Thus, $\Aut(W, \vp)$ embeds into $\Aut(M, \vp)$, and by Theorem \ref{theorem isometries}, $\Aut(C, \vp_C)$ embeds into $\Aut(M, \vp)$.

\appendix

\section{Backward Uniqueness for Subsolutions of Parabolic Equations with Unbounded Coefficients}\label{section Appendix}

With the technique used in the proof of Proposition \ref{proposition exponential decay}, we can prove the following improvement to the backward uniqueness result of \cite{EscauriazaSereginSverak}.

\begin{thm}\label{theorem scalar backwards uniqueness}
Let $Q_{R,T}$ be the parabolic domain $(\bR^n \setminus B_R) \times [0, T]$, and let $u: Q_{R,T} \rightarrow \bR$ satisfy
\begin{equation}\label{equation subsolution and growth conditions} 
    |\De u + \dd_t u| \le M|\na u| + (N|x|^{\de} + M)|u|\;\; \text{ and }\;\; |u(x,t)| \le Me^{M|x|^2},
\end{equation}
for constants $M, T, R, C, D >0$ and $0< \de < 1/2$. Then, if $u(x,0) \equiv 0$ in $\bR^n \setminus B_R$, $u$ vanishes identically in $Q_{R,T}$. 
\end{thm}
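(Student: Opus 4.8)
The plan is to mimic the strategy of Proposition \ref{proposition exponential decay} followed by Proposition \ref{proposition vanishing}, specialized to the flat Euclidean setting with a static metric, so that the role of the shrinker potential $f$ is played directly by $|x|^2$ and the radial function $h$ is simply $|x|$. First I would set up the two Carleman estimates in this simplified context. The weight for the ``rapid decay'' estimate is $\sH_2(x,t) = (t+a)^{-n/2}\exp\!\big(-(|x|-\rho)^2/(4(t+a))\big)$, which is the exact heat kernel with center at radius $\rho$, so on flat space the conjugate-heat-operator error $\sH_2^{-1}(\partial_t \sH_2 - \Delta \sH_2)$ vanishes identically except for curvature terms, which are now absent; the only surviving ``bad term'' on the right-hand side of the resulting $L^2$ inequality comes from the zeroth-order coefficient $N|x|^\de + M$ in the differential inequality \eqref{equation subsolution and growth conditions}. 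This is precisely where the power $\de < 1/2$ enters: the term contributes a factor growing like $|x|^{2\de} < |x|$, which is the analogue of (and in fact milder than) the $\log r$ growth encountered in Proposition \ref{proposition second Carleman estimate PDE}. The ``first'' Carleman estimate, with weight $\sH_1 = \exp[\alpha(T-t)|x|^{2-\de'} + |x|^2]$ for a suitable $\de' \in (0,1)$, goes through exactly as in Proposition \ref{proposition first carleman estimate}, since its proof only used that the relevant Hessians are positive and that $\sF_1$ is controlled — both of which are trivial on flat space.

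Next I would prove exponential decay: there exist absolute constants $N, r_{10}$ and $s_0 \in (0,1]$ such that $\||u| + |\nabla u|\|_{L^2(A((1-\sqrt s)\rho,(1+\sqrt s)\rho)\times[0,s])} \le N e^{-\rho^2/(Ns)}$ for all $s \le s_0$ and $\rho$ large. This is the heart of the matter and follows the argument of Proposition \ref{proposition exponential decay} almost verbatim: introduce spatial and temporal cutoffs $\psi_{\rho,\xi}$ and $\vartheta$, apply the second Carleman estimate to $u_{\rho,\xi} = \vartheta\psi_{\rho,\xi}u$, and — crucially — since the bad term now grows only like $|x|^{2\de}$ rather than $\log|x|$, choose the iteration index $k_1 \sim C|x|^{2\de}$ (evaluated at the outer radius of the annulus, which we fix at $\xi = 8\rho$) so that the factor $\sqrt k - N\rho^{\de}$ can be absorbed, yielding $\sqrt k - N\rho^\de \ge \sqrt k/2$ for $k \ge k_1$. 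One then runs the same summation over $l$, with $k = k_1 + l$, multiplying by $\rho^{2l}/((2C)^l l!)$ and summing; the contributions of the cutoff commutator terms (supported in the inner annulus $A(\rho/6,\rho/3)$ and outer annulus $A(16\rho,24\rho)$) are bounded using $\max_{s>0} s^{-k}e^{-\rho^2/(18s)} \le \rho^{-2k}C^k k!$ via Stirling, and the $\|u\|_\infty$ term from the ODE-type inequality is handled using the hypothesis $|u| \le Me^{M|x|^2}$ — note that the exponential-squared growth allowed here is weaker than the exponential decay we extract, so it is harmless after optimizing in $a$. The net effect of the $\rho$-dependence of $k_1$ is a prefactor that grows like $\exp(N(\log\rho)^{1+\varepsilon})$ or like $\rho^{4k_1}k_1^{2k_1} = \exp(N\rho^{2\de}\log\rho)$, which, because $2\de < 1$, is subquadratic-exponential in $\rho$ and is therefore absorbed into the $e^{-\rho^2/(Ns)}$ decay at the cost of enlarging $N$.

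Finally I would deduce vanishing: the exponential decay estimate shows that $|u| + |\nabla u|$ decays fast enough (faster than $e^{-C|x|^2}$ on a short time interval near $t=0$) that $u_{\rho,\xi}\sH_1^{1/2}$ and its derivatives are square-integrable, so the growth condition needed to apply the first Carleman estimate is met; feeding the differential inequality \eqref{equation subsolution and growth conditions} into \eqref{equation first carleman estimate PDE} and \eqref{equation first carleman estimate ODE}, absorbing the first-order term $M|\nabla u|$ and the zeroth-order term (which is polynomially bounded and hence dominated by $\alpha$ for $\alpha$ large) into the left-hand side, and letting $\alpha \to \infty$ forces $u \equiv 0$ on $\cC' := (\bR^n \setminus B_{r_{11}}) \times [0,\tau']$ for some $\tau' > 0$ and $r_{11}$ large. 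A standard unique-continuation argument for the (now uniformly) parabolic inequality $|\Delta u + \partial_t u| \le M|\nabla u| + M'|u|$ with bounded coefficients on the complement of a fixed ball then propagates the vanishing to all of $Q_{R,T}$. The main obstacle, as in the body of the paper, is the bookkeeping in the exponential-decay step: making the $\rho$-dependent choice of $k_1$ compatible with the absorption of the bad term while keeping all the resulting $k_1$-dependent constants subquadratic-exponential in $\rho$. Here the condition $\de < 1/2$ is exactly what is needed and cannot be relaxed by this method, since at $\de = 1/2$ the prefactor $\exp(N\rho^{2\de}\log\rho) = \exp(N\rho\log\rho)$ would no longer be dominated by quadratic-exponential decay uniformly in the relevant parameters.
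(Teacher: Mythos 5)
Your overall strategy coincides with the paper's: flat-space versions of the two Carleman estimates, an exponential-decay iteration with the index $k_1$ chosen of size $\rho^{2\de}$ so that the bad zeroth-order term can be absorbed by the $\sqrt{k}$ coefficient, and then a vanishing step with the quadratic-exponential weight. The decay step is essentially correct as you describe it (the prefactor $\rho^{4k_1}k_1^{2k_1}\le \exp(C\rho^{2\de'})$ is subquadratic-exponential and is absorbed into $e^{-\rho^2/(Cs)}$). However, you have misattributed where $\de<1/2$ is used: that step only requires $\de\in(0,1)$, since $\exp(N\rho^{2\de}\log\rho)$ is dominated by quadratic-exponential decay whenever $2\de<2$; the paper makes exactly this remark. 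The restriction $\de<1/2$ is needed only in the vanishing step.

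That is also where your proposal has a genuine gap. You claim the zeroth-order coefficient $N|x|^{\de}+M$ is ``polynomially bounded and hence dominated by $\alpha$ for $\alpha$ large'' and can simply be absorbed before letting $\alpha\to\infty$. But the first Carleman estimate applies only to compactly supported functions, so you must cut off at some radius $\xi$; on the support the coefficient is as large as $N(2\xi)^{\de}$, so absorption into $\alpha\|u\,\sH_1^{1/2}\|^2$ forces $\alpha\gtrsim\xi^{2\de}$, and $\alpha$ and $\xi$ cannot be decoupled. You can neither send $\xi\to\infty$ at fixed $\alpha$ (absorption fails) nor send $\alpha\to\infty$ at fixed $\xi$ (the outer cutoff-commutator error on $A(\xi,2\xi)$ carries the weight $\exp\big(\tfrac{1}{2}\al(T_0-t)|x|^{2-\eta}+\tfrac{1}{2}|x|^2\big)$, which blows up in $\alpha$ and is not controlled by the a priori $e^{4|x|^2}$-weighted bound coming from the decay step). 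The paper resolves this by coupling $\xi=C^{-1}\al^{1/(2\de)}$ and choosing the weight exponent $\eta$ with $2\de<\eta<1$, so that on the outer annulus $\al(T_0-t)|x|^{2-\eta}+|x|^2\le 4|x|^2$ and the outer error is bounded by the finite Gaussian-weighted norm; the existence of such $\eta$ is precisely where $\de<1/2$ enters and is why the method stops there. Your write-up contains neither the coupling nor the choice of $\eta$, so the limit $\al\to\infty$ is not justified as stated. Once that is supplied, the conclusion that $u$ vanishes outside a fixed ball follows, and your final propagation step is fine in spirit: with $u$ supported in a ball the coefficient is bounded, so the original Escauriaza--Seregin--\v{S}ver\'ak theorem applies and one iterates in time to cover $[0,T]$.
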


With the rescaling $u = \tfrac{1}{M}\tilde{u}(\om x, \om^2 t)$ for some small parameter $\om$, we may assume that 
\begin{equation}\label{equation rescaled conditions for backwards uniqueness}
    |\De u + \dd_t u| \le \ve(|\na u| + |u|) + N|x|^{\de}|u|\;\; \text{ and }\;\; |u(x,t)| \le e^{\ve|x|^2}.
\end{equation}
We will not use the Carleman inequality (1.4) in \cite{EscauriazaSereginSverak}, as it discards the crucial coefficient of $\sqrt{\al}$ on the left-hand side. We instead consider the inequality on page 4 of \cite{kotschwarwang2015}, which we restate here. 
\begin{lem}
Fix $\ga \ge 1$ and let $\sg_a(t) = (t+a)e^{-\frac{(t+a)}{3}}$. Then, for all $\rho \ge 1$, $a \in (0,1)$, $\al \ge \al_0(\ga, n) \ge 0$ and $u \in C^\infty_c( \{|x| \ge \ga \rho\} \times [0,1) )$ such that $u(x,0) \equiv 0$, the following inequality holds.
\begin{multline}\label{equation L2 second carleman scalar}
    \sqrt{\al}||\sg_a^{-\al - \frac{1}{2}}e^{- \frac{(|x| - \rho)^2}{8(t+a)}} u||_{L^2(\bR^n \times (0,1))} + ||\sg_a^{-\al}e^{- \frac{(|x| - \rho)^2}{8(t+a)}} \na u||_{L^2(\bR^n \times (0,1))} \\
    \le C(\ga, n)||\sg_a^{-\al}e^{- \frac{(|x| - \rho)^2}{8(t+a)}} (\dd_t + \De) u||_{L^2(\bR^n \times (0,1))}
\end{multline}
\end{lem}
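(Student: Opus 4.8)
The plan is to prove \eqref{equation L2 second carleman scalar} by the conjugation argument of Escauriaza--Seregin--{\v{S}}ver{\'a}k and Kotschwar--Wang, noting that on flat $\bR^n$ this inequality is the scalar specialization of (the proof of) Proposition \ref{proposition second Carleman estimate PDE}. Concretely, run the entire machinery of Section \ref{subsection Carleman estimates to imply rapid decay} with the ambient manifold taken to be $(\bR^n,\delta)$: then the metric does not evolve ($\partial_\tau g\equiv 0$), the curvature and torsion vanish ($\Rm\equiv 0$, $T\equiv 0$), so the error function $\sE$ of Lemma \ref{lemma mixed time and space derivative estimates on shrinker potential} is identically zero, $f(x)=|x|^2/4$ is the \emph{exact} soliton potential, and the radial function $h$ of \eqref{definition time dependent radial function} equals $|x|$. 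As a consequence every term in the perturbed divergence identity of Lemma \ref{lemma perturbed fundamental divergence identity} and in the computations of Section \ref{subsection Carleman estimates to imply rapid decay} carrying a factor of $R$, of $|\Rm|+|\na\Rm|$, of $N/r^2$, of $\log r$, or the bundle commutator $E(Z,\na Z)$ (which vanishes identically for a scalar function on flat space) simply drops out, and one is left with the clean ESS-type estimate, in particular with \emph{no} logarithmic growth term on the right-hand side.

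In detail, I would use the weights $\sg_a(t)=(t+a)e^{-(t+a)/3}$, $\theta:=\sg_a/\dot\sg_a$, $\sH:=\sH_{2;a,\rho}(x,t)=e^{-(|x|-\rho)^2/(4(t+a))}$, the choice $\sF:=\sH^{-1}(\partial_\tau\sH-\De\sH)-\al\dot\sg_a/\sg_a$, and $\widetilde\sF:=\sF+\al\dot\sg_a/\sg_a$, exactly as in Section \ref{subsection Carleman estimates to imply rapid decay}. Integrating the flat divergence identity over $\bR^n\times(0,1)$ and using $u(\cdot,0)\equiv 0$ together with the compact support of $u$ produces the flat analogue of Lemma \ref{lemma integral inequality for backward heat operator},
\[
\int_{\bR^n\times(0,1)}\tfrac{\sg_a^{1-\al}}{\dot\sg_a}\big(Q_3(\na u,\na u)+Q_4(u,u)\big)\sH\,dx\,dt\le\int_{\bR^n\times(0,1)}\tfrac{\sg_a^{1-\al}}{\dot\sg_a}\big|(\partial_\tau+\De)u\big|^2\sH\,dx\,dt,
\]
where $Q_3(\na u,\na u)=\big(2\na^2\phi-\tfrac{\sg_a}{\dot\sg_a}\ddot{\widehat{\log\sg_a}}\,\delta\big)(\na u,\na u)$, $Q_4(u,u)=\tfrac12\big(\partial_\tau\widetilde\sF+\De\widetilde\sF+\tfrac{\dot\theta}{\theta}\widetilde\sF\big)|u|^2$, and $\phi=\log(\sg_a^{-\al}\sqrt{\sH})$, with no $N/r^2$ or $E$ corrections. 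The key positivity facts are then verified by direct computation: since the $-\al\log\sg_a$ piece of $\phi$ is independent of $x$, one has $\na^2\phi=-\tfrac1{4(t+a)}\big(\hat x\otimes\hat x+(|x|-\rho)\na^2|x|\big)$, and on the support $\{|x|\ge\ga\rho\}$ the inequalities $|x|-\rho\ge 0$, $\na^2|x|=\tfrac1{|x|}(\delta-\hat x\otimes\hat x)\ge 0$ and $\tfrac{|x|-\rho}{|x|}\le 1-\ga^{-1}<1$ give $2\na^2\phi\ge-\tfrac1{2(t+a)}\delta$; combined with the log-concavity identity $-\tfrac{\sg_a}{\dot\sg_a}\ddot{\widehat{\log\sg_a}}=\tfrac1{(t+a)(1-\frac13(t+a))}\ge\tfrac1{t+a}$, this yields $Q_3\ge\tfrac1{2(t+a)}|\na u|^2$. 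For $Q_4$, the computation of $\widetilde\sF_2$ leading to \eqref{equation F2 with compact notation}--\eqref{equation lower bound for backward heat on F2}, specialized to $R=\sE=0$ and $h=|x|$, collapses to $\widetilde\sF_2=-\tfrac{(n-1)\rho}{2|x|(t+a)}\le 0$ and, using $\na^2|x|$ and $\Delta(|x|^{-1})$ in $\bR^n$, to a lower bound $Q_4\ge -C(\ga,n)\,\sg_a^{-1}|u|^2$ with $C$ independent of $\al$.

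Finally, to bring out the $\sqrt{\al}$ factor I would invoke the flat case of Lemma \ref{lemma control Z by gradient}: with $N/r^2\equiv 0$ and $\widetilde\sF_2\le 0$ one has $\al\dot\sg_a/\sg_a-\widetilde\sF_2\ge\al\dot\sg_a/\sg_a\ge\tfrac{\al}{3e(t+a)}$, so
\[
\tfrac{\al}{C}\int_{\bR^n\times(0,1)}\sg_a^{-2\al-1}|u|^2\sH\,dx\,dt\le 2\int_{\bR^n\times(0,1)}\sg_a^{-2\al}|\na u|^2\sH\,dx\,dt+\tfrac{C}{\al}\int_{\bR^n\times(0,1)}\sg_a^{-2\al}\big|(\partial_\tau+\De)u\big|^2\sH\,dx\,dt.
\]
Combining the gradient estimate coming from $Q_3$, $Q_4$ with this Hardy-type inequality exactly as in the proof of Proposition \ref{proposition second Carleman estimate PDE} (using the bounds $\tfrac1{3e}\le\dot\sg_a\le 1$, $\tfrac1{3e}(t+a)\le\sg_a\le t+a$ to pass between the $\sg_a^{1-\al}/\dot\sg_a$ and $\sg_a^{-2\al}$ weights, absorbing the $\na u$ term, choosing $\al\ge\al_0(\ga,n)$ large enough to swallow the $C(\ga,n)\sg_a^{-1}|u|^2$ left over from $Q_4$, and taking square roots) gives \eqref{equation L2 second carleman scalar} with $C=C(\ga,n)$. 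I expect the only real obstacle to be bookkeeping: checking term by term that every curvature-, torsion-, and $\sE$-dependent contribution in Lemma \ref{lemma perturbed fundamental divergence identity} and in the computation of $\widetilde\sF_2$ genuinely vanishes in the flat setting (so that no $\log r$ error survives), and tracking the several weight powers so that the final constant depends only on $\ga$ and $n$; both are mechanical given the results of Section \ref{section carleman estimates} but must be carried out with care.
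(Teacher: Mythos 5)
Your proposal is correct and follows essentially the same route as the paper: the paper proves this lemma simply by observing that it is the Kotschwar--Wang estimate (5.23) (equivalently, the machinery behind Proposition \ref{proposition second Carleman estimate PDE}) specialized to the static Euclidean metric with $h(x,t)=|x|$ and to scalar functions, which is exactly the specialization you carry out, with all curvature, torsion, $\sE$, and commutator terms vanishing. The only differences are bookkeeping (e.g.\ whether one keeps the $(t+a)^{-n/2}$ prefactor in $\sH_2$, which affects whether $\widetilde\sF_2\le 0$ exactly or only up to an $O(\sg_a^{-1})$ term absorbed by taking $\al_0(\ga,n)$ large, and a factor of $2$ in the Hessian of $\phi$), none of which affects the argument.
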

\begin{proof}
This inequality is (modulo constants) the inequality (5.23) in \cite{kotschwarwang2015} with respect to the static Ricci flow of the Euclidean metric on $\bR^n$ (where $r_c^2(x) = h(x,t) = |x|$) applied to the $0$-order tensor bundle $\cZ = C^\infty( \bR^n \times (0,1))$.
\end{proof}

We prove the $L^2$ exponential decay property analogous to Proposition \ref{proposition exponential decay}. As before, we have $A(r_1, r_2) := B_{r_2} \setminus B_{r_1}$ where $0 < r_1 < r_2$.

\begin{prop}\label{proposition exponential decay scalar linear}
There exist dimensional constants $s_0$, $C$, and $r_{0}$, with $s_0 \in (0, 1]$, such that
\begin{equation}\label{equation exponential decay scalar}
|||u| + |\nabla u| ||_{L^2(A((1-\sqrt{s})\rho,(1+\sqrt{s})\rho)\times [0,s])} \leq C e^{-\frac{\rho^2}{Cs}}
\end{equation}
for any $s \in (0, s_0]$ and $\rho > 12r_0$. 
\end{prop}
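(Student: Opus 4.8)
The plan is to mirror the argument of Proposition~\ref{proposition exponential decay}, but in the simpler setting of a scalar subsolution on Euclidean space, so that the bookkeeping is lighter. First I would fix $\ga = 2$, take $r_0$ to be the dimensional constant $\al_0(\ga,n)$-threshold from the lemma (inflated as needed), let $\rho \ge 12 r_0$, and introduce two cutoff functions: a temporal cutoff $\vartheta \in C^\infty(\bR,[0,1])$ with $\vartheta \equiv 1$ for $t \le 1/6$ and $\vartheta \equiv 0$ for $t \ge 1/5$, and a spatial cutoff $\psi = \psi_\rho \in C^\infty(\bR^n, [0,1])$ with $\psi \equiv 1$ on $A(\rho/3, 24\rho)$, $\psi \equiv 0$ on $B_{\rho/6} \cup (\bR^n \setminus B_{48\rho})$, and $|\na \psi| + |\De \psi| \le N\rho^{-1}$. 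Set $u_\rho := \vartheta \psi u$; this is compactly supported in $A(\rho/6, 48\rho) \times [0, 1/5)$ and vanishes at $t = 0$, so the Carleman inequality \eqref{equation L2 second carleman scalar} applies to $u_\rho$.

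\textbf{Absorbing the bad terms.} Applying \eqref{equation L2 second carleman scalar} with the integer exponent $k$ in place of $\al$, I would expand $(\dd_t + \De)u_\rho$ using the product rule and the subsolution hypothesis \eqref{equation rescaled conditions for backwards uniqueness}. This produces: (i) interior terms $\ve(|\na u_\rho| + |u_\rho|)$ which, for $\ve$ small, are absorbed into the left-hand side; (ii) the genuinely new term $N|x|^\de |u_\rho|$, which is the analogue of the $(\log r)^{1/2}|Z|$ term in Proposition~\ref{proposition second Carleman estimate PDE} — here it is handled by choosing $k \ge k_1 := k_1(\rho)$ with $4N^2 (48\rho)^{2\de} \le k_1 \le 16 N^2 (48\rho)^{2\de}$, since the maximum of $|x|^\de$ on $A(\rho/6, 48\rho)$ is $(48\rho)^\de$, giving $\sqrt{k} - N(48\rho)^{\de/2} \ge \sqrt{k}/2$ so that the $N|x|^\de|u_\rho|$ term is absorbed by the $\sqrt{\al}\,\|u_\rho\|$ term on the left; and (iii) commutator terms supported on $A(\rho/6, \rho/3) \times [0,1/5]$, on $A(24\rho, 48\rho) \times [0,1/5]$, and on $A(\rho/6, 48\rho) \times [1/6, 1/5]$, each carrying a factor $N\rho^{-1}$ or $|\vartheta'|$, controlled on those annuli/time-slices using the a priori growth bound $|u| \le e^{\ve|x|^2}$.

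\textbf{Summation and conclusion.} On the spatial annuli $A(\rho/6,\rho/3)$ and $A(24\rho, 48\rho)$ one has $|x| - \rho$ bounded below by a multiple of $\rho$, so $e^{-(|x|-\rho)^2/(8(t+a))} \le e^{-\rho^2/(C(t+a))}$, and Stirling's estimate $\max_{s>0} s^{-k} e^{-\rho^2/(Cs)} \le \rho^{-2k} C^k k!$ turns those contributions into $N\rho^{7/2}\rho^{-2k}C^k k!$; the time-slice term $[1/6,1/5]$ contributes $C^k N \rho^{n/2}$ using $|u| \le e^{\ve|x|^2}$ and $(t+a)\ge 1/6$. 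Writing $k = k_1 + l$, multiplying the resulting inequality by $\rho^{2l}/((2C)^l l!)$ and summing over $l \ge 0$ produces, after restricting the time interval to $[0,\delta a]$ for a small $\delta$, an estimate of the form $\||u_\rho| + |\na u_\rho|\,e^{-(|x|-\rho)^2/(8(t+a))}\|_{L^2} \le N (k_1!)^2 \rho^{n/2}(\,\text{stuff}\,) + a^{-(k_1+1/2)} e^{-\delta\rho^2/(Ca)}$; optimizing the free parameter $a$ at its critical point $a_0 \sim k_1/\rho^2$ bounds the last term by $\rho^{4k_1}$, and since $k_1 \le 16N^2(48\rho)^{2\de}$ with $\de < 1/2$, the prefactor $\rho^{4k_1}(k_1^{k_1})^2$ is $\exp(O(\rho^{2\de}\log\rho))$, which is far subexponential in $\rho^2$ and hence absorbed into $e^{-\rho^2/(Cs)}$ after enlarging $C$. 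On the restricted domain $A((1-\sqrt{s})\rho, (1+\sqrt{s})\rho) \times [0,s]$ with $s = \delta a$, the Gaussian weight is $\ge N^{-1}$ and $\psi \equiv 1$, $\vartheta \equiv 1$, so $u_\rho = u$ there, yielding \eqref{equation exponential decay scalar}. The main obstacle, exactly as in Proposition~\ref{proposition exponential decay}, is the interplay between the $\rho$-dependence of $k_1$ (forced by absorbing the unbounded coefficient $N|x|^\de$) and the need to keep the resulting $k_1$-dependent prefactors from overwhelming the quadratic-exponential decay; the choice $\de < 1/2$ is exactly what makes $\rho^{2\de}\log\rho = o(\rho^2)$ and saves the argument.
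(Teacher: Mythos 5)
Your proposal follows essentially the same route as the paper's proof: cut off in space and time, apply the scalar Carleman inequality \eqref{equation L2 second carleman scalar} with an integer exponent $k$, absorb the unbounded zeroth-order coefficient by taking $k\ge k_1(\rho)\sim \rho^{2\de}$, estimate the cutoff error terms via Stirling and the Gaussian weight, sum the resulting series in $l=k-k_1$, restrict to a short time interval, and check that the $k_1$-dependent prefactor is subexponential in $\rho^2$. Three small corrections, none of which changes the structure: (i) your choice $\ga=2$ is incompatible with your own cutoff, which is supported down to $|x|=\rho/6$ and must equal $1$ near $|x|=\rho$ for the final conclusion; the Carleman lemma requires support in $\{|x|\ge \ga\rho\}$, so you need $\ga\le 1/6$ (the paper takes $\ga=1/12$), and the displayed absorption should read $\sqrt{k}-N(48\rho)^{\de}\ge \sqrt{k}/2$, consistent with your choice of $k_1$. (ii) The term $a^{-(k_1+1/2)}e^{-\de\rho^2/(Ca)}$ and the optimization at $a_0\sim k_1/\rho^2$ are artifacts of the tensor ODE--PDE argument, where they come from the sup-norm boundary term in the ODE Carleman estimate (Proposition \ref{proposition second carleman estimate ODE}); the scalar inequality \eqref{equation L2 second carleman scalar} has no such term, so in this setting one simply takes $a$ small and no critical-point computation is needed --- your inclusion of it is harmless but superfluous. (iii) The restriction $\de<1/2$ is not what saves this proposition: for the exponential decay only $\de<1$ is needed (as the paper remarks), since $\rho^{2\de}\log\rho=o(\rho^2)$ already for $\de<1$; the bound $\de<1/2$ is used only later, in the vanishing step, where one must choose $\eta\in(2\de,1)$ in the quadratic-exponential Carleman weight.
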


\begin{proof}
The proof is a simplified version of the proof of Proposition \ref{proposition exponential decay}. We choose $\ga = 1/12$ and set $r_0 = R$ initially. Let $\vartheta \in C^\infty(\bR, [0,1])$ be a temporal cutoff function such that $\vartheta \equiv 1$ for $\tau \le 1/6$ and $\vartheta \equiv 0$ for $\tau \ge 1/5$. Let $\psi_{\rho, \xi}$ be the spatial cutoff function given in Lemma \ref{lemma spatial cutoff function}. Define $u_{\rho, \xi} := \vartheta \psi_{\rho, \xi} u$. As before, in \eqref{equation L2 second carleman scalar} we replace the parameter $\al$ by an integer $k \ge \al_0$ and we restrict $a$ to the interval $(0,1/8)$. This yields
\begin{multline*}
    k^{\frac{1}{2}}||\sg_a^{-k - \frac{1}{2}}e^{- \frac{(|x| - \rho)^2}{8(t+a)}} u_{\rho, \xi}||_{L^2(A(\frac{\rho}{6}, 3\xi) \times [0,\frac{1}{5}])} + ||\sg_a^{-k}e^{- \frac{(|x| - \rho)^2}{8(t+a)}} \na u_{\rho, \xi}||_{L^2(A(\frac{\rho}{6}, 3\xi) \times [0,\frac{1}{5}])} \\
    \le C||\sg_a^{-k}e^{- \frac{(|x| - \rho)^2}{8(t+a)}} (\dd_t + \De) u_{\rho, \xi}||_{L^2(A(\frac{\rho}{6}, 3\xi) \times [0,\frac{1}{5}])}.
\end{multline*}
Now, set $\xi = 8\rho$. Choose $k_1$ depending on $\rho$ such that $4C^2(24\rho)^{2\de} \le k_1 \le 16C^2(24\rho)^{2\de}$. Since the maximum of $|x|^{\de}$ on $A(\rho/6, 3\xi) = A(\rho/6, 24\rho)$ is $(24\rho)^{\de}$, the following inequality holds for $k \ge k_1$.
\begin{equation}\label{equation absorption inequality} k^\frac{1}{2} - C(24\rho)^{\de} \ge \frac{k^\frac{1}{2}}{2}.
\end{equation}
We use \eqref{equation rescaled conditions for backwards uniqueness} to obtain
\begin{multline*}|(\dd_t +\De) u_{\rho, 8\rho}| \le \ve(|u_{\rho, 8\rho}| + |\na u_{\rho, 8\rho}|) + C|x|^\de|u_{\rho, 8\rho}| \\ + \psi_{\rho, 8\rho}|\vartheta'||u| + \vartheta(|u|(|\De \psi_{\rho, 8\rho}| + |\na \psi_{\rho, 8\rho}|) +2|\na \psi_{\rho, 8\rho}||\na u|).
\end{multline*}
For $\ve$ sufficiently small and $k \ge k_1$,
\begin{align*}
    &k^{\frac{1}{2}}||\sg_a^{-k -\frac{1}{2}} e^{- \frac{(|x| - \rho)^2}{8(t+a)}}u_{\rho, 8\rho}||_{L^2(A(\frac{\rho}{6}, 24\rho) \times [0,\frac{1}{5}])} + ||\sg_a^{-k}e^{- \frac{(|x| - \rho)^2}{8(t+a)}}\na u_{\rho, 8\rho}||_{L^2(A(\frac{\rho}{6}, 24\rho) \times [0,\frac{1}{5}])} \\
    &\le C(24\rho)^\de||\sg_a^{-k -\frac{1}{2}} e^{- \frac{(|x| - \rho)^2}{8(t+a)}}u_{\rho, 8\rho}||_{L^2(A(\frac{\rho}{6}, 24\rho) \times [0,\frac{1}{5}])} \\
    &\qquad + C||\sg_a^{-k-\frac{1}{2}}e^{- \frac{(|x| - \rho)^2}{8(t+a)}}u||_{L^2(A(\frac{\rho}{6}, 24\rho) \times [\frac{1}{6},\frac{1}{5}])} \\
    &\qquad \qquad  + C||\sg_a^{-k}e^{- \frac{(|x| - \rho)^2}{8(t+a)}}(|u| + |\na u|)||_{L^2(A(\frac{\rho}{6}, \frac{\rho}{3}) \times [0,\frac{1}{5}])} \\
    &\qquad \qquad  \qquad + C||\sg_a^{-k}e^{- \frac{(|x| - \rho)^2}{8(t+a)}}(|u| + |\na u|)||_{L^2(A(16\rho, 24\rho) \times [0,\frac{1}{5}])}
\end{align*}
By \eqref{equation absorption inequality}, we can absorb the first term on the right-hand side into the left-hand side to obtain
\begin{align*}
    &k^{\frac{1}{2}}||\sg_a^{-k -\frac{1}{2}} e^{- \frac{(|x| - \rho)^2}{8(t+a)}}u_{\rho, 8\rho}||_{L^2(A(\frac{\rho}{6}, 24\rho) \times [0,\frac{1}{5}])} + ||\sg_a^{-k}e^{- \frac{(|x| - \rho)^2}{8(t+a)}}\na u_{\rho, 8\rho}||_{L^2(A(\frac{\rho}{6}, 24\rho) \times [0,\frac{1}{5}])} \\
    &\le C||\sg_a^{-k-\frac{1}{2}}e^{- \frac{(|x| - \rho)^2}{8(t+a)}}u||_{L^2(A(\frac{\rho}{6}, 24\rho) \times [\frac{1}{6},\frac{1}{5}])} \\
    &\qquad \qquad  + C||\sg_a^{-k}e^{- \frac{(|x| - \rho)^2}{8(t+a)}}(|u| + |\na u|)||_{L^2(A(\frac{\rho}{6}, \frac{\rho}{3}) \times [0,\frac{1}{5}])} \\
    &\qquad \qquad  \qquad + C||\sg_a^{-k}e^{- \frac{(|x| - \rho)^2}{8(t+a)}}(|u| + |\na u|)||_{L^2(A(16\rho, 24\rho) \times [0,\frac{1}{5}])}
\end{align*}
Since $\sigma_a \le 1$ and $1 \le e^{(\tau + a)/3} \le e$, we can further estimate:
\begin{align*}
    &||(t+a)^{-k} e^{- \frac{(|x| - \rho)^2}{8(t+a)}}u_{\rho, 8\rho}||_{L^2(A(\frac{\rho}{6}, 24\rho) \times [0,\frac{1}{5}])} + ||\sg_a^{-k}e^{- \frac{(|x| - \rho)^2}{8(t+a)}}\na u_{\rho, 8\rho}||_{L^2(A(\frac{\rho}{6}, 24\rho) \times [0,\frac{1}{5}])} \\
    &\le C^k||(t+a)^{-k}e^{- \frac{(|x| - \rho)^2}{8(t+a)}}u||_{L^2(A(\frac{\rho}{6}, 24\rho) \times [\frac{1}{6},\frac{1}{5}])} \\
    &\qquad \qquad  + C^k||(t+a)^{-k}e^{- \frac{(|x| - \rho)^2}{8(t+a)}}(|u| + |\na u|)||_{L^2(A(\frac{\rho}{6}, \frac{\rho}{3}) \times [0,\frac{1}{5}])} \\
    &\qquad \qquad  \qquad + C^k||(t+a)^{-k}e^{- \frac{(|x| - \rho)^2}{8(t+a)}}(|u| + |\na u|)||_{L^2(A(16\rho, 24\rho) \times [0,\frac{1}{5}])}
\end{align*}
The first term on the right-hand side of the inequality can be estimated by the bounds \eqref{equation tensor fields are bounded} and $(\tau + a) \ge 1/6$.
\begin{equation*}
    ||(t+a)^{-k}e^{- \frac{(|x| - \rho)^2}{8(t+a)}}u||_{L^2(A(\frac{\rho}{6}, 24\rho) \times [\frac{1}{6},\frac{1}{5}])} \le 6^k || C ||_{L^2(A(\frac{\rho}{6}, 24\rho) \times [\frac{1}{6},\frac{1}{5}])} \le C^k\rho^\frac{n}{2}
\end{equation*}
for all $k\geq k_1$ and $a\in (0, 1/8)$. 

On the domains $A(\frac{\rho}{6},\frac{\rho}{3})\times [0, \frac{1}{5}]$ and $A(16\rho, 24\rho)\times [0, \frac{1}{5}]$, we have that
$$e^{-(r-\rho)^2/(8(\tau+a))} \leq e^{-\rho^2/(20(\tau+a))}.$$
We compute using Stirling's formula that
\[
 \max_{s>0} s^{-k}e^{-\rho^2/(20 s)}  = \rho^{-2k}(20 k)^ke^{-k} \leq \rho^{-2k}C^k k!.
\]
This observation yields the following bound.
\begin{multline*}
     ||(t+a)^{-k}e^{- \frac{(|x| - \rho)^2}{8(t+a)}}(|u| + |\na u|)||_{L^2(A(\frac{\rho}{6}, \frac{\rho}{3}) \times [0,\frac{1}{5}])} \\+ ||(t+a)^{-k}e^{- \frac{(|x| - \rho)^2}{8(t+a)}}(|u| + |\na u|)||_{L^2(A(16\rho, 24\rho) \times [0,\frac{1}{5}])} \le C^k\rho^{-2k}k!\rho^{\frac{n}{2}},
\end{multline*}
for $k \ge k_1$ and $a \in (0, 1/8)$. Let $l$ be such that $k = k_1 + l$. Plugging all this information back into the primary inequality, we find that 
\begin{multline*}
    ||(t+a)^{-k} e^{- \frac{(|x| - \rho)^2}{8(t+a)}}|u_{\rho, 8\rho}| + |\na u_{\rho, 8\rho}|||_{L^2(A(\frac{\rho}{6}, 24\rho) \times [0,\frac{1}{5}])} \\ \le \rho^\frac{n}{2}C^{k_1 + l}(1 + \rho^{-2(k_1 + l)}(k_1 + l)!)
\end{multline*}
for all $l\geq 0$ and all $a\in (0, 1/8)$. Noting that $(l+k_1)! \le C^{k_1 + l}(l!)(k_1!)$, 
\begin{align*}
     & ||(t+a)^{-k} e^{- \frac{(|x| - \rho)^2}{8(t+a)}}|u_{\rho, 8\rho}| + |\na u_{\rho, 8\rho}|||_{L^2(A(\frac{\rho}{6}, 24\rho) \times [0,\frac{1}{5}])} \\
     &\qquad \le \rho^\frac{n}{2}C^{k_1 + l} (1 + \rho^{-2l}C^{k_1+l} (k_1!)(l!))\\
     &\qquad \le \rho^\frac{n}{2}(k_1!)^2 (C^l + C^{l}\rho^{-2l}l!).
\end{align*}
\begin{rmk}
Note that $C^{k_1}$ can be bounded by $k_1!$ for sufficiently large $\rho$. Thus, we increase $r_0 > 0$ so that this may be true.
\end{rmk}
We multiply both sides by $\rho^{2l}/((2C)^ll!)$ and sum over all $l\geq 0$ yielding
\begin{align*}
     &||(|u_{\rho, 8\rho}| + |\na u_{\rho, 8\rho}|)e^{\frac{\rho^2}{C(\tau+a)}-\frac{(r -\rho)^2}{8(\tau + a)}}||_{L^2(A(\frac{\rho}{6}, 24\rho) \times [0,\frac{1}{5}])} \\
     &\qquad \le C(k_1!)^2\rho^\frac{7}{2}\bigg(1 + e^\frac{\rho^2}{2} \bigg).
\end{align*}
Restrict the time interval from $[0, 1/5]$  to $[0, a]$.
\begin{align*}
     &||(|u_{\rho, 8\rho}| + |\na u_{\rho, 8\rho}|)e^{-\frac{(r -\rho)^2}{8(\tau + a)}}||_{L^2(A(\frac{\rho}{6}, 24\rho) \times [0,a])} \\
     &\qquad \le C(k_1!)^2\rho^\frac{7}{2}\bigg(1 + e^\frac{\rho^2}{2} \bigg)e^{-\frac{\rho^2}{2Ca}}.
\end{align*}
For sufficiently small $a > 0$ we obtain the following upper bound
\[||(|u_{\rho, 8\rho}| + |\na u_{\rho, 8\rho}|)e^{-\frac{(r -\rho)^2}{8(\tau + a)}}||_{L^2(A(\frac{\rho}{6}, 24\rho) \times [0,a])} \le k_1^{2k_1}e^{-\frac{\rho^2}{Ca}}.
\]
Recall that $k_1 \le 16C^2(24\rho)^{2\de}$. After increasing $C > 0$, we have
\begin{align*}
    k_1^{2k_1} &\le (C\rho^{2\de})^{C\rho^{2\de}}\\
    &= \exp(C\rho^{2\de}(\log(C) + 2\de\log(\rho)))\\
    &\le \exp(C\rho^{2\de'}),
\end{align*}
where $0 < \de < \de' < 1$. This grows more slowly than quadratic exponential. By possibly increasing $r_{0}$, we finally obtain
\[||(|u_{\rho, 8\rho}| + |\na u_{\rho, 8\rho}|)e^{-\frac{(r -\rho)^2}{8(\tau + a)}}||_{L^2(A(\frac{\rho}{6}, 24\rho) \times [0,a])} \le Ce^{-\frac{\rho^2}{Ca}}.
\]
Observe that $e^{-|r -\rho|^2/(8(\tau+a))} \geq C^{-1}$ on the domain $A((1-\sqrt{a})\rho, (1+\sqrt{a})\rho)\times [0, \delta a]$. Furthermore, on this set, $\psi_{\rho, 8\rho} \equiv 1$ and $\vp \equiv 1$. Thus, the function $u_{\rho, 8\rho}$ agrees with its unmodified counterpart on this restricted domain and we obtain the estimate 
\begin{align}
\begin{split}
    \||u|+|\nabla u| \|_{L^2(A((1-\sqrt{s})\rho,(1+\sqrt{s})\rho)\times[0, s])}
\leq Ce^{-\frac{\rho^2}{Cs}}
\end{split}  
\end{align}
for $s\in [0, 1/C]$ and arbitrary $\rho > 12r_{0}$, which was the desired exponential decay. 
\end{proof}

\begin{rmk}
Note that the proof of exponential decay only requires that $\de \in (0,1)$, not the more restrictive condition $\de \in (0, 1/2)$ that is required for the proof of vanishing.
\end{rmk}

In the proof of vanishing, we will use the following Carleman inequality with weights growing quadratic exponentially.

\begin{prop}\label{proposition first carleman estimate scalar}
For each $\eta \in (0,1)$, there exists a constant $r_1 = r_1(\eta) \ge R$ such that for all $\al \ge 1$ and all $u \in C^\infty (Q_{R,T_0})$ satisfying that $u(\cdot, 0) \equiv 0$ and that $u(\cdot, t)$ is compactly supported in $\bR^n \setminus B_{r_1}$ for each $t \in [0,T_0]$, we have the estimate
\begin{multline}\label{equation first carleman estimate scalar}
\al \Big\|u e^{\frac{\al(T_0 - t)|x|^{2-\eta} + |x|^2}{2}}\Big\|^2_{L^2(Q_{r_1,T_0})} + \Big\||\na u| e^{\frac{\al(T_0 - t)|x|^{2-\eta} + |x|^2}{2}}\Big\|^2_{L^2(Q_{r_1,T_0})} \\
\le \frac{1}{2}\Big\|(\dd_t u + \De u) e^{\frac{\al(T_0 - t)|x|^{2-\eta} + |x|^2}{2}}\Big\|^2_{L^2(Q_{r_1,T_0})} \\ 
+ \Big\| |\na u| e^{\frac{\al(T_0 - t)|x|^{2-\eta} + |x|^2}{2}}\Big\|^2_{L^2((\bR^n \setminus B_{r_1}) \times \{T_0\})}.
\end{multline}
\end{prop}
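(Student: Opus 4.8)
\textbf{Proof proposal for Proposition \ref{proposition first carleman estimate scalar}.}

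The plan is to deduce this scalar Carleman estimate directly from the general tensorial framework already developed in Section \ref{section carleman estimates}, specialized to the static Euclidean background. Concretely, one applies the machinery behind Proposition \ref{proposition first carleman estimate} in the degenerate case where the "shrinker'' is the Gaussian shrinker on $\bR^n$: take $g_C$ to be the flat metric on $\bR^n$, the potential $f = |x|^2/4$, so that the time-dependent radial function becomes $h(x,\tau) = |x|$ exactly, $R \equiv 0$, $T \equiv 0$, and $\sE \equiv 0$. In this setting the quantity $\cZ$ of Section \ref{section carleman estimates} is the trivial $0$-tensor bundle $C^\infty(\bR^n \times [0,T_0])$, and the flow is the constant (static) flow, so all the evolution equations for $\dd_\tau g$, $\dd_\tau \vol$, and $\dd_\tau \Ga^k_{ij}$ vanish identically. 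The weight function $\sH_1 = \exp[\al(T_0 - t)h^{2-\eta} + h^2]$ then coincides with $\exp[\al(T_0-t)|x|^{2-\eta} + |x|^2]$, and $\sH_1^{1/2} = \exp\big[\tfrac{1}{2}(\al(T_0-t)|x|^{2-\eta}+|x|^2)\big]$ is precisely the weight appearing in \eqref{equation first carleman estimate scalar}.

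First I would verify that the hypotheses feeding into Lemma \ref{lemma estimates on phi 1} and Lemma \ref{lemma lower bound on derivative of F1} hold in this degenerate setting — but here they are nearly trivial, since all curvature and torsion terms vanish identically, so there are no error terms of order $r^{-2}$ to absorb, and the constants $K_0$, $A_0$ play no role. The Hessian estimate $\nabla^2 \phi_1 \ge g$ follows immediately because $\nabla_i \nabla_j |x|^2 = 2\delta_{ij}$ and $\nabla_i \nabla_j |x| = \frac{1}{|x|}(\delta_{ij} - \frac{x_i x_j}{|x|^2})$ is positive semidefinite on the orthogonal complement of the radial direction. The lower bound on $\dd_\tau \sF_1 + \De \sF_1$ from Lemma \ref{lemma lower bound on derivative of F1} holds with its proof simplifying dramatically (no $\log r$ error terms appear since $\sE \equiv 0$). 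Then the fundamental divergence identity Lemma \ref{lemma fundamental divergence identity} collapses to the classical Escauriaza--Seregin--\v{S}ver\'ak identity \cite[Lemma 1]{EscauriazaSereginSverak} with the conjugate-heat correction terms set to zero, and integration over $Q_{r_1, T_0}$ with the choice $\sF = \sF_1$ (as in \eqref{equation FH} with $R = 0$) produces exactly the weighted $L^2$ inequality \eqref{equation first L2 quadratic lower bound}; after discarding nonnegative contributions on the right and using $\nabla^2\phi_1 \ge g$ together with the lower bound on $\dd_\tau \sF_1 + \De \sF_1$, one obtains \eqref{equation first carleman estimate scalar} with $r_1 = r_1(\eta)$. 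In fact, since the statement only requires the PDE-type estimate (not the ODE companion \eqref{equation first carleman estimate ODE}), the proof is strictly shorter than that of Proposition \ref{proposition first carleman estimate}.

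The only genuine point requiring care — and hence the main "obstacle,'' though a mild one — is the bookkeeping of which radius $r_1(\eta)$ is needed: one must track through the proof of Lemma \ref{lemma estimates on phi 1} and Lemma \ref{lemma lower bound on derivative of F1} that, even with all geometric error terms absent, the positivity of the coefficient of $\al$ in $\De \phi_1$ and the absorption of the lower-order ($\al^0$) terms of $\sF_1$ into the $\al$ and $\al^2$ terms still requires $|x|$ bounded below by a constant depending on $\eta$ (arising from the $|x|^{-\eta}$ versus $|x|^{0}$ scaling mismatch). Once this threshold $r_1(\eta)$ is fixed, every remaining step is a direct specialization of the already-proven tensorial estimates, so I would phrase the proof as: "Apply the argument of Proposition \ref{proposition first carleman estimate} with $g_C$ the flat metric on $\bR^n$, $f = |x|^2/4$, $\cZ = C^\infty(\bR^n\times[0,T_0])$, and $K_0 = 1$; in this case $h \equiv |x|$, $R \equiv 0$, $\sE \equiv 0$, and $T \equiv 0$, all error terms vanish, and Lemma \ref{lemma estimates on phi 1} and Lemma \ref{lemma lower bound on derivative of F1} hold on $\bR^n \setminus B_{r_1(\eta)}$; the conclusion follows after discarding the ODE-type terms.''
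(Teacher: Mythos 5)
Your proposal is correct and takes essentially the same route as the paper, whose entire proof is the observation that the inequality is (modulo constants) the Carleman estimate of Kotschwar--Wang \cite[Proposition 4.9]{kotschwarwang2015} applied to the static flat metric on $\bR^n$ with $h(x,t) = |x|$ and the $0$-order tensor bundle $\cZ = C^\infty(\bR^n \times (0,1))$. Invoking the paper's own Proposition \ref{proposition first carleman estimate} instead, as you do, is an equivalent specialization, since in this degenerate setting the torsion and curvature corrections vanish and the Laplacian-flow and Ricci-flow versions of the estimate coincide.
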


\begin{proof}
This inequality is (modulo constants) the inequality in \cite[Proposition 4.9]{kotschwarwang2015} with respect to the static Ricci flow of the Euclidean metric on $\bR^n$ (where $r_c^2(x) = h(x,t) = |x|$) applied to the $0$-order tensor bundle $\cZ = C^\infty( \bR^n \times (0,1))$.
\end{proof}

\begin{proof}[Proof of Theorem \ref{theorem scalar backwards uniqueness}] We now prove that the function $u$ must vanish uniformly. We will follow the method of \cite{kotschwarwang2015}, modified by a similar technique to that in the proof of Proposition \ref{proposition exponential decay scalar linear}. We first show that the space-time $W^{1,2}$-norm, weighted by $e^{8|x|^2}$, is finite. By Proposition \ref{proposition exponential decay scalar linear}, there exists a constant $s_1(n)$ such that 
\[|||u| + |\nabla u| ||_{L^2(A(\rho - \sqrt{s}, \rho + \sqrt{s})\times [0,s])} \leq e^{-16\rho^2}
\]
for $s \in (0, s_1]$ and $\rho > 12r_0$. Now, choosing $r'' > 13 r_0$, 
\begin{align*}
    &\|(|u| + |\nabla u|) e^{4|x|^2}\|_{L^2(B_{r''} \setminus B_R \times [0,s_1])} \\
    &\qquad \le  \|(|u| + |\nabla u|) e^{4|x|^2}\|_{L^2(B_{r''} \setminus B_{13r_0} \times [0,s_1])} +  \|(|u| + |\nabla u|) e^{4|x|^2}\|_{L^2(B_{13r_0} \setminus B_R \times [0,s_1])}\\
    &\qquad \le \sum_{i=0}^{k'} \|(|u| + |\nabla u|) e^{4|x|^2}\|_{L^2(A(13r_0 + 2i\sqrt{s_1}, 13r_0 + 2(i+1)\sqrt{s_1})\times [0,s_1])}  \\
    &\qquad \qquad + C\|u\|_{W^{1,2}(B_{13r_0}\setminus B_{R} \times [0,s_1])}\\
    &\qquad \le \sum_{i=0}^{k'} C(13r_0 + 2(i+1)\sqrt{s_1})^{\frac{n}{2}} e^{-12((13r_0)^2 +i^2s_1} + C,
\end{align*}
where we have chosen $k'$ to be the least integer greater than $\tfrac{1}{2\sqrt{s_1}}(r'' - 13r_0)$. By sending $r''$ to infinity, we find that 
\begin{equation}\label{equation bounded L2 norm with exponential weight}\|(|u| + |\nabla u|) e^{4|x|^2}\|_{L^2(Q_{R,s_1})} \le C,
\end{equation}
where $C$ is a dimensional constant.

Let $\psi_{\rho, \xi}$ be the spatial cutoff function given in Lemma \ref{lemma spatial cutoff function} such that $\psi_{\rho, \xi} \equiv 1$ on $[2\rho, \xi]$ and $\equiv 0$ on $(-\infty, \rho] \cup [2\xi, \infty)$. Define the compactly supported function $u_{\rho, \xi} := \psi_{\rho, \xi} u$ and choose $\rho > 13r_0$ sufficiently large. Now apply the inequality \eqref{equation first carleman estimate scalar}.
\begin{multline*}
\al^\frac{1}{2} \Big\|u_{\rho,\xi} e^{\frac{\al(T_0 - t)|x|^{2-\eta} + |x|^2}{2}}\Big\|_{L^2(A(\rho, 2\xi) \times [0, T_0])} + \Big\||\na u_{\rho,\xi}| e^{\frac{\al(T_0 - t)|x|^{2-\eta} + |x|^2}{2}}\Big\|_{L^2(A(\rho, 2\xi) \times [0, T_0])} \\
\le 2\Big\|(\dd_t u_{\rho,\xi} + \De u_{\rho,\xi}) e^{\frac{\al(T_0 - t)|x|^{2-\eta} + |x|^2}{2}}\Big\|_{L^2(A(\rho, 2\xi) \times [0, T_0])} \\ 
+ \Big\| |\na u_{\rho,\xi}| e^{\frac{\al(T_0 - t)|x|^{2-\eta} + |x|^2}{2}}\Big\|_{L^2(A(\rho, 2\xi) \times \{T_0\})}
\end{multline*}
Observe that
\begin{multline*}
    |(\dd_t + \De) u_{\rho,\xi}| \le C|x|^\de|u_{\rho, \xi}| + \ve |\na u_{\rho, \xi}| + |u|(|\na \psi_{\rho, \xi}| + |\De \psi_{\rho, \xi}|) + 2 |\na \psi_{\rho, \xi}||\na u|.
\end{multline*}
Combining these two inequalities yields
\begin{align*}
    &\al^\frac{1}{2} \Big\|u_{\rho,\xi} e^{\frac{\al(T_0 - t)|x|^{2-\eta} + |x|^2}{2}}\Big\|_{L^2(A(\rho, 2\xi) \times [0, T_0])} + \Big\||\na u_{\rho,\xi}| e^{\frac{\al(T_0 - t)|x|^{2-\eta} + |x|^2}{2}}\Big\|_{L^2(A(\rho, 2\xi) \times [0, T_0])} \\
    &\qquad \le C \Big\||x|^\de u_{\rho,\xi} e^{\frac{\al(T_0 - t)|x|^{2-\eta} + |x|^2}{2}}\Big\|_{L^2(A(\rho, 2\xi) \times [0, T_0])} \\
    & \qquad \qquad + C\Big\|(|u| + |\na u|) e^{\frac{\al(T_0 - t)|x|^{2-\eta} + |x|^2}{2}}\Big\|_{L^2(A(\rho, 2\rho) \times [0, T_0])} \\
    & \qquad \qquad \qquad + C\Big\|(|u| + |\na u|) e^{\frac{\al(T_0 - t)|x|^{2-\eta} + |x|^2}{2}}\Big\|_{L^2(A(\xi, 2\xi) \times [0, T_0])}\\
    & \qquad + C\Big\||u| e^{\frac{\al(T_0 - t)|x|^{2-\eta} + |x|^2}{2}}\Big\|_{L^2(A(\rho, 2\rho) \times \{ T_0\})} + C\Big\||u| e^{\frac{\al(T_0 - t)|x|^{2-\eta} + |x|^2}{2}}\Big\|_{L^2(A(\xi, 2\xi) \times \{ T_0\})}\\
    & \qquad \qquad \qquad +  C\Big\||\na u| e^{\frac{\al(T_0 - t)|x|^{2-\eta} + |x|^2}{2}}\Big\|_{L^2(A(\rho, 2\xi) \times \{ T_0\})}.
\end{align*}
We now choose $\xi$ such that $C(2\xi)^\de \le \al^\frac{1}{2}/2$. We may increase $C$ by an amount depending on $\delta$ so that $\xi = C^{-1}\al^\frac{1}{2\de}$ satisfies this relation. Absorbing the first term on the right-hand side yields the simplified inequality
\begin{align*}
    &\al^\frac{1}{2} \Big\|u e^{\frac{\al(T_0 - t)|x|^{2-\eta} + |x|^2}{2}}\Big\|_{L^2(A(2\rho, C^{-1}\al^\frac{1}{2\de}) \times [0, T_0])} + \Big\||\na u| e^{\frac{\al(T_0 - t)|x|^{2-\eta} + |x|^2}{2}}\Big\|_{L^2(A(2\rho, C^{-1}\al^\frac{1}{2\de}) \times [0, T_0])} \\
    &\qquad \le  C\Big\|(|u| + |\na u|) e^{\frac{\al(T_0 - t)|x|^{2-\eta} + |x|^2}{2}}\Big\|_{L^2(A(\rho, 2\rho) \times [0, T_0])} \\
    & \qquad \qquad + C\Big\|(|u| + |\na u|) e^{\frac{\al(T_0 - t)|x|^{2-\eta} + |x|^2}{2}}\Big\|_{L^2(A(C^{-1}\al^\frac{1}{2\de}, 2C^{-1}\al^\frac{1}{2\de}) \times [0, T_0])}\\
    & \qquad \qquad \qquad+ C\Big\||u| e^{\frac{\al(T_0 - t)|x|^{2-\eta} + |x|^2}{2}}\Big\|_{L^2(A(\rho, 2\rho) \times \{ T_0\})} \\
    & \qquad \qquad \qquad + C\Big\||u| e^{\frac{\al(T_0 - t)|x|^{2-\eta} + |x|^2}{2}}\Big\|_{L^2(A(C^{-1}\al^\frac{1}{2\de}, 2C^{-1}\al^\frac{1}{2\de}) \times \{ T_0\})}\\
    & \qquad \qquad \qquad \qquad +  C\Big\||\na u| e^{\frac{\al(T_0 - t)|x|^{2-\eta} + |x|^2}{2}}\Big\|_{L^2(A(\rho, 2C^{-1}\al^\frac{1}{2\de}) \times \{ T_0\})}.
\end{align*}
Now we choose $\eta$ and $T_0$ in order to bound the left-hand side. First, we choose $T_0$. By the mean value theorem and \eqref{equation bounded L2 norm with exponential weight}, we can choose $T_0 \in (s_1/2, s_1)$ such that 
\begin{align*} \int_{(\bR^n \setminus B_R) \times \{T_0\} } (|u| + |\na u|)^2 e^{8|x|} dx & \le \frac{2}{s_1} \|(|u| + |\nabla u|) e^{4|x|^2}\|^2_{L^2(Q_{R,s_1})} \\
& \le C,
\end{align*}
where $C > 0$ is a dimensional constant.  Since $\al(T_0 - t)|x|^{2-\eta} + |x|^2 = |x|^2$ when $t=T_0$, we have 
\begin{align*}
    &C\Big\||u| e^{\frac{\al(T_0 - t)|x|^{2-\eta} + |x|^2}{2}}\Big\|_{L^2(A(\rho, 2\rho) \times \{ T_0\})} + C\Big\||u| e^{\frac{\al(T_0 - t)|x|^{2-\eta} + |x|^2}{2}}\Big\|_{L^2(A(C^{-1}\al^\frac{1}{2\de}, 2C^{-1}\al^\frac{1}{2\de}) \times \{ T_0\})}\\
    & \qquad \qquad +  C\Big\||\na u| e^{\frac{\al(T_0 - t)|x|^{2-\eta} + |x|^2}{2}}\Big\|_{L^2(A(\rho, 2C^{-1}\al^\frac{1}{2\de}) \times \{ T_0\})} \le 3C.
\end{align*}

Next, we choose $\eta$ so that 
\[ \al(T_0 - t)|x|^{2-\eta} + |x|^2 \le 4|x|^2
\]
on the domain $A(C^{-1}\al^\frac{1}{2\de}, 2C^{-1}\al^\frac{1}{2\de}) \times [0, T_0]$ for sufficiently large values of $\al$. Factoring out $|x|^2$ from this expression gives the following condition.
\begin{align*}
    \al(T_0 - t)|x|^{-\eta} + 1 &\le 4
\end{align*}
To find a sufficient condition for $\eta$, we assume that $|T_0 - t| \le 1$ and plug in $|x| = 2C^{-1} \al^\frac{1}{2\de}$. We then find $\eta \in (0,1)$ such that the following inequality is fulfilled for sufficiently large values of $\al$.
\begin{align*}
    C^\eta\al\al^{-\frac{\eta}{2\de}} &\le 3
\end{align*}
Since $1 < C^\eta < C$, this inequality is guaranteed for $\al$ large if
\begin{align*}
    1 - \frac{\eta}{2\de} &< 0\\
     \implies 2\de &< \eta.
\end{align*}
Since $\de \in (0, \frac{1}{2})$, it is always possible to find such $\eta(\de) \in (0,1)$. Choosing and fixing such a value for $\eta$ and applying \eqref{equation bounded L2 norm with exponential weight} yields the following inequality.
\begin{align*}
   &\Big\|(|u| + |\na u|) e^{\frac{\al(T_0 - t)|x|^{2-\eta} + |x|^2}{2}}\Big\|_{L^2(A(C^{-1}\al^\frac{1}{2\de}, 2C^{-1}\al^\frac{1}{2\de}) \times [0, T_0])}\\
   & \qquad \le \|(|u| + |\nabla u|) e^{4|x|^2}\|_{L^2(Q_{R,s_1})} \\
   &\qquad \le C.
\end{align*}
Combining results gives the inequality
\begin{align*}
    &\Big\|(|u| + |\na u|) e^{\frac{\al(T_0 - t)|x|^{2-\eta} + |x|^2}{2}}\Big\|_{L^2(A(2\rho, C^{-1}\al^\frac{1}{2\de}) \times [0, T_0])} \\
    &\qquad \le  C\Big\|(|u| + |\na u|) e^{\frac{\al(T_0 - t)|x|^{2-\eta} + |x|^2}{2}}\Big\|_{L^2(A(\rho, 2\rho) \times [0, T_0])} +C.
\end{align*}
Now, we can directly apply the method of \cite{EscauriazaSereginSverak} and \cite{kotschwarwang2015} to obtain the vanishing. Let $\te \ge 1$ be a constant depending on $\rho$ to be determined. On the domain $Q_{8\te \rho,\tfrac{T_0}{2}}$, the following lower bound holds.
\[ \exp(4\al T_0 (\te \rho)^{2-\eta}) \le \exp(\al(T_0 - t)|x|^{2-\eta} + |x|^2).
\]
On $A(\rho, 2 \rho) \times [0,T_0]$, the following upper bound holds.
\[ \exp(\al(T_0 - t)|x|^{2-\eta} + |x|^2) \le \exp(4\rho^2(\al T_0 + 1)).
\]
Plug these inequalities in to obtain
\begin{align*}
    &\Big\||u| + |\na u|\Big\|_{L^2(A(8 \te \rho, C^{-1}\al^\frac{1}{2\de}) \times [0, T_0/2])} \\
    &\qquad \le  Ce^{-4\al T_0 (\te \rho)^{2-\eta}}e^{4\rho^2(\al T_0 + 1)} \Big\||u| + |\na u|\Big\|_{L^2(A(\rho, 2\rho) \times [0, T_0])} + Ce^{-4\al T_0 (\te \rho)^{2-\eta}}.
\end{align*}
Choose $\te > 1$ such that $\te^{2-\eta} > 2\rho^\eta$. Then,
\[4\rho^2(1 + \al T_0 - \al T_0 \te^{2-\eta}\rho^{-\eta}) < 4\rho^2(1 - \al T_0). 
\]
Send $\al \rightarrow +\infty$ to obtain that 
\begin{equation}\label{equation vanishing outside an annulus}
    \Big\||u| + |\na u|\Big\|_{L^2(Q_{8\te \rho,T_0 / 2})} = 0.
\end{equation}
This tells us that $u$ vanishes identically outside of a large annulus $\bR^n \setminus B_{8\te \rho}$. However, since $u$ now must be supported in $B_{8\te \rho}$, it must also satisfy the following differential inequality.
\[|\De u + \dd_t u| \le M|\na u| + (N|8\te \rho|^{\de} + M)|u|.
\]
We may then apply \cite[Theorem 1]{EscauriazaSereginSverak} to obtain that $u$ vanishes on all of $Q_{R,T_0/2}$. Since $T_0$ is uniformly bounded below by $s_1/2$, iterating as in the proof of \cite[Theorem 1]{EscauriazaSereginSverak} suffices to prove that $u$ vanishes on all of $Q_{R,T}$.
\end{proof}

\bibliographystyle{amsplain}
\bibliography{bibliography}

\end{document}